\documentclass[11pt,a4paper]{article}
\usepackage[margin=26mm]{geometry}
\usepackage[T1]{fontenc}
\usepackage{amsmath,amssymb,amsthm,mathtools}
\usepackage{microtype,enumitem,booktabs,array}
\usepackage[hidelinks]{hyperref}
\hypersetup{pdftitle={Integrable distortion I: Weil-Petersson homeomorphisms and the Schoen conjecture},pdfauthor={Gaven Martin and Cong Yao}}
\newcommand{\D}{\mathbb D}
\newcommand{\ID}{\mathbb D}
\newcommand{\Sone}{{\mathbb S^1}}
\newcommand{\IS}{{\mathbb S^1}}
\newcommand{\IR}{\mathbb R}
\newcommand{\IC}{\mathbb C}
\newcommand{\K}{\mathbb K}
\newcommand{\IK}{\mathbb K}
\newcommand{\IH}{\mathbb H}
\newcommand{\I}{\mathcal I}
\newcommand{\E}{\mathcal E}

\newcommand{\area}{\,dA}

\newcommand{\bw}{\bar w}
\DeclareMathOperator{\loc}{loc}
\DeclareMathOperator{\id}{id}
\newtheorem{mainthm}{Theorem}

\newtheorem{theorem}{Theorem}[section]
\newtheorem{proposition}[theorem]{Proposition}
\newtheorem{lemma}[theorem]{Lemma}
\newtheorem{corollary}[theorem]{Corollary}
\theoremstyle{definition}

\newtheorem{example}[theorem]{Example}
\newtheorem{remark}[theorem]{Remark}
\numberwithin{equation}{section}
\setlist{itemsep=4pt,topsep=5pt}
\allowdisplaybreaks[1]
\title{Integrable distortion I:\\ Weil--Petersson circle homeomorphisms and the Schoen conjecture\thanks{{\tiny
Work of both authors partially supported by the New Zealand Marsden Fund. C. Yao is partially supported by the Young Scientist Program of the Ministry of Science and Technology of China (No. 2021YFA1002200), the National Natural Science Foundation of China (No. 12401096, No. 12101362), the Natural Science Foundation of Shandong Province (No. ZR2024QA035, No. ZR2022YQ01), the Fundamental Research Funds for the Central Universities.
\newline
{\bf Keywords.} Teichm\"uller theory, quasiconformal mappings, finite distortion, extremal mappings, harmonic mappings, calculus of variations
\newline
{\bf MSC Subject.} 30C62 31A05 49J10 58E20 } }}
\author{Gaven Martin \and Cong Yao}
\date{}
\begin{document}
\maketitle
\begin{abstract}
A homeomorphism of the circle $\beta:\Sone\to\Sone$ is Weil--Petersson if it has a quasiconformal extension to the disk whose Beltrami coefficient is square integrable for the hyperbolic area. We study the variational problem for the hyperbolic anti-holomorphic energy 
\[ \E_2(h)=\int_\D\eta(h)|h_{\bar w}|^2\,dA, \quad\quad  \mbox{ equivalently} \quad \quad \int_\D|\mu_f|^2(1-|\mu_f|^2)^{-1}\,dA_\eta,\quad f=h^{-1} \] over the full class of monotone Sobolev maps with the prescribed boundary values $\beta$. 

For Weil--Petersson data this energy has a unique minimiser, which is a real analytic hyperbolic harmonic diffeomorphism, globally quasiconformal and asymptotically conformal, with an explicit dilatation bound in terms of the energy of any admissible barrier. We prove that a circle homeomorphism is Weil--Petersson if and only if it admits a finite-distortion homeomorphic extension with hyperbolically integrable distortion excess 
\[ \int_\D(\K_F-1)\,dA_\eta<\infty \]
without a  quasiconformal hypothesis. More generally, for every $p>1$ the boundary values of a finite-distortion homeomorphism with 
\[ \int_\D|\mu_F|^p(1-|\mu_F|^2)^{-p_*/2}\,dA_\eta<\infty\]
 lie in Guo's $p$-integrable Teichm\"uller space $T^p$. Our proof is a capacity estimate for boundary moduli combined with two exact identities for the quasisymmetric ratio, and yields a derivative-free characterisation of the spaces  $T^p$ which at $p=2$ is essentially a theorem of Bishop. The minimum defines a M\"obius-invariant boundary energy, quantitatively comparable to the weighted square norm of the Hopf differential of the minimiser. 
 
 For arbitrary quasisymmetric data we give a proof of Markovi\'c's theorem (the Schoen conjecture) through approximation by our smooth quadratic minimisers using Mori's theorem and the mean-value property of a logarithm to verify the averaged-distance hypothesis in the interior comparison of Riestenberg--Smillie, and the resulting displacement, energy-density and dilatation bounds are explicit in the quasisymmetry constant. The nonquadratic ($p\neq 2$) theory is developed in Part~II.
\end{abstract}

\clearpage
\tableofcontents

\bigskip

\noindent\textbf{AI disclosure.}
This work was substantially developed before the widespread use of
generative AI. Indeed it followed the insistence of Professor M.~Brakalova that we look at this subject after a talk we gave at the third Quasiworld conference. We gratefully thank her for that suggestion. However in the final writing we used both Claude and GPT for proofreading, checking references
and revising the exposition. During this process, when prompted by ``what have we missed'' they both suggested looking at the work of Riestenberg--Smillie, and also of Ripoll--Tomi. This overcame some gaps in our argument for the Schoen conjecture in particular, and the estimates so obtained helped everywhere.

\clearpage
\section{Introduction}
The hyperbolic disk has infinite area, so for a map of the disk onto itself the natural finite variational quantity is the distortion in excess of conformality rather than the full Dirichlet energy. With the hyperbolic density
\[
 \eta(z)=\frac{1}{(1-|z|^2)^{2}},\qquad dA_\eta=\eta\,dA\qquad\mbox{(curvature $-4$)},
\]
and $\mu_f=f_{\bar z}/f_z$,  the {\em Beltrami coefficient} of $f$, the functionals we consider are
\begin{equation}\label{intro-functionals}
 \I_p(f)=\int_\D\frac{|\mu_f|^p}{(1-|\mu_f|^2)^{p_*/2}}\,dA_\eta, \qquad p_*=\max\{2,p\}.
\end{equation}
Following our earlier work \cite{MYRadonRiesz,MYLp} we work mostly with the ``inverse functional''. At $p=2$, if $h=f^{-1}$ has the required Sobolev regularity, then
\begin{equation}\label{intro-E2}
 \I_2(f)=\E_2(h)=\int_\D\eta(h)|h_{\bar w}|^2\,dA .
\end{equation}
This is half the pointwise excess of the hyperbolic Dirichlet-energy density of $h$ over its pullback area density. The minimisers and stationary points are hyperbolic harmonic maps,  so this ties our problem to the Schoen conjecture. 

This paper develops the quadratic theory ($p=2$) completely. The nonquadratic exponents $p\ne2$ have very different structure (convexity fails below a sharp threshold $p_{\mathrm c}=1+1/\sqrt2$, and the natural minimisers are no longer harmonic). These are treated in Part~II \cite{PartII}. The compactness results of Section~\ref{no-hairs} and the boundary results of Section~\ref{boundary-classes} are stated for general exponents because Part~II uses them in that form. Section~\ref{p-classes} is the exception to this rule. We prove, for every $p>1$, that the boundary values of a finite-energy finite-distortion map are $p$-integrable Teichm\"uller. This is a boundary statement, proved by capacity, and it uses none of the variational theory developed here.

\subsection{Main results}
Throughout, a circle homeomorphism $\beta$ is \emph{Weil--Petersson} if $\beta^{-1}$, equivalently $\beta$, has a quasiconformal extension $F$ to $\D$ with $\int_\D|\mu_F|^2\,dA_\eta<\infty$; this is the class $T^2$ of \cite{TT,Guo,Cui}. The minimisation takes place over the monotone Sobolev class $\mathcal M_\beta$ of Section~\ref{monotone-class}: continuous monotone surjections $h:\overline\D\to\overline\D$ with trace $\beta$, $h\in W^{1,2}(\D)$, $J_h\geq0$, and finite energy. No injectivity, finite distortion or quasiconformality is assumed of the competitors.

\begin{mainthm}[The quadratic problem for Weil--Petersson data]\label{thm-A}
Let $\beta$ be an orientation-preserving Weil--Petersson circle homeomorphism. Then $\E_2$ has a unique minimiser $h_\beta$ in $\mathcal M_\beta$. This map is a real analytic hyperbolic harmonic diffeomorphism of $\D$ onto itself, which is globally quasiconformal, and is asymptotically conformal in the sense that $|\mu_{h_\beta}(w)|\to0$ as $|w|\to1$. The dilatation of $h_\beta$ is controlled by the energy of any barrier. If $F_0$ is a finite-energy quasiconformal extension of $\beta^{-1}$ and $E_0=\I_2(F_0)$, then with
\[
B_0=\frac{64}{9\pi}(\pi+2E_0) e^{8\pi(\pi+2E_0)},
\]
we have 
\begin{equation}\label{intro-qc-bound}
 1-\|\mu_{h_\beta}\|_\infty^2\geq\tfrac12e^{-\frac8\pi B_0E_0},\qquad
 K(h_\beta)\leq8 e^{ \frac8\pi B_0E_0 }.
\end{equation}
\end{mainthm}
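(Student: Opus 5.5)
The plan runs through the direct method, harmonicity, injectivity via Schoen--Yau/Jost, and finally quasiconformality by a quantitative argument. The last step is where the explicit constants come from.

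\emph{Existence.} I take the barrier $h_0=F_0^{-1}$. Since $F_0$ is quasiconformal with $\I_2(F_0)=E_0<\infty$, the change-of-variables identity \eqref{intro-E2} puts $h_0$ in $\mathcal M_\beta$ with $\E_2(h_0)=E_0$, so the infimum is finite. I take a minimising sequence $h_j\in\mathcal M_\beta$. Because $\E_2$ controls only the anti-holomorphic part, I will use the pointwise identity
\[
\eta(h)|h_w|^2=\eta(h)|h_{\bar w}|^2+\eta(h)J_h,
\]
integrated over compact subdisks, together with the area bound $\int\eta(h)J_h\le \mathrm{Area}_\eta$ of the image, to get local $W^{1,2}$ bounds. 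The compactness results of Section~\ref{no-hairs} (monotone limits, no hairs, trace preserved) then give a limit $h_\beta\in\mathcal M_\beta$. Lower semicontinuity follows from convexity of $|h_{\bar w}|^2$ in the gradient, combined with locally uniform convergence for the $\eta(h)$ factor.

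\emph{Harmonicity and regularity.} Minimality under inner variations and under outer variations supported in compacts gives the hyperbolic harmonic map equation. This is because $\E_2$ differs from the hyperbolic Dirichlet energy by the pullback area, which is a null Lagrangian for maps of fixed degree and fixed boundary values. A continuous $W^{1,2}$ harmonic map into a negatively curved target is real analytic. The Hopf differential is holomorphic, and monotonicity gives $J\ge0$. The Schoen--Yau/Jost result that a harmonic map homotopic to a homeomorphism with $J\ge0$ is a diffeomorphism then gives injectivity, i.e.\ the Heinz-type argument. For uniqueness I use strict convexity of the energy along geodesic homotopies in negative curvature: two minimisers with the same boundary trace would be joined by a geodesic homotopy, and the energy would be convex and nonconstant along it unless the maps coincide. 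The trace agreement is what pins down equality.

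\emph{Quantitative quasiconformality.} This is the main obstacle. With $\Phi=\eta(h)h_w\overline{h_{\bar w}}$ the holomorphic Hopf differential, we have $|\mu|=|\Phi|/(\eta|h_w|^2)$. I will bound $\log(1/|\mu|)$ from below via the Bochner formula: $\log|h_w|/|h_{\bar w}|$ satisfies $\Delta\omega=4(\ldots)\sinh\omega$-type equations, and the maximum principle is applied with the energy bound $\int|\Phi|^2/\eta\lesssim E_0$. The plan has three parts.
\begin{itemize}
\item Energy finiteness gives $L^2$ control of $\Phi$ in the weighted norm. The mean value property on hyperbolic disks of fixed radius then gives a pointwise bound $|\Phi|(1-|w|^2)^2\le C E_0$, decaying at the boundary, which yields asymptotic conformality.
\item A lower bound on the holomorphic energy density $\eta|h_w|^2$ comes from a Mori/Hölder-type distortion estimate on the barrier. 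This is where $B_0=\frac{64}{9\pi}(\pi+2E_0)e^{8\pi(\pi+2E_0)}$ should arise: a capacity/modulus estimate $\pi+2E_0$ for the energy of annuli, exponentiated.
\item Combining the two bounds through the Bochner subharmonicity of $\log$ of the density yields $1-|\mu|^2\ge\tfrac12 e^{-8B_0E_0/\pi}$. Then $K=(1+|\mu|)/(1-|\mu|)\le 4/(1-|\mu|^2)$ gives the bound on $K(h_\beta)$ in \eqref{intro-qc-bound}.
\end{itemize}
I expect the density lower bound to be the delicate step; I would try it first by comparing $h_\beta$ to the barrier on hyperbolic disks.
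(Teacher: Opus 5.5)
Your existence step is essentially the paper's: the direct method, the compactness and NoHairs results of Section~\ref{no-hairs}, and lower semicontinuity from convexity of $|h_{\bar w}|^2$ with $\eta(h_j)\to\eta(h)$ uniformly on compacts. One correction: the uniform bound comes from the trace identity $\int_\D J_h\,dA=\pi$ and $\eta\geq1$, giving $\int_\D(|h_w|^2+|h_{\bar w}|^2)\,dA\leq\pi+2\E_2(h)$. Bounding it by the hyperbolic area of $h_j(U)$ would presuppose the compact containment you are trying to prove.

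\textbf{Harmonicity.} The first genuine gap is here. $\mathcal M_\beta$ is a constrained class (monotone, $J_h\geq0$, $h(\D)\subset\D$). Neither $h+t\varphi$ nor $\exp_h(t\varphi)$ stays in it, so minimality gives no Euler--Lagrange equation. Inner variations do preserve the class, but they only make the Hopf differential weakly holomorphic. For Sobolev maps that does not imply harmonicity: already $u(x,y)=|x|$ has constant Hopf differential. The missing idea is local harmonic replacement on pullback cells.
\begin{enumerate}
\item For a small target ball $B$, put $U=\operatorname{int}\overline{h^{-1}(B)}$.
\item Replace $h$ on $U$ by the hyperbolic Dirichlet minimiser with the prime-end trace of $h$. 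By Jost's univalence theorem and the curved Lewy theorem \cite{JostUnivalent,Martin} this is a diffeomorphism onto $B$, so the glued map is still in $\mathcal M_\beta$.
\item Stokes on a compactly contained domain leaves $\int\eta(\cdot)J$ unchanged. Hence $\E_2$ drops strictly unless $h$ already equals the replacement.
\end{enumerate}
This yields harmonicity and injectivity together. The Schoen--Yau and Jost results you cite concern compact surfaces or bounded convex targets, not the whole disk.

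\textbf{Uniqueness.} Your convexity argument fails for three reasons. Geodesic interpolants of two minimisers need not be monotone, so they are not competitors. Globally the hyperbolic Dirichlet energy and the area term are both infinite; the change of the area term along $u_t$ is a boundary term at $\Sone$ involving a primitive of $\eta\,dx\wedge dy$, which blows up there. Finally, $d_\eta(u_0,u_1)$ need not tend to $0$ at the circle just because the Euclidean traces agree. The paper instead proves every minimiser is a quasiconformal harmonic diffeomorphism and then applies Li--Tam uniqueness \cite{LiTamProperII}.

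\textbf{The quantitative bound.} Here the roles of the two density bounds are reversed. Write $\mathcal H=\eta(h)|h_w|^2/\eta$ and $\mathcal L=\eta(h)|h_{\bar w}|^2/\eta$. Then $|\Phi|^2/\eta^2=\mathcal H\mathcal L$, and only $\int_\D\mathcal L\,dA_\eta\leq E_0$ is known. So finite energy gives $\Phi\in A^2_\eta$ only after $\sup\mathcal H$ is bounded, and $B_0$ is exactly that upper bound. It is obtained as follows.
\begin{enumerate}
\item Normalise $h(0)=0$ and set $D_0=\pi+2E_0$.
\item The circle-length estimate $\int_a^b\ell(s)^2\,ds/s\leq4\pi D_0$, with $a=\frac12e^{-4\pi D_0}$ and $b=\frac12$, gives $h(\D_a)\subset\overline\D_{1/2}$, where $\eta\leq16/9$.
\item Subharmonicity of the Euclidean energy density then gives $(\mathcal H+\mathcal L)(0)\leq16D_0/(9\pi a^2)=B_0$.
\end{enumerate}
The lower bound $\mathcal H\geq1$ is Yau's Schwarz lemma \cite{YauSchwarz} and carries no constant. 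The barrier enters only through $E_0$, not through Mori-type comparisons.

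Once $\sup\mathcal H\leq B_0$ is in hand, your estimate $|\mu|\leq|\Phi|/\eta\lesssim\sqrt{B_0E_0}$ does give asymptotic conformality, a nice alternative to the paper's renormalisation. But it gives $\|\mu\|_\infty<1$ only for small energy, and a Wan-type maximum principle gives different constants (of order $e^{-2B_0}/B_0$). The stated bound comes from heat-semigroup comparison:
\begin{enumerate}
\item $v=1-|\mu|^2$ satisfies $\Delta_\eta v\leq16B_0v$, so $v\geq e^{-16B_0t}P_tv$.
\item The heat kernel bound $p_t\leq(4\pi t)^{-1}$ and $\int_\D|\mu|^2\,dA_\eta\leq E_0$ give $P_tv\geq1-E_0/(4\pi t)$.
\item Taking $t=E_0/(2\pi)$ gives $1-|\mu|^2\geq\frac12e^{-8B_0E_0/\pi}$, and $K\leq4/(1-|\mu|^2)$ gives the stated bound on $K(h_\beta)$.
\end{enumerate}
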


Existence of a quasiconformal harmonic extension for smooth data is due to Akutagawa \cite{Akutagawa} for $C^4$ data and to Li--Tam \cite{LiTamProper,LiTamProperII} for $C^1$ diffeomorphisms, and the harmonic characterisation of the Weil--Petersson class has recently been obtained by Diaf--Markovi\'c--Mesbah \cite{DMM}. Their Theorem~C minimises in a class of smooth quasiconformal extensions with additional hyperbolic derivative control. The novelty of Theorem~\ref{thm-A} is the variational formulation over the full monotone Sobolev class, the uniqueness of the minimiser in that class, the regularity proof by local harmonic replacement, and the quantitative bound \eqref{intro-qc-bound}, whose only input is the barrier energy.

\begin{mainthm}[The Schoen conjecture]\label{thm-B}
Every orientation-preserving quasisymmetric circle homeomorphism $\beta$ has a unique quasiconformal hyperbolic harmonic extension $h:\D\to\D$; it is a real analytic diffeomorphism. If $g:\D\to\D$ is any $K$-quasiconformal homeomorphism with the same boundary values, then
\[
 \sup_{z\in\D}d_{\mathbb H}(h(z),g(z))\leq C_{\mathrm{disp}}(K),\qquad K(h)\leq Q_K,
\]
where $C_{\mathrm{disp}}(K)$ and $Q_K$ are the explicit constants of \eqref{schoen-displacement-constants} and \eqref{schoen-qc-constants}, depending only on $K$. The harmonic extension is the limit, uniformly on $\overline\D$ and smoothly on compact subsets, of the minimisers of Theorem~\ref{thm-A} for smooth approximations of $\beta$ with a common quasisymmetry constant.
\end{mainthm}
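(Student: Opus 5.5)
The plan is to approximate $\beta$ by smooth data, apply Theorem~\ref{thm-A} to each approximant, and pass to the limit using estimates that depend only on $K$. The target is a uniform displacement bound for the approximating minimisers against fixed quasiconformal barriers.

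\emph{Approximation.} First normalise $\beta$ by M\"obius maps, so that it fixes three points. Take a $K$-quasiconformal extension $g$ of $\beta$; the Douady--Earle extension will do, with $K$ depending only on the quasisymmetry constant. Restrict $g$ to the circles $|z|=r_n\to1$ and conjugate by dilation to get real analytic circle homeomorphisms $\beta_n$. These have a common quasisymmetry constant and converge uniformly to $\beta$. Each $\beta_n$ is Weil--Petersson, since it is the boundary value of the restriction of $g$, which is quasiconformal with compactly supported-type hyperbolic integrability after smoothing. Theorem~\ref{thm-A} then gives the real analytic harmonic quasiconformal minimisers $h_n=h_{\beta_n}$. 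The quantitative bound \eqref{intro-qc-bound} degenerates because the barrier energies $E_0$ blow up, so it cannot be used directly.

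\emph{Uniform displacement.} This is the main step and the main obstacle. I would use the interior comparison of Riestenberg--Smillie. If a harmonic diffeomorphism $h$ and a $K$-quasiconformal $g$ share boundary values, and the averaged distance $\frac{1}{|B|}\int_B d_{\mathbb H}(h,g)$ over hyperbolic unit balls $B$ is bounded, then $\sup d_{\mathbb H}(h,g)$ is bounded in terms of $K$ and that average. The averaged-distance hypothesis would be verified as follows.
\begin{itemize}
\item Mori's theorem gives H\"older control $|g(z)-g(w)|\le 16|z-w|^{1/K}$. It also gives a uniform lower bound on how far $g$ and $h_n$ can send nearby points toward the boundary, because $h_n$ is itself quasiconformal and harmonic.
\item The function $\log$ of the hyperbolic cosine of the distance, or $\log|1-\overline{h}g|$-type expressions, is subharmonic or harmonic in a suitable sense along $h_n$. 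The mean-value property of this logarithm then converts the pointwise Mori bounds into a bound on averages over hyperbolic balls.
\end{itemize}
Combining these yields $\sup_z d_{\mathbb H}(h_n(z),g_n(z))\le C_{\mathrm{disp}}(K)$ uniformly in $n$, where $g_n$ is the rescaled $g$. Cheng's gradient estimate for harmonic maps then bounds the energy density $|\nabla h_n|^2\eta(h_n)$ uniformly. The harmonic-map Jacobian lower bound (Schoen--Yau/Wan type, via Bochner for $\log|h_w|/|h_{\bar w}|$ with bounded Hopf differential) gives $K(h_n)\le Q_K$.

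\emph{Limit and uniqueness.} Equicontinuity from the energy-density bound and uniform $K(h_n)\le Q_K$ give subsequential convergence, smoothly on compacta, to a harmonic $h$. The $Q_K$-quasiconformal limits converge uniformly on $\overline\D$ by normal-family compactness with three normalised boundary points, so $h$ has trace $\beta$. The limit $h$ is a diffeomorphism, since the Jacobian bound passes to the limit, and it is real analytic by elliptic regularity. Letting $n\to\infty$ in the displacement bound gives the stated estimate for $h$ against any $K$-quasiconformal $g$ with the same boundary values. Uniqueness uses that two quasiconformal harmonic maps with equal boundary values lie at bounded distance from each other. The distance function $d_{\mathbb H}(h_1,h_2)$ is then subharmonic and bounded, and a maximum-principle argument (Li--Tam) shows it vanishes. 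This uniqueness also upgrades subsequential convergence to convergence of the whole sequence.
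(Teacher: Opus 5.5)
The gap is in the uniform displacement step, which carries the whole theorem. The hypothesis you attribute to Riestenberg--Smillie is a bound on the ball averages $\frac1{|B|}\int_B d_{\mathbb H}(h,g)$. That is not the hypothesis their comparison uses, and it has no leverage. For harmonic $h$ the function $z\mapsto d_{\mathbb H}(g(x),h(z))$ is subharmonic, and Mori's theorem makes $g$ coarsely Lipschitz, $d_{\mathbb H}(g(x),g(z))\leq K\,d_{\mathbb H}(x,z)+b_K$. So the mean-value inequality on the unit ball about $x$ already bounds $d_{\mathbb H}(h(x),g(x))$ by $K+b_K$ plus a fixed multiple of that ball average. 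Bounding the averages is therefore the original problem restated. Your verification of it is also circular. Applying Mori's theorem to $h_n$ needs a quasiconformal constant for $h_n$ that is uniform in $n$, and that is exactly what is unavailable; as you note yourself, the bound of Theorem~\ref{thm-A} degenerates. Nor do you give any mechanism by which the mean-value property of a logarithm would control $d_{\mathbb H}(h_n,g_n)$.

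The missing idea is that the averaged-distance hypothesis is a strict drift property of the comparison map alone: there is $R_K$ such that
\[
 \int_{\partial B_{R_K}(x)}\bigl[d_{\mathbb H}(y,g(z))-d_{\mathbb H}(y,g(x))\bigr]\,d\omega_{x,R_K}(z)\geq1\qquad(x,y\in\D),
\]
where $\omega_{x,R_K}$ is harmonic measure. This is where Mori's theorem and the logarithm enter. Centre so that $x=g(x)=0$, and bound the distance increment below by the Busemann function $B_\omega(v)=\log\bigl(|v-\omega|^2/(1-|v|^2)\bigr)$ at an endpoint $\omega$ beyond $y$. Mori's lower bound $|G(z)-\omega|\geq(|z-\xi|/16)^K$, with $\xi=G^{-1}(\omega)$, combined with the vanishing circle means of the harmonic function $\log|z-\xi|$, gives circle means of $B_\omega\circ G$ at least $K^{-1}\log\frac1{1-r}-c_K$.

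The drift is then played against harmonicity of $h$ at an almost-maximal point $x$ of $D=d_{\mathbb H}(h(x),g(x))$. Such points exist because each $h_n$ is individually quasiconformal, so $\sup d_{\mathbb H}(h_n,g_n)<\infty$; this is a preliminary, non-uniform fact. Since $h(B_{R_K+2}(x))$ lies within $D+E_K$ of $g(x)$, Green's formula for $d_{\mathbb H}(g(x),h)^2$ cancels the $D^2$ term and gives $|dh|\lesssim\sqrt D$ on $B_{R_K}(x)$. The CAT(0) Ptolemy inequality then bounds $\bigl(d_{\mathbb H}(h(x),g(z))-D\bigr)+\bigl(d_{\mathbb H}(g(x),h(z))-D\bigr)$ by $O(D^{-1/2})+\epsilon$. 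The drift and subharmonicity make the mean of this sum at least $1$, hence $D\leq C_{\mathrm{disp}}(K)$.

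Three smaller points. First, for an arbitrary $K$-quasiconformal $g$ the estimate comes from applying this theorem directly to the pair consisting of the quasiconformal harmonic limit $h$ and $g$. Letting $n\to\infty$ only treats the particular $g$ used to build the approximants. Second, your approximants are undefined as stated, since $g(\{|z|=r_n\})$ is not a circle. You could compose with a Riemann map of $g(\D_{r_n})$, or mollify the increasing lift, which keeps the quasisymmetry constant. Third, a bounded subharmonic function on $\D$ need not vanish, so uniqueness must be taken from Li--Tam's theorem rather than from the maximum principle alone.
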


Existence here is of course Markovi\'c's theorem \cite{Markovic} and uniqueness is Li--Tam's \cite{LiTamProperII}. The displacement estimate is the heart of Markovi\'c's proof and we give a planar proof of it using the interior comparison of Riestenberg--Smillie \cite{RiestenbergSmillie}, whose averaged-distance hypothesis we verify in the disk from Mori's theorem and the mean-value property of a logarithm. The passage from a displacement bound to a dilatation bound is an explicit Bochner argument. All constants are effective, though presumably far from sharp.  However,  we show that as the quasisymmetry constant $M\to 1$ we have $K(h)\to 1$. See Remark~\ref{schoen-constants-scope}.

\begin{mainthm}[Weil--Petersson homeomorphisms and integrable distortion]\label{thm-C}
A circle homeomorphism $g$ is Weil--Petersson if and only if it has a finite-distortion homeomorphic extension $F:\D\to\D$ with
\[
 \int_\D(\K_F-1)\,dA_\eta<\infty,\quad\quad \K_F=\frac{1+|\mu_F|^2}{1-|\mu_F|^2}=\frac{1}{2}\Big(K+\frac{1}{K}\Big).
\]
Thus the quasiconformal hypothesis in the definition of $T^2$ is redundant and $\mathcal T_2=T^2$ in the notation of Section~\ref{boundary-classes}. More generally, for every $p\geq1$, a finite-distortion homeomorphism $F$ with $\I_p(F)<\infty$ extends homeomorphically to $\overline\D$, its boundary values are quasisymmetric and in fact symmetric, with modulus bounds depending only on $p$ and $\I_p(F)$, and $F$ factors as $A^{-1}\circ H$ with $A$ quasiconformal and $H$ having $C^1$ diffeomorphic boundary values.
\end{mainthm}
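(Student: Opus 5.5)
The forward direction is immediate. If $g$ is Weil--Petersson, take a quasiconformal extension $F$ with $\int_\D|\mu_F|^2\,dA_\eta<\infty$. Since $\|\mu_F\|_\infty\le k<1$, we have
\[
\K_F-1=\frac{2|\mu_F|^2}{1-|\mu_F|^2}\le \frac{2}{1-k^2}|\mu_F|^2 ,
\]
so the distortion excess is hyperbolically integrable. The converse is the real content. Note that $\K_F-1=2|\mu_F|^2/(1-|\mu_F|^2)$, which is exactly $2$ times the integrand of $\I_2(F)$. So the hypothesis is $\I_2(F)<\infty$ for a finite-distortion homeomorphism with no bound on $\|\mu_F\|_\infty$, and the task reduces to the general $p$ statement at $p=2$, followed by an upgrade to a genuinely quasiconformal extension.

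For the general statement I would proceed by capacity. \textbf{Step 1} is boundary extension. For a finite-distortion homeomorphism one has the modulus inequality
\[
M(F\Gamma)\le\int K_F\rho^2 ,
\]
where $K_F\le 2\K_F$. On a hyperbolic annulus or quadrilateral near the boundary, $\int(K_F-1)\rho^2$ is controlled by $\I_p(F)$ via H\"older (for $p\ge2$) or via the pointwise bound $|\mu|^p(1-|\mu|^2)^{-1}\gtrsim \K-1$ truncated where $|\mu|$ is small (for $1<p<2$). The admissible $\rho$ are comparable to $\eta^{1/2}$ on hyperbolically bounded regions, so the excess modulus is bounded by the $\I_p$-mass of a region. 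That mass tends to $0$ as the region moves to $\Sone$, since $\I_p(F)<\infty$ and the Möbius-invariant hyperbolic measure sees only a tail.

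\textbf{Step 2} applies this to the quadrilaterals defining the quasisymmetric ratio. Here I would use two exact identities: the cross-ratio/modulus identity for a symmetric quadrilateral $(\D;a,b,c,d)$, and its Möbius normalisation. Together they show that the quasisymmetric ratio of $g$ at scale $t$ is within $\omega(\text{tail mass})$ of $1$. This gives quasisymmetry with constants depending only on $p$ and $\I_p(F)$, and symmetry because the tail mass $\to0$ at small scales. \textbf{Step 3} is the factorisation $F=A^{-1}\circ H$. Take $A$ to be the Douady--Earle extension of $g$, which is quasiconformal by Step 2. Then $H=A\circ F$ has boundary values $A|_{\Sone}\circ g$. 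I would arrange this composition to be $C^1$ by pre-composing with a conformally natural correction, or more simply by choosing $A$ as a quasiconformal extension of $g$ itself so that the boundary map of $H$ is the identity.

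For the Weil--Petersson upgrade at $p=2$, the cleanest route, and where I would spend effort, is to show directly that $g\in T^2$ through Bishop's derivative-free characterisation. Weil--Petersson is equivalent to square-summability over dyadic Whitney boxes of the defect $\varepsilon(I)$ of the quasisymmetric ratio (suitably, e.g., the $\beta$-numbers of the welded curve). Step 2 gives, for each boundary interval $I$ with Carleson box $Q_I$, a local estimate
\[
\varepsilon(I)^2\lesssim \int_{\widehat Q_I}(\K_F-1)\,dA_\eta ,
\]
where $\widehat Q_I$ is a bounded-overlap hyperbolic neighbourhood of the top Whitney square. Summing over dyadic $I$ with bounded overlap then gives $\sum\varepsilon(I)^2\lesssim\I_2(F)<\infty$, hence $g\in T^2$. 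The analogous $\ell^p$ sum gives $T^p$ for general $p$.

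The main obstacle is this local, quadratic (not merely linear) capacity estimate. The modulus excess is linear in $\int(K-1)\rho^2$, whereas we need the square of the defect. I would first try to exploit symmetry of the extremal configuration: the first-order variation of modulus of a symmetric quadrilateral vanishes, so the defect in the ratio is controlled by the square root of the energy excess, which is exactly the quadratic gain needed.
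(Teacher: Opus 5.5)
Your outline (capacity estimate, then a Bishop-type dyadic criterion) is the architecture of the paper's \emph{second} proof in Section~\ref{p-classes}. However, it has a genuine gap at the step that carries the $T^2$ conclusion.

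\textbf{The local estimate fails.} Step~2 does not give $\varepsilon(I)^2\lesssim\int_{\widehat Q_I}(\K_F-1)\,dA_\eta$, and the estimate is false as stated. You also misplace the difficulty. The linear-versus-quadratic issue is settled by Cauchy--Schwarz, not by symmetry. With $m=|\mu_F|$ one has $K_F-1\le4m$ on $\{m\le\frac12\}$ and $K_F-1\le16\,m^2/(1-m^2)$ on $\{m>\frac12\}$. Moreover $\int_{\{m\le1/2\}}m\,\rho_Q^2\,dA\le(\operatorname{mod}Q)^{1/2}\bigl(\int m^2\rho_Q^2\,dA\bigr)^{1/2}$. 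Hence $|c_Q|^2\lesssim X_Q=\int\frac{m^2}{1-m^2}(\rho_Q^2+\rho_{Q^*}^2)\,dA$ for bounded $X_Q$ (Proposition~\ref{pc-perQ}). Your proposed mechanism is wrong in any case: the first variation of $\operatorname{mod}F(Q)$ is $-2\operatorname{Re}\int\mu\,(f_Q')^2\,dA$, which does not vanish for symmetric $Q$.

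\textbf{The real obstruction is non-locality.} The extremal density $\rho_Q$ lives on the whole disk. A density truncated to a neighbourhood of one Whitney square is not admissible, and any admissible density with $\int\rho^2$ near $\operatorname{mod}Q$ is close to $\rho_Q$. Concretely, take a coefficient supported on a single Whitney square at scale $2^k|I|$ above $I$. For large $k$ your right side vanishes, while $g$ is real-analytic but generically non-affine near $I$, with midpoint defect of order $2^{-k}\|\mu\|_\infty$. Summation therefore needs an overlap bound $\sum_Q(\rho_Q^2+\rho_{Q^*}^2)\lesssim\eta$. For the quadrilaterals $(x-t,x,x+t,\infty)$ that encode the quasisymmetric ratio directly, this fails. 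In normalised coordinates $\rho^2\asymp|\zeta|^{-3}$ at infinity, so every smaller scale contributes equally to the weight on a fixed Whitney square, and the overlap diverges logarithmically (Remark~\ref{pc-why-four}).

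\textbf{What is missing} is the bridge supplied by the paper's Section~\ref{p-classes}. Use quadrilaterals with four finite vertices $x-t,x-\frac t3,x+\frac t3,x+t$. For these $\iint(\rho_{x,t}^2+\rho_{x,t,*}^2)\,dx\,dt/t^2=C_W\eta$ (Lemma~\ref{pc-overlap}), so $\iint|c_{x,t}|^2\,dx\,dt/t^2\lesssim\I_2(F)$. These moduli measure a M\"obius-invariant cross-ratio rather than $q$, so $q$ must then be recovered in two steps.
\begin{enumerate}
\item The exact four-point identity $r_s=\frac{b-a}2-L(a)-L(b)$, with $L(v)=\log\cosh\frac v2$, telescoped in position, gives $q\in L^4$ and consecutive differences of $q$ in $L^2$, both with respect to $dx\,dt/t^2$.
\item The exact doubling identity for $q(x,2s)$, telescoped in scale, then gives $q\in L^2(dx\,dt/t^2)$. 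This telescoping converges only for $p>1$.
\end{enumerate}
Only then do Bishop's criterion, or the Beurling--Ahlfors estimate of Lemma~\ref{pc-extension}, yield $g\in T^2$.

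\textbf{A slip in Step~3.} If $A$ extends $g$, then $H=A\circ F$ has trace $g\circ g$. You need $A$ to extend $g^{-1}$, which makes the stated factorisation trivial once quasisymmetry is known. The paper's nontrivial factorisation, with $\mu_H=\mu|\mu|^{p-1}$, is its route to the boundary extension. You instead obtain the extension from half-annulus moduli, which is workable as in Lemma~\ref{carl-half-annulus}.

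\textbf{The paper's primary proof} of the $p=2$ converse is variational. $F^{-1}$ is a finite-energy competitor in $\mathcal M_{g^{-1}}$, and the $\E_2$-minimiser $h$ is a harmonic diffeomorphism, quasiconformal by Proposition~\ref{quadratic-energy-qc-bound}. Then $\int_\D|\mu_{h^{-1}}|^2\,dA_\eta\le\E_2(h)\le\I_2(F)$.
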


The finite-distortion characterisation complements the analytic and geometric descriptions of Weil--Petersson curves in \cite{TT,Shen,Bish1,Bish2}. Its proof applies Theorem~\ref{thm-A} to the inverse of the finite-distortion extension: the quasisymmetry is extracted from the energy by a capacity argument, and the variational theorem then produces a quasiconformal representative. The analogous identity for $p\ne2$ is Theorem~\ref{thm-E} below, proved by a different route.

\begin{mainthm}[A canonical energy]\label{thm-D}
For a Weil--Petersson homeomorphism $\beta$ put
\[
 \mathcal S(\beta)=\E_2(h_\beta)=\min_{h\in\mathcal M_\beta}\E_2(h)=\min_{F|_{\Sone}=\beta^{-1}}\I_2(F),
\]
where the last minimum ranges over finite-distortion homeomorphic extensions with finite energy and is attained only at $h_\beta^{-1}$. Then $\mathcal S$ is invariant under pre- and postcomposition by disk automorphisms, is nonnegative, and vanishes exactly for M\"obius boundary maps. If 
\begin{equation}\label{HD} \Phi_\beta=\eta(h_\beta)(h_\beta)_w\overline{(h_\beta)_{\bar w}}\end{equation}
 is the Hopf differential of $h_\beta$ and 
 \[ B_\beta=\sup_\D\eta(h_\beta)|(h_\beta)_w|^2/\eta,\] then
\[
 \frac{\|\Phi_\beta\|_{A^2_\eta}^2}{B_\beta}\leq\mathcal S(\beta)\leq\|\Phi_\beta\|_{A^2_\eta}^2,
 \qquad \|\Phi\|_{A^2_\eta}^2=\int_\D\frac{|\Phi|^2}{\eta}\,dA,
\]
and $B_\beta$ is bounded explicitly in terms of $\mathcal S(\beta)$.
\end{mainthm}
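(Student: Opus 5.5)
We prove Theorem~\ref{thm-D} in four steps: the identification of the three minima, the invariance and vanishing properties, the two-sided Hopf differential estimate, and the explicit bound on $B_\beta$. Throughout, $h=h_\beta$ is the minimiser from Theorem~\ref{thm-A}.

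\textbf{Identification of the minima.} By Theorem~\ref{thm-A}, $h$ is a quasiconformal diffeomorphism, so $h^{-1}$ is an admissible finite-distortion extension of $\beta^{-1}$. The change of variables \eqref{intro-E2} then gives $\I_2(h^{-1})=\E_2(h)$. Conversely, let $F$ be a finite-distortion homeomorphic extension of $\beta^{-1}$ with $\I_2(F)<\infty$. I would use the Sobolev regularity of inverses of finite-distortion maps established earlier (the inverse of a map with integrable distortion lies in $W^{1,2}$ and is monotone). This places $F^{-1}$ in $\mathcal M_\beta$ with $\E_2(F^{-1})=\I_2(F)$. Hence the two minima agree. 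Uniqueness of the minimiser in $\mathcal M_\beta$ forces $F^{-1}=h$, so $h^{-1}$ is the only extension attaining the minimum.

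\textbf{Invariance and vanishing.} For disk automorphisms $\phi,\psi$, the map $\psi\circ h\circ\phi$ has trace $\psi\circ\beta\circ\phi$. Composition with $\phi$ and $\psi$ is a bijection between the classes $\mathcal M_\beta$ and $\mathcal M_{\psi\circ\beta\circ\phi}$. The energy is unchanged, because $\eta(\psi(w))|\psi'(w)|^2=\eta(w)$ and $|(\psi\circ h\circ\phi)_{\bar w}|=|\psi'(h\circ\phi)|\,|h_{\bar w}\circ\phi|\,|\phi'|$, and the factor $|\phi'|^2$ is absorbed by the change of variables $dA$. Hence $\mathcal S$ is invariant. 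Nonnegativity is immediate from the definition. If $\beta$ is M\"obius, its extension is an automorphism, which has zero energy. If $\mathcal S(\beta)=0$, then $h_{\bar w}\equiv0$, so $h$ is a holomorphic self-diffeomorphism of $\D$ and therefore M\"obius.

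\textbf{Hopf estimates.} Write $\rho=\eta(h)$. Pointwise,
\[
|\Phi|^2/\eta=\rho|h_w|^2\cdot\rho|h_{\bar w}|^2/\eta .
\]
By the definition of $B_\beta$, this is at most $B_\beta\,\rho|h_{\bar w}|^2$. Integrating gives the lower bound $\|\Phi\|^2_{A^2_\eta}\le B_\beta\,\mathcal S(\beta)$.

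For the upper bound, I need $\rho|h_{\bar w}|^2\le|\Phi|^2/\eta$, which is equivalent to $\rho|h_w|^2\ge\eta$. This is the main obstacle. I would obtain it from the Bochner/Schwarz–Ahlfors–Yau argument used for the dilatation bound in Theorem~\ref{thm-A}. Set $u=\log(\rho|h_w|^2/\eta)$. For a harmonic map into curvature $-4$, the Bochner formula gives
\[
\Delta_\eta u\ \ge\ c\,\bigl(\rho|h_w|^2-\rho|h_{\bar w}|^2\bigr)/\eta \;-\; c' .
\]
Asymptotic conformality gives boundary behaviour $u\to0$ as $|w|\to1$, since $h$ is quasiconformal with $|\mu_h|\to0$, and for Weil–Petersson data the holomorphic energy density tends to $\eta$. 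A minimum principle for $-u$ on the region where $u<0$ then yields $u\ge0$. If this sharp inequality fails, I would instead accept a constant $c_0$ with $\rho|h_w|^2\ge c_0\eta$ and state the upper bound with $1/c_0$. I note, however, that the theorem as stated demands the sharp form.

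\textbf{Bound on $B_\beta$.} The same Bochner inequality, applied to $\log(\rho|h_w|^2/\eta)$ from above, bounds $\sup u$. Feeding in the dilatation bound \eqref{intro-qc-bound} with the barrier $F_0=h^{-1}$, so that $E_0=\mathcal S(\beta)$, controls $\rho|h_{\bar w}|^2/\eta$ and hence $B_\beta$ explicitly. The constant is of the form $B_0$ of Theorem~\ref{thm-A} with $E_0=\mathcal S(\beta)$.
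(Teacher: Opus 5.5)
Most of your proposal is the paper's argument. The admission of $F^{-1}$ to $\mathcal M_\beta$ via Lemma~\ref{finite-distortion-inverse-calculus}, the uniqueness step, M\"obius invariance, the vanishing case and the bound $\|\Phi_\beta\|_{A^2_\eta}^2\le B_\beta\,\mathcal S(\beta)$ are all done the same way.

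The gap is the pointwise inequality $\mathcal H=\eta(h)|h_w|^2/\eta\ge1$, which the other half of the Hopf estimate requires. Your interior minimum principle is sound: in this normalisation $\Delta_\eta\log\mathcal H=8(\mathcal H-\mathcal L-1)$, which is negative wherever $\mathcal H<1$. But it only closes if $\liminf_{|w|\to1}\mathcal H\ge1$, and that does not follow from asymptotic conformality. The condition $|\mu_h|\to0$ says only that $\mathcal L/\mathcal H\to0$; it says nothing about the size of $\mathcal H$. Your assertion that the holomorphic energy density tends to $\eta$ is precisely the missing statement. Your bound for $B_\beta$ uses the same unproved boundary behaviour to locate a maximum of $\log\mathcal H$.

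The paper needs no boundary information. The metric $g_{\mathcal H}=\eta(h)|h_w|^2|dw|^2$ is complete, because $h$ is a proper diffeomorphism and $h\circ\gamma$ has target length at most twice the $g_{\mathcal H}$-length of $\gamma$. Its curvature is $-4(1-|\mu_h|^2)\ge-4$. Yau's Schwarz lemma for the identity map $(\D,g_{\mathcal H})\to(\D,\eta|dw|^2)$ then gives $\mathcal H\ge1$. This is \eqref{quadratic-energy-density-bounds}, already established in the proof of Theorem~\ref{thm-A}; it is how $\int_\D|\mu_h|^2\,dA_\eta\le E_0$ is obtained there. So the sharp form holds, and your fallback constant $c_0$ is unnecessary.

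If you prefer to keep the maximum principle, you must prove $\mathcal H\to1$ at $\Sone$, for example by renormalising at $w_j\to\Sone$:
\begin{itemize}
\item The recentred maps are uniformly quasiconformal with bounded energy density, so they converge smoothly on compacta to a harmonic quasiconformal self-homeomorphism fixing $0$.
\item The limit is holomorphic. Over a fixed ball $B$, $\int_B\mathcal L_j\,dA_\eta$ is the integral of $\mathcal L$ over balls escaping to $\Sone$, and this tends to zero because $\int_\D\mathcal L\,dA_\eta=\mathcal S(\beta)<\infty$.
\item A holomorphic self-homeomorphism of $\D$ fixing $0$ is a rotation, so $\mathcal H(w_j)\to1$.
\end{itemize}

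For $B_\beta$ the paper simply uses $\mathcal H+\mathcal L\le B_0$ from Proposition~\ref{quadratic-energy-qc-bound} at $E_0=\mathcal S(\beta)$. That bound comes from subharmonicity of the energy density and the Dirichlet bound. Since \eqref{intro-qc-bound} is itself derived from it, your detour through the dilatation gives only the weaker, though still explicit, bound $B_\beta\le2\exp\bigl(\tfrac{8}{\pi}B_0\,\mathcal S(\beta)\bigr)$.

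Finally, Lemma~\ref{finite-distortion-inverse-calculus} gives $\E_2(F^{-1})\le\I_2(F)$, not equality, since no area formula is available for $F^{-1}$. The inequality suffices both for identifying the minima and for the uniqueness step: if $\I_2(F)=\mathcal S(\beta)$, then $F^{-1}$ is a minimiser in $\mathcal M_\beta$.
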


The weighted square norm $\|\cdot\|_{A^2_\eta}$ is the Weil--Petersson norm on holomorphic quadratic differentials in our normalisation, so the canonical energy is quantitatively equivalent to the Weil--Petersson norm of the Hopf differential of the harmonic extension. Section~\ref{connections} records connections with Loewner energy, minimal graphs in $\mathbb H^2\times\IR$, the minimal Lagrangian extension and anti-de Sitter geometry.

\medskip

For $p\ge1$ let $T^p$ be Guo's $p$-{\em integrable Teichm\"uller space}. These are the circle homeomorphisms with a quasiconformal extension $F$ such that $\int_\D|\mu_F|^p\,dA_\eta<\infty$, and let $\mathcal T_p$ be the circle homeomorphisms with a finite-distortion homeomorphic extension $G$ with $\I_p(G)<\infty$ (Section~\ref{two-classes}). Trivially $T^p\subset\mathcal T_p$, and Theorem~\ref{thm-C} is $\mathcal T_2=T^2$.

\begin{mainthm}[Finite-distortion boundary values are $p$-integrable Teichm\"uller]\label{thm-E}
For every $1<p<\infty$, $\mathcal T_p=T^p$, the boundary values of a finite-distortion homeomorphism $G:\D\to\D$ with $\I_p(G)<\infty$ have a quasiconformal extension with $\mu\in L^p(dA_\eta)$, and are therefore absolutely continuous with $\log g'$ lying in the Besov space $ B^{1/p}_{p,p}(\IS)$. The extension can be taken to be the Beurling--Ahlfors extension of the boundary map, with $\int|\mu|^p\,dA_\eta$ bounded in terms of $p$ and $\I_p(G)$ after normalisation. Moreover, for a quasisymmetric $g:\IR\to\IR$ and $q(x,t)=\log\bigl[(g(x+t)-g(x))/(g(x)-g(x-t))\bigr]$, the following are equivalent.
\begin{enumerate}
\item $g\in T^p$, \item the logarithmic modulus distortion of the quadrilaterals with vertices $x-t,x-\frac t3,x+\frac t3,x+t$ lies in $L^p(dx\,dt/t^2)$, \item $q\in L^p(dx\,dt/t^2)$.\end{enumerate}
\end{mainthm}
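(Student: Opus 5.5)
We will prove Theorem~\ref{thm-E} by going from the energy to boundary moduli and then to the quasisymmetric ratio, without using any variational theory. The order is: first a capacity estimate that turns $\I_p(G)<\infty$ into $L^p$ control of the logarithmic modulus distortion of boundary quadrilaterals; then an identification of that distortion with $q(x,t)$ up to bounded error; and finally a reduction of $q\in L^p(dx\,dt/t^2)$ to the Beurling--Ahlfors extension having $\mu\in L^p(dA_\eta)$. This yields $\mathcal T_p\subset T^p$, and together with $T^p\subset\mathcal T_p$ it gives $\mathcal T_p=T^p$. The Besov statement then follows from Guo's characterisation of $T^p$, which we quote.

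\textbf{Step 1: capacity.} Transfer to the upper half-plane by a Cayley map; $dA_\eta$ becomes $dA/(4y^2)$. By Theorem~\ref{thm-C} (general $p$), $G$ extends homeomorphically and its boundary map $g$ is quasisymmetric, with bounds depending only on $p$ and $\I_p(G)$. Fix the quadrilateral $Q(x,t)$: the half-disk over $[x-t,x+t]$ with marked vertices $x-t,x-t/3,x+t/3,x+t$. Its conformal modulus is an absolute constant $m_0$.

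For finite-distortion maps we have the modulus inequality $\mathrm{Mod}\,G(Q)\le\int K_G\rho^2$ for admissible $\rho$, and similarly for the conjugate family. Taking $\rho$ to be the extremal metric of $Q$ gives
\[
|\log\mathrm{Mod}\,G(Q)-\log m_0|\lesssim \frac{1}{m_0}\int_{Q}(K_G-1)\rho_0^2\,dA .
\]
Here $\rho_0^2\asymp t^{-2}$ on the bulk of $Q$, but it is singular at the vertices. To handle the vertices we replace $\rho_0$ by an admissible metric comparable to $\chi_{Q}/t$, which costs only a bounded factor. The quantity $K_G-1$ is comparable to $|\mu|^2/(1-|\mu|^2)$.

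Denote the resulting distortion by $\delta(x,t)$. Raising to the $p$-th power and applying H\"older (or Jensen) on the averaged integral, we obtain
\[
\int\!\!\int \delta(x,t)^p\,\frac{dx\,dt}{t^2}\lesssim \int\!\!\int \Big(\frac1{t^2}\int_{Q(x,t)}\Psi\Big)\frac{dx\,dt}{t^2},
\]
where $\Psi$ is the integrand of $\I_p$. The choice $p_*=\max\{2,p\}$ is exactly what makes $(|\mu|^2/(1-|\mu|^2))^p$ dominated by $|\mu|^p(1-|\mu|^2)^{-p_*/2}$ once $|\mu|$ is small. Where $|\mu|$ is not small, the distortion is large, and we bound $\delta^p$ directly by the integrand. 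This case split is where I expect the main difficulty, especially for $1<p<2$, where large distortion must be controlled through the $\log$ of the modulus rather than the modulus itself. The proposed fix is to use $\log\mathrm{Mod}\le\log(\frac1{m_0}\int K\rho^2)$ together with the concavity of $\log$.

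Finally, Fubini applies because each point $z=(u,y)$ lies in $Q(x,t)$ only for $t\asymp y$ and $|x-u|\lesssim t$. The right side above is therefore $\lesssim\I_p(G)$, which proves (1)$\Rightarrow$(2) for boundary values of $G$.

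\textbf{Step 2: exact identities.} The image quadrilateral is determined by the cross-ratio of $g(x-t),g(x-t/3),g(x+t/3),g(x+t)$, so its modulus is an explicit function of that cross-ratio. We would write this cross-ratio in terms of $q$ at the scales $(x,t)$ and $(x\pm t/3,\cdot)$, and express the three consecutive length ratios through $q$. Since $g$ is quasisymmetric, everything stays in a compact range. There the modulus function is bi-Lipschitz in the logarithmic coordinates, which gives $|q(x,t)|\lesssim\delta(x,t)+\delta(x',t')$ over a bounded set of related scales. This yields (2)$\Leftrightarrow$(3) after a change of variables.

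\textbf{Step 3: Beurling--Ahlfors.} Its Beltrami coefficient at $(x,t)$ is a smooth function of averaged ratios of $g$ over $[x-t,x+t]$ that vanishes when those averages balance. Hence $|\mu(x,t)|\lesssim$ an average of $|q(x+s,\lambda t)|$ over bounded $s/t$ and $\lambda$. With $\|\mu\|_\infty<1$ from quasisymmetry, this gives $\int|\mu|^p\,dA_\eta\lesssim\|q\|^p_{L^p}$, so (3)$\Rightarrow$(1). For (1)$\Rightarrow$(2), Step~1 applied to a quasiconformal extension suffices.
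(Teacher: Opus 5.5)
\textbf{The gap is in Step~2, direction (2)$\Rightarrow$(3).} The direction (3)$\Rightarrow$(2) is genuinely local, but no inequality $|q(x,t)|\lesssim\delta(x,t)+\delta(x',t')$ over boundedly many related scales can hold.

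The image modulus is a single function of one cross-ratio. That cross-ratio depends on two consecutive ratios $a=q(y+s,s)$ and $b=q(y+2s,s)$ through the exact identity $\log(4\chi)=\frac{b-a}2-L(a)-L(b)$, with $L(v)=\log\cosh\frac v2$. So $c_{x,t}$ vanishes along a whole curve in the $(a,b)$-plane, and to first order it sees only $b-a$. Put differently, $c$ is M\"obius invariant and $q$ is not: where $g$ agrees with a nonaffine M\"obius map on a long interval, every local $c_{x,t}$ vanishes while $q\neq0$.

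At $p=1$ the implication you claim is actually false. Take $g=\mathrm{id}+\phi$ with $\phi$ smooth, compactly supported, $|\phi'|<1$, $\phi''\not\equiv0$. Then $c_{x,t}=0$ unless a vertex lies in $\operatorname{supp}\phi$, and $c_{x,t}$ is $O(t^2)$ as $t\to0$ (a Schwarzian term) and $O(1/t)$ as $t\to\infty$. Hence $c\in L^1(dx\,dt/t^2)$. On the other hand $q(x,t)\approx t(\log g')'(x)$ as $t\to0$, so $\int_0|q|\,dt/t^2$ diverges logarithmically. A local bound would prove this false statement, and consistently, nothing in your argument uses $p>1$.

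\textbf{What is missing} is a global recovery of $q$ from $c$, in two integrations.
\begin{enumerate}
\item The paper tests the four-point identity against the divided difference of $\operatorname{sgn}(v)|v|^{2p-1}$ and telescopes in $x$ over the whole line. The terms $L(a)+L(b)\geq0$ have one sign, which gives coercivity. This yields $q\in L^{2p}(d\nu)$ and $q(x+s,s)-q(x,s)\in L^p(d\nu)$.
\item It then uses the exact doubling identity $q(x,2s)-2q(x,s)=\frac12[q(x+s,s)-2q(x,s)+q(x-s,s)]+L(q(x+s,s))-L(q(x-s,s))$ and telescopes over all scales, $q(x,s)=-\sum_{k\geq0}2^{-k-1}e(x,2^ks)$. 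The weights $\sum_k2^{-k-1}2^{k/p}$ converge exactly when $p>1$.
\end{enumerate}

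\textbf{Step~1 also needs repair.}
\begin{itemize}
\item If $Q(x,t)$ is the half-disk, then $G(Q)$ has the interior curve $G(\text{semicircle})$ as a side. So $\operatorname{mod}G(Q)$ depends on $G$ inside $\IH$ and is not a function of the four image points, contradicting the premise of Step~2.
\item You must use the whole half-plane quadrilateral $(\IH;x-t,x-\frac t3,x+\frac t3,x+t)$. Then $\chi_Q/t$ is not admissible, since curves joining the $a$-sides may leave the half-disk. You need the true extremal density, the bound $\rho_Q^2+\rho_{Q^*}^2\leq C_\rho\eta$, and the overlap identity $\int\!\!\int(\rho_{x,t}^2+\rho_{x,t,*}^2)\,dx\,dt/t^2=C_W\eta$. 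That identity is finite only because all four vertices are finite. The overlap is also not localised to $t\asymp y$.
\item $K_G-1=2|\mu|/(1-|\mu|)$ is first order in $|\mu|$. It cannot be replaced by $|\mu|^2/(1-|\mu|^2)=\frac12(\K_G-1)$, because modulus distortion is genuinely first order in $\mu$.
\item The correct split is $K-1\leq4|\mu|$ for $|\mu|\leq\frac12$, handled by H\"older for the probability measure $\rho_Q^2\,dA/\operatorname{mod}Q$. For $|\mu|>\frac12$, $K-1\leq2^{p+2}|\mu|^p/(1-|\mu|^2)$, which is linear in the energy density. Since $X_Q\leq C_\rho\I_p(G)$ uniformly, $|c_Q|^p\lesssim X_Q$, and no concavity argument is needed.
\end{itemize}

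Step~3 matches the paper in spirit. There $|\mu_F|\leq10\beta$, with $\beta$ the affine-interpolation defect, controlled by $q$ at all dyadic subscales with weights $(\Lambda/(\Lambda+1))^n$. The Besov identification is Wei--Matsuzaki's (Shen's at $p=2$), not Guo's.
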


The proof is in Section~\ref{p-classes}. Its interior part is a modulus inequality for finite-distortion homeomorphisms, applied to a two-parameter family of quadrilaterals with four finite vertices and integrated against the invariant measure $dx\,dt/t^2$; its boundary part consists of two exact identities for $q$, one for the four-point cross-ratio and one for scale doubling, with telescoping arguments, and a dyadic interpolation estimate for the Beurling--Ahlfors extension. No absolute continuity or Besov theory is needed.  The Besov description of $T^p$, due to Shen \cite{Shen} at $p=2$ and Wei--Matsuzaki \cite{WeiMatsuzakiIntegrable} in general, is used only to translate the conclusion. The characterisation of $T^p$ by $q$ is, at $p=2$, essentially Bishop's Theorem~1.6 in \cite{Bish1}, and in part we have adapted his argument; see Remark~\ref{pc-bishop}.

\subsection{Organisation and method}
Section~\ref{no-hairs} proves the compactness theorem for the monotone class and the \emph{NoHairs} theorem: a bounded-energy monotone limit sends no interior point to the circle, and it is a homeomorphism as soon as it is locally injective or has a continuous right inverse. The proof is a M\"obius rescaling argument using only the conformal invariance of the excess energy. Section~\ref{quadratic-theory} proves Theorem~\ref{thm-A}: the direct method gives a monotone minimiser; local harmonic replacement on pullback cells makes it a real analytic harmonic diffeomorphism; a quantitative Schwarz--Bochner argument converts finite excess energy into the explicit dilatation bound \eqref{intro-qc-bound}; and asymptotic conformality follows by renormalisation. Section~\ref{schoen-conjecture} proves Theorem~\ref{thm-B}. Section~\ref{boundary-classes} proves Theorem~\ref{thm-C}: a boundary regularity criterion, the factorisation of a finite-energy map through a quasiconformal map, and a capacity estimate for boundary moduli. Section~\ref{p-classes} proves Theorem~\ref{thm-E} from the last two of these alone. Section~\ref{canonical-energy} proves Theorem~\ref{thm-D}, and Section~\ref{connections} collects connections and questions.

\subsection{Normalisations and conventions}\label{conventions}
Let $\ID$ be the unit disk, let $\IS=\partial\ID$, and write $dA$ for Euclidean area. Our convention for the Beltrami equation is
\begin{equation}\label{be}
 f_{\bar z}=\mu_f f_z,\qquad |\mu_f|<1\quad\hbox{almost everywhere}.
\end{equation}
For a mapping of finite distortion we require $f\in W^{1,1}_{\loc}$, a locally integrable nonnegative Jacobian, and $Df=0$ almost everywhere on $\{J_f=0\}$. We use the normalised Hilbert--Schmidt norm
\begin{equation}\label{db}
 \|Df\|^2=|f_z|^2+|f_{\bar z}|^2,\qquad
 \IK_f=\frac{1+|\mu_f|^2}{1-|\mu_f|^2},\qquad
 \|Df\|^2=\IK_f J_f.
\end{equation}
At points where $Df=0$ we set $\mu_f=0$ and $\IK_f=1$. The operator-norm distortion is instead $K_f=(1+|\mu_f|)/(1-|\mu_f|)$. In particular
\begin{equation}\label{Kminus}
 \IK_f-1=\frac{2|\mu_f|^2}{1-|\mu_f|^2}.
\end{equation}
These distinctions matter for the normalising constants in the energy. We refer to \cite{AIM,HK} for the analytic background and the theory of quasiconformal mappings.

\medskip

Throughout, ``homeomorphism of finite distortion'' means orientation preserving. We frequently use an inverse change of variables. The validity is by now well understood in the theory of mappings of finite distortion: both inverse and forward maps must have the required Sobolev regularity, the inverse derivative formula must hold, and the area formula must apply. For a finite-energy finite-distortion homeomorphism, Lemma~\ref{finite-distortion-inverse-calculus} gives the inverse Sobolev estimate and the energy inequality needed for admission to the variational class. A continuous surjection of the closed disk is \emph{monotone} if every point fibre (preimage of a point) is connected. A \emph{hair} is a fibre containing more than one point; it is a compact connected set. Boundary hairs and their tips are defined in Section~\ref{no-hairs}.

\medskip

Write
\begin{equation}\label{eta}
 \eta(z)=(1-|z|^2)^{-2},\qquad dA_\eta=\eta(z)\,dA(z).
\end{equation}
Thus $\eta|dz|^2$ has curvature $-4$. 

We use the following definition: an orientation-preserving circle homeomorphism $g(e^{i\theta})=e^{iu(\theta)}$, where $u(\theta+2\pi)=u(\theta)+2\pi$, is quasisymmetric if
\[
 M^{-1}\leq\frac{u(\theta+t)-u(\theta)}{u(\theta)-u(\theta-t)}\leq M
 \qquad(\theta\in\IR,\ 0<t\leq\pi)
\]
for some $M$. It is symmetric if the ratio tends uniformly to $1$ as $t\downarrow0$. We work initially with unnormalised boundary maps. To obtain Teichm\"uller spaces as manifolds, one fixes three boundary points or takes the appropriate M\"obius quotient.  

The inclusion of smooth circle diffeomorphisms in the quasisymmetric group connects these questions with Virasoro geometry and mathematical physics \cite{Witten,NV,TT}. The effective M\"obius group acting on the circle is $\mathrm{PSL}(2,\IR)$, equivalently $\mathrm{PSU}(1,1)$ in disk coordinates.

\medskip

If a bounded Beltrami coefficient $\mu$ on $\ID$ is extended by zero to the plane, its principal solution is normalised by $F(z)=z+O(1/z)$ at infinity. The {\em quasicircle} $\Gamma=F(\IS)$ bounds $\Omega=F(\ID)$. A Riemann map $f:\ID\to\Omega$ may be normalised by $f(0)=F(0)$ and $f(1)=F(1)$, then the Weil--Petersson condition on the curve can be expressed by
\[
 \int_\ID\left|\frac{f''}{f'}\right|^2\,dA<\infty.
\]
Its Beltrami formulation is the existence of an appropriate quasiconformal representative with $\mu\in L^2(dA_\eta)$, not the integrability of every representative \cite{TT,Bish1,Bish2}. For example, $F(z)=z|z|^{\alpha-1}$, $\alpha>0$, $\alpha\ne1$, has identity boundary values and constant nonzero $|\mu_F|$. Its boundary curve is the circle, but $\int_\D|\mu_F|^2\eta\area=\infty$.

The symbols $K$ and $\K$ always denote operator-norm and normalised Hilbert--Schmidt distortion, respectively. Boundary traces of inverse maps $h$ are denoted by $\beta$; those of forward maps $f$ are $\beta^{-1}$.

\subsection{The energies and their invariance}\label{energies}
Set
\[
 p_* = \max\{2,p\},\qquad
 R_p(s)=\frac{s^{p/2}}{(1-s)^{p_*/2}},\qquad 0\leq s<1,
\]
and define, for a map $F$ of finite distortion,
\begin{equation}\label{funct}
 \mathcal I_p(F)=\int_\ID R_p(|\mu_F|^2)\,dA_\eta .
\end{equation}
For $p\geq2$, \eqref{Kminus} gives
\begin{equation}\label{Psiidentity}
 \mathcal I_p(F)=2^{-p/2}\int_\ID(\IK_F-1)^{p/2}\,dA_\eta ,
\end{equation}
and in particular $\I_2(F)=\tfrac12\int_\D(\K_F-1)\,dA_\eta$. Under the inverse and area formulas, $H=F^{-1}$ satisfies
\begin{equation}\label{inverseenergy}
 \mathcal I_p(F)=\mathcal E_p(H)
 :=\int_\ID\eta(H)J_H R_p(|\mu_H|^2)\,dA.
\end{equation}
Writing $a=H_w$, $b=H_{\bar w}$ and $J=|a|^2-|b|^2$, this becomes
\begin{equation}\label{inversedensity}
 \mathcal E_p(H)=
 \begin{cases}
 \displaystyle\int_\ID\eta(H)|b|^p|a|^{2-p}\,dA,&1\leq p\leq2,\\[4pt]
 \displaystyle\int_\ID\eta(H)|b|^p J^{1-p/2}\,dA,&p\geq2.
 \end{cases}
\end{equation}
In particular,
\begin{equation}\label{anti}
 \mathcal E_2(H)=\int_\ID\eta(H)|H_{\bar w}|^2\,dA.
\end{equation}
This is the anti-holomorphic energy in the chosen metric normalisation. The full hyperbolic Dirichlet energy of a diffeomorphism onto the disk is infinite; \eqref{anti} is an identity between densities, or between integrals over relatively compact subdomains, and is never justified by subtracting two infinite total integrals.

\paragraph{M\"obius invariance.}
For disk automorphisms $\phi,\psi$,
\[
 |\mu_{\psi\circ F\circ\phi}(z)|=|\mu_F(\phi(z))|,
 \qquad \eta(\phi(z))|\phi'(z)|^2=\eta(z).
\]
Consequently change of variables gives
\begin{equation}\label{mobiusenergy}
 \mathcal I_p(\psi\circ F\circ\phi)=\mathcal I_p(F).
\end{equation}
The prescribed boundary data transform along with the map.
\paragraph{The inverse Sobolev estimate.}
The ordering of the energies is useful here. Since $0\leq r<1$,
\begin{equation}\label{subquadratic-energy-order}
 \frac{r}{1-r^2}\geq\frac{r^p}{1-r^2}\geq\frac{r^2}{1-r^2},
 \qquad 1\leq p\leq2.
\end{equation}
Consequently, for a fixed admissible forward map,
$\I_1(F)\geq\I_p(F)\geq\I_2(F)$. The inverse densities satisfy the same ordering:
\[
 |a||b|\geq |a|^{2-p}|b|^p\geq |b|^2,
 \qquad |b|\leq|a|.
\]
Thus bounded $p$-energy controls the quadratic anti-holomorphic energy for every $1\leq p\leq2$, including $p=1$. The $p=1$ condition is the stronger energy condition. No convexity or stationarity is needed for these comparisons.

Assume $H$ is a homeomorphism of $\D$ in $W^{1,2}(\D)$; the area inequality for Sobolev homeomorphisms then gives $\int_\ID J_H\,dA\leq\pi$, which is all that is used (no Lusin property is needed). If $1\leq p\leq2$, \eqref{inversedensity} and $|a|\geq|b|$, $\eta\geq1$, give $\int|b|^2\leq\mathcal E_p(H)$. If $p\geq2$, put $x=|b|^2/J$. The inequality $x\leq1+x^{p/2}$ gives $\int|b|^2\leq\pi+\mathcal E_p(H)$. Therefore
\begin{equation}\label{sobolev}
 \int_\ID(|H_w|^2+|H_{\bar w}|^2)\,dA\leq
 \begin{cases}\pi+2\mathcal E_p(H),&1\leq p\leq2,\\
 3\pi+2\mathcal E_p(H),&p\geq2.
 \end{cases}
\end{equation}
This does not give a forward $W^{1,2}$ bound on $F$.

For every $1\leq p\leq2$, the excess-distortion calculation can also be written
\begin{align}\label{inverse-excess-bound}
 2\E_p(H)&\geq\int_\D(\K_H-1)J_H\eta(H)\area
 =2\int_\D\eta(H)|H_{\bar w}|^2\area\\
 &\geq\int_\D(|H_w|^2+|H_{\bar w}|^2)\area-\pi.\notag
\end{align}
The pointwise identity is 
\[ (\K_H-1)J_H=2|H_{\bar w}|^2\]
 and this controls the anti-holomorphic derivative. Adding $\int J_H=\pi$ then controls the full Dirichlet integral. Lemma~\ref{monotone-compactness} obtains the same total-Jacobian identity from the fixed boundary trace for monotone Sobolev maps, without assuming an area formula for the limit.

In particular, $\sup_j\E_p(h_j)\leq M$ implies $\|h_j\|_{W^{1,2}(\D)}\leq\pi+2M$, with the same constant for all $1\leq p\leq2$. For quasisymmetric boundary data this gives the equicontinuity and monotone compactness proved in Section~\ref{no-hairs}.

\paragraph{Finite forward energy gives no forward Sobolev bound.}
Take a radial homeomorphism, near zero, with
\[
 F(re^{i\theta})=R(r)e^{i\theta},\qquad
 R(r)=(\log(e/r))^{-1/2},
\]
and join it monotonically and smoothly on an intermediate annulus to the identity near $\partial\D$. Such a joining is possible by choosing the inner radius sufficiently small. The radial and tangential stretches are $R'$ and $R/r$. Their ratio gives distortion of order $\log(1/r)$, so $\I_p(F)<\infty$ for every finite $p$; near the boundary the coefficient is zero. However,
\[
 \int_0^\varepsilon\left(\frac{R(r)}r\right)^2r\,dr
 =\int_0^\varepsilon\frac{dr}{r\log(e/r)}=\infty.
\]
Thus $F\notin W^{1,2}$ even though every finite-$p$ forward energy is finite. Estimate~\eqref{sobolev} concerns $H=F^{-1}$.
\section{Boundary compactness and the NoHairs theorem}\label{no-hairs}
This section contains the topological tools of the paper: a compactness theorem for bounded-energy monotone maps, and the \emph{NoHairs} theorem, which says that a bounded-energy monotone limit never sends an interior point to the circle. Combined with interior estimates, which are more generally available, NoHairs promotes a monotone minimiser to a homeomorphism. The argument relies on M\"obius invariance of the excess energy.

\medskip

A \emph{hair} of
a continuous monotone surjection $h:\overline\D\to\overline\D$ is a
nonsingleton fibre $h^{-1}(z)$. It is a compact connected set. If the trace
$\beta=h|_{\Sone}$ is a homeomorphism, a hair meeting $\Sone$ meets it at
exactly one point, called its \emph{tip}; such a fibre is a
\emph{boundary hair}. Equivalently, it is a nonsingleton fibre over a point
of the target circle. A hair lying wholly in the open disk is an
\emph{interior hair}. No local-connectivity assumptions are made.

Theorem~\ref{no-boundary-hairs} excludes boundary hairs from a bound for
the Dirichlet integrals over the normalised M\"obius orbit. Hyperbolic
energy invariance supplies this bound for finite-energy approximation
families. Proposition~\ref{hair-homeomorphism} then gives the full
homeomorphism conclusion once the interior hairs are excluded, for instance by local injectivity or by a continuous right inverse.
\subsection{Boundary equicontinuity of the renormalised inverse maps}
Write
\[
 \mathcal D(u)=\int_\D\bigl(|u_w|^2+|u_{\bw}|^2\bigr)\area.
\]
We use the following standard planar compactness fact, with its boundary step made explicit.

\begin{lemma}\label{boundary-modulus}
Let $u_j:\overline\D\to\overline\D$ be continuous monotone surjections in $W^{1,2}(\D)$, with orientation-preserving boundary homeomorphisms. Suppose $\sup_j\mathcal D(u_j)<\infty$. Suppose also that their boundary maps admit $K_0$-quasiconformal disk extensions $B_j$, with a common $K_0$, fixing the three points $1,i,-i$. Then the $u_j$ have a common modulus of continuity on $\overline\D$.
\end{lemma}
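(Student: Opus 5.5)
The argument has two parts. First we get a common modulus of continuity for the boundary maps. Then we propagate it into the disk using the Courant--Lebesgue lemma together with monotonicity.

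\emph{Boundary maps.} The traces $\beta_j=u_j|_{\Sone}$ are the restrictions of the $K_0$-quasiconformal maps $B_j$, which fix $1,i,-i$. The normalised $K_0$-quasiconformal self-maps of $\D$ form a normal family and are uniformly H\"older on $\overline\D$ (Mori's theorem after reflection). Hence the $\beta_j$ and the $\beta_j^{-1}$ have a common modulus of continuity $\omega_0$ on $\Sone$, say $|\beta_j(\zeta)-\beta_j(\zeta')|\le C|\zeta-\zeta'|^{1/K_0}$.

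\emph{Courant--Lebesgue.} Put $D=\sup_j\mathcal D(u_j)$. Fix $z_0\in\overline\D$ and $0<\delta<\delta_0$. The standard estimate
\[
\int_\delta^{\sqrt\delta}\Bigl(\int_{C_r}|\nabla u_j|\,ds\Bigr)^2\frac{dr}{r}\le 2\pi D,
\]
with $C_r=\{|z-z_0|=r\}\cap\D$, produces a radius $r\in(\delta,\sqrt\delta)$ for which
\[
\mathrm{length}\,u_j(C_r)\le\varepsilon(\delta)=\bigl(4\pi D/\log(1/\delta)\bigr)^{1/2}.
\]
For a.e.\ $r$ the map $u_j$ is absolutely continuous on $C_r$, so this length bound holds.

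\emph{Interior points.} Suppose $C_r$ is a full circle inside $\D$. The image $u_j(C_r)$ is a closed curve of diameter at most $\varepsilon$. A monotone surjection of $\overline\D$ with homeomorphic trace maps the disk $B_r$ into the filled region bounded by $u_j(C_r)$. To see this, take $w$ outside that filled region. The fibre $u_j^{-1}(w)$ is connected, it meets $\Sone$ or the outside of $B_r$ through surjectivity and connectedness, and it does not meet $C_r$. So if it entered $B_r$ it would have to lie entirely inside $B_r$. Then the preimage of the unbounded complementary component (joined to $\Sone$) would be disconnected by $C_r$, which contradicts monotonicity: preimages of connected sets under monotone maps are connected. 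The filled region of a curve of diameter at most $\varepsilon$ has diameter at most $\varepsilon$ (or at most $2\varepsilon$ to be safe). Hence $\mathrm{diam}\,u_j(B_\delta(z_0))\le2\varepsilon(\delta)$.

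\emph{Points near the circle.} Now suppose $C_r$ is an arc whose endpoints $\zeta,\zeta'$ lie on $\Sone$, with $|\zeta-\zeta'|\le2\sqrt\delta$. Then $u_j(C_r)$ is a curve of length at most $\varepsilon$ joining $\beta_j(\zeta)$ to $\beta_j(\zeta')$. The boundary arc $\beta_j([\zeta,\zeta'])$ has diameter at most $\omega_0(2\sqrt\delta)$. The same monotonicity argument, applied to the Jordan region bounded by $C_r$ and the short arc $[\zeta,\zeta']$, shows that $u_j(B_\delta(z_0)\cap\D)$ lies in the region cut off by the closed curve
\[
u_j(C_r)\cup\beta_j([\zeta,\zeta']).
\]
This needs $\varepsilon$ and $\omega_0$ small, so that the cut-off region is the small side and not the complementary one. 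The small side is fixed by orientation and by the three normalised points, which stay well separated. Its diameter is therefore at most $\varepsilon(\delta)+\omega_0(2\sqrt\delta)$, which tends to $0$ uniformly in $j$. This gives the common modulus of continuity.

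The main obstacle is the topological step: showing from monotonicity alone, with no injectivity, that the image of a small disk or half-disk lies in the small filled region bounded by the image of its boundary. The plan is to argue through connectedness of the preimages of the complementary components, using that the trace is a homeomorphism. Picking the correct complementary side near the boundary relies on the normalisation at $1,i,-i$.
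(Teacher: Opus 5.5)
Your proof is correct in outline, but it takes a different route from the paper. The paper never treats points near the circle separately. It reflects $B_j$ to $K_0$-quasiconformal sphere maps $\widehat B_j$ fixing $1,i,-i$, and uses normalised compactness to get $|\widehat B_j|\le L$ on a common collar $1\le|z|\le R$, hence a collar Dirichlet bound $K_0\pi L^2$. Gluing $u_j$ to $\widehat B_j$ then gives monotone maps $U_j$ of the sphere with $\int_{\D_R}\|DU_j\|^2\,dA\le\mathcal D(u_j)+C_0$. Every point of $\overline\D$ is now interior to $\D_R$. A single interior Courant--Lebesgue estimate, combined with the inclusion $U_j(\overline{B(x,t)})\subset\operatorname{conv}U_j(\partial B(x,t))$ (justified through Youngs' approximation of monotone maps by homeomorphisms) and integrated against $dt/t$, gives one explicit logarithmic modulus valid up to the circle.

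You instead use the quasiconformal extensions only for equicontinuity of the traces $\beta_j$. You handle boundary half-disks by the classical Plateau-problem device of a short image arc closed up by a short boundary arc. In place of Youngs' theorem you give a self-contained hull argument based on connectedness of preimages under a closed monotone map. Your route uses less of the hypothesis: any normalisation giving equicontinuous traces would do, and no exterior energy bound is needed. The paper's route avoids the case distinction and the boundary topology altogether.

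Two points should be tightened.

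First, the lemma does not assume $u_j(\D)\subset\D$; NoHairs uses it precisely to prove this. So $u_j(C_r)$ may touch $\Sone$, and the part of the unbounded complementary component lying in $\overline\D$ need not be connected. Run your preimage argument separately for each component of $\overline\D\setminus u_j(C_r)$ contained in the unbounded component of $\IC\setminus u_j(C_r)$. Each such component meets $\Sone$: follow a path to $\infty$ until it first reaches $\Sone$.

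Second, the choice of the ``small side'' needs neither orientation nor the three normalised points. Let $I$ be the short arc, $J=\Sone\setminus\overline I$ and $\Gamma=u_j(\overline{C_r})\cup\beta_j(\overline I)$. Every component of $\overline\D\setminus\Gamma$ meeting $\Sone$ meets $\beta_j(J)$. Its connected preimage therefore contains a point of $J$ and misses $\overline{C_r}\cup\overline I$, hence misses $\overline{B(z_0,r)}$. Thus $u_j(\overline{B(z_0,r)}\cap\overline\D)$ lies in the planar filled hull of $\Gamma$, whose diameter is $\operatorname{diam}\Gamma\le\varepsilon(\delta)+\omega_0(2\sqrt\delta)$. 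The normalisation enters only through $\omega_0$.
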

\begin{proof}
Let $\rho(z)=1/\bar z$ be reflection in $\Sone$, and extend $B_j$ to the sphere by
\[
 \widehat B_j(z)=
 \begin{cases}
 B_j(z),&|z|\leq1,\\
 \rho\circ B_j\circ\rho(z),&|z|>1.
 \end{cases}
\]
These are $K_0$-quasiconformal sphere homeomorphisms fixing $1,i,-i$.
Normalised quasiconformal compactness gives constants $R>1$ and $L<\infty$,
depending only on $K_0$, such that
$|\widehat B_j(z)|\leq L$ for $1\leq|z|\leq R$ and every $j$.
Indeed, the common spherical modulus of continuity, applied near $\Sone$,
keeps the images of a common collar away from infinity. The area formula
and the distortion inequality consequently give
\begin{equation}\label{no-hairs-collar-energy}
 \int_{1<|z|<R}\|D\widehat B_j\|^2\area
 \leq K_0\int_{1<|z|<R}J_{\widehat B_j}\area
 \leq K_0\pi L^2=:C_0.
\end{equation}

Glue $u_j$ on $\overline\D$ to $\widehat B_j$ outside $\D$, and call the
result $U_j$. The matching traces give $U_j\in W^{1,2}(\D_R)$ and
\[
 \int_{\D_R}\|DU_j\|^2\area\leq\mathcal D(u_j)+C_0.
\]
The glued map is a continuous monotone surjection of the sphere: fibres
over $\D$ are fibres of $u_j$, exterior fibres are singletons, and a fibre
over $\Sone$ contains its unique boundary preimage. Thus, for every closed
Euclidean disk $\overline{B(x,t)}\subset\D_R$,
\[
 U_j\bigl(\overline{B(x,t)}\bigr)
 \subset\operatorname{conv}U_j\bigl(\partial B(x,t)\bigr).
\]
For completeness, Youngs' uniform approximation theorem for monotone
surface maps, recalled in \cite[Theorem 2.1]{IOMonotoneApprox}, reduces
this inclusion to the corresponding fact for homeomorphisms. The
approximating images of this disk stay in the finite plane; a Jordan
domain lies in the convex hull of its boundary, and the inclusion passes
to the uniform limit.

Set $R_0=(R-1)/2$ and let $x,y\in\overline\D$, with
$0<d=|x-y|<R_0$. For almost every $t\in(d,R_0)$ the restriction of $U_j$
to $\partial B(x,t)$ is absolutely continuous. The convex-hull inclusion
and Cauchy--Schwarz give
\[
 \begin{split}
 |u_j(x)-u_j(y)|^2
 &\leq\left(\int_{\partial B(x,t)}|\partial_\tau U_j|\,ds\right)^2\\
 &\leq 2\pi t\int_{\partial B(x,t)}|\partial_\tau U_j|^2\,ds
 \leq4\pi t\int_{\partial B(x,t)}\|DU_j\|^2\,ds.
 \end{split}
\]
Here $|\partial_\tau U_j|^2\leq2\|DU_j\|^2$ uses our normalised
Hilbert--Schmidt convention. Integrating with respect to $dt/t$ yields
the explicit logarithmic modulus
\begin{equation}\label{no-hairs-log-modulus}
 |u_j(x)-u_j(y)|^2
 \leq\frac{4\pi\bigl(\mathcal D(u_j)+C_0\bigr)}
 {\log(R_0/|x-y|)}.
\end{equation}
It is uniform up to the circle. In particular, finite distortion of
$u_j$ is not an additional hypothesis of the lemma.
\end{proof}

A single fixed quasisymmetric boundary map is covered by the same argument with one fixed quasiconformal extension; the three-point normalisation is needed when the boundary maps themselves vary under renormalisation. Arbitrary M\"obius pre- and postcomposition need not preserve the numerical constant in a chosen chordal definition of quasisymmetry. What is preserved here is the quasiconformal extension constant.

\subsection{The NoHairs theorem}
In our application the $ {W^{1,2}(\D)}$ bound in the following result will follow easily from the conformal invariance of the excess hyperbolic energy.
\begin{theorem}[NoHairs: the M\"obius rescaling theorem]\label{no-boundary-hairs}
Let $h:\overline\D\to\overline\D$ be a continuous monotone surjection
in $W^{1,2}(\D)$, with orientation-preserving quasisymmetric boundary
values $\beta$. Suppose that there is a constant $C_*$ such that
\begin{equation}\label{orbit-bound}
\|\psi\circ h\circ\phi^{-1}\|_{W^{1,2}(\D)} \leq C_*
\end{equation}
for all disk automorphisms $\phi,\psi$ for which the boundary map
$\psi\circ\beta\circ\phi^{-1}$ fixes $1,i,-i$. Then
\begin{equation}\label{no-hairs-boundary-preimage}
 h^{-1}(\Sone)=\Sone.
\end{equation}
Equivalently, $h(\D)\subset\D$, and every boundary fibre is a singleton, that is
there are no boundary hairs. If, in addition, every nonsingleton fibre
meets $\Sone$, then $h$ is a homeomorphism of $\overline\D$.
\end{theorem}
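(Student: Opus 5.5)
Argue by contradiction through M\"obius rescaling. Suppose $z_0\in\D$ with $h(z_0)=\zeta\in\Sone$. The fibre $h^{-1}(\zeta)$ is compact and connected and contains both $z_0$ and the boundary point $\beta^{-1}(\zeta)$, since $h|_{\Sone}=\beta$ is a homeomorphism. So it is a boundary hair $\gamma$ whose tip is $\xi=\beta^{-1}(\zeta)$. Everything therefore reduces to excluding boundary hairs: $h^{-1}(\Sone)=\Sone$ is then exactly the statement that each boundary fibre is the singleton given by its tip.

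Fix such a hair $\gamma$. It is a continuum joining $\xi$ to an interior point, so it has positive diameter $\delta$.
\begin{enumerate}
\item Choose points $z_k\to\xi$ along $\gamma$, and let $\phi_k$ be disk automorphisms with $\phi_k(z_k)=0$ that zoom in at $\xi$.
\item Choose $\psi_k$ so that $\beta_k=\psi_k\circ\beta\circ\phi_k^{-1}$ fixes $1,i,-i$. This can be done by fixing three boundary points near $\xi$ at scale comparable to $1-|z_k|$, mapping their $\phi_k$-images to three fixed well-separated points, and renormalising; since $\phi_k$ expands that scale to unit size, the points $\phi_k^{-1}(1),\phi_k^{-1}(i),\phi_k^{-1}(-i)$ shrink towards $\xi$.
\item The $\beta_k$ are M\"obius conjugates of $\beta$, so they have $K_0$-quasiconformal extensions $\psi_k\circ B\circ\phi_k^{-1}$ with a common $K_0$, where $B$ is a fixed extension of $\beta$. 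The maps $u_k=\psi_k\circ h\circ\phi_k^{-1}$ satisfy $\|u_k\|_{W^{1,2}}\le C_*$ by \eqref{orbit-bound}.
\item Lemma~\ref{boundary-modulus} gives a common modulus of continuity for the $u_k$ on $\overline\D$, for instance \eqref{no-hairs-log-modulus}.
\end{enumerate}

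The contradiction comes from the rescaled hairs $\gamma_k=\phi_k(\gamma)$. Each $\gamma_k$ is mapped by $u_k$ to a single boundary point $\zeta_k$, passes through $0$, and still reaches the circle at $\phi_k(\xi)$. So the fibre $u_k^{-1}(\zeta_k)$ contains a continuum joining $0$ to $\Sone$, of diameter at least $1$.

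Equicontinuity lets us pass to a subsequence converging uniformly to some $u$. The sets $\gamma_k$ converge in the Hausdorff sense to a continuum through $0$ that meets $\Sone$. On that continuum $u$ is constant and equal to $\lim\zeta_k\in\Sone$.

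Meanwhile the boundary maps $\beta_k$ converge uniformly, by normalised quasisymmetric compactness, to a homeomorphism $\beta_\infty$ fixing $1,i,-i$. Now compare the two descriptions of the limit at the circle. The limit continuum meets $\Sone$ at a point $\eta_\infty$, and there $u(\eta_\infty)=\beta_\infty(\eta_\infty)$. This equals $\lim\zeta_k$, which in turn equals $\lim\beta_k(\phi_k(\xi))$, so everything is consistent so far. The contradiction must come from interior degeneration.

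The cleaner route: choose $z_k$ so that the Euclidean distance from $\phi_k(\gamma)$ to the boundary arc grows. By monotonicity and the convex-hull inclusion used in Lemma~\ref{boundary-modulus}, $u_k$ maps a disk of radius $r$ about $0$ into the convex hull of $u_k$ of its boundary circle, and that circle meets $\gamma_k$. The modulus of continuity then forces $u_k(B(0,r))$ to lie within $\omega(2r)$ of $\zeta_k$, for each fixed $r<1$.

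Now zoom further. Take the base points $z_k$ on $\gamma$ with $1-|z_k|\to0$ while the hyperbolic neighbourhoods $\phi_k^{-1}(B(0,r))$ exhaust a fixed Euclidean region near $\xi$. This is possible because $\gamma$ has Euclidean size $\delta$ at $\xi$ while the automorphisms shrink. Then $u_k$ maps the image of a fixed Stolz-type region to within $o(1)$ of $\zeta_k$, while on the boundary arc of $\Sone$ adjacent to that region $u_k=\beta_k$ traverses an arc of definite length, because $\beta_k$ fixes $1,i,-i$. This contradicts the uniform modulus of continuity near those boundary points.

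The main obstacle is arranging the normalisation so that both hold at once: the hair stays macroscopic after rescaling, and the boundary arc near it has definite image under $\beta_k$. I would try choosing $\phi_k$ from the point of $\gamma$ at distance $t_k\to0$ from $\xi$, with normalising points $\xi e^{\pm it_k}$.

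Finally, suppose every nonsingleton fibre meets $\Sone$. By the above it is a boundary fibre, hence a singleton, a contradiction. So all fibres are singletons, and $h$ is a continuous bijection of the compact space $\overline\D$, hence a homeomorphism.
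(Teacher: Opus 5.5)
Your reduction (an interior point lying over $\Sone$ forces a boundary hair with tip $\xi=\beta^{-1}(\zeta)$) and your closing homeomorphism argument are fine. You also have the right tools: the orbit bound, the common extension constant of $\psi_k\circ B\circ\phi_k^{-1}$, and Lemma~\ref{boundary-modulus}. But, as you acknowledge, the proposal never reaches a contradiction.

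\begin{itemize}
\item Your first rescaling only records where the tip goes, and that is consistent.
\item The ``cleaner route'' rests on a false step. The set $\phi_k^{-1}(B(0,r))$ is a hyperbolic ball of fixed radius about $z_k\to\xi$, so its Euclidean diameter is comparable to $1-|z_k|\to0$. It cannot exhaust a fixed Euclidean region, and the Stolz-region contradiction that follows has no content.
\item More seriously, centring the rescalings at points $z_k$ of $\gamma$ is the wrong choice. Write $\phi_k(z)=e^{i\alpha_k}(z-z_k)/(1-\overline{z_k}z)$. Then $\phi_k(z)+e^{i\alpha_k}\xi\to0$ uniformly on compact subsets of $\overline\D\setminus\{\xi\}$, so all of $\gamma$ away from the tip collapses onto the boundary point $p_k=-e^{i\alpha_k}\xi$.
\item If $z_k\to\xi$ radially, then $\phi_k(\xi)=e^{i\alpha_k}\xi$ is antipodal to $p_k$. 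But if $\gamma$ approaches $\xi$ tangentially, i.e.\ $(1-|z_k|)/|z_k-\xi|\to0$, then $\phi_k(\xi)-p_k\to0$. The rescaled limit is then again a monotone map with a boundary hair through $0$, which is exactly the configuration you are trying to exclude. Nothing in your normalisation forces a contradiction.
\end{itemize}

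The missing idea is to translate along the geodesic ending at the tip, while keeping the images of both endpoints of that geodesic pinned. This is the paper's argument, and it tracks only one interior point of the fibre.

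\begin{itemize}
\item After preliminary automorphisms, assume $h(z_0)=h(i)=i$ and $h(-i)=-i$.
\item The maps $\phi_t(z)=(z-ti)/(1+itz)$ fix $\pm i$ and tend to $-i$ at every point of $\overline\D\setminus\{i\}$ as $t\to1$.
\item Let $\psi_t$ be the automorphism fixing $\pm i$ that sends $\beta(\phi_t^{-1}(1))$ to $1$. Then $h_t=\psi_t\circ h\circ\phi_t^{-1}$ has boundary map fixing $1,i,-i$.
\item So \eqref{orbit-bound} and Lemma~\ref{boundary-modulus} give the $h_t$ a common modulus of continuity on $\overline\D$.
\item Yet $w_t=\phi_t(z_0)\to-i$, while $h_t(w_t)=\psi_t(i)=i$ and $h_t(-i)=-i$. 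This is an oscillation of size $2$ at vanishing scale, contradicting \eqref{no-hairs-log-modulus}.
\end{itemize}

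No Hausdorff limits, limiting maps or tangency considerations are needed. In your language, the fix is to centre at points of the radius ending at $\xi$ rather than at points of $\gamma$. Then $\phi_k(\xi)$ and $p_k$ are antipodal. The far end of the hair, which you overlooked, takes the value $\beta_k(\phi_k(\xi))$ within $o(1)$ of $p_k$, where $u_k$ equals $\beta_k(p_k)$. These two values stay separated because three-point-normalised boundary maps with a common extension constant have equicontinuous inverses.
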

\begin{proof}
Suppose that $z_0\in\D$ and $h(z_0)\in\Sone$. Because $\beta$ is a
homeomorphism, this fibre has a unique boundary point. Initial source
and target automorphisms place this point and its image at $i$, and a
second boundary point and its image at $-i$. These changes preserve the
orbit hypothesis: every subsequently normalised composition is also a
normalised composition of the original map. We may therefore assume
\[
 h(z_0)=h(i)=i,\qquad h(-i)=-i.
\]
For $0<t<1$ put
\[
 \phi_t(z)=\frac{z-ti}{1+itz}.
\]
The automorphism $\phi_t$ fixes $i,-i$ and tends to $-i$ at every point
of $\overline\D\setminus\{i\}$. The point
$\beta(\phi_t^{-1}(1))$ lies on the open boundary arc from $-i$ to $i$
containing $1$. Hence there is a unique automorphism $\psi_t$ fixing
$i,-i$ which sends this point to $1$. It has the form
$\psi_t=\phi_{s(t)}$ with $-1<s(t)<1$. Consequently
\[
 h_t=\psi_t\circ h\circ\phi_t^{-1}
\]
fixes all three boundary points $1,i,-i$.

Choose a quasiconformal disk extension $B$ of $\beta$. The maps
$B_t=\psi_t\circ B\circ\phi_t^{-1}$ extend the boundary values of $h_t$,
fix the same three points, and have the same quasiconformal bound as
$B$. The orbit bound and Lemma~\ref{boundary-modulus} give a common
modulus of continuity for $h_t$ on the \emph{closed} disk. On the other
hand, if $w_t=\phi_t(z_0)$, then
\[
 w_t\longrightarrow-i,\qquad
 h_t(w_t)=\psi_t(h(z_0))=i,\qquad h_t(-i)=-i.
\]
Thus $|h_t(w_t)-h_t(-i)|=2$, contrary to
\eqref{no-hairs-log-modulus}. This proves
\eqref{no-hairs-boundary-preimage}; injectivity of $\beta$ then makes
each boundary fibre a singleton.

If every nonsingleton fibre meets $\Sone$, any such fibre would be a
boundary fibre, which has just been excluded. All fibres are therefore
singletons. The continuous bijection $h$ between compact disks is a
homeomorphism.
\end{proof}

The argument excludes boundary hairs without a variational equation or
a finite-distortion hypothesis on the limit. Interior fibres require
the separate classification in the last sentence of the theorem;
Proposition~\ref{hair-homeomorphism} supplies that classification in the cases that arise. The contradiction uses the close
pair $w_t,-i$, so no locally uniform limiting map of the rescaled family
needs to be extracted.
\subsection{Application to bounded-energy approximation families}\label{no-hairs-application}
For $h\in W^{1,1}_{\loc}(\D)$ and disk automorphisms $\phi,\psi$, the inverse energy is preserved:
\begin{equation}\label{inverse-invariance}
 \E_p(\psi\circ h\circ\phi^{-1})=\E_p(h).
\end{equation}
Indeed $W_p$ is homogeneous of degree two and invariant under rotations of both source and target. By the chain rule for Sobolev maps composed with diffeomorphisms, the domain conformal factors cancel in the change of variables, and the target factors cancel by
\[
 \eta(\psi(h))|\psi'(h)|^2=\eta(h).
\]
Combining this with the inverse estimate \eqref{sobolev} gives the uniform bound
\[
 \| \psi\circ h\circ\phi^{-1} \|_{W^{1,2}(\D)}
 \leq3\pi+2\E_p(h),\qquad 1\leq p<\infty.
\]
For $p\leq2$ one may replace $3\pi$ by $\pi$.

For the distortion-power energies $\int_\D(\K_f^m-1)\eta\area$ of \cite{MYLp,MYRadonRiesz}, the calculation is
equally direct. If $f=h^{-1}$ and the inverse/area formulas hold, then for
$m\geq1$,
\begin{equation}\label{original-no-hairs-orbit-energy}
\|h\|_{W^{1,2}(\D)}=\int_\D\K_f \area
 \leq\pi+\int_\D(\K_f^m-1)\eta\area.
\end{equation}
The last integral is unchanged by the corresponding M\"obius
pre- and postcompositions. Thus either energy family supplies the orbit
bound required by the theorem.

\begin{corollary}[Compact containment without boundary hairs]\label{compact-containment}
Fix a quasisymmetric boundary map $\beta$. Let $h_j:\overline\D\to\overline\D$ be orientation-preserving homeomorphisms in $W^{1,2}(\D)$ with $h_j|_{\Sone}=\beta$, and suppose
\[
 \sup_j\E_p(h_j)\leq M<\infty.
\]
Then a subsequence converges uniformly on $\overline\D$ and weakly in $W^{1,2}(\D)$ to a continuous monotone surjection $h$ with the same boundary values. Every such limit satisfies $h^{-1}(\Sone)=\Sone$. Moreover, for each compact $C\Subset\D$ there is a compact $C'\Subset\D$ such that $h_j(C)\subset C'$ for all $j$.
\end{corollary}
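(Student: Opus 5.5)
We prove the corollary in three steps: uniform convergence, absence of boundary hairs in the limit, and compact containment of the approximants.

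\emph{Step 1: compactness.} By the Möbius invariance \eqref{inverse-invariance} and the inverse Sobolev estimate \eqref{sobolev}, every normalised composition $\psi\circ h_j\circ\phi^{-1}$ has $W^{1,2}(\D)$ norm at most $3\pi+2M$. Taking $\phi=\psi=\id$ gives $\sup_j\mathcal D(h_j)<\infty$. Since $\beta$ is a single fixed quasisymmetric map, we fix one quasiconformal extension $B$ of $\beta$, normalised once by fixed automorphisms so that it fixes $1,i,-i$. Lemma~\ref{boundary-modulus}, in the single-boundary-map form noted after it, then gives a common modulus of continuity for the $h_j$ on $\overline\D$. Arzel\`a--Ascoli and weak compactness in $W^{1,2}$ give a subsequence converging uniformly and weakly. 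The limit $h$ is continuous, has trace $\beta$, and is surjective, since uniform limits of surjections of a compact set are surjective.

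Monotonicity of $h$ is the one point needing a citation. Uniform limits of homeomorphisms of the closed disk are monotone (Youngs' theorem, as in \cite{IOMonotoneApprox}). Alternatively, one argues directly: a disconnected fibre would separate, and the near-homeomorphisms would then have to create a hair crossing between the components, which is impossible.

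\emph{Step 2: no boundary hairs.} We apply Theorem~\ref{no-boundary-hairs} to $h$, so we need the orbit bound \eqref{orbit-bound} for $h$ itself. For fixed automorphisms $\phi,\psi$, the compositions $\psi\circ h_j\circ\phi^{-1}$ converge weakly in $W^{1,2}$ to $\psi\circ h\circ\phi^{-1}$, because composition with fixed smooth diffeomorphisms of $\overline\D$ is weakly continuous. Lower semicontinuity of the norm then gives $\|\psi\circ h\circ\phi^{-1}\|_{W^{1,2}}\leq 3\pi+2M$. This bound is uniform in $\phi,\psi$, so Theorem~\ref{no-boundary-hairs} yields $h^{-1}(\Sone)=\Sone$. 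The argument applies to every subsequential limit.

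\emph{Step 3: compact containment.} This is the main obstacle, because it concerns the whole sequence and not only the limit. We argue by contradiction. Suppose there are a compact $C\Subset\D$, points $z_j\in C$, and indices $j$ (after a subsequence) with $|h_j(z_j)|\to1$. Pass to a further subsequence with $z_j\to z_0\in C$ and $h_j\to h$ uniformly as in Step 1. Equicontinuity gives $h_j(z_j)\to h(z_0)$, so $h(z_0)\in\Sone$ with $z_0\in\D$. This contradicts Step 2. Hence
\[
\sup_j\sup_{z\in C}|h_j(z)|=r_C<1,
\]
and we may take $C'=\overline{D(0,r_C)}$.

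The essential point is that the common modulus of continuity from Step 1 lets the pointwise statement about the limit in Step 2 transfer to the sequence. Otherwise this step is routine.
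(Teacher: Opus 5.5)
Your proof is correct and follows the paper's argument step for step: the Dirichlet bound from \eqref{sobolev} and one fixed exterior extension of $\beta$ give equicontinuity and the subsequence; M\"obius invariance and lower semicontinuity of the Dirichlet integral pass the orbit bound to every limit, so Theorem~\ref{no-boundary-hairs} applies; and compact containment follows by contradiction through uniform convergence. Two points need tightening: weak convergence of $\psi\circ h_j\circ\phi^{-1}$ follows from uniform convergence together with the uniform Dirichlet bound, not from weak continuity of composition, since postcomposition by $\psi$ is nonlinear; and in Step~3 the bad indices must tend to infinity, which holds because each single homeomorphism $h_j$ already maps $C$ into a compact subset of $\D$.
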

\begin{proof}
The inverse estimate \eqref{sobolev}, which for a Sobolev homeomorphism uses only the area inequality $\int_\D J_{h_j}\leq\pi$, bounds the Dirichlet integral of $h_j$. Gluing to a fixed exterior extension of $\beta$ as in Lemma~\ref{boundary-modulus} gives equicontinuity on the closed disk. Arzel\`a--Ascoli and weak Sobolev compactness give the stated subsequence. A uniform limit of these homeomorphisms is a monotone surjection with boundary values $\beta$.

For each fixed pair $\phi,\psi$ normalising the boundary map, \eqref{inverse-invariance} bounds the Dirichlet integrals of $\psi\circ h_j\circ\phi^{-1}$ by $3\pi+2M$. These maps converge uniformly to $\psi\circ h\circ\phi^{-1}$, so weak lower semicontinuity of the \emph{Dirichlet integral} gives \eqref{orbit-bound} for the limit. Theorem~\ref{no-boundary-hairs} applies. This step does not require lower semicontinuity of the original nonconvex energy.

If the last assertion failed, a sequence $z_j\in C$ would have $h_j(z_j)$ tending to the circle. Apply the preceding compactness argument to that subsequence and let $z_j\to z\in C$. Uniform convergence would give $h(z)\in\Sone$, a contradiction.
\end{proof}
\subsection{From NoHairs to a homeomorphism}\label{hair-chain}
Theorem~\ref{no-boundary-hairs} removes the boundary hairs. The following elementary proposition collects the three ways in which the interior hairs are then removed in practice: by a classification of hairs, by local injectivity, or by a continuous right inverse.

\begin{proposition}[Interior hairs and the homeomorphism conclusion]\label{hair-homeomorphism}
Let $h:\overline\D\to\overline\D$ be a continuous monotone surjection whose boundary trace is a homeomorphism, and suppose $h^{-1}(\Sone)=\Sone$. Then $h$ is a homeomorphism of $\overline\D$ in each of the following cases.
\begin{enumerate}[label=\textup{(\alph*)}]
\item Every hair meets $\Sone$.
\item $h$ is a local homeomorphism on $\D$. If moreover $h$ is a $C^k$ local diffeomorphism on $\D$, $k\geq1$, then $h|_\D$ is a $C^k$ diffeomorphism of $\D$ onto itself.
\item There is a continuous map $g:\D\to\D$ with $h\circ g=\id_\D$. In this case $g(\D)=\D$ and $g=h^{-1}$ on $\D$; if $g$ is a $C^k$ diffeomorphism onto its image, then $g$ and $h$ are $C^k$ diffeomorphisms of $\D$ onto itself.
\end{enumerate}
Without the hypothesis $h^{-1}(\Sone)=\Sone$, case~\textup{(c)} still gives that $h^{-1}(\D)=g(\D)$, that $h|_{g(\D)}:g(\D)\to\D$ is a homeomorphism with inverse $g$, and that every hair meets $\Sone$.
\end{proposition}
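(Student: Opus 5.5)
Case (a) is immediate: under $h^{-1}(\Sone)=\Sone$ a hair meeting $\Sone$ lies over a point of the target circle, so it is a boundary fibre. That fibre is $\beta^{-1}$ of that point, a singleton, which contradicts its being a hair. Hence all fibres are singletons, and a continuous bijection of the compact disk $\overline\D$ onto itself is a homeomorphism, exactly as in the last step of Theorem~\ref{no-boundary-hairs}.

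For case~(b) I will reduce to~(a) by showing there are no interior hairs. Suppose $X=h^{-1}(w)$ is a nonsingleton fibre with $w\in\D$. Since $h^{-1}(\Sone)=\Sone$ we have $X\subset\D$, and $X$ is compact and connected with more than one point. A local homeomorphism has discrete fibres, so $X\cap U$ is a single point for a neighbourhood $U$ of each point of $X$. Thus $X$ is discrete and compact, hence finite, and being connected it is a point. This contradiction gives injectivity, and (a) applies. For the $C^k$ statement, $h|_\D:\D\to\D$ is then a bijective local $C^k$ diffeomorphism, so its inverse is locally $C^k$ by the inverse function theorem. The equality $h(\D)=\D$ uses $h^{-1}(\Sone)=\Sone$ together with surjectivity.

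For case~(c), first note that $h\circ g=\id_\D$ makes $g$ injective and continuous on the open disk, so by invariance of domain $g(\D)$ is open and $g$ is a homeomorphism onto it, with $h|_{g(\D)}=g^{-1}$. The key step, which I expect to be the main obstacle, is showing $h^{-1}(\D)=g(\D)$ without the boundary hypothesis. My plan is as follows. Fix $w\in\D$ and the fibre $X=h^{-1}(w)$, which contains $g(w)$. Let $V=g(B)$ for a small disk $B\ni w$. Then $V$ is an open neighbourhood of $g(w)$, and $h|_V$ is injective, so $X\cap V=\{g(w)\}$. Hence $g(w)$ is an isolated point of the connected set $X$, which forces $X=\{g(w)\}$. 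This shows every fibre over $\D$ is a singleton, so $h^{-1}(\D)=g(\D)$, and every hair lies over a point of $\Sone$, i.e.\ meets $\Sone$ through its unique boundary preimage. (This uses $\beta$ being a homeomorphism, so that $\Sone$ maps onto $\Sone$ and every fibre over $\Sone$ contains a boundary point.)

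Under $h^{-1}(\Sone)=\Sone$ we then have $h^{-1}(\D)=\D$, so $g(\D)=\D$ and case~(a) gives the homeomorphism. If $g$ is a $C^k$ diffeomorphism onto its image, then $h|_\D=g^{-1}$ is $C^k$ as well.
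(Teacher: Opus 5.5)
Your proposal is correct and follows the paper's proof essentially step for step. Case (a) uses $h^{-1}(\Sone)=\Sone$ together with injectivity of the trace; case (b) shows that interior fibres are compact, connected and discrete; and case (c) uses invariance of domain and the fact that $g(w)$ is an isolated point of the connected fibre $h^{-1}(w)$. The only difference is cosmetic: the auxiliary small disk $B$ in your case (c) is unnecessary, since $h$ is already injective on the whole open set $g(\D)$.
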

\begin{proof}
(a) If a hair $E=h^{-1}(z)$ existed, a point $\xi\in E\cap\Sone$ would give $z=h(\xi)\in\Sone$, and $h^{-1}(\Sone)=\Sone$ would force $E\subset\Sone$. Injectivity of the boundary trace then gives $E=\{\xi\}$, a contradiction. Thus every fibre is a singleton, and a continuous bijection between compact disks is a homeomorphism.

(b) An interior fibre is compact, connected by monotonicity, and discrete by local injectivity, hence a singleton; boundary fibres are singletons because the trace is injective and $h^{-1}(\Sone)=\Sone$. The differentiability assertion follows by inverting the local diffeomorphism.

(c) The identity $h\circ g=\id$ makes $g$ injective, so invariance of domain makes $\Omega=g(\D)$ open and $h|_\Omega=g^{-1}$ a homeomorphism. For $z\in\D$ the fibre $h^{-1}(z)$ contains $g(z)$, and its intersection with $\Omega$ is exactly $\{g(z)\}$. Thus $g(z)$ is isolated in this connected fibre, which is therefore the singleton $\{g(z)\}$. Consequently $h^{-1}(\D)=\Omega$, every hair lies over $\Sone$ and meets $\Sone$ in its unique boundary preimage. This proves the final assertion, and (a) then gives the homeomorphism conclusion when $h^{-1}(\Sone)=\Sone$. The differentiability assertion follows by inverting the given diffeomorphism.
\end{proof}

If the right-inverse identity $h\circ g=\id$ initially holds only on a set of full measure, continuity of both maps extends it to all of $\D$. The hypothesis $h^{-1}(\Sone)=\Sone$ is supplied by Theorem~\ref{no-boundary-hairs} for every bounded-energy limit, through Lemma~\ref{monotone-compactness} below; neither energy convergence nor finite distortion of the limit is needed for that topological conclusion.

\subsection{The class and compactness}\label{monotone-class}
Fix an orientation-preserving quasisymmetric boundary homeomorphism $\beta$. For $0<p\leq2$, let $\mathcal M_\beta$ consist of maps $h$ such that
\begin{enumerate}
\item $h:\overline\D\to\overline\D$ is a continuous monotone surjection, $h|_{\Sone}=\beta$, and $h(\D)\subset\D$;
\item $h\in W^{1,2}(\D)$ and $J_h\geq0$ almost everywhere;
\item $\E_p(h)=\int_\D\eta(h)W_p(h_w,h_{\bw})\area<\infty$, where $W_p(a,b)=|a|^{2-p}|b|^p$ on $|b|\leq|a|$, and $W_p(0,0)=0$.
\end{enumerate}
We assume that this class contains a finite-energy competitor, for example the inverse of an admissible finite-energy homeomorphism.  
This definition intentionally permits nonzero rank-one gradients: when $|a|=|b|$, $W_p(a,b)=|a|^2$. Such a map need not have finite distortion in the sense of Section~\ref{conventions}, which requires $Dh=0$ on $J_h=0$. The energy here is our extension of the inverse density to monotone maps.

\begin{lemma}[Inverse coercivity and compactness]\label{monotone-compactness}
For every $h\in\mathcal M_\beta$,
\begin{equation}\label{trace-jacobian}
 \int_\D J_h\area=\pi,\qquad
 \|h\|_{W^{1,2}(\D)}\leq\pi+2\E_p(h).
\end{equation}
Every sequence $h_j\in\mathcal M_\beta$ with $\sup_j\E_p(h_j)\leq M$ has a subsequence converging uniformly on $\overline\D$ and weakly in $W^{1,2}(\D)$ to a continuous monotone surjection $h$ with trace $\beta$, $J_h\geq0$, and $h(\D)\subset\D$. For each compact $C\Subset\D$, all $h_j(C)$ lie in a common compact subset of $\D$. This compactness conclusion does not yet assert $\E_p(h)<\infty$.
\end{lemma}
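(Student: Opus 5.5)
The plan has three parts: the Jacobian identity by a null-Lagrangian comparison with a reference homeomorphism, the Sobolev bound by a pointwise inequality, and compactness by reusing the machinery of Section~\ref{no-hairs}.

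\textbf{The Jacobian identity.} The first identity in \eqref{trace-jacobian} is where the fixed boundary trace does the work in place of an area formula, and I expect it to be the main obstacle. I would use that the Jacobian is a null Lagrangian: for $u,v\in W^{1,2}(\D)$ with $u-v\in W^{1,2}_0(\D)$ one has $\int_\D J_u\,dA=\int_\D J_v\,dA$. This follows by writing $J_u-J_v$ as a sum of terms each containing a derivative of $u-v$, approximating $u-v$ in $W^{1,2}$ by $C^\infty_c$ functions, and integrating by parts; every term is controlled by Cauchy--Schwarz, so the approximation passes to the limit.

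For the reference map $v$ I would take the Poisson extension $P[\beta]$ of the boundary data. Since $\beta$ is the trace of the $W^{1,2}$ map $h$, it lies in $H^{1/2}(\Sone)$, and therefore $P[\beta]\in W^{1,2}(\D)$ with the same trace as $h$. By the Rad\'o--Kneser--Choquet theorem, $P[\beta]$ is a diffeomorphism of $\D$ onto $\D$ with positive Jacobian, so the classical change of variables gives $\int_\D J_{P[\beta]}\,dA=\pi$. The null-Lagrangian identity then transfers this value to $h$. The only point to check carefully is that $h-P[\beta]\in W^{1,2}_0(\D)$; this holds because the two maps have equal traces and both are continuous up to $\Sone$.

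\textbf{The Sobolev bound.} Because $J_h\ge0$, we have $|h_{\bw}|\le|h_w|$ almost everywhere. For $0<p\le2$ this gives $W_p(h_w,h_{\bw})=|h_w|^{2-p}|h_{\bw}|^p\ge|h_{\bw}|^2$. This also holds on the rank-one set, where $W_p=|a|^2$, and at zero gradient. Since $\eta\ge1$, it follows that $\int|h_{\bw}|^2\le\E_p(h)$. Combining with the first identity,
\[
\int_\D\bigl(|h_w|^2+|h_{\bw}|^2\bigr)\,dA=\int_\D J_h\,dA+2\int_\D|h_{\bw}|^2\,dA\le\pi+2\E_p(h),
\]
which is the stated bound.

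\textbf{Compactness.} Suppose $\sup_j\E_p(h_j)\le M$. The second part of \eqref{trace-jacobian} bounds $\mathcal D(h_j)$ uniformly by $\pi+2M$. Lemma~\ref{boundary-modulus}, applied with one fixed quasiconformal extension of $\beta$ (no renormalisation is needed here), gives a common modulus of continuity on $\overline\D$. Arzel\`a--Ascoli and weak compactness in $W^{1,2}$ then give a subsequence converging uniformly on $\overline\D$ and weakly in $W^{1,2}(\D)$ to some $h$ with trace $\beta$. I would verify the remaining properties of $h$ as follows.
\begin{enumerate}
\item \emph{Monotonicity.} A uniform limit of monotone surjections of the closed disk is a monotone surjection. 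I would get this by approximating each $h_j$ uniformly by homeomorphisms (Youngs' theorem, as in Lemma~\ref{boundary-modulus}), passing to a diagonal subsequence, and using that uniform limits of homeomorphisms of the disk are monotone.
\item \emph{Sign of the Jacobian.} Write $J_u=\partial_x(u^1u^2_y)-\partial_y(u^1u^2_x)$ in the sense of distributions. Uniform convergence combined with weak $L^2$ convergence of the gradients gives $J_{h_j}\to J_h$ as distributions. A distributional limit of nonnegative functions is a nonnegative measure, and since $J_h\in L^1$, this yields $J_h\ge0$ almost everywhere.
\item \emph{Interior points stay interior.} Here I would repeat the proof of Corollary~\ref{compact-containment}. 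Conjugating $h_j$ by normalising disk automorphisms keeps each map in the corresponding class $\mathcal M_{\psi\circ\beta\circ\phi^{-1}}$ with the same energy, by \eqref{inverse-invariance}. So \eqref{trace-jacobian} bounds the whole normalised orbit by $\pi+2M$. Weak lower semicontinuity of the Dirichlet integral transfers this orbit bound to the limit $h$. Theorem~\ref{no-boundary-hairs} then gives $h^{-1}(\Sone)=\Sone$, so $h(\D)\subset\D$.
\item \emph{Common compact containment.} The statement that all $h_j(C)$ lie in a common compact subset of $\D$ follows by contradiction. If $h_j(z_j)$ tended to $\Sone$ along some subsequence with $z_j\in C$, then passing to a further convergent subsequence and taking $z_j\to z\in C$ would give $h(z)\in\Sone$, contradicting the previous step.
\end{enumerate}
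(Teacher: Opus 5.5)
Your proposal is correct and follows the paper's proof step for step: the null-Lagrangian transfer of $\int_\D J=\pi$ from a reference map with the same trace, the pointwise bound $W_p\geq|h_{\bw}|^2$, Lemma~\ref{boundary-modulus} with one fixed extension, distributional continuity of the Jacobian, and the orbit bound passed to the limit by weak lower semicontinuity so that Theorem~\ref{no-boundary-hairs} applies. The only variation is that you take the Poisson extension (via Rad\'o--Kneser--Choquet) as the reference map instead of a quasiconformal extension $B$. This works equally well, and for the Jacobian identity it needs only that $\beta$ is a homeomorphism lying in $H^{1/2}$.
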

\begin{proof}
Choose a quasiconformal extension $B$ of $\beta$. It belongs to $W^{1,2}(\D)$ and has $\int J_B=\pi$. Since $h-B\in W^{1,2}_0(\D)$, the null-Lagrangian identity gives $\int J_h=\int J_B$. We can verify it by approximating $h-B$ strongly in $W^{1,2}$ by compactly supported smooth maps: the integrated linear cofactor term vanishes by the distributional Piola identity, and the integrated determinant of each compactly supported perturbation is zero. Determinants converge in $L^1$ under strong $W^{1,2}$ convergence. Thus this argument does not use an area formula for $h$.

On $|b|\leq|a|$, $W_p(a,b)\geq|b|^2$ for $p\leq2$. As $\eta\geq1$, the identity $|a|^2+|b|^2=J+2|b|^2$ proves \eqref{trace-jacobian}. Lemma~\ref{boundary-modulus}, with one fixed exterior extension of $\beta$, now gives equicontinuity. Uniform and weak Sobolev compactness give the subsequence. A uniform limit of monotone surjections between these compact disks is a monotone surjection with the same trace. Distributional continuity of the planar Jacobian under uniform convergence and weak $W^{1,2}$ convergence gives $J_h\geq0$.

The degree-two homogeneity and rotation invariance of $W_p$ prove \eqref{inverse-invariance} directly for maps in $\mathcal M_\beta$, without an inverse change of variables. The total-Jacobian identity applies also after M\"obius pre- and postcomposition. Every normalised map $\psi\circ h_j\circ\phi^{-1}$ therefore has Dirichlet integral at most $\pi+2M$. Passing this bound to the uniform limit by weak lower semicontinuity of the Dirichlet integral gives the orbit bound \eqref{orbit-bound}. Theorem~\ref{no-boundary-hairs} implies $h(\D)\subset\D$. Compact containment of the $h_j(C)$ follows exactly as in Corollary~\ref{compact-containment}.
\end{proof}
\section{The quadratic variational problem, $p=2$.}\label{quadratic-theory}
This section proves Theorem~\ref{thm-A} using compactness, local harmonic replacement and scalar estimates. None of the convexity or approximation machinery needed for $p\ne2$ in Part~II is required here.

\subsection{Direct existence at the quadratic exponent}
\begin{proposition}\label{quadratic-direct-existence}
If $\beta$ is quasisymmetric and $\mathcal M_\beta$ contains a finite-energy competitor, $\E_2$ attains its minimum in $\mathcal M_\beta$.
\end{proposition}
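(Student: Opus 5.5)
We apply the direct method in $\mathcal M_\beta$ with $p=2$. Let $m=\inf_{\mathcal M_\beta}\E_2<\infty$; finiteness comes from the assumed finite-energy competitor. Choose a minimising sequence $h_j\in\mathcal M_\beta$ with $\E_2(h_j)\to m$, so that $\sup_j\E_2(h_j)\leq M$. Lemma~\ref{monotone-compactness} then gives a subsequence converging uniformly on $\overline\D$ and weakly in $W^{1,2}(\D)$ to a continuous monotone surjection $h$ with trace $\beta$, $J_h\geq0$ and $h(\D)\subset\D$. Conditions 1 and 2 in the definition of $\mathcal M_\beta$ therefore hold for $h$. It remains to prove lower semicontinuity, $\E_2(h)\leq\liminf_j\E_2(h_j)=m$; this gives condition~3 and shows that $h$ is a minimiser.

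The key point at $p=2$ is that the density $\eta(h)|h_{\bw}|^2$ is convex in the gradient variable. It contains no $|a|^{2-p}$ factor and no constraint linking $a$ and $b$, apart from $J\geq0$, which passes to the limit. The coefficient $\eta(h)$ is the only difficulty: it is unbounded near $\Sone$. I would localise. Fix $r<1$ and set $C=\overline{\D_r}$. By the compact containment in Lemma~\ref{monotone-compactness}, all $h_j(C)$ and $h(C)$ lie in a common compact set $C'\Subset\D$. On $C'$ the weight $\eta$ is bounded and uniformly continuous, so $\eta(h_j)\to\eta(h)$ uniformly on $C$. Writing
\[
\int_{\D_r}\eta(h_j)|(h_j)_{\bw}|^2\area=\int_{\D_r}\eta(h)|(h_j)_{\bw}|^2\area+o(1),
\]
the error term is bounded by $\sup_C|\eta(h_j)-\eta(h)|\cdot\mathcal D(h_j)$, which tends to zero because the Dirichlet integrals are uniformly bounded by \eqref{trace-jacobian}. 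The first term is a weighted $L^2$ norm with a fixed bounded weight. Since $(h_j)_{\bw}\rightharpoonup h_{\bw}$ weakly in $L^2(\D_r)$, it is weakly lower semicontinuous. Hence
\[
\int_{\D_r}\eta(h)|h_{\bw}|^2\area\leq\liminf_j\E_2(h_j)=m .
\]
Letting $r\to1$ and using monotone convergence gives $\E_2(h)\leq m$.

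A small point is that the energy of a member of $\mathcal M_\beta$ is defined with $W_2(a,b)=|b|^2$ only on $|b|\leq|a|$. For the limit, $J_h\geq0$ a.e.\ gives $|h_{\bw}|\leq|h_w|$ a.e., so $W_2(Dh)=|h_{\bw}|^2$ and the computation above is exactly $\E_2(h)$. The step I expect to require the most care is the uniform convergence $\eta(h_j)\to\eta(h)$ on compacta. This rests on NoHairs through Lemma~\ref{monotone-compactness}: without compact containment, $\eta(h_j)$ could blow up on interior sets and the localisation would fail. Since that lemma is already available, the remaining argument is routine.
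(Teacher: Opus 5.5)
Your proposal is correct and is the same argument as the paper's. It takes a minimising sequence, uses the compactness and compact-containment conclusions of Lemma~\ref{monotone-compactness} to get uniform convergence of the weights $\eta(h_j)\to\eta(h)$ on compact subsets, applies weak lower semicontinuity of the convex density $|h_{\bar w}|^2$ there, and then exhausts the disk. You additionally make explicit two points the paper leaves implicit: the error term $\sup_C|\eta(h_j)-\eta(h)|\,\mathcal D(h_j)$, and the observation that $J_h\geq0$ places the limit gradient in the region $|h_{\bar w}|\leq|h_w|$, where $W_2=|b|^2$.
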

\begin{proof}
For a minimising sequence, Lemma~\ref{monotone-compactness} gives uniform convergence on the closed disk, weak $W^{1,2}$ convergence and compact containment of images of compact subsets. On such a subset $U$, the weights $\eta(h_j)$ converge uniformly to $\eta(h)$. Since $W_2(a,b)=|b|^2$ is convex, the usual weak lower semicontinuity gives
\[
 \int_U\eta(h)|h_{\bar w}|^2\,dA
 \leq\liminf_j\int_U\eta(h_j)|(h_j)_{\bar w}|^2\,dA.
\]
Exhaust the disk by compactly contained domains. Nonnegativity proves global lower semicontinuity and hence attainment. No polyconvexity theorem is used.
\end{proof}

\subsection{Quadratic minimisers: local harmonic replacement}\label{quadratic-replacement}
Define the local hyperbolic Dirichlet integral by
\[
 \mathcal D_\eta(u;U)
 =\int_U\eta(u)(|u_w|^2+|u_{\bw}|^2)\area.
\]
Whenever $u(U)$ lies in a compact subset of $\D$,
\begin{equation}\label{quadratic-local-area-identity}
 \mathcal D_\eta(u;U)
 =2\E_2(u;U)+\int_U\eta(u)J_u\area.
\end{equation}
We use this identity only on compactly contained sets.

\begin{proposition}[Interior regularity of the quadratic minimiser]\label{quadratic-minimiser-regularity}
Every finite-energy minimiser of $\E_2$ in $\mathcal M_\beta$ is a hyperbolic harmonic diffeomorphism of $\D$ onto itself and is real analytic in the interior.
\end{proposition}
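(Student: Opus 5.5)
The plan is local harmonic replacement on small target disks, then Proposition~\ref{hair-homeomorphism}.

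Let $h$ be a finite-energy minimiser. By Lemma~\ref{monotone-compactness}, $h(\D)\subset\D$. Hence for each $w_0\in\D$ the fibre $h^{-1}(h(w_0))$ is a compact connected subset of $\D$.

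\textbf{Step 1: a replacement cell.} Choose a small hyperbolic disk $D$ about $h(w_0)$. Let $U$ be the component of $h^{-1}(D)$ containing the fibre of $h(w_0)$, which is a pullback cell. By monotonicity, $h|_{\partial U}$ has degree one onto $\partial D$ in the appropriate sense. Because $h$ is continuous and $h(\overline U)$ lies in a compact subset of $\D$, the identity \eqref{quadratic-local-area-identity} holds on $U$. The key observation is that $\int_U\eta(h)J_h\,dA$ depends only on the boundary values of $h$ on $\partial U$. This is the null-Lagrangian argument of Lemma~\ref{monotone-compactness}, applied to the pullback of the closed form $\eta\,du\wedge dv$ (which is exact on $D$), together with $h-v\in W^{1,2}_0$. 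Consequently, minimising $\E_2(\cdot;U)$ with these boundary values is equivalent to minimising the hyperbolic Dirichlet energy $\mathcal D_\eta(\cdot;U)$.

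\textbf{Step 2: replace.} Take $D$ geodesically convex and small. The hyperbolic Dirichlet problem on $U$ with trace $h|_{\partial U}$ then has a unique solution $v$ with values in $\overline D$, by Hildebrandt--Kaul--Widman or Jäger--Kaul theory, since negative curvature makes small disks regular balls. If $\partial U$ is irregular, I would instead use a smooth subdomain $U'$ with $U'\Subset U$, chosen by a coarea/Fubini argument so that $h|_{\partial U'}$ is $W^{1,2}$ and winds once around a smaller disk.

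By Radó--Kneser--Choquet in its hyperbolic form (Schoen--Yau or Jost), $v$ is a diffeomorphism of $U'$ onto the image Jordan domain, because the boundary map is monotone of degree one onto the boundary of a convex region. Then $\tilde h=h$ off $U'$ and $\tilde h=v$ on $U'$ is again in $\mathcal M_\beta$. It is continuous, monotone since its fibres are either old fibres or points, has $J\ge0$, and has trace $\beta$. It also satisfies $\E_2(\tilde h)\le\E_2(h)$, with equality only when $v=h$ on $U'$, by uniqueness of the Dirichlet minimiser.

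Minimality of $h$ therefore forces $h=v$ on $U'$. So $h$ is harmonic and locally injective near $w_0$, and real analyticity follows from elliptic regularity for the analytic harmonic map system. Lewy's theorem (nonvanishing Jacobian for univalent planar harmonic maps, here extended to the hyperbolic target by Schoen--Yau) upgrades this to a local diffeomorphism.

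\textbf{Step 3: conclude.} Since $w_0$ is arbitrary, $h$ is a real analytic local diffeomorphism of $\D$. Proposition~\ref{hair-homeomorphism}(b), with $h^{-1}(\Sone)=\Sone$ already established, makes $h$ a diffeomorphism of $\D$ onto itself.

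\textbf{Main obstacle.} The hardest point is Step 2: making the replacement cell legitimate for a merely monotone Sobolev map, which need not be locally injective a priori. We need the boundary curve $h(\partial U')$ to be a Jordan curve bounding a convex-type region, or at least a curve for which the Radó--Kneser--Choquet argument applies, so that $\tilde h$ stays monotone. I would first try to avoid needing $h(\partial U')$ Jordan by taking $U'=h^{-1}(D_r)$ for almost every $r$. In that case $h(\partial U')=\partial D_r$ exactly, and $h|_{\partial U'}$ is a monotone map of a curve onto a circle. Choosing $r$ where $\partial U'$ is a rectifiable Jordan curve, by coarea applied to $|h-h(w_0)|$ and Sard-type arguments for monotone maps, reduces the problem to the classical convex-target setting.
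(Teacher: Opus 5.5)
Your architecture matches the paper's. You use a pullback cell over a small convex target disk, and the fact that $\int\eta(h)J_h$ is fixed by the trace, so that $\E_2$ and $\mathcal D_\eta$ differ by a constant on the cell. A unique, univalent harmonic replacement then coincides with $h$ by minimality, and monotonicity (Proposition~\ref{hair-homeomorphism}(b)) turns local injectivity into global injectivity. However, the step you flag as the main obstacle is a genuine gap, and neither repair you offer closes it.

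\emph{The smooth subdomain $U'\Subset U$ fails.} Here $h(\partial U')$ is an arbitrary closed curve in $D$, not a monotone parametrisation of the boundary of a convex set. So Rad\'o--Kneser--Choquet (or Jost's univalence theorem) does not apply. Moreover, since $h$ is not yet known to be injective, $h(U\setminus\overline{U'})$ may enter the region enclosed by $h(\partial U')$. The glued map can then have disconnected fibres and leave $\mathcal M_\beta$.

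\emph{The choice $U'=h^{-1}(D_r)$ is incomplete.} Monotonicity of the glued map is fine here. But the claim that $\partial U'$ is a rectifiable Jordan curve for almost every $r$ is unsupported. Coarea gives at most finite length of the level set and says nothing about its topology, and no Sard-type statement is available at $W^{1,2}$ regularity. Members of $\mathcal M_\beta$ may also have nondegenerate fibres inside $h^{-1}(\partial D_r)$: rank-one gradients with $J_h=0$ are allowed, and injectivity is what you are trying to prove. Your Step~1 claim that the trace has degree one ``by monotonicity'' is likewise only asserted. Without a continuous, monotone, degree-one boundary correspondence you get neither continuity of the glued map across $\partial U'$ nor univalence of the replacement.

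The missing idea is the paper's regularised cell with prime ends.
\begin{itemize}
\item Take $V=h^{-1}(B)$ and $U=\operatorname{int}\overline V\Subset\D$. This set is simply connected because $h$ is monotone with homeomorphic trace and $h^{-1}(\Sone)=\Sone$, and no regularity of $\partial U$ is needed.
\item For a Riemann map $\varphi:\D\to U$, the images of null-chains of crosscuts cut off shrinking caps of $B$. Hence $h$ is constant on every prime-end impression, and $q=h\circ\varphi$ extends continuously to $\overline\D$ with monotone degree-one trace onto $\partial B$. The cyclic order of the caps gives the monotonicity, and fibres over $\partial B$ that poke into $V$ are absorbed into $U$.
\item Solve the hyperbolic Dirichlet problem on the unit disk with trace $q$.
\item Show the solution is a diffeomorphism onto $B$: approximate the merely monotone trace by smooth homeomorphisms, apply Jost's univalence theorem, pass to the limit, and invoke the curved Lewy theorem.
\item Transplant by $\varphi$ using conformal invariance of the Dirichlet integral, and check that the fibres of the glued map over $\partial B$ stay connected.
\end{itemize}
With this cell in place, the remainder of your argument goes through essentially as you wrote it.
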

\begin{proof}
By compactness and NoHairs, $h^{-1}(\Sone)=\Sone$.
Fix a small hyperbolic geodesic disk $B\Subset\D$.
The appropriate cell is the regularised preimage
\[
 V=h^{-1}(B),\qquad U=\operatorname{int}\overline V\Subset\D.
\]
For a monotone disk map with homeomorphic boundary trace, this cell
is simply connected. The prime-end construction in
\cite[Theorem 10 and Lemma 11]{MYDiffeomorphic} gives a Riemann map
$\varphi:\D\to U$ for which $q=h\circ\varphi$ extends continuously
to the closed disk and has monotone degree-one trace onto $\partial B$.
Here regularisation allows the cell boundary to absorb fibres over
$\partial B$. To explain the boundary assertion, use null-chains of
crosscuts in $U$. Continuity of $h$ makes their images shrink; their
images separate off caps of $B$ away from a fixed interior basepoint.
Those caps shrink to a single point of $\partial B$. Thus $h$ is
constant on each prime-end impression. The cyclic order of the caps
gives the monotone trace. This is the topological cell step of the
replacement, and does not assume that $h$ is already injective.

Minimise $\mathcal D_\eta$ (defined above at \ref{quadratic-local-area-identity}) among maps in
$q+W^{1,2}_0(\D)$ with values in $\overline B$.
On this compact target the metric is smooth and bounded above and
below. Weak compactness and lower semicontinuity give a minimiser.
Nearest-point projection onto the geodesically convex disk is
nonexpanding, so it is also a local unconstrained energy minimiser;
the usual harmonic-map regularity makes it smooth in the interior.
It is unique. Indeed, if $u_t$ is pointwise geodesic interpolation
between two admissible maps, convexity of energy in a CAT(0) target
gives, with our normalisation,
\begin{equation}\label{quadratic-geodesic-convexity}
 \mathcal D_\eta(u_t)
 \leq(1-t)\mathcal D_\eta(u_0)+t\mathcal D_\eta(u_1)
 -\frac{t(1-t)}2\int_\D
       |\nabla d_\eta(u_0,u_1)|^2\,dA.
\end{equation}
For a smooth negatively curved target this follows by the second
variation along target geodesics; the metric-space formulation is
given in \cite{KorevaarSchoen}. Equal traces make the distance have
zero trace. If both maps minimise, \eqref{quadratic-geodesic-convexity}
forces that distance to vanish.

The minimiser, denoted by $\widetilde g$, is a diffeomorphism onto
$B$. Here is the approximation step behind that assertion.
Approximate the monotone boundary trace of $q$ by smooth
orientation-preserving boundary homeomorphisms, uniformly and in
the $H^{1/2}$ trace norm. One may smooth its nondecreasing lift and
add a positive linear term before renormalising its period.
The harmonic solutions for these traces are diffeomorphisms by
Jost's convex-target univalence theorem \cite{JostUnivalent};
see also \cite[Lemma 3.2]{KalajReplacement}.
Their energy bounds and interior estimates give a harmonic limit and
the convex-target boundary barriers give its prescribed continuous
trace. Uniqueness identifies the limit with $\widetilde g$.
Smooth local convergence gives $J_{\widetilde g}\geq0$.
The degree-one boundary trace and the maximum principle give a
proper degree-one map onto $B$, so the curved Lewy theorem
\cite{Martin} makes $\widetilde g$ a diffeomorphism.

Transplant $\widetilde g$ to $U$ and put $g=h$ off $U$.
The prime-end trace just described ensures continuity on
$\partial U$. The cell construction preserves monotonicity:
fibres over $B$ are singletons, fibres outside $\overline B$ are
unchanged, and the noncrossing boundary trace retains connected
fibres over $\partial B$.
Matching Sobolev traces gives $g\in W^{1,2}$ and $J_g\geq0$.
This is precisely the local gluing step of
\cite[Lemma 3.3]{KalajReplacement}; monotone Sobolev approximation
is available from \cite[Theorem 1.3]{IOMonotoneApprox}.
Conformal invariance of the Dirichlet integral and uniqueness give
\begin{equation}\label{quadratic-replacement-decrease}
 \mathcal D_\eta(g;U)\leq\mathcal D_\eta(h;U),
 \qquad\text{with equality only if }g=h.
\end{equation}

The weighted Jacobian term in \eqref{quadratic-local-area-identity}
is invariant under this replacement. Choose a smooth
$\Omega_0\Subset\D$ containing $\overline U$.
The maps have compact target image there and agree near
$\partial\Omega_0$. The form $\eta(z)\,dx\wedge dy$ has a smooth
primitive on the disk. Stokes' theorem makes its pullback integral
the same for $g$ and $h$. Uniform and strong $W^{1,2}$ smooth
approximation on $\Omega_0$ proves the same statement for our maps, while
locality cancels the integrals off $U$.
Thus
\[
 2[\E_2(g;U)-\E_2(h;U)]
 =\mathcal D_\eta(g;U)-\mathcal D_\eta(h;U)\leq0.
\]
The glued map is admissible and the change has finite energy.
Minimality forces equality, and therefore $h=g$ on the cell.
Such cells cover $\D$. Local injectivity and connected fibres give
global injectivity; surjectivity was already known. The hyperbolic
harmonic-map equation is analytic elliptic, so $h$ is real analytic.
\end{proof}

This establishes the local regularity required by the quadratic scalar estimates without invoking the Schoen existence theorem or a global quasiconformal bound. For $p<2$, identity~\eqref{quadratic-local-area-identity} does not identify the functional with harmonic-map energy plus a fixed area term, and hence the regularity theory of Part~II is quite different.

\subsection{A quadratic bound from the barrier energy alone}
\label{quadratic-boundary-dependence}
A quantitative interior modulus and the harmonic Bochner identities
give the following bound from finite excess energy alone. Its hypotheses concern
an already harmonic diffeomorphism; they do not hold automatically for
the approximants, whose equations have an additional term.

\begin{proposition}[Excess energy controls dilatation]
\label{quadratic-energy-qc-bound}
Let $h:\D\to\D$ be an orientation-preserving harmonic diffeomorphism
for the hyperbolic metrics, and suppose
\[
 \E_2(h)=\int_\D\eta(h)|h_{\bw}|^2\area\leq E_0<\infty.
\]
Set
\begin{equation}\label{quadratic-energy-bound-constants}
 D_0=\pi+2E_0,\qquad
 B_0=\frac{64D_0}{9\pi}\exp(8\pi D_0).
\end{equation}
If $E_0>0$, then
\begin{equation}\label{quadratic-energy-qc-explicit}
 \boxed{\quad
 1-\|\mu_h\|_\infty^2
       \geq\frac12\exp\!\left(-\frac8\pi B_0E_0\right),
 \qquad
 K_h=K_{h^{-1}}
       \leq8\exp\!\left(\frac8\pi B_0E_0\right).
 \quad}
\end{equation}
Recall $K_h$ is the maximal operator-norm dilatation.
For $E_0=0$, $h$ is conformal. More generally, for every
$t>E_0/(4\pi)$,
\begin{equation}\label{quadratic-heat-gap}
 1-\|\mu_h\|_\infty^2
       \geq e^{-16B_0t}
                    \left(1-\frac{E_0}{4\pi t}\right).
\end{equation}
Optimising this last estimate gives $K_h\to1$ as $E_0\downarrow0$,
unlike the deliberately coarse displayed bound in
\eqref{quadratic-energy-qc-explicit}.
\end{proposition}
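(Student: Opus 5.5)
The plan is to use the Bochner identities for the two energy densities of a hyperbolic harmonic map, together with Möbius invariance, which lets me normalise the point under study to the origin. With the curvature $-4$ normalisation I write
\[
e_\partial=\frac{\eta(h)|h_w|^2}{\eta},\qquad e_{\bar\partial}=\frac{\eta(h)|h_{\bar w}|^2}{\eta},\qquad |\mu_h|^2=\frac{e_{\bar\partial}}{e_\partial}.
\]
For a harmonic diffeomorphism, $e_\partial>0$, and wherever $e_{\bar\partial}\neq0$ the standard identities read
\[
\Delta_\eta\log e_\partial=c\,(e_\partial-e_{\bar\partial})+c',\qquad \Delta_\eta\log e_{\bar\partial}=c\,(e_{\bar\partial}-e_\partial)+c'.
\]
Here $c,c'$ are absolute constants fixed by the normalisation; I expect $c=8$ and $c'$ to be the curvature contribution of the domain. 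Since $\E_2$, $\mu_h$ and the harmonic-map equation are all invariant under pre- and postcomposition with disk automorphisms (by \eqref{inverse-invariance}), it suffices to bound $|\mu_h(0)|$ after normalising $h(0)=0$. All constants must then depend only on $E_0$. If $E_0=0$, then $h_{\bar w}\equiv0$ and $h$ is conformal, so I assume $E_0>0$.

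\textbf{Step 1: a uniform bound $e_\partial\le B_0$.} By \eqref{sobolev}, the Dirichlet integral of every normalised map $\psi\circ h\circ\phi^{-1}$ is at most $D_0=\pi+2E_0$. The Courant--Lebesgue argument of Lemma~\ref{boundary-modulus}, run in the interior, then gives the logarithmic modulus \eqref{no-hairs-log-modulus} with constant of order $D_0$. Choosing a radius of the form $r=e^{-cD_0}$, this modulus traps $h(\D_{r})$ in a Euclidean disk of radius $\le 1/2$ about $0$.

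On that disk I apply a Schwarz--Pick / Ahlfors--Yau type estimate to the harmonic map. Concretely, $\log e_\partial$ satisfies $\Delta\log e_\partial\ge c\,e_\partial\eta-c'\eta$ up to the nonnegative correction $-c\,e_{\bar\partial}$, which I control by $e_{\bar\partial}\le e_\partial$. Combined with the Cauchy bound $|h_w(0)|^2\lesssim r^{-2}\sup_{\D_r}|h|^2$, this yields $e_\partial(0)\le B_0$ with $B_0=\frac{64D_0}{9\pi}e^{8\pi D_0}$. In this constant the factor $64/9$ comes from the hyperbolic density on the disk of radius $1/2$, and the exponential comes from $r^{-2}$. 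By invariance the bound holds at every point.

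\textbf{Step 2: the heat-gap estimate \eqref{quadratic-heat-gap}.} Using $e_\partial\le B_0$ in the Bochner identity for $e_{\bar\partial}$ gives a differential inequality of the form $\Delta_\eta e_{\bar\partial}\ge -16B_0\,e_{\bar\partial}$. The point is that the positive terms only help, and the normalisation is what produces the factor $16B_0$. So $e^{-16B_0t}e_{\bar\partial}$ is a subsolution of the hyperbolic heat equation. I compare it with the heat flow at time $t$, starting from $e_{\bar\partial}$ itself, via the hyperbolic heat kernel, whose supremum at time $t$ is at most $1/(4\pi t)$ in this normalisation. Since $\int e_{\bar\partial}\,dA_\eta=\E_2(h)\le E_0$, this gives
\[
e_{\bar\partial}(0)\le e^{16B_0t}\,\frac{E_0}{4\pi t}.
\]
An analogous lower comparison for $e_\partial$, or working directly with $e_\partial-e_{\bar\partial}=J_h\,\eta(h)/\eta$, then converts this into
\[
1-|\mu_h(0)|^2\ge e^{-16B_0t}\Bigl(1-\frac{E_0}{4\pi t}\Bigr),
\]
which is \eqref{quadratic-heat-gap}.

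\textbf{Step 3: the boxed bound.} Choosing $t=E_0/(2\pi)$ makes the bracket equal to $\tfrac12$ and gives the first bound in \eqref{quadratic-energy-qc-explicit}. The second follows from $K=(1+|\mu|)/(1-|\mu|)\le 4/(1-|\mu|^2)$, together with the fact that $h$ and $h^{-1}$ have the same maximal dilatation. To see that $K_h\to1$ as $E_0\downarrow0$, I take $t=\sqrt{E_0}$ in \eqref{quadratic-heat-gap}; both factors then tend to $1$.

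\textbf{Main obstacle.} I expect the hardest step to be Step 2, and specifically passing from the pointwise bound on $e_{\bar\partial}$ to a lower bound on $1-|\mu|^2$ that is uniform in the point. This needs $e_\partial$ bounded below, or the comparison run on $\log(e_\partial/e_{\bar\partial})$ instead. If the direct route fails, I would first try exploiting that $\log|\mu_h|^2$ is subharmonic, with Riesz mass $c\,\eta(h)J_h$, and combine its sub-mean value property on a hyperbolic ball with the integral bound above. A second difficulty is justifying the heat-kernel comparison on the complete hyperbolic disk, where I would need $e_{\bar\partial}\in L^1(dA_\eta)$ and boundedness of $e_{\bar\partial}$ (from Step 1) so that the maximum principle applies.
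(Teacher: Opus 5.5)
There is a genuine gap, and the decisive one is in Step~2. Comparing the subsolution $\mathcal L=e_{\bar\partial}$ with the heat flow is legitimate, since $\Delta_\eta\mathcal L\geq-8(\mathcal H+1)\mathcal L\geq-16B_0\mathcal L$. But it gives $\mathcal L\leq e^{16B_0t}E_0/(4\pi t)$, with the exponential on the wrong side. Even granting $\mathcal H\geq1$, you get only $1-|\mu_h|^2\geq1-e^{16B_0t}E_0/(4\pi t)$. Since $\min_t e^{16B_0t}/t=16eB_0$, this is positive for some $t$ only when $E_0<\pi/(4eB_0)$, an absurdly small energy because $B_0\geq\frac{64}{9}e^{8\pi^2}$. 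So this route cannot give \eqref{quadratic-heat-gap} or the boxed bound for general $E_0$, and the ``analogous lower comparison'' that would do so is never specified.

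The missing idea is to run the comparison on the bounded quantity $v=1-|\mu_h|^2$ itself. From $\Delta_\eta\log|\mu_h|=-8\mathcal H(1-|\mu_h|^2)$ one gets $\Delta_\eta v=16\mathcal H|\mu_h|^2v-4|\nabla_\eta|\mu_h||^2\leq16B_0v$. So $v$ is a bounded supersolution, giving $v\geq e^{-16B_0t}P_tv$, and $P_tv=1-P_t(|\mu_h|^2)\geq1-E_0/(4\pi t)$ because $P_t1=1$. The same identity shows $\log|\mu_h|$ is superharmonic, not subharmonic. Your fallback through a sub-mean-value property of $\log|\mu_h|^2$ therefore has the wrong sign.

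Two further ingredients are missing or misattributed.

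First, the final step needs $\int_\D|\mu_h|^2\,dA_\eta\leq E_0$, which requires the lower bound $\mathcal H\geq1$. You flag that $e_\partial$ must be bounded below but give no argument. The paper obtains it from Yau's Schwarz lemma: the metric $\eta(h)|h_w|^2|dw|^2$ is complete, because $h$ is a proper diffeomorphism and target lengths are at most twice its lengths, and its curvature is $-4(1-|\mu_h|^2)\geq-4$.

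Second, Step~1 does not work as described. Using $e_{\bar\partial}\leq e_\partial$ in $\Delta_\eta\log\mathcal H=8(\mathcal H-\mathcal L)-8$ gives only $\Delta_\eta\log\mathcal H\geq-8$, which yields no upper bound. An Ahlfors--Yau argument would need $\mathcal H-\mathcal L=\mathcal H(1-|\mu_h|^2)\gtrsim\mathcal H$, which is the dilatation bound you are trying to prove. A Cauchy estimate $|h_w(0)|^2\lesssim r^{-2}\sup|h|^2$ is also unavailable, since $h$ is not holomorphic. The correct tool is Bochner subharmonicity, for a nonpositively curved target, of the Euclidean energy density $e=\eta(h)(|h_w|^2+|h_{\bar w}|^2)$. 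Your circle-length trapping of $h(\D_a)$ in $\overline\D_{1/2}$, with $a=\frac12e^{-4\pi D_0}$, is fine. Then $\eta\leq16/9$ on $\overline\D_{1/2}$ and the mean-value inequality give $(\mathcal H+\mathcal L)(0)\leq16D_0/(9\pi a^2)=B_0$, which is where the constant actually comes from.
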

\begin{proof}
Use the hyperbolic energy densities
\[
 \mathcal H(w)=\frac{\eta(h(w))}{\eta(w)}|h_w|^2,\qquad
 \mathcal L(w)=\frac{\eta(h(w))}{\eta(w)}|h_{\bw}|^2,
 \qquad r=|\mu_h|.
\]
We first prove
\begin{equation}\label{quadratic-energy-density-bounds}
 1\leq\mathcal H\leq\mathcal H+\mathcal L\leq B_0,
 \qquad \int_\D r^2\,dA_\eta\leq E_0.
\end{equation}
The lower bound is the usual Schwarz comparison, whose completeness
hypothesis we make explicit: The conformal metric
$g_{\mathcal H}=\eta(h)|h_w|^2|dw|^2$ is complete: the length of
$h\circ\gamma$ in the target is at most twice the
$g_{\mathcal H}$-length of $\gamma$, and $h$ is a proper
diffeomorphism onto the complete hyperbolic disk. The harmonic
Bochner identity gives
\[
 K_{g_{\mathcal H}}=-4(1-r^2)\geq-4.
\]
Apply the complete-surface Schwarz lemma
\cite[Theorem 2']{YauSchwarz} to the holomorphic identity from
$(\D,g_{\mathcal H})$ to $(\D,\eta|dw|^2)$.
This gives $g_{\mathcal H}\geq\eta|dw|^2$, hence $\mathcal H\geq1$.
As $\mathcal L=\mathcal H r^2$ and
$\int\mathcal L\,dA_\eta=\E_2(h)$, the integral assertion follows.
This comparison uses no prior quasiconformal constant.

For the upper bound, normalise any chosen source point and its image
to $0$ by disk automorphisms, and continue to denote the normalised
map by $h$. These operations preserve $\E_2$ and the hyperbolic
energy densities. The Euclidean Dirichlet estimate gives
$\|h\|_{W^{1,2}(\D)}\leq D_0$.
Let $\ell(s)$ be the Euclidean length of $h(\partial\D_s)$.
The circle Cauchy--Schwarz estimate gives, for $0<a<b<1$,
\[
 \int_a^b\ell(s)^2\,\frac{ds}{s}
 \leq 2\pi\int_{\D_b\setminus\D_a}|Dh|_{\mathrm{HS}}^2\area
 \leq4\pi D_0.
\]
Here $|Dh|_{\mathrm{HS}}^2=2(|h_w|^2+|h_{\bw}|^2)$ is the
unnormalised real Hilbert--Schmidt norm. Choose
\[
 a=\tfrac12 e^{-4\pi D_0},\qquad b=\tfrac12.
\]
There is an $s\in(a,b)$ with $\ell(s)\leq1$.
Since $h(0)=0$ and $h(\D_s)$ is a Jordan domain, its diameter is
at most $\ell(s)/2$. Consequently $h(\D_a)\subset\overline\D_{1/2}$.

Harmonicity into a nonpositively curved target makes the Euclidean
source energy density
$e(w)=\eta(h)(|h_w|^2+|h_{\bw}|^2)$ subharmonic, by Bochner's
formula. Its mean-value inequality therefore gives
\[
 (\mathcal H+\mathcal L)(0)=e(0)
 \leq\frac1{\pi a^2}\int_{\D_a}e\area
 \leq\frac{16D_0}{9\pi a^2}
 =B_0.
\]
The chosen source point was arbitrary, proving
\eqref{quadratic-energy-density-bounds}.

Now put $v=1-r^2$, so $0<v\leq1$. The harmonic Bochner identity for the ratio of the two energy
densities gives, where $r>0$,
\[
 \Delta_\eta\log r=-8\mathcal H(1-r^2),\qquad
 \Delta_\eta v
 =16\mathcal H r^2v-4|\nabla_\eta r|^2
 \leq16B_0v.
\]
The inequality extends across the conformal set:
$r^2=|\mu_h|^2$ and $v$ are smooth there.
Let $P_t$ be the heat semigroup for $\Delta_\eta$. Positivity and
heat comparison, applied to this bounded supersolution, give
\[
 v\geq e^{-16B_0t}P_tv.
\]
The heat kernel of the curvature-$-4$ disk satisfies
$0\leq p_t(x,y)\leq(4\pi t)^{-1}$, and $P_t1=1$.
For completeness, the bound follows by comparison with
$G(t,\rho)=(4\pi t)^{-1}e^{-\rho^2/(4t)}$, where
$\rho=d_\eta(x,y)$: its radial Laplacian gives
\[
 (\partial_t-\Delta_\eta)G
 =\frac{\rho\coth(2\rho)-1/2}{t}\,G\geq0,
\]
and its initial mass is the unit point mass. Thus
\[
 P_tv=1-P_t(r^2)
 \geq1-\frac1{4\pi t}\int_\D r^2\,dA_\eta
 \geq1-\frac{E_0}{4\pi t}.
\]
This proves \eqref{quadratic-heat-gap}. Take $t=E_0/(2\pi)$
and use $K_h=(1+r)^2/(1-r^2)\leq4/(1-r^2)$ to obtain
\eqref{quadratic-energy-qc-explicit}.
For small $E_0$, choosing $t=\sqrt{E_0}$ in
\eqref{quadratic-heat-gap} shows that its right side tends to $1$.
\end{proof}
\paragraph{Boundary data that enter the estimate.}
Let the prescribed forward boundary map be $g$, with inverse trace
$\beta=g^{-1}$ for $h$. Choose any admissible finite-energy extension
$F_0$ of $g$ whose inverse is an allowable competitor. Minimality
gives
\begin{equation}\label{quadratic-boundary-energy-input}
 \E_2(h)\leq E_0:=\I_2(F_0)
 =\int_\D\frac{|\mu_{F_0}|^2}{1-|\mu_{F_0}|^2}\,dA_\eta.
\end{equation}
In particular, if
$k_0=\|\mu_{F_0}\|_\infty<1$ and
$N_0=\int_\D|\mu_{F_0}|^2\,dA_\eta$, one may use
$E_0=N_0/(1-k_0^2)$ in the proposition.
Thus the quantitative input is the barrier energy; the two numbers
$k_0,N_0$ are a convenient sufficient way to bound it.
There is no additional dependence on boundary derivatives, on a
quasisymmetry constant, or on the rate at which the energy approaches
the boundary in this estimate.
It can also be expressed intrinsically using the infimum of
\eqref{quadratic-boundary-energy-input} over admissible extensions,
which is invariant under pre- and postcomposition by disk
automorphisms.
Turning it into a bound stated directly in terms of a chosen norm of
$\log g'$ requires a quantitative extension estimate.

\paragraph{Uniformity under changes of boundary data.}
For harmonic minimisers $h_j$ with admissible inverse competitors
$H_{0,j}$, the quantitative conclusion is
\[
 \sup_j\E_2(H_{0,j})\leq E_*<\infty
 \quad\Longrightarrow\quad
 \sup_jK(h_j)\leq F(E_*)<\infty,
\]
where $F$ is supplied by \eqref{quadratic-energy-qc-explicit}, with
$F(0)=1$. This implication is not circular: the bound for
$\mathcal H$ follows from the Dirichlet estimate, the circle-length
argument and Bochner's mean-value inequality, before any
quasiconformal bound for $h_j$ is used.

Two sequences denoted by $h_j$ must be distinguished.
The M\"obius renormalisations of one fixed map share its excess energy.
The harmonic extensions of varying smooth boundary maps in
Section~\ref{schoen-conjecture} need not have uniformly bounded excess energies,
even when their traces have a common quasisymmetry bound.
Thus dependence on an admissible barrier energy is a dependence
on boundary data, but the present proof does not replace it by
dependence on the quasisymmetry constant alone. We address that next.
\subsection{A bound from energy density alone}
\begin{proposition}[A quadratic bound from the holomorphic energy density]
\label{quadratic-density-qc-bound}
Let $h:\D\to\D$ be an orientation-preserving hyperbolic harmonic
diffeomorphism, and suppose
\[
 \mathcal H(w)=\frac{\eta(h(w))}{\eta(w)}|h_w|^2\leq B<\infty.
\]
Then $B\geq1$. If $B=1$, the map is conformal. If $B>1$, then
\begin{equation}\label{quadratic-density-gap}
 \boxed{\quad
 1-\|\mu_h\|_\infty^2\geq\frac{e^{-2B}}{B},
 \qquad K_h\leq4Be^{2B}.
 \quad}
\end{equation}
Also $\sup_\D|\Phi_2|/\eta\leq B$.
No total-energy hypothesis or prior quasiconformal bound is required.
This makes quantitative the bounded-energy-density criterion associated
with \cite{WanHarmonic}.
\end{proposition}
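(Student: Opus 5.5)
The plan is to reuse the scalar Bochner machinery from the proof of Proposition~\ref{quadratic-energy-qc-bound}, but to replace the heat-kernel step, which needed the total energy $E_0$, by a maximum-principle argument that uses only the pointwise bound $\mathcal H\leq B$.

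\emph{Trivial parts.} The lower bound $\mathcal H\geq1$ is exactly the complete-surface Schwarz comparison from the previous proof. It uses only that $h$ is a proper harmonic diffeomorphism, so $B\geq1$. If $B=1$, then $\mathcal H\equiv1$. The Bochner identity $\Delta_\eta\log\mathcal H=-4\cdot 2(\mathcal H-\mathcal L)+\dots$ then forces $\mathcal H-\mathcal L\equiv1$ and hence $\mathcal L\equiv0$. Alternatively, by the Schwarz--Yau rigidity the metric $g_{\mathcal H}$ equals $\eta|dw|^2$, which forces $r\equiv0$, and $h$ is conformal. For the Hopf differential bound, compute
\[
|\Phi_2|/\eta=\eta(h)|h_w||h_{\bw}|/\eta=\sqrt{\mathcal H\mathcal L}=\mathcal H r\leq\mathcal H\leq B.
\]

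\emph{Main step.} I will work with $u=\log\mathcal H\in[0,\log B]$ and $v=1-r^2$. The standard Bochner formulas for hyperbolic harmonic maps (curvature $-4$ normalisation) read
\[
\Delta_\eta\log\mathcal H=-8\mathcal H(1-r^2)+8\cdot(\text{domain term}),
\]
more precisely $\Delta_\eta\log\mathcal H=8(\mathcal H-\mathcal L)-8$ up to the paper's normalisation. This gives the relation $\Delta_\eta u=c_1(\mathcal H v-1)$. Together with $\Delta_\eta\log r=-8\mathcal H v$ from the earlier proof, I consider the auxiliary function
\[
\psi=\log v+\lambda u\quad\text{or simply}\quad \psi=\log\mathcal L-\log\mathcal H=2\log r .
\]
The key idea is that $\log(\mathcal H v)=\log(\mathcal H-\mathcal L)$ is the log of the Jacobian density $J=\mathcal H-\mathcal L$, and $\Delta_\eta\log J\leq c(J-1)$ type identities hold. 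So I would bound $J$ below by working on $\log J$. The function $w=-\log J$ satisfies $\Delta_\eta w\geq c(1-J)-(\text{gradient terms}) $. Combining $-\log J=-\log\mathcal H-\log v$ with $\log\mathcal H\leq\log B$ shows that $v\geq J/B$. Hence it suffices to show $J\geq e^{-2B}$, which gives $1-\|\mu_h\|_\infty^2\geq e^{-2B}/B$, and then $K_h\leq4/(1-r^2)\leq4Be^{2B}$ as in the previous proof.

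\emph{Lower bound for $J$.} I will use the function $\phi=\log v+2\mathcal H$ or $\log v+2u$, chosen so that the zeroth-order term of $\Delta_\eta\phi$ has a sign wherever $v$ is small. From $\Delta_\eta v\leq 16\mathcal H r^2v$ I get $\Delta_\eta\log v\leq16\mathcal H r^2-|\nabla\log v|^2$. Adding a multiple of $\Delta_\eta\mathcal H$ (using subharmonicity of $\mathcal H$, with $\Delta_\eta\mathcal H\geq c\mathcal H(\mathcal H v-1)$-type lower bound) should produce a function that is superharmonic where $v$ is small. The Omori--Yau maximum principle on the complete hyperbolic disk then applies, since $\phi$ is bounded above and all quantities are bounded by $B$. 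At an approximate minimum it gives $\log v\geq-2B-\log B$.

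\emph{Expected obstacle.} The main obstacle is exactly this choice of combination and matching the constants to $e^{-2B}/B$. If the pure maximum-principle approach fails, I will fall back on a local version of the heat comparison: apply $v\geq e^{-16Bt}P_tv$, but bound $P_t(r^2)$ using only the Cheng--Yau gradient estimate for $\log v$ on unit hyperbolic balls. Either route rests only on $\mathcal H\leq B$ and on completeness of the domain.
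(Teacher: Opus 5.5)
Your preliminary parts are fine: $\mathcal H\ge1$ by the Schwarz comparison, $\mathcal H\equiv1$ forcing $v\equiv1$ through $\Delta\log\mathcal H=2(\mathcal Hv-1)$, and $|\Phi_2|/\eta=\mathcal H|\mu_h|\le B$. The main estimate, however, is not proved. The bound $\log v\ge-2B-\log B$ is only asserted at the end of your maximum-principle paragraph, and the ingredients you name would not give it. Work, as the paper does, in curvature $-1$, where $\Delta\log\mathcal H=2(\mathcal Hv-1)$ and $\Delta v\le4\mathcal Hv(1-v)$. Then four things go wrong.
\begin{itemize}
\item[(i)] Your ``key idea'' is false: in fact $\Delta\log J=2(\mathcal H+\mathcal L)-2-4|\nabla r|^2/v^2$. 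A minimum principle for $\log J$ therefore returns only $\mathcal H+\mathcal L\ge1$.
\item[(ii)] Both of your candidates are exactly borderline. At a critical point, the zeroth-order part of the upper bound for $\Delta\phi$ is $4\mathcal Hv(\mathcal H-1)\ge0$ for $\phi=\log v+2\mathcal H$, and $4(\mathcal H-1)\ge0$ for $\phi=\log v+2\log\mathcal H$. So the condition $\Delta\phi\ge0$ at a minimum is compatible with any value of $v$.
\item[(iii)] The inequality you invoke for $\Delta\mathcal H$ points the wrong way. At a minimum you know $\Delta\phi\gtrsim0$, so you need an \emph{upper} bound for $\Delta\phi$. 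That requires the identity $\Delta\mathcal H=2\mathcal H(\mathcal Hv-1)+|\nabla\mathcal H|^2/\mathcal H$, and you must absorb the positive term $a|\nabla\mathcal H|^2/\mathcal H$ into $-|\nabla\log v|^2$ via the first-order condition $\nabla\log v=-a\nabla\mathcal H$ (this uses $\mathcal H\ge1$). A lower bound on $\Delta\mathcal H$ does nothing here.
\item[(iv)] A minimum principle needs $\phi$ bounded \emph{below}, not above. Nothing shows $\log v$ is bounded below, since that is exactly what is being proved.
\end{itemize}

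The paper's proof repairs all four points at once.
\begin{itemize}
\item[1.] It takes $\Psi_\epsilon=\log(v+\epsilon)+a\mathcal H$ with $a>2$, which is bounded below.
\item[2.] It minimises $\Psi_\epsilon+\tau\chi$, where $\chi=\sqrt{1+d_{\mathbb H}(0,\cdot)^2}$ satisfies $|\nabla\chi|\le1$ and $\Delta\chi\le3$.
\item[3.] At the minimum, $-3\tau\le\Delta\Psi_\epsilon\le2\mathcal H\{a\mathcal Hv-(a-2)\}+C_a\tau^2$. Hence limits of $(\mathcal H,v)$ as $\tau\downarrow0$ satisfy $\mathcal H_*v_*\ge1-2/a$.
\item[4.] Since $e^{aH}/H$ increases on $H\ge1$, this gives $\inf\Psi_\epsilon\ge\log(1-2/a)+a$. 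Hence $v\ge(1-2/a)e^{-a(B-1)}$, and the choice $a=2B/(B-1)$ gives exactly $e^{-2B}/B$.
\end{itemize}
The same inequality also yields $J=\mathcal Hv\ge e^{-2B}$, so your reduction to $J$ is consistent; it just needs this argument behind it.

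Your heat-kernel fallback is circular as stated: controlling $P_t(r^2)$ by gradient estimates for $\log v$ presupposes a lower bound on $v$. A heat argument that does work with bounded data alone is the one in Corollary~\ref{quadratic-density-near-conformal}. It integrates the heat equation for $\log\mathcal H\in[0,\log B]$ against $P_sv\le e^{4Bs}v$. That yields the different bound $(4t-2\log B)/(e^{4Bt}-1)$, not \eqref{quadratic-density-gap}.
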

\begin{proof}
Use $g_{\mathbb H}=4\eta|dw|^2$, of curvature $-1$, for gradients and
Laplacians in this proof. Completeness of
$\mathcal H g_{\mathbb H}$ and the Schwarz comparison, justified in
the preceding energy estimate, give $\mathcal H\geq1$.
Put $v=1-|\mu_h|^2$. The Bochner identities imply
\begin{equation}\label{quadratic-density-heat-system}
 \Delta\log\mathcal H=2(\mathcal H v-1),\qquad
 \Delta v\leq4\mathcal H v(1-v),\qquad
 \Delta\mathcal H=2\mathcal H(\mathcal H v-1)
                    +\frac{|\nabla\mathcal H|^2}{\mathcal H}.
\end{equation}
The inequality for $v$ extends across Hopf zeros by smoothness.

Fix $a>2$ and $\epsilon>0$. The function
$\Psi_\epsilon=\log(v+\epsilon)+a\mathcal H$ is bounded below,
even if $\inf v=0$. For $\tau>0$ minimise
$\Psi_\epsilon+\tau\chi$, where
$\chi(w)=\sqrt{1+d_{\mathbb H}(0,w)^2}$.
At a minimum, $|\nabla\chi|\leq1$, $\Delta\chi\leq3$ and
$\nabla\log(v+\epsilon)=-a\nabla\mathcal H-\tau\nabla\chi$.
Since $\mathcal H\geq1$, completing a square gives
\[
 -|\nabla\log(v+\epsilon)|^2
       +\frac{a}{\mathcal H}|\nabla\mathcal H|^2
 \leq C_a\tau^2.
\]
Consequently, at this minimum,
\[
 -3\tau\leq\Delta\Psi_\epsilon
 \leq4\mathcal H\frac{v(1-v)}{v+\epsilon}
              +2a\mathcal H(\mathcal H v-1)+C_a\tau^2
 \leq2\mathcal H\{a\mathcal H v-(a-2)\}+C_a\tau^2.
\]
As $\tau\downarrow0$, the values of $\Psi_\epsilon$ at these
points tend to its infimum. A subsequence of the bounded pairs
$(\mathcal H,v)$ tends to $(\mathcal H_*,v_*)$, with
$\mathcal H_*v_*\geq1-2/a$.
The function $e^{aH}/H$ is increasing for $H\geq1$; hence
\[
 \inf\Psi_\epsilon
 \geq\log(1-2/a)-\log\mathcal H_*+a\mathcal H_*
 \geq\log(1-2/a)+a.
\]
It follows, first letting $\epsilon\downarrow0$, that
\begin{equation}\label{quadratic-density-time-gap}
 v\geq(1-2/a)e^{-a(B-1)}\qquad(a>2).
\end{equation}
Choose $a=2B/(B-1)$ to obtain \eqref{quadratic-density-gap},
using $K_h=(1+|\mu_h|)^2/v\leq4/v$.
If $B=1$, the first identity in \eqref{quadratic-density-heat-system}
forces $v=1$. Finally $|\Phi_2|/\eta=\mathcal H|\mu_h|\leq B$.
\end{proof}

\begin{corollary}[A density bound that is sharp at the conformal limit]
\label{quadratic-density-near-conformal}
Under Proposition~\ref{quadratic-density-qc-bound}, for every
$t>\tfrac12\log B$ one also has
\begin{equation}\label{quadratic-density-heat-improvement}
 1-\|\mu_h\|_\infty^2
 \geq\frac{4t-2\log B}{e^{4Bt}-1}.
\end{equation}
Consequently $K_h\to1$ whenever $B\downarrow1$.
For example, if $0<\ell=\log B<1$, the explicit gap
\[
 \delta_{\mathrm{near}}(B)
 =\frac{2(\sqrt\ell-\ell)}{e^{2B\sqrt\ell}-1}
\]
is valid and tends to one. This gives
$K_h\leq(1+\sqrt{1-\delta_{\mathrm{near}}})/
              (1-\sqrt{1-\delta_{\mathrm{near}}})$
and in particular $K_h-1=O((\log B)^{1/4})$ near $B=1$.
One may take the better of this estimate and
\eqref{quadratic-density-gap}.
\end{corollary}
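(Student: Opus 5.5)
The plan is to run the heat-semigroup argument of Proposition~\ref{quadratic-energy-qc-bound}, but applied to the first identity of \eqref{quadratic-density-heat-system} instead of to the total energy. We work, as in Proposition~\ref{quadratic-density-qc-bound}, with the curvature $-1$ metric $g_{\mathbb H}=4\eta|dw|^2$, its Laplacian $\Delta$ and heat semigroup $P_t$, and with $v=1-|\mu_h|^2$. The inputs from that proposition are $1\leq\mathcal H\leq B$, so $0\leq\log\mathcal H\leq\log B$, together with
\[
 \Delta\log\mathcal H=2(\mathcal Hv-1),\qquad \Delta v\leq4\mathcal Hv(1-v).
\]

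First, $\log\mathcal H$ is smooth and bounded, and its Laplacian $2(\mathcal Hv-1)$ is bounded too. The hyperbolic plane is stochastically complete, so we may write
\[
 P_t\log\mathcal H-\log\mathcal H=\int_0^tP_s\Delta\log\mathcal H\,ds=2\int_0^tP_s(\mathcal Hv)\,ds-2t .
\]
Evaluating at a point $x$ and using $0\leq\log\mathcal H\leq\log B$ gives
\[
 2\int_0^tP_s(\mathcal Hv)(x)\,ds\geq2t-\log B .
\]
Second, we bound the integrand above by $v(x)$. Since $\mathcal H\leq B$ and $1-v\leq1$, we have $\Delta v\leq4Bv$, so $v$ is a bounded positive supersolution. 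Heat comparison, exactly as in Proposition~\ref{quadratic-energy-qc-bound}, gives $P_sv\leq e^{4Bs}v$. Hence
\[
 P_s(\mathcal Hv)\leq BP_sv\leq Be^{4Bs}v(x).
\]
Combining the two steps,
\[
 2t-\log B\leq2Bv(x)\int_0^te^{4Bs}\,ds=\tfrac12v(x)\bigl(e^{4Bt}-1\bigr),
\]
which is \eqref{quadratic-density-heat-improvement} after taking the infimum over $x$.

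For the consequences, take $t=\tfrac12\sqrt\ell$ with $\ell=\log B\in(0,1)$. Then $t>\ell/2$, and the bound becomes $\delta_{\mathrm{near}}(B)$. As $\ell\downarrow0$, both numerator and denominator behave like $2\sqrt\ell$ and $2B\sqrt\ell$ to first order, so $\delta_{\mathrm{near}}\to1/B\to1$. Moreover $1-\delta_{\mathrm{near}}=O(\sqrt\ell)$. The bound $\|\mu_h\|_\infty\leq\sqrt{1-\delta_{\mathrm{near}}}$ inserted into $K=(1+|\mu|)/(1-|\mu|)$ then gives the stated $K_h$ bound and $K_h-1=O(\ell^{1/4})$.

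The main obstacle is justifying the semigroup manipulations on the noncompact surface. We need the Duhamel identity for $\log\mathcal H$, and the supersolution comparison $P_sv\leq e^{4Bs}v$ for a bounded function that is only a supersolution. I would handle both as in Proposition~\ref{quadratic-energy-qc-bound}: use the minimal heat kernel on the complete hyperbolic plane, whose curvature is bounded below, apply cutoff exhaustion together with the maximum principle for bounded functions, and use $P_t1=1$.
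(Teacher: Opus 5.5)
Your proposal is correct and follows the paper's proof. Like the paper, you integrate the heat equation for $\log\mathcal H$ using $0\le\log\mathcal H\le\log B$ and $P_s1=1$, bound $P_s(\mathcal Hv)\le BP_sv\le Be^{4Bs}v$ by heat comparison for the supersolution $v$, and take $t=\sqrt\ell/2$; your remarks on justifying the Duhamel identity and the comparison on the complete hyperbolic plane only make explicit what the paper leaves implicit.
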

\begin{proof}
Work again in curvature $-1$, and let $P_t$ be its heat semigroup.
Set $u=\log\mathcal H$ and $v=1-|\mu_h|^2$.
The bounded identities in \eqref{quadratic-density-heat-system}
give $0\leq u\leq\log B$ and $P_sv\leq e^{4Bs}v$.
Integrating the heat equation for $u$ yields
\[
 t-\tfrac12\log B
 \leq\int_0^tP_s(\mathcal H v)\,ds
 \leq B\int_0^tP_sv\,ds
 \leq\tfrac14(e^{4Bt}-1)v.
\]
This proves \eqref{quadratic-density-heat-improvement}.
Take $t=\sqrt\ell/2$ and expand at $\ell=0$.
All functions in this argument are bounded; no total-energy
integrability or initial quasiconformal constant is required.
\end{proof}

\begin{corollary}[Finite quadratic energy gives asymptotic conformality]
\label{quadratic-asymptotic-conformality}
Every hyperbolic harmonic disk diffeomorphism with finite $\E_2$
satisfies $|\mu_h(w)|\to0$ uniformly as $|w|\to1$.
\end{corollary}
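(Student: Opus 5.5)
The plan is a renormalisation argument that combines Möbius invariance of $\E_2$ with the quantitative estimates of Propositions~\ref{quadratic-energy-qc-bound} and~\ref{quadratic-density-qc-bound}, applied to tails of the energy.

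Let $h$ be a hyperbolic harmonic diffeomorphism with $\E_2(h)=E<\infty$. Argue by contradiction. Suppose there are $w_j$ with $|w_j|\to1$ and $|\mu_h(w_j)|\geq\delta>0$. Choose disk automorphisms $\phi_j,\psi_j$ with $\phi_j(0)=w_j$ and $\psi_j(h(w_j))=0$, and set $h_j=\psi_j\circ h\circ\phi_j$. Each $h_j$ is again a hyperbolic harmonic diffeomorphism of $\D$ with $\E_2(h_j)=E$, and $|\mu_{h_j}(0)|=|\mu_h(w_j)|\geq\delta$.

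By Proposition~\ref{quadratic-energy-qc-bound}, specifically the bound \eqref{quadratic-energy-density-bounds}, the densities satisfy $1\leq\mathcal H_j\leq B_0(E)$ uniformly. Hence the $h_j$ are uniformly quasiconformal, and their energy densities are locally bounded. The normalisation $h_j(0)=0$ together with harmonic-map interior estimates then gives a subsequence converging in $C^\infty_{\loc}$ to a harmonic map $h_\infty$ with $h_\infty(0)=0$. The uniform quasiconformality and $\mathcal H\geq1$ keep the limit a local diffeomorphism; indeed $J\geq\mathcal H(1-r^2)\eta/\eta(h)$ is bounded below on compacts. Moreover $|\mu_{h_\infty}(0)|\geq\delta$.

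The key step is to show that $h_\infty$ has zero excess energy on every compact set. For fixed $R<1$, the hyperbolic disk $\phi_j(\D_R)$ escapes to the boundary, since $|w_j|\to1$ while its hyperbolic radius stays fixed. Because $\int_\D r^2\,dA_\eta\leq E<\infty$, the tail integral over $\{|w|>\rho\}$ tends to $0$ as $\rho\to1$. Möbius invariance gives
\[
\int_{\D_R}|\mu_{h_j}|^2\,dA_\eta=\int_{\phi_j(\D_R)}|\mu_h|^2\,dA_\eta\longrightarrow0 .
\]
By smooth local convergence, $\int_{\D_R}|\mu_{h_\infty}|^2\,dA_\eta=0$ for every $R$. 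So $\mu_{h_\infty}\equiv0$, which contradicts $|\mu_{h_\infty}(0)|\geq\delta$.

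The main obstacle I expect is justifying the compactness. This requires a uniform $C^1$ (indeed $C^2$) bound on compacts so that $\mu_{h_j}(0)\to\mu_{h_\infty}(0)$. The density bound gives Lipschitz control in hyperbolic metrics, and harmonic-map regularity into the fixed compact target $h_j(\D_R)\subset\D_{\rho'}$ bootstraps this to $C^\infty$. The target is uniformly bounded because of the Lipschitz bound and $h_j(0)=0$.

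An alternative that avoids the limit entirely is to apply the pointwise Bochner heat argument of Proposition~\ref{quadratic-energy-qc-bound} localised at $w_j$. That would give $v(w_j)\geq e^{-16B_0t}\bigl(1-P_t(r^2)(w_j)\bigr)$, and $P_t(r^2)(w_j)\to0$ because the heat kernel concentrates (up to exponentially small hyperbolic tails) near $w_j$ while $r^2\in L^1(dA_\eta)$ with tails going to zero. Letting first $j\to\infty$ and then $t\to0$ gives $v(w_j)\to1$. This is the route I would actually write, since it only needs heat-kernel decay, with the renormalisation picture as motivation.
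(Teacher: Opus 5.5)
Your renormalisation argument is essentially the paper's proof. The paper also moves $w_j$ to $0$ by automorphisms and uses the uniform bound on $\mathcal H+\mathcal L$ from Proposition~\ref{quadratic-energy-qc-bound} to get smooth local harmonic compactness. It likewise keeps $(h_\infty)_w\neq0$ through $\mathcal H_j\geq1$, and concludes because $\int|\mu_{h_j}|^2\,dA_\eta$ over a fixed ball about $0$ is the integral over a ball about $w_j$, which escapes to the circle. The only difference is that it argues directly along an arbitrary sequence rather than by contradiction.

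The heat-kernel route you say you would actually write is genuinely different, and it is also correct. The pointwise inequality $v\geq e^{-16B_0t}P_tv=e^{-16B_0t}\bigl(1-P_t(r^2)\bigr)$ is already established in the proof of Proposition~\ref{quadratic-energy-qc-bound}. So you only need $P_t(r^2)(w)\to0$ as $|w|\to1$ for each fixed $t$. Splitting the kernel integral at a hyperbolic ball $B^\eta_\rho(w)$ and using $p_t\leq(4\pi t)^{-1}$ gives
\[
 1-|\mu_h(w)|^2\geq e^{-16B_0t}\Bigl(1-\frac1{4\pi t}\int_{B^\eta_\rho(w)}|\mu_h|^2\,dA_\eta-m_t(\rho)\Bigr),
 \qquad m_t(\rho)=\int_{\D\setminus B^\eta_\rho(0)}p_t(0,y)\,dA_\eta(y).
\]
Here $m_t(\rho)$ is independent of $w$ by homogeneity. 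It tends to $0$ as $\rho\to\infty$ simply because $p_t(0,\cdot)\,dA_\eta$ has total mass $1$, so the exponential tail decay you invoke is not actually needed.

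Compared with the paper, your route extracts no limit map. It therefore needs neither harmonic-map compactness nor the passage of $\mu$ to the limit. It also yields an explicit modulus of asymptotic conformality in terms of the local tail of the energy near $w$. The paper's argument gives no rate. On the other hand, it uses only the first half of Proposition~\ref{quadratic-energy-qc-bound} (the density bound and $\int|\mu_h|^2\,dA_\eta<\infty$) together with standard interior regularity. It does not use the heat-semigroup comparison for a bounded supersolution on the complete disk, on which your route depends.
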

\begin{proof}
Proposition~\ref{quadratic-energy-qc-bound} gives a global bound for
$\mathcal H+\mathcal L$ and
$\int_\D|\mu_h|^2\,dA_\eta<\infty$.
If $w_j\to\Sone$, precompose by automorphisms taking $0$ to $w_j$
and postcompose to make the resulting maps $h_j$ fix zero.
Their common hyperbolic energy-density bound gives bounded images
on compact source disks and smooth local harmonic compactness.
The lower bound $\mathcal H_j\geq1$ persists, so the limit has
nonzero holomorphic derivative. For every fixed hyperbolic ball
about zero, conformal invariance transforms
$\int|\mu_{h_j}|^2\,dA_\eta$ into an integral over a ball centred
at $w_j$. These balls escape every compact set, so the integrals
tend to zero. Smooth convergence now gives
$|\mu_{h_j}(0)|=|\mu_h(w_j)|\to0$. The arbitrary choice of the
sequence proves the assertion.
\end{proof}

\subsection{Quadratic uniqueness}
For $p=2$, a $C^2$ diffeomorphism with holomorphic hyperbolic Hopf differential is harmonic. Indeed, if
\[
 \tau=H_{w\bar w}+\frac{\eta_z}{\eta}(H)H_wH_{\bar w},
\]
then $\partial_{\bar w}Q_H=\eta(H)(\tau\overline{H_{\bar w}}+H_w\overline\tau)$. Since $|H_w|>|H_{\bar w}|$, vanishing of the left side forces $\tau=0$.

\begin{corollary}[Quadratic uniqueness]\label{quadratic-uniqueness}
Let $h_1,h_2:\overline\D\to\overline\D$ be orientation-preserving homeomorphisms with the same quasisymmetric boundary values. Suppose each is a $C^2$ diffeomorphism in $\D$, has finite $\E_2$, and has holomorphic weighted Hopf differential. Then $h_1=h_2$.
\end{corollary}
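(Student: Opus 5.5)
The plan is a maximum-principle argument for the hyperbolic distance between the two maps. We show it is subharmonic, bounded, and tends to zero at the circle.

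By the remark preceding the corollary, each $h_i$ is a $C^2$ diffeomorphism with holomorphic weighted Hopf differential and $|(h_i)_w|>|(h_i)_{\bar w}|$, so each is a hyperbolic harmonic diffeomorphism. Proposition~\ref{quadratic-energy-qc-bound} then applies to each. This gives three things:
\begin{itemize}
\item a global bound $\mathcal H_i+\mathcal L_i\leq B_i$ on the energy densities;
\item the lower bound $\mathcal H_i\geq1$;
\item global quasiconformality of $h_i$.
\end{itemize}
Corollary~\ref{quadratic-asymptotic-conformality} adds that $|\mu_{h_i}(w)|\to0$ as $|w|\to1$.

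Put $d(w)=d_{\mathbb H}(h_1(w),h_2(w))$. Since both maps are harmonic into a target of negative curvature, $d$ is subharmonic. This is the standard convexity of the distance function on the product, the same second-variation fact behind \eqref{quadratic-geodesic-convexity}.

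Next, $d$ is bounded. The map $h_2^{-1}\circ h_1$ is a quasiconformal self-map of $\D$ with identity boundary values. Such a map moves points a bounded hyperbolic distance, with a bound depending only on its dilatation; this is the classical fact that quasiconformal maps agreeing on $\Sone$ are uniformly close. Since $h_2$ is a quasi-isometry, $d\leq C<\infty$.

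The key step is $d(w)\to0$ as $|w|\to1$. Take $w_j\to\Sone$ and choose automorphisms $\phi_j$ with $\phi_j(0)=w_j$ and $\psi_j$ with $\psi_j(h_1(w_j))=0$. Set $h_{i,j}=\psi_j\circ h_i\circ\phi_j$.
\begin{itemize}
\item The maps $h_{1,j}$ fix $0$. Because $d\leq C$, the points $h_{2,j}(0)$ lie in a fixed compact set.
\item The uniform density bounds give locally bounded images and smooth local harmonic compactness for both families. Pass to subsequences converging smoothly on compacta to harmonic maps $k_1,k_2$.
\item As in the proof of Corollary~\ref{quadratic-asymptotic-conformality}, asymptotic conformality forces $\mu_{k_i}\equiv0$. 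So each $k_i$ is a holomorphic self-map of $\D$.
\item The bound $\mathcal H\geq1$ persists in the limit, while Schwarz--Pick gives $\mathcal H\leq1$. Hence $\mathcal H\equiv1$ and each $k_i$ is a M\"obius automorphism.
\item Since $d$ is invariant under these renormalisations, $d_{\mathbb H}(k_1,k_2)\leq C$ everywhere. Thus $k_1^{-1}\circ k_2$ is a hyperbolic isometry of bounded displacement, and therefore the identity.
\item Hence $d(w_j)=d_{\mathbb H}(h_{1,j}(0),h_{2,j}(0))\to0$.
\end{itemize}
Every sequence has such a subsequence, so $d\to0$ uniformly at $\Sone$. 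The maximum principle for the bounded subharmonic function $d$ then gives $d\equiv0$, that is, $h_1=h_2$.

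The main obstacle I expect is the boundedness of $d$. The classical displacement bound for quasiconformal maps with identity boundary values should be cited carefully, since the effective version of this estimate appears only later, in Section~\ref{schoen-conjecture}. A lesser technical point is that subharmonicity of $d$ fails to be smooth where $d=0$. I would handle this by working with $\cosh d$, or by using the viscosity or distributional form of subharmonicity.
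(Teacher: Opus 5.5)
Your argument is correct, but it follows a different route from the paper. The paper's proof is three lines: the identity preceding the corollary makes $h_1,h_2$ hyperbolic harmonic, Proposition~\ref{quadratic-energy-qc-bound} makes them globally quasiconformal, and then it quotes Li--Tam's uniqueness theorem for quasiconformal harmonic extensions \cite[Section 1]{LiTamProperII}.

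You instead reprove uniqueness inside the finite-energy class by a maximum principle for $d=d_{\mathbb H}(h_1,h_2)$. Your ingredients are:
\begin{enumerate}
\item[(i)] subharmonicity, from convexity of distance on a Hadamard target;
\item[(ii)] boundedness, which is exactly Lemma~\ref{schoen-finite-displacement} and rests only on Mori's theorem, so borrowing it from Section~\ref{schoen-conjecture} is not circular;
\item[(iii)] vanishing of $d$ at $\Sone$, from asymptotic conformality, M\"obius renormalisation and Schwarz--Pick rigidity of the holomorphic limits.
\end{enumerate}
The smoothness worry you flag is harmless. At zeros of $d$ the sub-mean-value inequality holds trivially because $d\geq0$. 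Elsewhere $d$ is smooth with $\Delta d\geq0$.

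What your route buys is self-containment given results already in the paper: it avoids the Li--Tam analysis. That analysis is needed in general because, for arbitrary quasisymmetric data, one a priori knows only that $d$ is bounded, not that it tends to zero at the circle. What it costs is generality. Your proof uses finite $\E_2$ essentially, through Corollary~\ref{quadratic-asymptotic-conformality}. So it cannot replace the Li--Tam citation in Theorem~\ref{schoen-through-minimisers}, where the limiting harmonic extension generally has infinite energy.
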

\begin{proof}
The preceding identity shows that both maps are hyperbolic harmonic. Proposition~\ref{quadratic-energy-qc-bound} makes both globally quasiconformal. The uniqueness theorem for quasiconformal harmonic extensions \cite[Section 1]{LiTamProperII} gives equality.
\end{proof}

This application uses the established quasiconformal harmonic uniqueness class. It does not require upgrading weighted $A^2_\eta$ information to global $L^1$ integrability of the Hopf coefficient. Proposition~\ref{quadratic-minimiser-regularity} verifies its regularity hypotheses for every finite-energy quadratic minimiser constructed here. The uniqueness theorem is used after the independent quasiconformality proof, not to obtain that bound.
\subsection{Proof of Theorem~\ref{thm-A}}
\begin{proof}[Proof of Theorem~\ref{thm-A}]
The Weil--Petersson class is closed under taking inverses \cite{TT,Matsuzaki}. Choose a quasiconformal extension $F_0$ of $\beta^{-1}$ such that, for $\mu_0=\mu_{F_0}$,
\[
 k_0:=\|\mu_0\|_\infty<1,
 \qquad \int_\D|\mu_0|^2\eta\area<\infty.
\]
Then $h_0=F_0^{-1}$ is an admissible barrier in $\mathcal M_\beta$, since the quasiconformal change of variables gives
\begin{equation}\label{wp-quadratic-barrier}
 \E_2(h_0)=\I_2(F_0)
 \leq\frac1{1-k_0^2}\int_\D|\mu_0|^2\eta\area<\infty.
\end{equation}
Propositions~\ref{quadratic-direct-existence} and \ref{quadratic-minimiser-regularity} give a minimiser which is a real analytic hyperbolic harmonic diffeomorphism. Since $\E_2(h_\beta)\leq\E_2(h_0)=E_0$, Proposition~\ref{quadratic-energy-qc-bound} gives \eqref{intro-qc-bound}, Corollary~\ref{quadratic-asymptotic-conformality} gives asymptotic conformality, and Corollary~\ref{quadratic-uniqueness} gives uniqueness.
\end{proof}

Membership of the Weil--Petersson class means that at least one quasiconformal representative has a hyperbolically square-integrable Beltrami coefficient; it is not an assumption that every representative has this property (see the example in Section~\ref{conventions}), nor an a priori quasiconformal bound on the minimiser.

\section{The Schoen conjecture}\label{schoen-conjecture}
Schoen asked whether every quasisymmetric homeomorphism of the ideal
boundary of the hyperbolic plane has a unique quasiconformal harmonic
extension \cite{SchoenHarmonic}. Akutagawa established existence for $C^4$ boundary data \cite{Akutagawa}, and Li--Tam existence for $C^1$ diffeomorphic boundary data and uniqueness in the quasiconformal
harmonic class \cite{LiTamProper,LiTamProperII}. Wan's description by
bounded holomorphic Hopf differentials linked the problem to universal
Teichm\"uller space \cite{WanHarmonic}. Markovi\'c resolved the general
existence problem \cite{Markovic}. His principal estimate controls the
hyperbolic distance between a harmonic quasiconformal map and any
quasiconformal comparison map with the same boundary values, using only
the dilatation of the comparison map.

Higher-dimensional extensions were obtained by Lemm--Markovi\'c
\cite{LemmMarkovic}, and Benoist--Hulin treated quasi-isometries between
rank-one symmetric spaces \cite{BenoistHulin}. Sidler--Wenger extended
the target setting to Gromov hyperbolic CAT(0) spaces
\cite{SidlerWenger}. Riestenberg--Smillie's Theorem~7 recovers the
earlier harmonic quasi-isometry results within a broader comparison
theory \cite{RiestenbergSmillie}.

We give a planar proof of that estimate and then pass to arbitrary
quasisymmetric data through the smooth quadratic minimisers constructed
above. The key comparison is the interior argument of
Riestenberg--Smillie \cite[Proposition 32 and Lemmas 33--34]{RiestenbergSmillie}.
In the disk its strict averaged-distance hypothesis follows from Mori's
theorem and the mean-value property of a logarithm. The proof below
establishes the displacement estimate without invoking Markovi\'c's
theorem; the square-root gradient estimate and the four-point comparison
really belong to Riestenberg--Smillie. Our contribution here is the planar verification of that hypothesis,
the variational construction of the smooth approximants, and explicit
constants throughout this comparison. The approximation--displacement--
compactness proof architecture is Markovi\'c's.

Throughout this section,
\[
 g_{\mathbb H}=4\eta(z)|dz|^2,\qquad
 d=d_{\mathbb H},\qquad B_R(x)=B_R^{g_{\mathbb H}}(x).
\]
Thus the curvature is $-1$, and $B_R(0)$ is the Euclidean disk of radius
$\tanh(R/2)$. Dilatations $K$ are operator-norm dilatations. Write
$\omega_{x,R}$ for harmonic measure at $x$ on $\partial B_R(x)$; after
centring $x$ it is $dt/(2\pi)$ on the corresponding Euclidean circle.

\subsection{Boundary distortion gives a strict averaged distance increase}
Mori's theorem \cite{Mori1956} gives, for every $K$-quasiconformal
$G:\D\to\D$ with $G(0)=0$,
\begin{equation}\label{schoen-mori}
 \left(\frac{|z-w|}{16}\right)^K
 \leq |G(z)-G(w)|\leq16|z-w|^{1/K},
 \qquad z,w\in\overline\D.
\end{equation}
The lower bound is the upper bound for $G^{-1}$. Both extend to the
boundary by continuity. Set
\begin{equation}\label{schoen-boundary-constants}
 \begin{split}
 b_K&=K\log16+\log2,\qquad c_K=\log32+2K\log16,\\
 R_K&=K(1+c_K)+\log2.
 \end{split}
\end{equation}

\begin{lemma}[Coarse control and positive drift]\label{schoen-drift}
Let $g:\D\to\D$ be $K$-quasiconformal. Then
\begin{equation}\label{schoen-coarse-upper}
 d(g(x),g(z))\leq Kd(x,z)+b_K.
\end{equation}
Moreover, for every $x,y\in\D$,
\begin{equation}\label{schoen-strict-drift}
 \int_{\partial B_{R_K}(x)}
 \bigl[d(y,g(z))-d(y,g(x))\bigr]\,d\omega_{x,R_K}(z)\geq1.
\end{equation}
The radius and the margin depend only on $K$.
\end{lemma}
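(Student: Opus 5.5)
The plan is to reduce both assertions to estimates at the origin using the isometry group. Pre- and postcompose $g$ by disk automorphisms so that $x=0$ and $g(x)=0$. Automorphisms are $g_{\mathbb H}$-isometries, preserve $K$, and carry $\omega_{x,R}$ to $dt/(2\pi)$ on $|z|=\tanh(R/2)$. So both \eqref{schoen-coarse-upper} and \eqref{schoen-strict-drift} become statements about a $K$-quasiconformal $G$ with $G(0)=0$, to which Mori's inequality \eqref{schoen-mori} applies. Throughout I use
\[
 d(0,w)=\log\frac{1+|w|}{1-|w|},\qquad 1-|\varphi_y(w)|^2=\frac{(1-|y|^2)(1-|w|^2)}{|1-\bar yw|^2},\qquad \varphi_y(w)=\frac{w-y}{1-\bar yw}.
\]

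\emph{Coarse upper bound.} Let $D=d(0,z)$, so $1/(1-|z|)\leq e^{D}$. For any $\zeta\in\Sone$ the lower Mori bound gives
\[
|G(z)-G(\zeta)|\geq(|z-\zeta|/16)^K\geq((1-|z|)/16)^K .
\]
Since $G(\Sone)=\Sone$, this yields $1-|G(z)|\geq((1-|z|)/16)^K$. Therefore
\[
d(0,G(z))\leq\log\frac{2}{1-|G(z)|}\leq\log2+K\log16+K\log\frac1{1-|z|}\leq KD+b_K,
\]
which is \eqref{schoen-coarse-upper}.

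\emph{Positive drift.} Set $s=|\varphi_y(w)|$ and use $(1+s)/(1-s)\geq1/(1-s^2)$. This gives
\[
d(y,w)\geq\log|1-\bar yw|^2-\log(1-|y|^2)-\log(1-|w|^2).
\]
On the other side, $d(y,0)\leq-\log(1-|y|^2)+\log4$. The terms in $y$ cancel, so the drift integrand is at least
\[
2\log|1-\bar yG(z)|-\log(1-|G(z)|^2)-\log4 .
\]
The second term is the large positive one. With $r=\tanh(R/2)$, so $1-r\leq2e^{-R}$, the upper Mori bound applied along the radius through $z$ gives $1-|G(z)|\leq16(1-r)^{1/K}$. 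Hence $-\log(1-|G(z)|^2)\geq R/K-O_K(1)$ uniformly on the circle. The first term must be bounded below on average by a constant depending only on $K$, uniformly in $y$. This is where the mean-value property of the logarithm enters.

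For $\zeta=z/|z|$, Mori gives $|G(z)-G(\zeta)|\leq16(1-r)^{1/K}$. This lets me compare $|1-\bar yG(z)|$ with $|1-\bar y\beta(\zeta)|$, where $\beta=G|_{\Sone}$, up to a controlled error, using also the trivial bound $|1-\bar yw|\geq1-|y|$ when $|y|$ is small. For the boundary average, write $\hat y=y/|y|$. Then
\[
|1-\bar y\beta(\zeta)|\geq\max\{1-|y|,\;|y|\,|\beta(\zeta)-\hat y|\}.
\]
Mori applied to $G^{-1}$ shows that the $dt$-measure of $\{t:|\beta(e^{it})-\hat y|<\varepsilon\}$ is at most $C\,16\varepsilon^{1/K}$. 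A layer-cake integration of $\log^-|\beta-\hat y|$ against this distribution then gives
\[
\int\log|1-\bar yG(z)|\,\frac{dt}{2\pi}\geq-C'_K .
\]
This is a quasiconformal replacement for the exact identity $\int\log|1-\bar ye^{it}|\,dt/2\pi=0$. Combining the two terms, the average drift is at least $R/K-c'_K$. Choosing $R=R_K$ as in \eqref{schoen-boundary-constants} makes it at least $1$.

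\emph{Main obstacle.} The hardest step is this uniform lower bound for the logarithmic average. One difficulty is the case $|y|\to1$, where the singularity $\hat y$ approaches the circle. The other is that the pushforward of $dt$ under $\beta$ is not arc length. I would first try to handle both only through the $1/K$-Hölder measure estimate above, keeping constants explicit. I would then adjust the bookkeeping, in particular the choice of $\varepsilon$ relative to $(1-r)^{1/K}$, so that the final constants match $c_K$ and $R_K$.
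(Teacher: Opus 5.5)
The coarse bound \eqref{schoen-coarse-upper} is fine. Applying the lower Mori bound against every boundary point is the same as the paper's upper bound for $G^{-1}$ at a nearest boundary point. Your estimate $-\log(1-|G(z)|^2)\geq\frac1K\log\frac1{1-r}-\log32$ also matches the paper.

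The gap is the step you yourself flag: a lower bound, uniform in $y$, for $\frac1{2\pi}\int\log|1-\bar yG(re^{it})|\,dt$. Passing to the boundary map gives only the additive comparison
\[
|1-\bar yG(z)|\geq|1-\bar y\beta(\zeta)|-16|y|(1-r)^{1/K}.
\]
Suppose $1-|y|$ is small compared with $(1-r)^{1/K}$. Then, by Mori's upper bound for $G$ on $\Sone$, the right side is nonpositive on an arc about $\xi=G^{-1}(\hat y)$ whose length depends only on $K$ and $r$. On that arc your fallback $\log(1-|y|)$ contributes a fixed positive length times $-\log\frac1{1-|y|}$, which is unbounded as $|y|\to1$. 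Your layer-cake bound is correct for the average of $\log|\beta-\hat y|$ on $\Sone$. However, re-tuning $\varepsilon$ does not transfer it to the circle of radius $r$; as written the step fails in exactly the regime you worried about.

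The missing idea, which is the paper's, is to skip boundary values and apply Mori's lower bound at the interior point itself. With $\xi=G^{-1}(\omega)\in\Sone$,
\[
 \log|G(z)-\omega|\geq K\log|z-\xi|-K\log16 .
\]
The function $\log|z-\xi|$ is harmonic in $\D$ and vanishes at the origin, so its average over $|z|=r$ is exactly $0$, uniformly in $\omega$. In your notation $\omega=\hat y$. Since $|1-\bar yw|=|\hat y-|y|w|\geq\tfrac12|\hat y-w|$ for $|w|\leq1$, this closes your gap.

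The paper also uses a sharper first step. Take $\omega$ to be the endpoint of the ray from $g(x)$ through $y$, and $B_\omega(v)=\log\bigl(|v-\omega|^2/(1-|v|^2)\bigr)$. The triangle inequality then gives $d(y,g(z))-d(y,g(x))\geq B_\omega(g(z))-B_\omega(g(x))$ with no additive loss.

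Your separate estimates for $d(y,w)$ and $d(y,0)$ lose $\log4$. This loss is sharp as $|y|\to1$, where your lower bound tends to $B_{\hat y}(w)-\log4$. So with the same Mori inputs your chain yields only $1-\log4$ at the radius $R_K=K(1+c_K)+\log2$ of \eqref{schoen-boundary-constants}. The claim that $R=R_K$ gives margin $1$ is therefore unsupported. Your route needs a larger radius, still depending only on $K$.
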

\begin{proof}
Use source and target M\"obius isometries to arrange $x=g(x)=0$, and
denote the resulting map by $G$. If $|z|=r$, apply the upper bound in
\eqref{schoen-mori} for $G^{-1}$ to $G(z)$ and a nearest boundary point.
This gives
\[
 1-r\leq16(1-|G(z)|)^{1/K},\qquad
 d(0,G(z))\leq K\log\frac1{1-r}+b_K
             \leq Kd(0,z)+b_K.
\]
This proves \eqref{schoen-coarse-upper}.

For $\omega\in\Sone$ put $\xi=G^{-1}(\omega)$ and use the Busemann
function, normalised at $0$,
\[
 B_\omega(v)=\log\frac{|v-\omega|^2}{1-|v|^2}.
\]
The two Mori estimates imply
\[
 \log|G(z)-\omega|\geq K\log|z-\xi|-K\log16,
 \qquad 1-|G(re^{it})|^2\leq32(1-r)^{1/K}.
\]
Since $\log|z-\xi|$ is harmonic in $\D$ and is zero at $0$,
\begin{equation}\label{schoen-mori-mean-drift}
 \frac1{2\pi}\int_{-\pi}^{\pi}B_\omega(G(re^{it}))\,dt
 \geq\frac1K\log\frac1{1-r}-c_K.
\end{equation}
For $r=\tanh(R_K/2)$, the right side is at least $1$, because
$\log(1/(1-r))=R_K-\log(1+r)\geq R_K-\log2$.

Undo the normalisations. Choose $\omega$ beyond $y$ on the geodesic
ray starting at $g(x)$; if $y=g(x)$ any ray suffices. The triangle
inequality, followed by the limit defining its Busemann function, gives
\[
 d(y,g(z))-d(y,g(x))
 \geq B_\omega(g(z))-B_\omega(g(x)).
\]
Averaging proves \eqref{schoen-strict-drift}. The measure is the same
for every $y$, which will allow us to combine this inequality with
subharmonicity of a different distance function.
\end{proof}

\subsection{The Riestenberg--Smillie comparison in the disk}
\paragraph{Overview of the comparison.}
Near a maximum displacement $D$ between a harmonic map and a coarse
comparison map, the harmonic image lies in a ball of radius $D+E$.
Subtracting the central squared distance in Green's formula leaves
$2DE+E^2$, so the local gradient is $O(\sqrt D)$. The four-point
Ptolemy inequality then bounds the sum of two crossed-distance
increments by $O(D^{-1/2})$. One increment has average at least $1$
by \eqref{schoen-strict-drift}; the other has nonnegative average by
harmonicity and convexity of distance. This bounds $D$.
This is the mechanism of \cite[Section 4.1]{RiestenbergSmillie}.
Our smooth maps allow the following classical planar calculation.

\begin{lemma}[The square-root gradient estimate]\label{schoen-rs-gradient}
Let $h$ be hyperbolic harmonic on a neighbourhood of
$\overline{B_{R+2}(x)}$. Suppose, for some target point $y$, that
\[
 d(y,h(x))=D\geq1,\qquad d(y,h(z))\leq D+E
 \quad(z\in B_{R+2}(x)).
\]
Put
\[
 r_i=\tanh((R+i)/2),\quad
 \ell_R=\log(r_2/r_1),\quad a_0=(1-e^{-1})/2.
\]
Then
\begin{equation}\label{schoen-rs-gradient-bound}
 \sup_{B_R(x)}|dh|_{g_{\mathbb H},\mathrm{op}}
 \leq\Gamma(R,E)\sqrt D,\qquad
 \Gamma(R,E)^2=\frac{2E+E^2}{4a_0^2\ell_R}.
\end{equation}
\end{lemma}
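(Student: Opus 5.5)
The plan is to combine a Green's-formula estimate for the squared distance function with the mean-value inequality for the energy density. Throughout, work in the conformal coordinate in which $x=0$, after a source isometry. Then $B_\rho(0)$ is the Euclidean disk of radius $\tanh(\rho/2)$, and harmonicity of $h$ is the Euclidean Laplace-type equation written earlier.

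First, set $f(z)=d(y,h(z))^2$. In a target of curvature $-1$ the Hessian of $d(y,\cdot)^2$ dominates $2g_{\mathbb H}$. Since $h$ is harmonic, the chain rule gives, with the Euclidean Laplacian $\Delta_0$ in the source,
\[
 \Delta_0 f\geq 2e_0,\qquad e_0(z)=|dh|^2_{\mathrm{Eucl}\to g_{\mathbb H}}(z),
\]
where $e_0$ is the conformally invariant energy density (the quantity $\eta(h)(|h_w|^2+|h_{\bar w}|^2)$, up to the fixed factor from $g_{\mathbb H}=4\eta|dz|^2$). In particular $f$ is subharmonic.

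Second, apply Green's representation on the Euclidean disk of radius $r_2$:
\[
 \frac1{2\pi}\int_0^{2\pi}f(r_2e^{it})\,dt-f(0)=\int_{|z|<r_2}\frac1{2\pi}\log\frac{r_2}{|z|}\,\Delta_0 f\,dA.
\]
By hypothesis the left side is at most $(D+E)^2-D^2=2DE+E^2\leq(2E+E^2)D$, where the last step uses $D\geq1$. Restrict the right side to $|z|<r_1$, where the Green kernel is at least $\ell_R/(2\pi)$, and use $\Delta_0f\geq2e_0\geq0$. This gives
\[
 \int_{B_{R+1}(0)}e_0\,dA\leq\frac{\pi(2E+E^2)D}{\ell_R}.
\]

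Third, convert this integral bound into a pointwise bound on $B_R(0)$. Take $z\in B_R(0)$; then $B_1(z)\subset B_{R+1}(0)$. Recentre at $z$ by a source isometry. Both the energy density $e_0\,dA$ as a measure and the hyperbolic ball are invariant under this recentring. After recentring, $B_1(z)$ is the Euclidean disk of radius $\tanh(1/2)\geq a_0$. The Euclidean energy density of a harmonic map into a nonpositively curved target is subharmonic by Bochner, as used in Proposition~\ref{quadratic-energy-qc-bound}. Its mean-value inequality on the disk of radius $a_0$ therefore bounds the recentred density at the origin by $(\pi a_0^2)^{-1}$ times the integral above. At the origin, the hyperbolic operator norm satisfies $|dh|^2_{g_{\mathbb H},\mathrm{op}}\leq e_0/4$, because the source metric there is $4|dz|^2$. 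Combining these gives $\Gamma(R,E)^2=(2E+E^2)/(4a_0^2\ell_R)$, provided the normalising factors are tracked correctly.

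The main obstacle is this constant bookkeeping, not the structure of the argument. One has to fix the exact normalisations of the Hessian comparison ($\Delta_0 f\geq 2e_0$ in the $g_{\mathbb H}$ normalisation), of the Green kernel, and of the factor $4$ between $dA$ and the hyperbolic metric at the centre. These must combine to give exactly $4a_0^2\ell_R$. I would first verify the Hessian comparison $\nabla^2 d^2\geq 2g$ via the comparison with the model space of curvature $-1$, where $\nabla^2(d^2/2)=g_{\mathrm{rad}}+d\coth d\,g_{\perp}\geq g$. If the constants come out off by a harmless factor, I would absorb the discrepancy using $\tanh(1/2)>a_0$.
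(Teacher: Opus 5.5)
Your argument is correct and is essentially the paper's proof: the same Hessian comparison $\Delta f\geq 2e_0$, the same Green's formula between the radii $r_1$ and $r_2$, and the same Bochner mean-value step. The constants close exactly to $\Gamma(R,E)^2=(2E+E^2)/(4a_0^2\ell_R)$, so the fallback via $\tanh(1/2)>a_0$ is not needed.

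The only difference is in the last step. The paper stays in the centred chart and uses Euclidean disks of radius $a_0(1-|z|^2)$ about $z$. It checks their containment in $\{|z|<r_1\}$ by the monotonicity of $(r_1-t)/(1-t^2)$, and that computation is where $a_0$ comes from. Your isometric recentring onto $B_1(0)$ avoids this computation, and it would even allow $\tanh(1/2)$ in place of $a_0$.
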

\begin{proof}
Centre $x$ at $0$, use the Euclidean source metric, and set
$q=d(y,h)^2$ and
$e_E=\tfrac12|dh|_E^2=4\eta(h)(|h_z|^2+|h_{\bar z}|^2)$,
with the hyperbolic metric on the target and the usual tensor norm
in $|dh|_E^2$. The squared-distance Hessian inequality and Bochner's
formula give $\Delta_Eq\geq4e_E$ and $\Delta_Ee_E\geq0$.
Green's formula yields
\begin{equation}\label{schoen-green-cancellation}
 \frac{2\ell_R}{\pi}\int_{|z|<r_1}e_E\area
 \leq\frac1{2\pi}\int_{-\pi}^{\pi}q(r_2e^{it})\,dt-q(0)
 \leq2DE+E^2.
\end{equation}
For $|z|=t\leq r_0$, the Euclidean disk of radius
$a_0(1-t^2)$ centred at $z$ lies in $\{|z|<r_1\}$. Indeed
$(r_1-t)/(1-t^2)$ decreases on $[0,r_0]$, and its value at $r_0$ is
$\tanh(1/2)/(1+r_0\tanh(1/2))\geq a_0$.
The mean-value inequality and the source conformal factor therefore give
\[
 |dh(z)|_{g_{\mathbb H},\mathrm{op}}^2
 \leq\frac{(1-t^2)^2}{2}e_E(z)
 \leq\frac{2DE+E^2}{4a_0^2\ell_R}
 \leq\Gamma(R,E)^2D.
\]
Keeping the factor $(1-t^2)^2$ avoids an unnecessary loss near the
boundary of the centred disk.
\end{proof}

\begin{lemma}[Preliminary finiteness of displacement]\label{schoen-finite-displacement}
Two quasiconformal homeomorphisms of the disk having the same boundary values
are at bounded hyperbolic distance. We do not assert a
bound depending on only one of their dilatations.
\end{lemma}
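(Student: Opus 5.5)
Let $f,g:\D\to\D$ be $K_1$- and $K_2$-quasiconformal with $f=g$ on $\Sone$. The plan is to reduce to a single quasiconformal map with identity boundary values. Put $G=g^{-1}\circ f$. It is $K_1K_2$-quasiconformal, extends to the identity on $\Sone$, and satisfies
\[
 d(f(z),g(z))=d(g(G(z)),g(z))\leq K_2\,d(G(z),z)+b_{K_2}
\]
by the coarse estimate \eqref{schoen-coarse-upper} of Lemma~\ref{schoen-drift}. It therefore suffices to show that a $K$-quasiconformal self-map $G$ of $\D$ with identity boundary values satisfies $\sup_z d(G(z),z)<\infty$. This bound may depend on $K$, and also on $G$; the lemma claims only finiteness.

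To bound $d(G(z),z)$, fix $z$ and choose disk automorphisms $\phi$ with $\phi(0)=z$. The conjugate $\widetilde G=\phi^{-1}\circ G\circ\phi$ is $K$-quasiconformal and again has identity boundary values, because conjugating the identity gives the identity. Since $d(G(z),z)=d(\widetilde G(0),0)$, we need a bound on $|\widetilde G(0)|$ away from $1$ that is uniform over all $K$-quasiconformal maps fixing $\Sone$ pointwise. Suppose no such bound exists, and take a sequence $G_j$ of such maps with $|G_j(0)|\to1$. Reflect each across $\Sone$ to get $K$-quasiconformal maps of the sphere. Each fixes $1,i,-i$, so normalised quasiconformal compactness gives a subsequence converging uniformly in the spherical metric to a $K$-quasiconformal homeomorphism $G_\infty$. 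The limit is the identity on $\Sone$ and maps $\D$ onto $\D$. Hence $G_\infty(0)\in\D$, which contradicts $|G_j(0)|\to1$.

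Equivalently, and more quantitatively, one can use Mori's estimate \eqref{schoen-mori} applied to $\widetilde G$ with its value at $0$ renormalised, or the Teichm\"uller extremal bound for maps fixing the circle. Either yields an explicit constant $C(K)$, which makes the finiteness effective.

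The only delicate step is the compactness-with-fixed-boundary argument. One has to check that the limit of the reflected maps is a homeomorphism and not a constant. This holds because the three normalised points $1,i,-i$ stay fixed and the family has uniformly bounded dilatation. Once this is settled, combining the bound on $d(G(z),z)$ with the displayed inequality bounds $\sup_z d(f(z),g(z))$ in terms of $K_1$ and $K_2$. This is a bound depending on both dilatations, which is consistent with the lemma's disclaimer.
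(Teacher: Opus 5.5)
Your proof is correct and has the same architecture as the paper's: reduce to $u=g^{-1}\circ h$, which fixes $\Sone$ pointwise; centre a source point by a disk automorphism; bound $|u(0)|$ away from $1$; and finish with \eqref{schoen-coarse-upper} for $g$. The only difference is the key step, where you bound $|\widetilde G(0)|$ non-effectively by normal-family compactness, while the paper uses the Mori route you list as an alternative: with $T_a\circ u(0)=0$ and $\xi_\pm=T_a^{-1}(\pm i)$, Mori gives $2\le16|\xi_+-\xi_-|^{1/L}$ and $|\xi_+-\xi_-|=2/\cosh d(0,a)$, hence the explicit bound $\sup_x d(x,u(x))\le\operatorname{arcosh}(2\cdot8^L)$, in keeping with the section's emphasis on explicit constants (though only finiteness is needed here).
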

\begin{proof}
If $u$ is $L$-quasiconformal and fixes $\Sone$ pointwise, conjugate by
a disk automorphism to centre any source point, and rotate so that
$u(0)=a\in[0,1)$. The map $T_a\circ u$ fixes $0$, where
$T_a(z)=(z-a)/(1-az)$. At $\xi_\pm=T_a^{-1}(\pm i)$ its boundary
values are $\pm i$. Mori's theorem gives
\[
 2\leq16|\xi_+-\xi_-|^{1/L},\qquad
 |\xi_+-\xi_-|=\frac{2(1-a^2)}{1+a^2}
              =\frac2{\cosh d(0,a)}.
\]
Thus $\sup_xd(x,u(x))\leq\operatorname{arcosh}(2\cdot8^L)$.
Apply this to $u=g^{-1}\circ h$ and then use
\eqref{schoen-coarse-upper} for $g$.
\end{proof}

\begin{theorem}[Uniform displacement]\label{schoen-displacement}
Let $g:\D\to\D$ be $K$-quasiconformal and let $h:\D\to\D$ be a
harmonic quasiconformal homeomorphism with the same boundary values.
There is a bound depending only on $K$,
\begin{equation}\label{schoen-displacement-bound}
 \boxed{\quad\sup_{z\in\D}d(h(z),g(z))\leq C_{\mathrm{disp}}(K).\quad}
\end{equation}
An explicit choice is
\begin{equation}\label{schoen-displacement-constants}
 \begin{split}
 E_K&=K(R_K+2)+b_K+1,\qquad
 \Gamma_K=\Gamma(R_K,E_K),\\
 C_{\mathrm{disp}}(K)
 &=\max\{1,\,4(KR_K+b_K)^2R_K^2\Gamma_K^2\}.
 \end{split}
\end{equation}
The constants do not involve $K_h$ or the total excess energy of $h$.
\end{theorem}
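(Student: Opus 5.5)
We argue by a maximum-displacement comparison following the Riestenberg--Smillie mechanism described in the overview. By Lemma~\ref{schoen-finite-displacement}, $D:=\sup_{z}d(h(z),g(z))$ is finite, so we may assume $D>C_{\mathrm{disp}}(K)\geq1$ and seek a contradiction. The supremum need not be attained, so we pick $x$ with $d(h(x),g(x))\geq D-\delta$ for small $\delta>0$. We then renormalise by source and target isometries, which preserve $K$, harmonicity and all distances. We set $y=g(x)$ and $D'=d(y,h(x))\in[D-\delta,D]$.

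\textbf{Step 1: local bound on the harmonic image.} For $z\in B_{R_K+2}(x)$, the triangle inequality together with \eqref{schoen-coarse-upper} gives
\[
 d(y,h(z))\leq d(y,g(z))+d(g(z),h(z))\leq K(R_K+2)+b_K+D\leq D'+E_K,
\]
where $\delta\leq1$ is absorbed by the $+1$ in $E_K$. Lemma~\ref{schoen-rs-gradient} therefore applies with $R=R_K$ and $E=E_K$, and yields $|dh|\leq\Gamma_K\sqrt{D}$ on $B_{R_K}(x)$. Integrating along radial geodesics from $x$ then gives $d(h(x),h(z))\leq R_K\Gamma_K\sqrt D$ for $z\in\partial B_{R_K}(x)$.

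\textbf{Step 2: two averaged increments.} We apply Lemma~\ref{schoen-drift} with the target point $h(x)$. It gives a positive average drift of the comparison map:
\[
 \int\bigl[d(h(x),g(z))-d(h(x),g(x))\bigr]\,d\omega_{x,R_K}\geq1 .
\]
For the harmonic map, the function $z\mapsto d(g(x),h(z))$ is subharmonic, since distance to a point composed with a harmonic map into a CAT($-1$) target is subharmonic. Its $\omega_{x,R_K}$-average is therefore at least $d(g(x),h(x))=D'$.

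\textbf{Step 3: the four-point inequality.} Consider the points $a=g(x)$, $b=h(x)$, $c=g(z)$, $e=h(z)$. Maximality gives $d(c,e)\leq D$ and $D'\geq D-\delta$. We also have $d(b,e)\leq R_K\Gamma_K\sqrt D$ from Step 1 and $d(a,c)\leq KR_K+b_K$ from \eqref{schoen-coarse-upper}. A hyperbolic Ptolemy/four-point comparison (equivalently, comparing the quadrilateral with its Euclidean or tree model, where the long sides of length about $D$ are nearly parallel) shows that
\[
 [d(b,c)-d(b,a)]+[d(a,e)-d(a,b)]\leq\frac{(KR_K+b_K)\,R_K\Gamma_K\sqrt D}{D}+O(\delta)
\]
pointwise in $z$. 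More precisely, the two crossed-distance increments sum to at most $d(a,c)\,d(b,e)/D$ up to $\delta$, since short segments attached at the ends of a long segment can only lengthen the diagonals by the product over the length. Averaging over $\omega_{x,R_K}$ and using Step 2, the left side has average at least $1+0$. Hence
\[
 1\leq\frac{(KR_K+b_K)R_K\Gamma_K}{\sqrt D}+O(\delta),
\]
that is, $D\leq(KR_K+b_K)^2R_K^2\Gamma_K^2$ after letting $\delta\to0$. The factor $4$ in $C_{\mathrm{disp}}$ covers the loss in the Ptolemy step; for instance, the inequality $d(a,c)d(b,e)\leq\dots$ holds only with a factor $2$ in curvature $-1$.

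\textbf{Main obstacle.} The main difficulty is the precise four-point inequality in Step 3. The hyperbolic Ptolemy inequality holds in CAT(0) spaces as $d(a,e)d(b,c)\leq d(a,b)d(c,e)+d(a,c)d(b,e)$. Converting this into the additive increment bound requires the sides $d(a,b)$ and $d(c,e)$ to both be near $D$ with $D$ large, and a careful expansion of the square roots. I would first try to write $d(a,e)d(b,c)\leq D'D+d(a,c)d(b,e)$ and apply AM--GM, $d(a,e)+d(b,c)\leq 2\sqrt{d(a,e)d(b,c)}+\dots$. Since AM--GM goes the wrong way, I would instead use $\min$/$\max$ bookkeeping together with the bound $d(a,e),d(b,c)\geq D-O(\sqrt D)$ to linearise, which is where the factor $4$ arises.
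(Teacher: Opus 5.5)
Your architecture matches the paper's: an almost-maximal point, the square-root gradient lemma centred at $y=g(x)$, the drift lemma with target point $h(x)$, subharmonicity of $d(g(x),h(\cdot))$, and the CAT(0) Ptolemy inequality $d(a,e)\,d(b,c)\le d(a,b)\,d(c,e)+d(a,c)\,d(b,e)$, which you state correctly. Steps~1 and~2 are fine. The gap is Step~3, which is the crux and which you leave open yourself.

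Write $u=d(b,c)-D'$ and $v=d(a,e)-D'$. Your pointwise bound $u+v\le d(a,c)\,d(b,e)/D+O(\delta)$ is asserted from a ``nearly parallel long sides'' picture, not derived. The routes you suggest (AM--GM, expanding square roots, linearising via lower bounds $D-O(\sqrt D)$) are only sketched, and, as you note, AM--GM points the wrong way. So your claim that the factor $4$ in $C_{\mathrm{disp}}(K)$ ``covers the loss'' has no argument behind it. Nor is that factor a curvature effect: Ptolemy holds with constant one for ordinary distances in every CAT(0) space.

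The missing idea is that no approximation is needed; expand the product exactly. Since $d(c,e)\le D\le D'+\delta$, Ptolemy gives
\[
 (D'+u)(D'+v)\le D'(D'+\delta)+d(a,c)\,d(b,e),
 \qquad\text{that is}\qquad
 D'(u+v)\le D'\delta+d(a,c)\,d(b,e)-uv .
\]
The triangle inequality controls the cross term: $|u|\le d(a,c)$ and $|v|\le d(b,e)$, so $-uv\le d(a,c)\,d(b,e)$. Hence, for every $z$,
\[
 u+v\le \frac{2\,d(a,c)\,d(b,e)}{D'}+\delta .
\]
The factor $2$ comes precisely from configurations where $u$ and $v$ have opposite signs.

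Now insert $d(a,c)\le KR_K+b_K$ and $d(b,e)\le R_K\Gamma_K\sqrt D$, average over $\partial B_{R_K}(x)$ using your Step~2, and let $\delta\to0$. This gives $\sqrt D\le 2(KR_K+b_K)R_K\Gamma_K$, i.e.\ $D\le 4(KR_K+b_K)^2R_K^2\Gamma_K^2$. That is exactly $C_{\mathrm{disp}}(K)$, contradicting $D>C_{\mathrm{disp}}(K)$. This exact rearrangement is the step the paper uses, and with it your proof closes.
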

\begin{proof}
Let $S=\sup d(h,g)<\infty$, by Lemma~\ref{schoen-finite-displacement}.
If $S\leq1$ there is nothing to prove. Otherwise choose $x$ with
$D=d(h(x),g(x))\geq1$ and $S\leq D+\epsilon$, where $0<\epsilon\leq1$.
For $z\in B_{R_K+2}(x)$, \eqref{schoen-coarse-upper} gives
$d(g(x),h(z))\leq D+E_K$. Lemma~\ref{schoen-rs-gradient} applies.

For $z\in B_{R_K}(x)$ write
\[
 \begin{aligned}
 D'&=d(h(z),g(z)),& E&=d(g(x),g(z)),& E'&=d(h(x),h(z)),\\
 F&=d(h(x),g(z)),& F'&=d(g(x),h(z)).
 \end{aligned}
\]
The CAT(0) Ptolemy inequality \cite{FLS} gives $FF'\leq DD'+EE'$.
These are ordinary distances; the classical hyperbolic Ptolemy formula
with hyperbolic sines is a different statement.
Consequently, using the triangle inequalities,
\[
 \begin{split}
 D(F-D+F'-D')
 &\leq(F-D)(D-F')+EE'\leq2EE',\\
 F-D+F'-D
 &\leq\frac{2(KR_K+b_K)R_K\Gamma_K}{\sqrt D}+\epsilon.
 \end{split}
\]
Here $D'\leq D+\epsilon$, $E\leq KR_K+b_K$, and the gradient bound
integrated along a source geodesic gives $E'\leq R_K\Gamma_K\sqrt D$.
Average on $\partial B_{R_K}(x)$. Equation~\eqref{schoen-strict-drift}
with $y=h(x)$ gives $\int(F-D)\,d\omega_{x,R_K}\geq1$.
Since $d(g(x),h(\cdot))$ is subharmonic,
$\int(F'-D)\,d\omega_{x,R_K}\geq0$. Therefore
\[
 1\leq\frac{2(KR_K+b_K)R_K\Gamma_K}{\sqrt D}+\epsilon.
\]
Choose almost-maximal points with $\epsilon\downarrow0$, so that
$D\to S$. This proves \eqref{schoen-displacement-bound}.
\end{proof}

\begin{remark}[Relation to the general comparison]
This is the interior comparison of \cite[Proposition 32]{RiestenbergSmillie},
with an almost maximum in place of an attained maximum. The additional
$1$ in $E_K$ absorbs that change, and the final $\epsilon$ disappears
in the limit. Our approximants already exist on the whole disk, so no
boundary estimate for an exhausting sequence of bounded domains is
needed. The comparator need only be quasiconformal: the argument uses
\eqref{schoen-coarse-upper}, not its derivatives. Individual
quasiconformality of $h$ supplies the preliminary finite $S$; it is not
being used as a uniform bound in the comparison. Hopf coordinates and
their zeros do not enter this step.
\end{remark}

\subsection{Uniform energy density and quasiconformality}
\begin{proposition}[Obtaining a quasiconformal bound]
\label{schoen-uniform-qc}
Under the hypotheses of Theorem~\ref{schoen-displacement}, put
\begin{equation}\label{schoen-qc-constants}
 \begin{split}
 T_K&=2C_{\mathrm{disp}}(K)+K\log3+b_K,\\
 M_K&=\max\{1,2T_K^2/\log2\},\qquad
 \delta_K=e^{-2M_K}/M_K,\\
 Q_K&=\frac{1+\sqrt{1-\delta_K}}{1-\sqrt{1-\delta_K}}.
 \end{split}
\end{equation}
Then,  
\begin{equation}\label{schoen-density-and-gap}
 \begin{gathered}
 1\leq\mathcal H=\frac{\eta(h)|h_z|^2}{\eta},\qquad
 \mathcal H+\mathcal L\leq M_K,\qquad
 \mathcal L=\frac{\eta(h)|h_{\bar z}|^2}{\eta},\\
 1-|\mu_h|^2\geq\delta_K,\qquad
 K_h\leq Q_K\leq4M_Ke^{2M_K},\qquad
 \sup_\D\frac{|\Phi_2|}{\eta}\leq M_K/2.
 \end{gathered}
\end{equation}
\end{proposition}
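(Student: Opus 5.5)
The plan is to reduce everything to Proposition~\ref{quadratic-density-qc-bound}: once we show $\mathcal H+\mathcal L\leq M_K$ everywhere, that proposition with $B=M_K$ gives $1-|\mu_h|^2\geq e^{-2M_K}/M_K=\delta_K$ and $\sup|\Phi_2|/\eta\leq M_K/2$ (after matching the normalisation of $\Phi_2$ with $\mathcal H|\mu_h|$ and $|\mu_h|\leq 1$, $\mathcal H\le M_K$; the factor $1/2$ comes from $\mathcal H|\mu_h|\leq\sqrt{\mathcal H\mathcal L}\leq(\mathcal H+\mathcal L)/2$). Then $K_h=(1+|\mu|)/(1-|\mu|)$ with $|\mu|\leq\sqrt{1-\delta_K}$ gives $K_h\leq Q_K$, and $Q_K\leq 4/\delta_K=4M_Ke^{2M_K}$ since $(1+s)/(1-s)=(1+s)^2/(1-s^2)$. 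The lower bound $\mathcal H\geq1$ is the Schwarz comparison already established in the proof of Proposition~\ref{quadratic-energy-qc-bound}, valid for any harmonic diffeomorphism of $\D$ onto itself.

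So the real work is the uniform energy-density bound. Fix $x\in\D$ and normalise by isometries so that $x=0$, $g(0)=0$. By Theorem~\ref{schoen-displacement}, $d(h(z),g(z))\leq C_{\mathrm{disp}}(K)$, and by \eqref{schoen-coarse-upper}, for $z$ in the hyperbolic ball of radius $\log 3$ about $0$ (Euclidean radius $1/2$) we have $d(0,g(z))\leq K\log3+b_K$. Hence $h$ maps $\{|z|<1/2\}$ into the hyperbolic ball about $h(0)$ of radius at most $T_K$ (using $d(h(z),h(0))\le d(h(z),g(z))+d(g(z),0)+d(0,h(0))\le 2C_{\mathrm{disp}}+K\log 3+b_K$). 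Now apply the Green/Bochner argument of Lemma~\ref{schoen-rs-gradient} with $q=d(h(0),h)^2$: $\Delta_E q\geq 4e_E$ and $e_E$ subharmonic, with $q\leq T_K^2$ on $|z|<1/2$ and $q(0)=0$. Green's formula on $|z|<1/2$ against the inner disk $|z|<1/4$ (or the log-annulus weight used there, $\ell=\log 2$) bounds $\int_{|z|<1/4}e_E$ by a multiple of $T_K^2/\log 2$, and the mean-value inequality at the centre bounds $e_E(0)$, i.e.\ $(\mathcal H+\mathcal L)(0)$ up to the conformal factor $\eta(0)=1$.

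The main obstacle is bookkeeping of constants: to land exactly on $M_K=\max\{1,2T_K^2/\log2\}$ one must choose the radii and normalisations (Euclidean vs.\ curvature $-1$ metric, factor $4$ in $g_{\mathbb H}$, $e_E=4\eta(h)(|h_z|^2+|h_{\bar z}|^2)$) consistently; most likely the intended computation uses the circle average of $q$ on a radius giving $\log 2$ as the logarithmic width, e.g.\ $\frac{1}{2\pi}\int q(re^{it})\,dt - q(0)=\frac1{2\pi}\int_{|z|<r}\log(r/|z|)\Delta q$, restricted to $|z|<r/2$, combined with subharmonicity of $e_E$ applied via averaging from the centre. I would first try this precise form and adjust radii so that $\mathcal H+\mathcal L\leq 2T_K^2/\log 2$; since $\mathcal H\ge1$ the max with $1$ is harmless. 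Finally, since $x$ was arbitrary and the quantities are isometry-invariant, the bound holds on all of $\D$, completing the proof.
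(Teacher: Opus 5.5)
Your proposal is correct and is essentially the paper's own proof. The displacement and coarse bounds put $h(B_{\log3}(x))$ in the target ball of radius $T_K$. Green's formula for $q=d(h(0),h)^2$ on $|z|<1/2$, with weight at least $\log2$ on $|z|<1/4$, gives $\int_{|z|<1/4}e_E\,dA\le\pi T_K^2/(2\log2)$, and the mean-value inequality then gives $e_E(0)=4(\mathcal H+\mathcal L)(0)\le 8T_K^2/\log2$. The Schwarz comparison, Proposition~\ref{quadratic-density-qc-bound} with $B=M_K$, and $|\Phi_2|/\eta=\sqrt{\mathcal H\mathcal L}$ finish the argument exactly as you describe.

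The one point you leave implicit is that the harmonic quasiconformal homeomorphism $h$ is a diffeomorphism. This follows from the metric Lewy theorem \cite{Martin}, which the paper invokes before applying the Schwarz comparison and Proposition~\ref{quadratic-density-qc-bound}, since both assume a diffeomorphism.
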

\begin{proof}
The harmonic homeomorphism is a diffeomorphism by the metric Lewy
theorem \cite{Martin}; for our minimisers this is already supplied by
Proposition~\ref{quadratic-minimiser-regularity}.
The displacement and coarse bounds put $h(B_{\log3}(x))$ in the
target ball of radius $T_K$ about $h(x)$. Centre $x$ at $0$, so the
source ball has Euclidean radius $1/2$. Apply Green's formula to
$q=d(h(0),h)^2$, whose central value is zero, and then the mean-value
inequality to $e_E=\tfrac12|dh|_E^2$ on radius $1/4$:
\[
 \int_{|z|<1/4}e_E\area\leq\frac{\pi T_K^2}{2\log2},\qquad
 e_E(0)\leq\frac{8T_K^2}{\log2}.
\]
Since $e_E(0)=4(\mathcal H+\mathcal L)(0)$, this proves the density
bound at every centre. Completeness of $\mathcal H g_{\mathbb H}$
and the Schwarz comparison used in
Proposition~\ref{quadratic-density-qc-bound} give $\mathcal H\geq1$.
Apply Proposition~\ref{quadratic-density-qc-bound} with $B=M_K$ to obtain $1-|\mu_h|^2\geq\delta_K$ and the asserted bound for $K_h$. Finally,
$|\Phi_2|/\eta=\sqrt{\mathcal H\mathcal L}\leq(\mathcal H+\mathcal L)/2$.
\end{proof}

\subsection{Passage from smooth minimisers to the prescribed boundary map}
\begin{lemma}[Smooth approximation with a fixed quasisymmetry bound]
\label{smooth-uniform-qs-approximation}
Every orientation-preserving quasisymmetric circle homeomorphism
is a uniform limit of smooth orientation-preserving circle
diffeomorphisms with a common quasisymmetry bound.
In the angular adjacent-interval convention of the introduction,
the original constant $M$ can be retained.
\end{lemma}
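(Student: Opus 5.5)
The plan is to mollify the lift. Write $\beta(e^{i\theta})=e^{iu(\theta)}$ with $u$ continuous, strictly increasing and $u(\theta+2\pi)=u(\theta)+2\pi$. Fix a smooth nonnegative mollifier $\varphi$ with $\int\varphi=1$ and $\varphi>0$ exactly on $(-1,1)$. Put $\varphi_\varepsilon(s)=\varepsilon^{-1}\varphi(s/\varepsilon)$ and
\[
 u_\varepsilon(\theta)=\int_{\IR}u(\theta-s)\varphi_\varepsilon(s)\,ds .
\]
Because $\int\varphi_\varepsilon=1$, the periodicity relation passes to $u_\varepsilon$, so $u_\varepsilon(\theta+2\pi)=u_\varepsilon(\theta)+2\pi$. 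Hence $\beta_\varepsilon(e^{i\theta})=e^{iu_\varepsilon(\theta)}$ is a well-defined $C^\infty$ map of the circle of degree one.

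The main point is that the quasisymmetry constant survives exactly, because the condition is preserved under positive averaging. Fix $\theta$ and $0<t\le\pi$. For every $s$ the hypothesis applied at the point $\theta-s$ gives
\[
 M^{-1}\bigl[u(\theta-s)-u(\theta-s-t)\bigr]\le u(\theta-s+t)-u(\theta-s)\le M\bigl[u(\theta-s)-u(\theta-s-t)\bigr].
\]
Multiply by $\varphi_\varepsilon(s)\ge0$ and integrate in $s$. Linearity of the convolution then yields
\[
 M^{-1}\bigl[u_\varepsilon(\theta)-u_\varepsilon(\theta-t)\bigr]\le u_\varepsilon(\theta+t)-u_\varepsilon(\theta)\le M\bigl[u_\varepsilon(\theta)-u_\varepsilon(\theta-t)\bigr].
\]
Both increments are strictly positive, as shown in the next paragraph, so $u_\varepsilon$ satisfies the angular adjacent-interval condition with the same $M$. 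This is the reason for stating the lemma in that convention: chordal or Möbius-normalised definitions would not be preserved by this averaging.

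The step needing some care is that $\beta_\varepsilon$ is a diffeomorphism and not merely smooth and monotone; that is, we need $u_\varepsilon'>0$ everywhere. Let $\nu=du$ be the Lebesgue--Stieltjes measure of $u$. It is a positive measure, and since $u$ is strictly increasing it gives positive mass to every nondegenerate interval. Then $u_\varepsilon'=\nu*\varphi_\varepsilon$, that is,
\[
 u_\varepsilon'(\theta)=\int\varphi_\varepsilon(\theta-\sigma)\,d\nu(\sigma).
\]
The integrand is strictly positive for $\sigma\in(\theta-\varepsilon,\theta+\varepsilon)$, and that interval has positive $\nu$-mass, so $u_\varepsilon'(\theta)>0$. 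Thus $u_\varepsilon$ is a smooth increasing diffeomorphism of $\IR$ commuting with translation by $2\pi$, and $\beta_\varepsilon$ is an orientation-preserving $C^\infty$ circle diffeomorphism. (If a uniform positive lower bound on the derivative were wanted, one could average with $\theta$ at the cost of the constant; this is not needed.)

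Finally, convergence. Since $u(\theta)-\theta$ is $2\pi$-periodic and continuous, $u$ is uniformly continuous on $\IR$. Hence
\[
 |u_\varepsilon(\theta)-u(\theta)|\le\sup_{|s|\le\varepsilon}|u(\theta-s)-u(\theta)|\to0
\]
uniformly in $\theta$, and so $\beta_\varepsilon\to\beta$ uniformly on $\Sone$. Taking $\varepsilon=1/j$ gives the required sequence, all with quasisymmetry constant $M$.
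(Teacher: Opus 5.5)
Your proposal is correct and follows the paper's proof essentially step for step. You mollify the lift with a compactly supported nonnegative kernel that is positive near $0$, use that $du$ charges every open interval to get $u_\varepsilon'=\varphi_\varepsilon*du>0$, average the translated linear increment inequalities to keep the same $M$, and use uniform continuity of $u(\theta)-\theta$ to get uniform convergence.
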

\begin{proof}
Let $u$ be the increasing lift, and convolve it on the real line
with a nonnegative smooth approximate identity $\rho_\epsilon$
which is positive in a neighbourhood of zero and has compact
support. Then
$u_\epsilon(\theta+2\pi)=u_\epsilon(\theta)+2\pi$.
The distributional measure $du$ gives positive mass to every open
interval, so $u_\epsilon'=\rho_\epsilon*du>0$.
Uniform continuity of $u(\theta)-\theta$ gives uniform convergence
of the resulting circle maps.
For every $\theta$ and $0<t\leq\pi$, the original inequalities
\[
 M^{-1}[u(\theta)-u(\theta-t)]
 \leq u(\theta+t)-u(\theta)
 \leq M[u(\theta)-u(\theta-t)]
\]
are linear inequalities between increments. Averaging their
translates against $\rho_\epsilon$ preserves both inequalities
with the same $M$. This proves the assertion. The corresponding
quasiconformal extension constants can consequently be bounded
by one $K_0=K_0(M)$.
\end{proof}
\begin{lemma}[Smooth boundary data admit finite-energy barriers]\label{smooth-barrier}
Every smooth orientation-preserving circle diffeomorphism has a smooth quasiconformal extension $F:\overline\D\to\overline\D$ with $|\mu_F(z)|=O(1-|z|)$ as $|z|\to1$. In particular $\I_p(F)<\infty$ for every $p>1$.
\end{lemma}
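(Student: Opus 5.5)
We will construct the extension explicitly by a radial (Douady--Earle-free) formula. Write the diffeomorphism as $\beta(e^{i\theta})=e^{iu(\theta)}$ with $u$ smooth, $u(\theta+2\pi)=u(\theta)+2\pi$ and $u'\geq c>0$. Define, in polar coordinates,
\[
 F(re^{i\theta})=\rho(r)\,r\,e^{iU(r,\theta)},\qquad
 U(r,\theta)=(1-\chi(r))\theta+\chi(r)u(\theta),
\]
where $\chi$ is a smooth cutoff equal to $0$ on $[0,1/4]$ and $1$ on $[1/2,1]$. The factor $\rho$ is not needed; take $\rho\equiv1$. Then $F$ is smooth on $\overline\D$ minus the origin and equals the identity near $0$. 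Since $\partial_\theta U=(1-\chi)+\chi u'\geq\min\{1,c\}>0$ and the map preserves each circle $|z|=r$, $F$ restricts to a diffeomorphism of each circle. Consequently $F$ is a diffeomorphism of $\overline\D$ with trace $\beta$ and positive Jacobian.

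The only real point is the boundary behaviour of $\mu_F$. The boundary expansion would not vanish for this product ansatz: at $r=1$ the map $re^{i\theta}\mapsto re^{iu(\theta)}$ has radial stretch $1$ and tangential stretch $u'$, so $\mu\neq0$ on the circle unless $u'\equiv1$. We therefore replace the collar part by a first-order conformal correction. On $1/2\leq r\leq1$, write $z=e^{-s+i\theta}$ with $s=-\log r$, and set
\[
 F(e^{-s+i\theta})=\exp\bigl(-s\,u'(\theta)+iu(\theta)+\tfrac{i}{2}s^2u''(\theta)\bigr).
\]
With $\zeta=\theta+is$ we have $\log F=i\,G(\zeta)$ where $G(\zeta)=u(\theta)+isu'(\theta)-\tfrac12 s^2u''(\theta)$. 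This is the second-order Taylor polynomial of the formal holomorphic extension $u(\theta+is)$. Hence
\[
 \partial_{\bar\zeta}G=\tfrac12(\partial_\theta+i\partial_s)G=O(s^2)\,u''',\qquad \partial_\zeta G=u'(\theta)+O(s).
\]
Therefore $|\mu_F|=|G_{\bar\zeta}|/|G_\zeta|=O(s^2)=O((1-|z|)^2)$, which is even better than required. For $s$ small, say $s\leq s_0$ with $s_0$ depending on $\|u\|_{C^3}$ and $c$, the map is a local diffeomorphism because $|G_\zeta|\geq c/2$ dominates. It is injective on each curve $s=\mathrm{const}$ since $\partial_\theta\operatorname{Re}G=u'+O(s)>0$, and $-\operatorname{Im}G$ is increasing in $s$ because $\partial_s(su')=u'>0$. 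So it maps the collar $\{e^{-s_0}\leq|z|\leq1\}$ diffeomorphically onto a smooth collar bounded by the circle and a smooth Jordan curve $\gamma$ near $|w|=e^{-s_0}$.

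It remains to fill in the interior. Take any smooth diffeomorphism from $\overline{\D_{e^{-s_0}}}$ onto the closed Jordan domain bounded by $\gamma$ that agrees with the collar map to all orders along $|z|=e^{-s_0}$. This can be arranged by first interpolating radially, as above, to a map that is the restriction of a diffeomorphism near the inner circle, and then invoking the smooth Schoenflies theorem (Smale's theorem that $\mathrm{Diff}^+(\D)$ fixing the boundary is contractible, or simply Munkres) to extend. This is the step I expect to need the most care, but it is purely topological.

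The interior piece is smooth on a compact set with positive Jacobian, so $\|\mu_F\|_\infty<1$ there. Hence $F$ is quasiconformal with $|\mu_F|=O(1-|z|)$. Finally,
\[
 \I_p(F)\leq(1-\|\mu_F\|_\infty^2)^{-p_*/2}\int_\D|\mu_F|^p\,\frac{dA}{(1-|z|^2)^2}<\infty
\]
because $|\mu_F|^p\lesssim(1-|z|)^p$ with $p>1$ makes the integrand $O((1-|z|)^{p-2})$, which is integrable.
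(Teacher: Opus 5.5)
Your proof is correct and is essentially the paper's. The paper also uses a collar formula that truncates the formal holomorphic extension of $u$, namely the first-order map $[1-(1-r)u'(\theta)]e^{iu(\theta)}$ giving $|\mu_F|=O(1-r)$; it fills the interior by a standard topological extension, the time-one flow of the isotopy $u_t=(1-t)\theta+tu(\theta)$, where you invoke smooth Schoenflies/Smale; and it concludes from the integrability of $(1-|z|)^{p-2}$. One small slip: with $G=u+isu'-\tfrac12s^2u''$ one gets $\log F=iG=-su'+iu-\tfrac{i}{2}s^2u''$, so the $s^2u''$ term in your displayed collar formula should carry a minus sign; as written it gives only $|\mu_F|=O(s)$, which is still enough for the lemma.
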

\begin{proof}
Write the lift of the boundary map as $u(\theta)$, with $u'>0$. On a sufficiently thin collar set
\[
 F(re^{i\theta})=\bigl[1-(1-r)u'(\theta)\bigr]e^{iu(\theta)}.
\]
This is an orientation-preserving smooth collar embedding. At $r=1$ its radial and unit tangential derivatives are $u'e^{iu}$ and $iu'e^{iu}$, so $F_{\bar z}=0$ there and $|\mu_F|=O(1-r)$. The collar extends to a smooth disk diffeomorphism: apply the same formula to the isotopy $u_t=(1-t)\theta+tu(\theta)$, extend its velocity field smoothly to the disk using a cutoff which is one near the circle, and take the time-one flow. On a thin collar this flow agrees with the formula above. The resulting extension is quasiconformal, and its energy on the collar is bounded by a constant times $\int_0^\epsilon s^{p-2}\,ds<\infty$ for $p>1$; its energy on the remaining compact disk is finite as well.
\end{proof}

\begin{theorem}[Schoen's theorem through quadratic minimisers]
\label{schoen-through-minimisers}
Every orientation-preserving quasisymmetric circle homeomorphism
$\beta$ has a unique quasiconformal hyperbolic harmonic extension
$h:\D\to\D$. It is a real analytic diffeomorphism. More precisely,
let $\beta_j\to\beta$ be the smooth approximations of
Lemma~\ref{smooth-uniform-qs-approximation}, and let $g_j$ be
quasiconformal extensions with $K(g_j)\leq K_0$. The quadratic
minimisers $h_j$ with these inverse traces satisfy
\[
 \sup_j\sup_\D d(h_j,g_j)\leq C_{\mathrm{disp}}(K_0),\qquad
 K(h_j),K(h)\leq Q_{K_0}.
\]
They converge to $h$ uniformly on the closed disk and smoothly on
compact subsets of $\D$. Their inverses converge uniformly to $h^{-1}$.
\end{theorem}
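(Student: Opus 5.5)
We assemble the result from the preceding ingredients in the order approximation, construction, uniform estimates, compactness, identification.

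\emph{Approximation and construction.} Let $\beta_j\to\beta$ be the smooth diffeomorphisms of Lemma~\ref{smooth-uniform-qs-approximation}, with common quasisymmetry constant $M$. Their inverses are smooth diffeomorphisms, so Lemma~\ref{smooth-barrier} gives each $\beta_j^{-1}$ a smooth quasiconformal extension $F_{0,j}$ with $\I_2(F_{0,j})<\infty$. Hence $F_{0,j}^{-1}$ is an admissible barrier in $\mathcal M_{\beta_j}$. Proposition~\ref{quadratic-direct-existence} and Proposition~\ref{quadratic-minimiser-regularity} produce minimisers $h_j$, which are real analytic hyperbolic harmonic diffeomorphisms with trace $\beta_j$. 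Proposition~\ref{quadratic-energy-qc-bound} makes each $h_j$ individually quasiconformal, with a bound depending on $j$. That is exactly what Theorem~\ref{schoen-displacement} needs for its preliminary finiteness.

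\emph{Uniform estimates.} Let $g_j$ be $K_0$-quasiconformal extensions of $\beta_j$ with $K_0=K_0(M)$, for instance Beurling--Ahlfors or Douady--Earle. Theorem~\ref{schoen-displacement} gives $\sup_\D d(h_j,g_j)\le C_{\mathrm{disp}}(K_0)$, and Proposition~\ref{schoen-uniform-qc} gives $K(h_j)\le Q_{K_0}$ together with the energy-density bound $\mathcal H_j+\mathcal L_j\le M_{K_0}$ and $\mathcal H_j\geq 1$.

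\emph{Compactness.} The $h_j$ are uniformly $Q_{K_0}$-quasiconformal self-maps of $\D$ with traces $\beta_j\to\beta$. By the reflection argument of Lemma~\ref{boundary-modulus}, combined with normalised quasiconformal compactness, they, and likewise their inverses, are equicontinuous on $\overline\D$. We may need to fix three boundary points via Möbius normalisation; since uniform convergence of $\beta_j$ keeps the normalising automorphisms in a compact set, the unnormalised family is equicontinuous too. A subsequence therefore converges uniformly on $\overline\D$ to a $Q_{K_0}$-quasiconformal homeomorphism $h$ with trace $\beta$, and the inverses converge uniformly to $h^{-1}$. The uniform density bound and the harmonic map equation give $C^\infty_{\rm loc}$ bounds through standard elliptic estimates, so $h$ is hyperbolic harmonic. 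Because $h$ is a quasiconformal harmonic homeomorphism, it is a diffeomorphism by the Lewy theorem \cite{Martin}. It is real analytic because the equation is analytic elliptic.

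\emph{Identification and the limit bounds.} Passing the displacement bound to the limit via uniform convergence of $g_j$ along a further subsequence gives the stated bounds for $h$ as well. Uniqueness of the quasiconformal harmonic extension \cite{LiTamProperII} makes the limit independent of the subsequence, so the whole sequence converges.

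The main obstacle is the compactness step. The difficulty is securing equicontinuity up to $\Sone$ for both $h_j$ and $h_j^{-1}$ with varying traces, while ensuring that the normalisation does not degenerate. I would handle this through the common quasisymmetry constant and the uniform convergence $\beta_j\to\beta$.
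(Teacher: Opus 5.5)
Your proposal is correct and follows essentially the same route as the paper: barriers from Lemma~\ref{smooth-barrier}, minimisers made individually quasiconformal by Proposition~\ref{quadratic-energy-qc-bound}, uniform bounds from Theorem~\ref{schoen-displacement} and Proposition~\ref{schoen-uniform-qc}, normalised quasiconformal compactness for the maps and their inverses, and Li--Tam uniqueness to identify every subsequential limit. The differences are minor. You obtain smooth local convergence from the density bound and elliptic estimates rather than citing \cite{YaoHarmonicCompactness}. You get that the limit is a diffeomorphism from the Lewy-type theorem \cite{Martin}, whereas the paper passes the uniform lower bound $\sqrt{\mathcal H_j}(1-|\mu_{h_j}|)\geq 1-\sqrt{1-\delta_{K_0}}$ on the smaller singular value to the limit.
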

\begin{proof}
Each smooth $\beta_j$ has a finite-energy barrier: apply Lemma~\ref{smooth-barrier} to the forward trace $\beta_j^{-1}$.
Propositions~\ref{quadratic-direct-existence} and
\ref{quadratic-minimiser-regularity} give us the quadratic
minimiser $h_j$, a hyperbolic harmonic diffeomorphism.
Proposition~\ref{quadratic-energy-qc-bound} makes it individually quasiconformal. Li--Tam uniqueness identifies this variational extension with their classical smooth-data harmonic extension.
The forward map is $f_j=h_j^{-1}$, and
\[
 \I_2(f_j)=\int_\D
 \frac{|\mu_{f_j}|^2}{1-|\mu_{f_j}|^2}\eta\area
 =\E_2(h_j)=\int_\D\eta(h_j)|(h_j)_{\bar z}|^2\area.
\]
These finite energies are not assumed uniformly bounded.

It is Theorem~\ref{schoen-displacement} and
Proposition~\ref{schoen-uniform-qc} that give the asserted uniform bounds.
The remaining limiting step is now the classical compactness theorem for
uniformly quasiconformal harmonic disk maps; see
\cite{YaoHarmonicCompactness}, including its $C^\infty_{\loc}$
strengthening. We recall its application to fix the normalisation.
After three-point normalisation, quasiconformal compactness for maps
and inverses gives uniform convergence on the closed disk, along a
subsequence, to inverse homeomorphisms with traces $\beta$ and
$\beta^{-1}$. The normalisation excludes a constant limit; because the
limiting boundary map is a homeomorphism, the normalisations converge
and can be undone. Harmonic compactness gives smooth local convergence
to a harmonic $Q_{K_0}$-quasiconformal map. Its Jacobian is positive:
the smaller hyperbolic singular value of each $h_j$ is
$\sqrt{\mathcal H_j}(1-|\mu_{h_j}|)
\geq1-\sqrt{1-\delta_{K_0}}>0$.
The analytic harmonic-map equation gives real analyticity.
Uniqueness in the quasiconformal harmonic class
\cite[Section 1]{LiTamProperII} identifies every subsequential limit,
so the whole sequence converges as stated.
\end{proof}

\begin{remark}[Scope and the size of the constants]\label{schoen-constants-scope}
Of course the limit will not, in general, have finite $\E_2$. Once the smooth
minimisers are harmonic, the displacement comparison uses harmonicity
and the boundary distortion bound alone.

The constants in \eqref{schoen-boundary-constants},
\eqref{schoen-displacement-constants} and \eqref{schoen-qc-constants}
are completely explicit once the comparator dilatation $K$ is given.
They provide quantitative bounds for displacement, energy density,
the Bloch norm of the Hopf differential and quasiconformal dilatation,
with no unspecified constants from a compactness argument. These estimates, despite being large, are already
effective. For example
$\ell_R\geq e^{-(R+1)}$, so
\[
 C_{\mathrm{disp}}(K)
 \leq\max\left\{1,
 \frac{e^{R_K+1}}{a_0^2}{(KR_K+b_K)^2R_K^2(2E_K+E_K^2)} 
\right\}.
\]
Since $R_K=O(K^2)$, this is at most $A\exp(BK^2)$ for universal
constants $A,B$. The subsequent bound $Q_K$ is substantially larger.
This asymptotic description is only a summary of the explicit formulas
above. Improving their size, especially near conformal boundary data,
is a separate problem. One may already optimise the
comparison over radii $R$ with
$R/K-(\log2)/K-c_K>0$, retaining this positive margin in place of $1$.

The constants do not reflect the behaviour near conformal data: $C_{\mathrm{disp}}(K)\geq1$ by construction, so $Q_K$ stays bounded away from $1$ as $K\to1$. Nevertheless the dilatation of the harmonic extension does tend to $1$ as the quasisymmetry constant $M$ of the boundary map tends to $1$, by a compactness argument with no rate. Suppose $M_n\to1$ but $K(h_n)\geq K_0>1$ for the harmonic quasiconformal extensions $h_n$. Normalise by M\"obius maps so that $|\mu_{h_n}(0)|\geq k_0>0$ and the boundary maps fix three points; both constants are M\"obius invariant. The boundary maps are $M_n$-quasisymmetric and normalised, hence equicontinuous, and every limit is $1$-quasisymmetric, i.e.\ M\"obius, i.e.\ the identity; so they converge uniformly to the identity. The $h_n$ are uniformly $K$-quasiconformal with $K=Q_{K(\sup_nM_n)}$, so a subsequence converges locally uniformly to a $K$-quasiconformal map $h$ with boundary values the identity, and by Cheng's gradient estimate the convergence is smooth on compact sets, so $h$ is harmonic. Li--Tam uniqueness gives $h=\mathrm{id}$, and then $\mu_{h_n}(0)\to0$, a contradiction. A quantitative version would be a bound for the hyperbolic supremum norm of the Hopf differential in terms of $M-1$, through Wan's criterion \cite{WanHarmonic}.  However we do not know one.
\end{remark}
\begin{proof}[Proof of Theorem~\ref{thm-B}]
Existence, uniqueness, real analyticity and the convergence statement are Theorem~\ref{schoen-through-minimisers}. For the displacement and dilatation bounds, let $g$ be any $K$-quasiconformal extension of $\beta$. The harmonic extension $h$ is quasiconformal, so Theorem~\ref{schoen-displacement} applies to the pair $(h,g)$ directly and gives $\sup d(h,g)\leq C_{\mathrm{disp}}(K)$; Proposition~\ref{schoen-uniform-qc} then gives $K(h)\leq Q_K$.
\end{proof}

\subsection{A local energy comparison with a displacement error}\label{quadratic-geodesic-comparison}
The displacement bound of Theorem~\ref{schoen-displacement} was converted into an energy-density bound by a Green's formula on one ball. The following intrinsic comparison, which freezes neither the target metric nor a patch, gives a second passage from displacement to energy density and may be of independent use. All quantities are measured with the source and target metric $\eta|dz|^2$. For a smooth map $F$ put
\[
 e_F=\tfrac12|dF|_\eta^2
 =\frac{\eta(F)}{\eta}
       (|F_w|^2+|F_{\bw}|^2),
\]
so that $e_h=\mathcal H+\mathcal L$ in the notation of Proposition~\ref{quadratic-energy-qc-bound}.

\begin{proposition}[Local energy comparison with a displacement error]
\label{quadratic-geodesic-caccioppoli}
Let $h:\D\to\D$ be hyperbolic harmonic and let $G:\D\to\D$ be a
smooth comparison map. Put $d(w)=d_\eta(h(w),G(w))$.
For every compactly supported Lipschitz function $0\leq\chi\leq1$,
\begin{equation}\label{quadratic-geodesic-energy}
 \boxed{\quad
 \int_\D\chi^2 e_h\,dA_\eta
 \leq 2\int_\D\chi^2 e_G\,dA_\eta
       +4\int_\D|\nabla_\eta\chi|^2d^2\,dA_\eta .
 \quad}
\end{equation}
Neither map is assumed to have finite total hyperbolic energy.
In particular,
\begin{equation}\label{quadratic-annular-displacement}
 \int_{B_R^\eta(a)}e_h\,dA_\eta
 \leq 2\int_{B_{2R}^\eta(a)}e_G\,dA_\eta
       +\frac4{R^2}
         \int_{B_{2R}^\eta(a)\setminus B_R^\eta(a)}
                          d^2\,dA_\eta .
\end{equation}
\end{proposition}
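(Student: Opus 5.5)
Since $h$ is harmonic, it minimises energy among maps that agree with it off a compact set. The plan is to take the comparison map $u_t$ obtained by pointwise geodesic interpolation from $h$ towards $G$ with a variable parameter $t=\chi^2$. Concretely, for each $w$ let $\gamma_w:[0,1]\to\D$ be the target geodesic from $h(w)$ to $G(w)$, and set $u(w)=\gamma_w(\chi(w)^2)$. Then $u=h$ off $\operatorname{supp}\chi$. Since the target is CAT(0) with a unique smooth geodesic joining each pair of points, $u$ is locally Lipschitz, and $u$ agrees with $h$ near the boundary of any relatively compact domain $\Omega\supset\operatorname{supp}\chi$.

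The key tool is the pointwise form of the geodesic-interpolation convexity \eqref{quadratic-geodesic-convexity}, with a variable parameter. For each $w$, the differential of $u$ in a direction $v$ is the Jacobi-field value $J(\chi^2)$ along $\gamma_w$, with endpoints $dh(v)$ and $dG(v)$, plus the term $2\chi\,(\partial_v\chi)\,d\,\dot\gamma/|\dot\gamma|$ coming from the variation of the parameter. In nonpositive curvature $|J(s)|^2$ is convex in $s$, so
\[
|J(s)|^2\leq(1-s)|dh(v)|^2+s|dG(v)|^2 .
\]
Using this together with $|a+b|^2\leq(1+\epsilon)|a|^2+(1+\epsilon^{-1})|b|^2$, one expects pointwise
\[
e_u\leq(1-\chi^2)(1+\epsilon)e_h+\chi^2(1+\epsilon)e_G+C_\epsilon\chi^2|\nabla_\eta\chi|^2d^2 .
\]
A sharper route avoids the $\epsilon$-loss. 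The cross term between $J$ and the radial vector $\dot\gamma$ is controlled by the derivative of $d$, which satisfies $|\nabla d|^2\lesssim e_h+e_G$. It can be absorbed with Young's inequality at weight $\tfrac12$, and this is the source of the constants $2$ and $4$.

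Next, integrate over $\Omega$ and use minimality $\int_\Omega e_h\leq\int_\Omega e_u$, which holds for harmonic maps into the CAT(0) disk by the convexity \eqref{quadratic-geodesic-convexity} on compact domains. The $(1-\chi^2)e_h$ terms cancel and leave $\int\chi^2e_h$ on the left. Choosing $\epsilon=1$, the bound $\chi^2\leq 1$, and a Young's inequality split of the cross term, one aims to arrive at $\int\chi^2e_h\leq 2\int\chi^2e_G+4\int|\nabla_\eta\chi|^2d^2$. Everything is localised to $\operatorname{supp}\chi$, so no finiteness of total energy is needed. Conformal invariance of the two-dimensional energy lets us use $dA_\eta$ and $|\nabla_\eta\chi|$ interchangeably with the Euclidean quantities.

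For the annular statement \eqref{quadratic-annular-displacement}, take $\chi=1$ on $B_R^\eta(a)$, $\chi=2-d_\eta(a,\cdot)/R$ on the annulus, and $\chi=0$ outside, so that $|\nabla_\eta\chi|\leq1/R$. The main obstacle is the cross term produced when the interpolation parameter varies. If one is careless, a term $\chi\,\partial\chi\,d\,\langle J,\dot\gamma\rangle$ appears that is not obviously bounded by the right side. I would first try the approach of Korevaar--Schoen: write the energy of the interpolation through the quadrilateral (Reshetnyak) comparison of the two pairs $(h(w),G(w))$ and $(h(w'),G(w'))$ for nearby points, which bounds $d(u(w),u(w'))^2$ directly. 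This gives the pointwise inequality with the desired constants, with the factor $2$ arising from $(x+y)^2\leq2x^2+2y^2$.
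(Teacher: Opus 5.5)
Your proposal does not reach the boxed inequality: neither of the two mechanisms you give for the pointwise bound works.

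In the $\epsilon$-route the factor $1+\epsilon$ multiplies $(1-\chi^2)e_h$. After $\int e_h\leq\int e_u$ you are left with $\int\chi^2e_h\leq\epsilon\int_{\operatorname{supp}\chi}(1-\chi^2)e_h+\cdots$. The $(1-\chi^2)e_h$ terms cancel only for $\epsilon=0$. The leftover is an unweighted energy of $h$ on the transition region, and the right-hand side does not control it. Your Korevaar--Schoen fallback with $(x+y)^2\leq2x^2+2y^2$ is the case $\epsilon=1$ and fails the same way.

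Suppose instead you keep the cross term exactly. The tangential component of a Jacobi field is affine, so the cross term equals $2(1-\chi^2)\langle dh(\nabla\chi^2),\dot\gamma(0)\rangle+2\chi^2\langle dG(\nabla\chi^2),\dot\gamma(1)\rangle$. If you use only convexity of $|J|^2$, then with real Hilbert--Schmidt norms $|dF|^2=2e_F$ you get
\[
\int\chi^2|dh|^2\leq\int\chi^2|dG|^2+4\int\chi(1-\chi^2)|\nabla\chi|\,d\,|dh|+4\int\chi^3|\nabla\chi|\,d\,|dG|+4\int\chi^2|\nabla\chi|^2d^2 .
\]
Absorbing the $|dh|$ term where $\chi$ is small costs a coefficient $4/(\delta(1-\delta))\geq16$ in front of $|\nabla\chi|^2d^2$. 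In this normalisation the statement needs $8$, so the constants $2$ and $4$ are asserted rather than derived. The loss comes from the fact that convexity of $|J|^2$ only gives $\partial_s|J|^2(0)\leq|J(1)|^2-|J(0)|^2$. This has half the coercivity of the sharp bound $\partial_s|J|^2(0)\leq2|J(0)||J(1)|-2|J(0)|^2$.

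That sharp bound is the missing ingredient. The paper differentiates your own family $\exp_h(t\chi^2V)$, $V=\log_hG$, at $t=0$; that is, it tests the harmonic-map equation with $\chi^2V$. From the Hessian of $\tfrac12d_\eta^2$ it reads off $\langle dh(X),\nabla_XV\rangle\leq-|dh(X)|^2+|dh(X)||dG(X)|$. The relevant eigenvalues are $-1$ and $-2d\coth2d$ for the $p$-derivative of $\log_pq$, and $1$ and $2d/\sinh2d$ for the $q$-derivative. This gives $\int\chi^2|dh|^2\leq\int\chi^2|dh||dG|+2\int\chi|\nabla\chi||dh|\,d$. Young's inequality with weight $\tfrac14$ on each term then yields exactly $2$ and $4$.

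Your finite competitor $u=\exp_h(\chi^2V)$ can also be rescued. Use convexity of $|J|$ rather than $|J|^2$, so that $|J_X(s)|\leq(1-s)|dh(X)|+s|dG(X)|$, and keep the exact cross term. With $A=|dh|$, $B=|dG|$ and $c=|\nabla\chi|\,d$ this gives, pointwise,
\[
|du|^2-|dh|^2\leq-\chi^2A^2+2\chi^2B^2+8c^2-Q,\qquad Q=(1-\chi^2)(\chi A-\chi B-2c)^2+(\chi B-2c)^2\geq0 .
\]
Minimality of $h$ then gives the stated inequality with no absorption step at all. Your use of minimality among compact perturbations and your annular cutoff are fine as written.
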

\begin{proof}
The hyperbolic plane has a unique geodesic between any two points.
Let $V(w)=\log_{h(w)}G(w)$ be its initial velocity, with
$|V|_\eta=d$. Test harmonicity of $h$ with the compactly supported
field $\chi^2V$; equivalently use the variation
$\exp_h(t\chi^2V)$ at $t=0$.

For target points $p,q$, the differential of $\log_p q$ with
respect to $p$ is minus the Hessian of
$\tfrac12d_\eta(p,q)^2$. In curvature $-4$ its radial and transverse
eigenvalues are $-1$ and $-2d\coth(2d)$.
With respect to $q$, after parallel transport to $p$, the
corresponding eigenvalues are $1$ and $2d/\sinh(2d)$.
Their limiting values at $d=0$ are understood.
Consequently, for each source tangent vector $X$,
\[
 \langle dh(X),\nabla_X V\rangle_\eta
 \leq-|dh(X)|_\eta^2
       +|dh(X)|_\eta\,|dG(X)|_\eta .
\]
Sum over a source orthonormal frame and use the first variation.
Writing norms of differentials as real Hilbert--Schmidt norms,
one obtains
\[
 \int\chi^2|dh|_\eta^2
 \leq\int\chi^2|dh|_\eta|dG|_\eta
       +2\int\chi|\nabla_\eta\chi|\,|dh|_\eta\,d .
\]
All integrals here use $dA_\eta$.
Young's inequality bounds the right side by
\[
 \tfrac12\int\chi^2|dh|_\eta^2
       +\int\chi^2|dG|_\eta^2
       +4\int|\nabla_\eta\chi|^2d^2.
\]
Absorb the first term and divide by two to obtain
\eqref{quadratic-geodesic-energy}.
Take the radial cutoff equal to one on $B_R^\eta(a)$, decreasing
linearly to zero on the surrounding annulus, with
$|\nabla_\eta\chi|\leq R^{-1}$, to prove
\eqref{quadratic-annular-displacement}.
The logarithm field is smooth even at coincident points, and
Lipschitz cutoffs follow by approximation.
\end{proof}
\section{Integrable distortion and the boundary classes}\label{boundary-classes}
This section proves Theorem~\ref{thm-C}. The order of the argument is as follows. A boundary regularity criterion for finite-distortion maps with hyperbolically integrable coefficient (Section~\ref{boundary-criterion}); the classes $T^p$ and $\mathcal T_p$ (Section~\ref{two-classes}); a factorisation of every finite-energy map through a quasiconformal map, which gives the homeomorphic boundary extension (Section~\ref{factorisation-section}); the Sobolev regularity of the inverse map and a capacity estimate giving quantitative quasisymmetry and symmetry (Section~\ref{boundary-moduli}); and finally the identity $\mathcal T_2=T^2$ (Section~\ref{wp-characterisation}). Except for the last step the results hold for every $p\geq1$; the identity $\mathcal T_p=T^p$ for every $p>1$ is proved in Section~\ref{p-classes} by a different method using only the homeomorphic boundary extension and the capacity estimate of Lemma~\ref{finite-distortion-inverse-calculus}.

\subsection{A boundary regularity criterion}\label{boundary-criterion}
We use the disk version of the theorem of Gutlyanskii, Sakan and Sugawa \cite[Theorem 1.7]{Gut}. This theorem concerns a $\mu$-conformal \emph{homeomorphism} of the disk onto itself. Here, the assumption that the map is a homeomorphism is necessary.

\begin{theorem}\label{extend}
Let $f:\ID\to\ID$ be a homeomorphism of finite distortion in the class to which \cite[Theorem 1.7]{Gut} applies. If
\begin{equation}\label{muint}
 \int_\ID\frac{|\mu_f|}{1-|\mu_f|}\,dA_\eta<\infty,
\end{equation}
then $f$ extends homeomorphically to $\overline\ID$, and its boundary map is a $C^1$ diffeomorphism.
\end{theorem}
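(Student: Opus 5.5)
The plan is to reduce Theorem~\ref{extend} to a direct application of \cite[Theorem 1.7]{Gut}, after translating the hyperbolic integrability condition \eqref{muint} into the form that theorem uses. In the half-plane model, Gutlyanskii--Sakan--Sugawa prove boundary conformality for a $\mu$-conformal homeomorphism whose dilatation $\frac{1+|\mu|}{1-|\mu|}-1=\frac{2|\mu|}{1-|\mu|}$ is integrable against $dA/y^2$. One version of their hypothesis allows local integrability near a boundary arc; the global form suffices here. So the first step is to conjugate $f$ by a Cayley map $C:\IH\to\D$ and set $\tilde f=C^{-1}\circ f\circ C$. Since $|\mu_{\tilde f}(\zeta)|=|\mu_f(C(\zeta))|$ and $\eta(C\zeta)|C'(\zeta)|^2=1/(4y^2)$, the condition \eqref{muint} becomes
\[
 \int_{\IH}\frac{|\mu_{\tilde f}|}{1-|\mu_{\tilde f}|}\,\frac{dA}{y^2}<\infty .
\]
This is precisely the integrability in \cite{Gut}. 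By M\"obius invariance, choosing different Cayley maps lets me place any boundary point of $\Sone$ at $0$ and carry out the analysis there.

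The second step is to apply \cite[Theorem 1.7]{Gut} at each boundary point. It gives a homeomorphic extension of $\tilde f$ to the closed half-plane near that point, conformality at the point, and a nonzero finite angular derivative. Working in the disk, the same statement holds at every $\xi\in\Sone$: $f$ is conformal at $\xi$ with derivative $f'(\xi)\neq0$. The homeomorphic extension to $\overline\D$ itself is also available directly from the homeomorphism hypothesis together with the Theorem~\ref{thm-C}-type factorisation of Section~\ref{factorisation-section}. The simplest route, however, is that the GSS theorem already provides it.

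The third step, which I expect to be the main obstacle, is to upgrade pointwise conformality to a $C^1$ \emph{diffeomorphism} of the circle. That requires continuity of $\xi\mapsto f'(\xi)$ and nonvanishing. The tool I would use is the uniformity in the GSS estimate: the tail integrals
\[
 \int_{\D\cap B(\xi,\delta)}\frac{|\mu_f|}{1-|\mu_f|}\,dA_\eta
\]
tend to zero uniformly in $\xi$ as $\delta\to0$, by absolute continuity of the integral. The difference quotients of $f$ at $\xi$ are then controlled by an exponential of such tails. This makes $\log|f'(\xi)|$ and $\arg f'(\xi)$ limits of uniformly convergent families, so $f'$ is continuous and nonzero on $\Sone$. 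Applying the same reasoning to $f^{-1}$ shows the inverse boundary map is $C^1$ as well, which gives the diffeomorphism conclusion. If the uniform form is not quoted explicitly in \cite{Gut}, I would recover it by splitting $f=A\circ\phi$, where $\phi$ solves the Beltrami equation with $\mu$ truncated away from a thin boundary collar. This reduces the question to uniform control of the collar contribution.
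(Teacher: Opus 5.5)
Your overall route is the paper's: Theorem~\ref{extend} is a direct application of \cite[Theorem 1.7]{Gut}. The decisive observation is the one in your third step, that absolute continuity of the integral in \eqref{muint} gives uniformity in the boundary point. But you have misidentified what the cited theorem assumes. As a result, the one verification the proof actually needs is missing, while the step you flag as the main obstacle is already part of the cited theorem.

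Its hypotheses are not integrability against $dA/y^2$ or $dA_\eta$. They are the convergence, uniformly in $\zeta\in\Sone$ as $r_1,r_2\to0$, of two Brakalova--Jenkins-type integrals over the one-sided regions $T(\zeta;r_1,r_2)=\{z\in\D:r_1<|(z-\zeta)/(z+\zeta)|<r_2\}$. Up to constants, their integrands have absolute values $|\mu|^2/(1-|\mu|^2)$ and $|\mu|/(1-|\mu|^2)$ times the kernel $|z^2-\zeta^2|^{-2}$.

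What must be checked is the following:
\begin{itemize}
\item both integrands are dominated by the integrand of \eqref{muint}, via $|z^2-\zeta^2|\geq1-|z|^2$ and $|\mu|^2/(1-|\mu|^2)\leq|\mu|/(1-|\mu|^2)\leq|\mu|/(1-|\mu|)$;
\item $T(\zeta;r_1,r_2)\subset B(\zeta,2r_2)$, so it has area at most $4\pi r_2^2$ independently of $\zeta$.
\end{itemize}
Absolute continuity then gives both uniform convergences. The cited theorem directly returns the homeomorphic extension together with a continuous, strictly positive boundary derivative.

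So no separate pointwise-to-$C^1$ upgrade is needed, and your truncation fallback should be dropped. A truncated degenerate coefficient need not have any homeomorphic solution (compare Example~\ref{rawcoefficient}). Note also that the pointwise conclusion of that theorem is differentiability of the boundary map, not conformality of $f$ at $\zeta$.

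Two further steps are wrong as written.
\begin{itemize}
\item You cannot apply ``the same reasoning to $f^{-1}$''. The condition \eqref{muint} is not invariant under inversion: after the change of variables the weight $\eta$ is replaced by $\eta(f)J_f$, which is the issue recorded at \eqref{badweight}. The step is also unnecessary, because a circle homeomorphism with continuous nonvanishing derivative has a $C^1$ inverse by the one-variable inverse function theorem.
\item Obtaining the boundary extension from the factorisation of Section~\ref{factorisation-section} would be circular, since Proposition~\ref{factorisation} is proved by applying Theorem~\ref{extend}.
\end{itemize}
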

\begin{proof}
For $\zeta\in\IS$ let
\[
 T(\zeta;r_1,r_2)=\left\{z\in\ID:r_1<\left|\frac{z-\zeta}{z+\zeta}\right|<r_2\right\}.
\]
The two integrands in the cited criterion are, in absolute value, bounded by the integrable function in \eqref{muint}, since
\[
 |z^2-\zeta^2|\geq1-|z|^2,\qquad
 \frac{|\mu|^2}{1-|\mu|^2}\leq\frac{|\mu|}{1-|\mu|},\qquad
 \frac{|\mu|}{1-|\mu|^2}\leq\frac{|\mu|}{1-|\mu|}.
\]
Moreover $T(\zeta;r_1,r_2)\subset B(\zeta,2r_2)$, so its area is at most $4\pi r_2^2$, uniformly in $\zeta$. Absolute continuity of the integral gives uniform convergence in both conditions. The cited theorem supplies continuity and strict positivity of the boundary derivative. \end{proof}

Theorem~\ref{extend} is a boundary version of the Teichm\"uller--Wittich--Belinskii theorem, and it is worth saying what separates the two. That theorem is an interior, pointwise statement about quasiconformal maps: if $f$ is quasiconformal near $0$ with $f(0)=0$ and $\int_{|z|<1}|\mu_f|\,|z|^{-2}\,dA<\infty$, then $f(z)/z$ has a nonzero limit at $0$. Three things are different here. First, the point lies on the boundary of a self-map of the disk, where reflection across the circle is not available for a $\mu$-homeomorphism (the reflected map need not be in $W^{1,1}_{\loc}$ across the circle, \cite[Example 4.6]{Gut}), so the modulus estimates must be made one-sidedly, for semiannuli, and the conclusion is differentiability of the boundary map, with derivative $\lim_{z\to\zeta}|f(z)-f(\zeta)|/|z-\zeta|>0$, rather than conformality of $f$ at $\zeta$. Second, the map is only a $\mu$-homeomorphism in $W^{1,1}_{\loc}$ with $\|\mu\|_\infty=1$ allowed, which is what makes the result available for finite-distortion maps; the hypotheses are the two conditions of Brakalova--Jenkins type, with the denominators $1-|\mu|^2$ carrying the degeneration, and the single Wittich condition is meaningless where $|\mu|\to1$. Third, pointwise conditions give pointwise differentiability only; the continuity of the derivative needs the integrals to converge uniformly in $\zeta$, and that uniformity is what \eqref{muint} provides through the absolute continuity of the integral, the sets $T(\zeta;r_1,r_2)$ having uniformly small area.

\begin{proposition}\label{inverseboundary}
Let $H:\ID\to\ID$ be a homeomorphism of finite distortion. Assume that $F=H^{-1}$ is also of finite distortion and that the inverse and area formulas apply. If
\begin{equation}\label{hypint}
 \int_\ID\eta(H)\,|H_wH_{\bar w}|\,dA(w)<\infty,
\end{equation}
then both boundary maps are $C^1$ diffeomorphisms.
\end{proposition}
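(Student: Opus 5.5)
The plan is to transfer the hypothesis \eqref{hypint} on $H$ into the hypothesis \eqref{muint} of Theorem~\ref{extend} for the forward map $F=H^{-1}$ by the inverse change of variables, to apply Theorem~\ref{extend} to $F$, and then to read off the statement for $H$ by inverting the boundary diffeomorphism.

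\textbf{Step 1: pointwise identity.} Write $a=H_w$, $b=H_{\bar w}$ and $r=|\mu_H|=|b|/|a|$ where $a\neq0$. The inverse derivative formula gives $|\mu_F(H(w))|=|\mu_H(w)|$ almost everywhere on $\{J_H>0\}$. Since $J_H=|a|^2-|b|^2=|a|^2(1-r)(1+r)$, the transported density is
\[
 \eta(H)\,J_H\,\frac{r}{1-r}=\eta(H)\,|a|^2 r(1+r)=\eta(H)\,|a||b|\,(1+r)\leq2\,\eta(H)\,|H_wH_{\bar w}|.
\]

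\textbf{Step 2: integrate.} The area formula for $H$ then gives
\[
 \int_\D\frac{|\mu_F|}{1-|\mu_F|}\,dA_\eta=\int_\D\eta(H)J_H\frac{|\mu_H|}{1-|\mu_H|}\,dA\leq2\int_\D\eta(H)|H_wH_{\bar w}|\,dA<\infty.
\]
On $\{J_H=0\}$, finite distortion of $H$ forces $DH=0$, so the integrand vanishes there with our convention $\mu=0$. Sets where $J_F=0$ correspond, via the inverse formula, to sets that $H$ carries onto null sets, so they contribute nothing to the left side beyond what the formula records. I expect this bookkeeping to be routine given the standing assumption that the inverse and area formulas apply.

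\textbf{Step 3: apply Theorem~\ref{extend} to $F$.} This gives a homeomorphic extension of $F$ to $\overline\D$ whose boundary map $\varphi$ is a $C^1$ diffeomorphism with $\varphi'>0$ everywhere, the strict positivity being part of the cited conclusion. Then $H$ extends homeomorphically as the inverse of this extension, with boundary map $\varphi^{-1}$. The inverse function theorem on the circle makes $\varphi^{-1}$ a $C^1$ diffeomorphism, so both boundary maps are $C^1$ diffeomorphisms.

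\textbf{Main obstacle.} The step I expect to need care is confirming that $F$ lies in the class to which \cite[Theorem 1.7]{Gut} applies, namely a $\mu$-homeomorphism in $W^{1,1}_{\loc}$ with the requisite local integrability. I would derive this from $F$ being of finite distortion, as assumed, together with local integrability of $|\mu_F|/(1-|\mu_F|)$, which follows from Step~2 because $\eta\geq1$. If further hypotheses are needed, I would state them as part of the standing assumptions, as in Theorem~\ref{extend}.
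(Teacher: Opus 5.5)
Your proof is correct and follows the paper's own argument: change variables by the inverse formula to get $\int_\D|\mu_F|(1-|\mu_F|^2)^{-1}\,dA_\eta=\int_\D\eta(H)|H_wH_{\bar w}|\,dA$, use $(1-r)^{-1}\le2(1-r^2)^{-1}$ (your factor $1+r\le2$ merges this comparison into the first step), apply Theorem~\ref{extend} to $F$, and invert the boundary diffeomorphism. The null-set bookkeeping you raise needs no separate argument. It is covered by the standing assumption that the inverse and area formulas apply, and in any case the integrand vanishes on $\{J_F=0\}$, because finite distortion of $F$ gives $DF=0$ there.
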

\begin{proof}
The inverse formula gives $|\mu_F(H(w))|=|\mu_H(w)|$. Therefore
\[
 \int_\ID\frac{|\mu_F|}{1-|\mu_F|^2}\,dA_\eta
 =\int_\ID\eta(H)\,|H_wH_{\bar w}|\,dA.
\]
Since $(1-r)^{-1}\leq2(1-r^2)^{-1}$ for $0\leq r<1$, Theorem~\ref{extend} applies to $F$. Its boundary inverse is also a $C^1$ diffeomorphism.
\end{proof}

\begin{corollary}\label{harmonicboundary}
Let $H:\ID\to\ID$ be an orientation-preserving hyperbolic harmonic diffeomorphism onto the disk. If its Hopf coefficient
\[
 Q_H=\eta(H)H_w\overline{H_{\bar w}}
\]
belongs to $L^1(\ID,dA)$, then its boundary map is a $C^1$ diffeomorphism.
\end{corollary}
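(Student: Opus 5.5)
The plan is to deduce Corollary~\ref{harmonicboundary} directly from Proposition~\ref{inverseboundary}, applied with $H$ the given harmonic diffeomorphism and $F=H^{-1}$. The integrability hypothesis \eqref{hypint} is exactly the assumption $Q_H\in L^1(\ID,dA)$, since
\[
 |Q_H|=\eta(H)\,|H_w|\,|\overline{H_{\bar w}}|=\eta(H)\,|H_wH_{\bar w}|.
\]
What remains is to check the structural hypotheses of Proposition~\ref{inverseboundary}: that $H$ and $F$ are homeomorphisms of finite distortion, and that the inverse and area formulas hold.

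\textbf{Step 1: the local structure.} A hyperbolic harmonic diffeomorphism is real analytic, so its Jacobian is nonnegative. By Lewy's theorem in the curved setting (\cite{Martin}, as used in Proposition~\ref{quadratic-minimiser-regularity}) it is in fact positive everywhere. Hence $H$ is a $C^\infty$ orientation-preserving diffeomorphism of $\ID$ onto $\ID$ with $|H_{\bar w}|<|H_w|$ pointwise, and $F=H^{-1}$ is also a smooth diffeomorphism.

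\textbf{Step 2: the hypotheses of Proposition~\ref{inverseboundary}.} Both maps lie in $W^{1,1}_{\loc}$ with locally bounded derivatives and positive Jacobians. They are therefore of finite distortion in the sense of Section~\ref{conventions}, with $|\mu|<1$ everywhere. The inverse derivative formula holds pointwise by the chain rule, and the area formula holds for smooth diffeomorphisms. In particular $|\mu_F(H(w))|=|\mu_H(w)|$, which is the identity used in the proof of that proposition.

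\textbf{Step 3: conclusion.} Proposition~\ref{inverseboundary} now gives that $F$ satisfies \eqref{muint}, since $(1-r)^{-1}\le 2(1-r^2)^{-1}$. By Theorem~\ref{extend}, $F$ extends homeomorphically to $\overline\ID$ with a $C^1$ diffeomorphic boundary map. The boundary map of $H$ is its inverse, hence also a $C^1$ diffeomorphism.

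\textbf{Main obstacle.} The one point needing care is the phrase ``in the class to which \cite[Theorem 1.7]{Gut} applies'' in Theorem~\ref{extend}. That theorem is stated for $\mu$-conformal homeomorphisms of $\ID$ onto itself in $W^{1,1}_{\loc}$. A smooth diffeomorphism of $\ID$ onto $\ID$ with $|\mu_F|<1$ is certainly such a map, so I expect this step to be a routine verification and not a genuine gap.
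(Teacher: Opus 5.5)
Your proposal is correct and follows the paper's proof: the paper applies Proposition~\ref{inverseboundary} directly, noting only that the inverse and area formulas hold for the diffeomorphism $H$, and your Steps~2--3 spell out that verification. (Step~1 needs neither harmonicity, real analyticity nor the curved Lewy theorem: $J_H>0$ holds simply because $H$ is an orientation-preserving diffeomorphism, and real analyticity by itself does not give $J_H\geq0$.)
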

\begin{proof}
Apply Proposition~\ref{inverseboundary}. The inverse and area formulas hold for the diffeomorphism $H$.
\end{proof}

Harmonicity alone does not imply that a map is a homeomorphism: constant maps and $H(w)=w^2$ are immediate counterexamples, with zero Hopf differential. The degree or injectivity assumptions in results such as \cite{Martin} must be retained. Conversely, the existence of a harmonic extension with \emph{integrable} Hopf coefficient is not established here from $C^1$ boundary regularity alone.
\subsection{The classes $T^p$ and $\mathcal T_p$}\label{two-classes}
For $p\geq1$, let $T^p$ consist of the circle homeomorphisms admitting a quasiconformal extension $F:\ID\to\ID$ with $\mu_F\in L^p(dA_\eta)$. The spaces were introduced in \cite{Guo}; for $p=2$ the Douady--Earle extension preserves this integrability \cite{Cui}. The relation with the Weil--Petersson Hilbert-manifold structure is developed in \cite{TT}. Elements of $T^p$ are symmetric, and for $0<p\leq1$ the classical spaces consist of $C^1$ diffeomorphisms \cite{B5,AB}. The converse inclusion from all $C^1$ diffeomorphisms is false even for $T^2$; see the example below and \cite{Shen}.

The coefficient and boundary classes under discussion are
\begin{align}
 \mathcal B_p&=\{\mu_F:F:\ID\to\ID\text{ a homeomorphism of finite distortion},\ \mathcal I_p(F)<\infty\},\label{Bp}\\
 \mathcal T_p&=\{g\in\mathrm{Homeo}^+(\IS):\exists F\text{ as above extending continuously with }F|_\IS=g\}.\label{Tpclass}
\end{align}
The existence of an integrating map is part of this definition of $\mathcal B_p$. This is a different problem to start with an arbitrary measurable coefficient satisfying only the integral bound as the next example shows.

\begin{example}[The coefficient condition alone does not give existence]\label{rawcoefficient}
For $0<r<e^{-1}$ put $t=\log(1/r)$ and set
\[
 \mu(re^{i\theta})=-\frac{t^2-1}{t^2+1}e^{2i\theta};
 \qquad \mu=0\quad\hbox{for }e^{-1}\leq r<1.
\]
The value at zero is immaterial. This coefficient satisfies the integral in \eqref{funct} for every finite $p\geq1$: its support stays away from the boundary and its integrand grows only like $t^{p_*}$ as $r\downarrow0$, whereas $r\,dr=e^{-2t}(-dt)$.

Nevertheless there is no homeomorphic disk solution with this coefficient. On the punctured disk it is the coefficient of
\[
 U(re^{i\theta})=R(r)e^{i\theta},\qquad
 R(r)=\begin{cases}\exp(-2+1/\log(1/r)),&0<r<e^{-1},\\r,&e^{-1}\leq r<1.\end{cases}
\]
Indeed $rR'/R=t^{-2}$ on the inner part. The map $U$ takes the punctured disk onto the annulus $e^{-2}<|w|<1$. If a homeomorphic solution $F:\ID\to\ID$ existed, $F\circ U^{-1}$ would be conformal from that finite-modulus annulus onto $\ID\setminus\{F(0)\}$, a punctured disk of infinite modulus. This is impossible.
\end{example}

For each fixed $p>0$,
\[
 \min\{1,2/p\}\leq\frac{1-r^2}{1-r^p}\leq\max\{1,2/p\}\qquad(0\leq r<1).
\]
Thus replacing the inner power $2$ in the denominator by $p$ gives an equivalent integrability condition, with constants depending on $p$.
\begin{lemma}\label{easyinclusion}
For $p\geq1$, $T^p\subset\mathcal T_p$.
\end{lemma}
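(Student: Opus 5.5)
Take $g\in T^p$ and choose a quasiconformal extension $F:\ID\to\ID$ of $g$ with $\int_\ID|\mu_F|^p\,dA_\eta<\infty$ and $k=\|\mu_F\|_\infty<1$. The plan is to check directly that this same $F$ witnesses $g\in\mathcal T_p$, as required by the definitions \eqref{Bp} and \eqref{Tpclass}. There are three things to check.

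First, $F$ is a homeomorphism of finite distortion. A quasiconformal map lies in $W^{1,2}_{\loc}$, has $J_F>0$ almost everywhere, and satisfies $|Df|^2\leq KJ_F$, so $Df=0$ wherever $J_F=0$. These are exactly the requirements of Section~\ref{conventions}, and orientation is preserved.

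Second, $F$ extends continuously to $\overline\ID$ with trace $g$. This holds by definition of a quasiconformal extension, which is a homeomorphism of $\overline\ID$ onto itself.

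Third, $\mathcal I_p(F)<\infty$. Since $|\mu_F|\leq k$ pointwise,
\[
 R_p(|\mu_F|^2)=\frac{|\mu_F|^p}{(1-|\mu_F|^2)^{p_*/2}}\leq(1-k^2)^{-p_*/2}|\mu_F|^p .
\]
Integrating against $dA_\eta$ gives $\mathcal I_p(F)\leq(1-k^2)^{-p_*/2}\int_\ID|\mu_F|^p\,dA_\eta<\infty$, which places $\mu_F\in\mathcal B_p$ and $g\in\mathcal T_p$.

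No step here is a genuine obstacle. The one point needing care is that the extension in the definition of $T^p$ is required to be quasiconformal, so $\|\mu_F\|_\infty<1$ and the weight $(1-|\mu_F|^2)^{-p_*/2}$ in $R_p$ stays bounded. Without that hypothesis the denominator could degenerate, which is exactly why the converse inclusion needs the later work.
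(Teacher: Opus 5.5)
Your proof is correct and is the paper's argument: for a quasiconformal representative $\|\mu_F\|_\infty\le k<1$, so the weight $(1-|\mu_F|^2)^{-p_*/2}$ in $R_p$ is bounded and $\mathcal I_p(F)\le(1-k^2)^{-p_*/2}\int_\ID|\mu_F|^p\,dA_\eta<\infty$. Your extra checks, that $F$ has finite distortion and has trace $g$, are routine details that the paper's one-line proof leaves implicit.
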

\begin{proof}
For a quasiconformal representative $|\mu_F|\leq k<1$, the denominator in \eqref{funct} is bounded away from zero.
\end{proof}
\subsection{Coefficient factorisation}\label{factorisation-section}
\begin{lemma}[Quasiconformal postcomposition at integrable distortion]
\label{integrable-distortion-composition}
Let $G:\D\to\D$ be a finite-distortion homeomorphism with
$K_G\in L^1(\D)$, and let $A$ be a quasiconformal disk map.
Then $H=A\circ G$ belongs to $W^{1,1}(\D)$, has finite distortion,
and satisfies the usual derivative and Beltrami composition formulas
almost everywhere. Also
\[
 H^{-1}=G^{-1}\circ A^{-1}\in W^{1,2}(\D).
\]
\end{lemma}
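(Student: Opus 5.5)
The plan is to treat the forward composition and the inverse separately, using only standard facts about Sobolev homeomorphisms in the plane.

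\emph{Forward composition.} Since $G$ is a homeomorphism of finite distortion, $|DG|^2\le K_GJ_G$ holds almost everywhere with the operator norm. Cauchy--Schwarz gives
\[
\int_\D|DG|\,dA\le\Bigl(\int_\D K_G\,dA\Bigr)^{1/2}\Bigl(\int_\D J_G\,dA\Bigr)^{1/2}\le\bigl(\pi\|K_G\|_{L^1}\bigr)^{1/2},
\]
using the area inequality $\int J_G\le\pi$. So $G\in W^{1,1}(\D)$. For $H=A\circ G$, take the standard route for quasiconformal postcomposition. The map $A$ is differentiable almost everywhere with $|DA|^2\le K_AJ_A$. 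It satisfies Lusin's conditions $N$ and $N^{-1}$, so $G^{-1}$ of an $A$-null set is null wherever $J_G>0$. On $\{J_G=0\}$, finite distortion gives $DG=0$. Hence the chain rule $DH=DA(G)\,DG$ holds almost everywhere; it is justified on the ACL lines of $G$, where $A$ is locally Lipschitz in the appropriate sense after approximation. For integrability,
\[
|DH|^2\le|DA(G)|^2|DG|^2\le K_A K_G\,J_A(G)J_G=K_AK_G\,J_H ,
\]
so the same Cauchy--Schwarz argument with $\int J_H\le\pi$ gives $H\in W^{1,1}$ and $K_H\le K_AK_G$. Consequently $H$ has finite distortion ($DH=0$ where $J_H=0$). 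The Beltrami composition formula then follows pointwise from the chain rule at points of differentiability.

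\emph{Inverse.} The key fact is the planar theorem (Hencl--Koskela) that the inverse of a $W^{1,1}$ homeomorphism of finite distortion with $K\in L^1$ lies in $W^{1,2}$, with $\int|DG^{-1}|^2\le\int K_G$. More precisely, the inverse is a $W^{1,1}_{\loc}$ homeomorphism of finite distortion and $|DG^{-1}|^2(y)=K_G(x)J_G(x)^{-1}\cdot J_{G^{-1}}(y)\cdot J_G(x)$ transforms under the area formula. Applying this to $H$ with $K_H\le K_AK_G\in L^1$ gives $H^{-1}=G^{-1}\circ A^{-1}\in W^{1,2}(\D)$, with norm bounded by $\|K_AK_G\|_{L^1}$. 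Alternatively one can apply it to $G$ and then precompose $G^{-1}\in W^{1,2}$ with quasiconformal $A^{-1}$; this uses conformal invariance of the Dirichlet integral up to a factor $K_A$ and the fact that $W^{1,2}$ is preserved under quasiconformal change of variables.

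\emph{Main obstacle.} The delicate step is the almost-everywhere chain rule, since $G$ need not satisfy Lusin $N$. I would handle it by restricting to the set where $J_G>0$ and $G$ is differentiable, on which $G$ does satisfy $N$. On the complement, $DG=0$, and $H$ has zero derivative there by the ACL argument. This uses the fact that $A$ is locally H\"older by Mori, so line images of null length sets behave correctly.
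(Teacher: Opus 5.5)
Your proposal has a genuine gap in the forward part, at the step you flag as delicate. The property that matters there is not Lusin $N$ for $G$ but Lusin $N$ for $G^{-1}$. Two things are left unproved.

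First, an almost-everywhere classical chain rule with $|DH|\in L^1$ does not put $H$ in $W^{1,1}(\D)$. A homeomorphism such as $(x,y)\mapsto(x+c(x),y)$, with $c$ a Cantor function, is differentiable a.e.\ with integrable derivative but is not ACL. Your justification of weak differentiability is that $A$ is ``locally Lipschitz in the appropriate sense after approximation'' along the ACL lines of $G$, and this is not available. $A$ is only H\"older, and H\"older continuity (Mori) neither makes $A$ absolutely continuous along the curves $G(\ell)$ nor sends length-null sets to length-null sets.

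Second, you give no valid argument on $\{J_G=0\}\cap G^{-1}(N_A)$, where $N_A$ is the null set on which $A$ is not differentiable. You have not shown that this set is null. Mori only gives $|H(x+k)-H(x)|\lesssim|G(x+k)-G(x)|^{1/K}=o(|k|^{1/K})$, not $o(|k|)$, so $DH(x)=0$ does not follow. Your first route to $H^{-1}\in W^{1,2}$ applies Hencl--Koskela to $H$ and so inherits both problems.

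The missing idea is to exploit $K_G\in L^1$ through the inverse before handling $H$, which is what the paper does.
\begin{itemize}
\item Hencl--Koskela give $g=G^{-1}\in W^{1,2}(\D)$ of finite distortion. A planar $W^{1,2}$ homeomorphism has Lusin's property $N$, so $|G^{-1}(E)|=|g(E)|=0$ for every null set $E$.
\item Taking $E=N_A$ and using Gehring--Lehto for $G$, the classical chain rule for $A\circ G$ holds at almost every point with no case split. Lusin $N$ for $g$ together with the area formula for $G$ also shows $J_G>0$ a.e., so your set $\{J_G=0\}$ is null.
\item Sobolev membership then comes from inverting twice. First, $g\circ A^{-1}\in W^{1,2}$ by quasiconformal precomposition (this is your alternative route). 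It has finite distortion by its chain rule and the $N$, $N^{-1}$ properties of $A$.
\item The inverse theorem applied to $g\circ A^{-1}$ gives $H\in W^{1,1}_{\mathrm{loc}}$ of finite distortion. The global $W^{1,1}$ bound follows from the inverse derivative estimate and the finite Dirichlet integral of $g\circ A^{-1}$. Only at this point are the a.e.\ classical derivatives identified with the Sobolev derivatives.
\end{itemize}

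If you prefer to keep your direct approach, it can be repaired with Fuglede's lemma instead. $A$ is absolutely continuous, with a finite Borel representative of $|DA|$ as upper gradient, along all curves outside a family of zero $2$-modulus. Take $\rho_0\in L^2$ with infinite integral over every exceptional curve. The estimate $\int_\D\rho_0(G)|DG|\,dA\le\|\rho_0\|_{L^2}\|K_G\|_{L^1}^{1/2}$ follows from $|DG|^2\le K_GJ_G$ and the area inequality. It shows $\int_{G(\ell)}\rho_0\,ds<\infty$ for almost every line $\ell$, so $G$ carries almost every line into the good family. That gives ACL for $H$ with $|\partial H|\le|DA|\circ G\,|DG|$, hence $H\in W^{1,1}$ and $DH=0$ where $DG=0$. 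This argument is not in your proposal.
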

\begin{proof}
Hencl--Koskela inverse regularity gives
$g=G^{-1}\in W^{1,2}(\D)$ of finite distortion
\cite[Theorems 1.2--1.3]{HenclKoskelaInverse}.
In particular $g$ has Lusin's property $N$
\cite[Theorem 3.3.7]{AIM}.
Quasiconformal precomposition preserves $W^{1,2}$, with
\[
\|g\circ A^{-1}\|_{W^{1,2}(\D)} \leq K(A)\|g\|_{W^{1,2}(\D)} ,
\]
by \cite[Theorem 3.8.2]{AIM}. Its chain rule and the $N$ and
$N^{-1}$ properties of $A$ show that $g\circ A^{-1}$ has finite
distortion: if its Jacobian vanishes, its full derivative vanishes.
Apply the inverse theorem once more. Its inverse $H$ has finite
distortion and belongs to $W^{1,1}_{\loc}$; the inverse derivative
estimate and the finite Dirichlet integral give the global
$W^{1,1}$ bound.

For the asserted chain rule, let $N_A$ be the null set where $A$
is not differentiable. Property $N$ of $g$ implies
$|G^{-1}(N_A)|=|g(N_A)|=0$.
The Gehring--Lehto theorem makes $G$ differentiable almost
everywhere \cite[Corollary 3.3.3]{AIM}. Thus the classical chain
rule applies almost everywhere to $A\circ G$, and these derivatives
are its Sobolev derivatives. At $DG=0$ this gives $DH=0$;
elsewhere it gives the Beltrami composition formula.
\end{proof}

Let $G:\ID\to\ID$ be a homeomorphism of finite distortion with $\mathcal I_p(G)<\infty$. Write $\mu=\mu_G$ and set
\[
 \nu=\mu|\mu|^{p-1}.
\]
For all $p\geq1$,
\begin{equation}\label{improvedintegrability}
 \frac{|\nu|}{1-|\nu|^2}
 =\frac{|\mu|^p}{1-|\mu|^{2p}}
 \leq\frac{|\mu|^p}{(1-|\mu|^2)^{p_*/2}}.
\end{equation}
Indeed, $1-r^{2p}\geq1-r^2$, and, for $p\geq2$, $1-r^2\geq(1-r^2)^{p/2}$.

The coefficient prescribed for a quasiconformal factor $A$ is
\begin{equation}\label{factorcoeff}
 \mu_A(G(z))=
 \frac{\nu(z)-\mu(z)}{1-\nu(z)\overline{\mu(z)}}
 \frac{G_z(z)}{\overline{G_z(z)}}.
\end{equation}
Choose Borel representatives of the derivatives of $G$, and put the right side equal to zero where those representatives are exceptional or $G_z=0$. Transporting this Borel function by the homeomorphism $G^{-1}$ defines a measurable coefficient on the whole disk. Lemma~\ref{integrable-distortion-composition} justifies the subsequent Sobolev composition. Its modulus satisfies
\begin{equation}\label{factorbound}
 |\mu_A(G(z))|=d_p(|\mu(z)|),\qquad
 d_p(r)=\frac{r(1-r^{p-1})}{1-r^{p+1}}
 \leq\frac{p-1}{p+1}.
\end{equation}
For $p>1$ this follows on writing $r=e^{-t}$ and using the increase of $\sinh x/x$:
\[
 d_p(e^{-t})=\frac{\sinh((p-1)t/2)}{\sinh((p+1)t/2)}.
\]
For $p=1$, the factor is conformal. The measurable Riemann mapping theorem therefore produces a $p$-quasiconformal disk map $A$ with the prescribed coefficient.

\begin{proposition}[Factorisation of every finite-energy extension]\label{factorisation}
Let $p\geq1$ and let $G:\D\to\D$ be a finite-distortion homeomorphism with $\I_p(G)<\infty$. There is a $p$-quasiconformal disk map $A$ for which $H=A\circ G$ has a $C^1$ diffeomorphic boundary value $h$. In particular $G$ extends homeomorphically to the closed disk, and
\[
 g=G|_\IS=a^{-1}\circ h,\qquad a=A|_\IS,
\]
is quasisymmetric.
\end{proposition}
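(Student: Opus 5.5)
The plan is to check that the quasiconformal factor $A$ constructed just before the statement, with coefficient \eqref{factorcoeff}, converts $G$ into a map $H=A\circ G$ whose Beltrami coefficient is exactly $\nu=\mu|\mu|^{p-1}$. Then Theorem~\ref{extend} applies to $H$ through \eqref{improvedintegrability}, and the boundary statement for $G$ follows by undoing $A$.

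\textbf{Admissibility of the composition.} First I verify that Lemma~\ref{integrable-distortion-composition} applies, which requires $K_G\in L^1(\D)$. Since $K_G\leq2\K_G$ and $\eta\geq1$, it suffices to bound $\int_\D(\K_G-1)\,dA$.
\begin{itemize}
\item For $1\leq p\leq2$, the ordering \eqref{subquadratic-energy-order} gives $\I_p(G)\geq\I_2(G)=\tfrac12\int_\D(\K_G-1)\,dA_\eta$.
\item For $p\geq2$, use \eqref{Psiidentity} together with $x\leq1+x^{p/2}$ for $x\geq0$. This gives $\int_\D(\K_G-1)\,dA\leq\pi+2^{p/2}\I_p(G)$.
\end{itemize}
Either way $K_G\in L^1$. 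Lemma~\ref{integrable-distortion-composition} then shows that $H$ is a $W^{1,1}$ finite-distortion homeomorphism of $\D$ onto itself with $H^{-1}\in W^{1,2}$, and that the Beltrami composition formula holds almost everywhere.

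\textbf{The coefficient of $H$.} Insert \eqref{factorcoeff} into the composition formula
\[
\mu_H=\frac{\mu+(\mu_A\circ G)\,\overline{G_z}/G_z}{1+\bar\mu\,(\mu_A\circ G)\,\overline{G_z}/G_z}.
\]
Writing $\lambda=(\nu-\mu)/(1-\nu\bar\mu)$, a short Möbius computation gives $(\mu+\lambda)/(1+\bar\mu\lambda)=\nu$, because $\lambda$ is the disk automorphism carrying $\mu$ to $\nu$. On the exceptional set, where $G_z=0$ or the Borel representatives fail, finite distortion gives $DG=0$ almost everywhere, hence $DH=0$ and the convention $\mu_H=0$ applies. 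Thus $\mu_H=\nu$ almost everywhere, so $|\mu_H|=|\mu_G|^p$.

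\textbf{Boundary values.} By \eqref{improvedintegrability} and $(1-r)^{-1}\leq2(1-r^2)^{-1}$,
\[
\int_\D\frac{|\mu_H|}{1-|\mu_H|}\,dA_\eta\leq2\I_p(G)<\infty .
\]
Theorem~\ref{extend} then shows that $H$ extends homeomorphically to $\overline\D$ with $C^1$ diffeomorphic boundary map $h$.

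The $p$-quasiconformal map $A$ extends homeomorphically with quasisymmetric trace $a$. Hence $G=A^{-1}\circ H$ extends homeomorphically, with $g=a^{-1}\circ h$. A $C^1$ circle diffeomorphism is quasisymmetric, and quasisymmetric maps form a group, so $g$ is quasisymmetric.

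\textbf{Main obstacle.} I expect the delicate point to be confirming that $H$ lies in the class to which \cite[Theorem 1.7]{Gut} applies. That theorem needs a $W^{1,1}_{\loc}$ $\mu$-homeomorphism with the correct coefficient almost everywhere, including on the degenerate set where $|\mu|\to1$ or $G_z=0$. This is exactly what Lemma~\ref{integrable-distortion-composition} is designed to supply. I would check carefully that its almost-everywhere chain rule identifies the Sobolev coefficient of $H$ with $\nu$ and not just a pointwise one.
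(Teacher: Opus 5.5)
Your proposal is correct and follows essentially the paper's proof. Both arguments establish $K_G\in L^1$, build $A$ from \eqref{factorcoeff}, use Lemma~\ref{integrable-distortion-composition} to get $\mu_H=\nu$ almost everywhere, apply Theorem~\ref{extend} through \eqref{improvedintegrability} with the factor $2$ between the denominators, and compose the quasisymmetric traces. Your explicit M\"obius check that $(\mu+\lambda)/(1+\bar\mu\lambda)=\nu$, and your use of $x\leq1+x^{p/2}$ instead of the H\"older step in Lemma~\ref{finite-distortion-inverse-calculus}, only fill in details the paper leaves implicit.
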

\begin{proof}
Finite energy gives $K_G\in L^1(\D)$ as in Lemma~\ref{finite-distortion-inverse-calculus}. Construct $A$ from \eqref{factorcoeff}. Lemma~\ref{integrable-distortion-composition} proves that $H$ has finite distortion and that its coefficient is $\mu_H=\nu$ almost everywhere. By \eqref{improvedintegrability}, $\int |\mu_H|/(1-|\mu_H|^2)\,dA_\eta<\infty$. Apply Theorem~\ref{extend}, using the factor of at most $2$ between the two denominators. A $C^1$ orientation-preserving circle diffeomorphism is quasisymmetric, as are the boundary values of $A$ and $A^{-1}$. Their composition gives the assertion.
\end{proof}

The inverse regularity and its Lusin property justify factorisation for the entire finite-energy class. No Lusin $N$ property of $G$ itself was assumed.

For $p\ne2$, the stronger identification
\begin{equation}\label{classquestion}
 \mathcal T_p\stackrel{?}{=}T^p
\end{equation}
requires an additional argument; the quadratic case is settled in Section~\ref{wp-characterisation}, and the general case in Section~\ref{p-classes} by a different method. In particular, \eqref{factorbound} does not itself show $\mu_A\in L^p(dA_\eta)$. When change of variables is valid, the required integral is
\begin{equation}\label{badweight}
 \int_\ID|\mu_A(w)|^p\eta(w)\,dA(w)
 =\int_\ID d_p(|\mu_G(z)|)^p\eta(G(z))J_G(z)\,dA(z).
\end{equation}
The hypothesis controls a different weight, $\eta(z)$, and no comparison with $\eta(G)J_G$ is available. Nor does the fact that $h$ is a $C^1$ diffeomorphism alone give a quasiconformal extension with an $L^1(dA_\eta)$ Beltrami coefficient; the affirmative answer to \eqref{classquestion} in Section~\ref{p-classes} does not go through the factorisation.  
\subsection{Sobolev regularity of the inverse and quantitative boundary moduli}\label{boundary-moduli}
We now prove quantitative boundary quasisymmetry from the finite-energy condition. The factorisation in Proposition~\ref{factorisation}, proved above using only inverse Sobolev regularity, supplies the boundary extension when it has not been prescribed.

\begin{lemma}[Sobolev regularity of the inverse and the capacity estimate]
\label{finite-distortion-inverse-calculus}
Let $F:\D\to\D$ be a finite-distortion homeomorphism with
$\I_p(F)<\infty$, $p\geq1$, and let $h=F^{-1}$.
Then $h\in W^{1,2}(\D)$ and has finite distortion. For $1\leq p\leq2$,
\begin{equation}\label{finite-distortion-inverse-energy}
 \|h\|_{W^{1,2}(\D)}\leq\int_\D\K_F\,dA\leq\pi+2\I_p(F),
 \qquad \E_p(h)\leq\I_p(F).
\end{equation}
For a scalar function $u$ smooth in the disk and continuous on its
closure, whenever the right side is finite,
\begin{equation}\label{inverse-capacity-pullback}
 \int_\D|D(u\circ h)|^2\,dA
 \leq\int_\D K_F|Du|^2\,dA.
\end{equation}
The same assertion follows by approximation for the square-coordinate
potentials used below.
\end{lemma}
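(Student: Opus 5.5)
The plan is to obtain everything from the Hencl--Koskela inverse theory together with one pointwise identity for the inverse derivative. The steps are: show $K_F\in L^1(\D)$, invoke the inverse theorem, and then compute the Dirichlet integrals of $h$ and of $u\circ h$ by the area formula applied to $F$.

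\textbf{Integrability of the distortion.} Pointwise, $\K_F-1=2|\mu_F|^2/(1-|\mu_F|^2)$. For $1\le p\le2$ we have $|\mu|^2\le|\mu|^p$, so $\K_F-1\le 2R_p(|\mu_F|^2)$. For $p\ge2$, \eqref{Psiidentity} together with $x\le 1+x^{p/2}$ gives the same kind of control. Since $\eta\ge1$, this yields
\[
 \int_\D\K_F\,dA\le\pi+2\I_p(F)\qquad(1\le p\le 2),
\]
and hence $K_F\le2\K_F\in L^1(\D)$. Hencl--Koskela \cite[Theorems 1.2--1.3]{HenclKoskelaInverse} then give $h=F^{-1}\in W^{1,2}_{\loc}$ of finite distortion, with $\int|Dh|^2\le\int K_F$. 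This is the same input already used in Lemma~\ref{integrable-distortion-composition}.

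\textbf{Sharp inverse energy bound.} For the bound in the normalised Hilbert--Schmidt norm, use that at a.e.\ point where $F$ is differentiable with $J_F>0$, $Dh(F(z))=DF(z)^{-1}$. This gives $\|Dh\|^2(F(z))=\|DF(z)\|^2/J_F(z)^2$. The area formula for $F$, which by Lusin $N$ of $h$ (as in Lemma~\ref{integrable-distortion-composition}) is valid on the set where $J_F>0$, gives
\[
 \int_\D\|Dh\|^2\,dA=\int_{\{J_F>0\}}\frac{\|DF\|^2}{J_F}\,dA=\int\K_F\,dA.
\]
On the complement $\{J_F=0\}$, the set $h$ maps into is null, and $Dh=0$ a.e.\ on $h^{-1}$ of the null set where $DF$ is degenerate, by finite distortion of $h$. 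The same change of variables, using $|\mu_h(F(z))|=|\mu_F(z)|$, gives $\E_p(h)=\int\eta R_p(|\mu_F|^2)\,dA=\I_p(F)$ restricted to $\{J_F>0\}$, hence $\E_p(h)\le\I_p(F)$.

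\textbf{Capacity estimate.} For smooth $u$ the chain rule gives $|D(u\circ h)|^2\le |Du|^2(h)\,|Dh|_{\rm op}^2$. At $w=F(z)$ one has $|Dh|_{\rm op}^2(w)=1/\ell(DF(z))^2=K_F(z)/J_F(z)$, where $\ell$ denotes the minimal stretch. Changing variables $w=F(z)$ then gives
\[
 \int|D(u\circ h)|^2\,dA\le\int |Du|^2 K_F\,dA,
\]
which is \eqref{inverse-capacity-pullback}. For potentials that are only continuous on $\overline\D$ with finite right side, approximate $u$ by smooth functions and use Fatou's lemma on the left.

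\textbf{Main obstacle.} The main difficulty is justifying the change of variables without Lusin $N$ for $F$. The way around it is to use only the inequality direction of the area formula for $F$, which holds for any a.e.\ differentiable homeomorphism, and to note that the left-hand integrals only see the set $h^{-1}$ of points where $J_F>0$. This is because $Dh=0$ a.e.\ off it, which follows from $N$ for $h$ and finite distortion.
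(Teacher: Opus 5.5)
Your proof is correct in substance, but it runs the change of variables the other way from the paper.

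\textbf{What the paper does.} The paper never substitutes $w=F(z)$. At a.e.\ $w$ with $J_h(w)>0$ it writes $\|Dh\|^2=(\K_F\circ h)\,J_h$ and $|D(u\circ h)|^2\le(|Du|^2K_F)\circ h\,J_h$, and it uses $Dh=0$ a.e.\ on $\{J_h=0\}$. It then integrates with the area \emph{inequality} $\int\phi(h)J_h\,dA\le\int\phi\,dA$ for $h$. That inequality holds for every a.e.\ differentiable homeomorphism. So the paper needs no Lusin property of either map and no localisation, and it obtains exactly the inequalities claimed.

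\textbf{What your route needs.} Substituting $w=F(z)$ requires two further facts. Let $D_+$ be the set of differentiability points of $F$ with $J_F>0$.
\begin{enumerate}
\item $Dh=0$ a.e.\ off $F(D_+)$. This uses Sard for the image of the differentiability points with $J_F=0$. It also uses $J_h=0$ a.e.\ on the image of the non-differentiability set of $F$, which again comes from the area inequality for $h$, followed by finite distortion of $h$.
\item The area formula holds with \emph{equality} for $F$ on $D_+$.
\end{enumerate}
The second fact is true. It follows from Federer's area formula on sets of differentiability points. It also follows, as you indicate, from Lusin $N$ for $h$: the exact area formula for $h$, together with $J_F(h)J_h=1$ a.e.\ on $F(E)$, gives $|F(E)|=\int_EJ_F$ for $E\subset D_+$.

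\textbf{What your route buys.} Lusin $N$ for $h$ also forces $|\{J_F=0\}|=0$. You therefore get the equalities $\int\|Dh\|^2=\int\K_F$ and $\E_p(h)=\I_p(F)$, which is \eqref{inverseenergy}, rather than just inequalities.

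\textbf{The closing paragraph is wrong.} It misidentifies what makes the argument work. The one-sided area inequality for $F$ is $\int_E\phi(F)J_F\,dA\le\int_{F(E)}\phi\,dA$. With $\phi=\|Dh\|^2$ it bounds $\int\|Dh\|^2$ from \emph{below} by $\int_{D_+}\K_F$, not from above. With $\phi=|D(u\circ h)|^2$ it gives no upper bound on $\int|D(u\circ h)|^2$ at all. The step that actually carries your proof is the equality on $D_+$ asserted in your main text. The correct one-sided tool is the area inequality for $h$, which is precisely the paper's argument.

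\textbf{The Fatou step is unnecessary and not justified as written.} The square-coordinate potentials are harmonic in $\D$, so the smooth case already covers them; your pointwise chain-rule argument never uses behaviour at $\Sone$. As written, the step would also require approximation in a space weighted by the merely integrable weight $K_F$, which is not automatic.
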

\begin{proof}
For $p\leq2$, $\K_F-1\leq2R_p(|\mu_F|^2)$ and $\eta\geq1$.
For $p>2$, \eqref{Psiidentity} and H\"older's inequality on the
finite Euclidean area disk give $\K_F\in L^1(\D)$ as well.
Since $K_F\leq2\K_F$, the inverse theorems
\cite[Theorems 1.2--1.3]{HenclKoskelaInverse} give
$h\in W^{1,2}_{\loc}$ and finite distortion of $h$.
At its regular points the inverse derivative formula holds; on
$\{J_h=0\}$ we have $Dh=0$ almost everywhere.
The area inequality for a Sobolev homeomorphism, applied to $h$,
therefore gives the global Dirichlet estimate by exhaustion and
\[
 \int_\D\eta(h)|h_w|^{2-p}|h_{\bar w}|^p\,dA
 =\int_{\{J_h>0\}}
       \eta(h)R_p(|\mu_F\circ h|^2)J_h\,dA
 \leq\I_p(F) \quad(p\leq2).
\]
Similarly the local chain rule and
$|Dh|_{\mathrm{op}}^2=K_hJ_h$ give
\[
 |D(u\circ h)|^2\leq (|Du|^2K_F)\circ h\;J_h
\]
almost everywhere, with zero derivative on $\{J_h=0\}$.
Integration and the same area inequality prove
\eqref{inverse-capacity-pullback}. Truncation and exhaustion give
the assertion for finite-energy potentials smooth only in the
interior. This argument needs an area inequality, not a Lusin
$N$ assertion for the map $F$.
\end{proof}

\begin{proposition}[Quantitative boundary modulus control]
\label{finite-energy-boundary-moduli}
Let $F:\D\to\D$ be a finite-distortion homeomorphism with
$\I_p(F)<\infty$, $p\geq1$. Then $F$ extends homeomorphically to
the closed disk. Denote its boundary map by $g$, and set
\[
 C_0=\frac{16\pi}{\Gamma(1/4)^4},\qquad
 X_p=\begin{cases}2C_0\I_p(F),&1\leq p\leq2,\\
 2C_0^{2/p}\I_p(F)^{2/p},&p>2,
 \end{cases}
 \qquad \Lambda(X)=1+X+\sqrt{X(X+2)}.
\]
For every modulus-one marked quadrilateral $Q$ in the disk,
\begin{equation}\label{boundary-modulus-explicit}
 \Lambda(X_p)^{-1}\leq\operatorname{mod}g(Q)
                              \leq\Lambda(X_p).
\end{equation}
Consequently $g$ is quasisymmetric and, in fact, symmetric.
After a fixed three-point normalisation, its quasisymmetry constant
is bounded in terms of $p$ and $\I_p(F)$ alone.
\end{proposition}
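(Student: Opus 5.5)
The homeomorphic extension is already given by Proposition~\ref{factorisation}, so I would only prove the modulus bound \eqref{boundary-modulus-explicit}. I plan a capacity comparison in the inverse direction, using the pullback inequality \eqref{inverse-capacity-pullback} of Lemma~\ref{finite-distortion-inverse-calculus}.

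First I normalise. The energy $\I_p$ is M\"obius invariant \eqref{mobiusenergy}, and a modulus-one marked quadrilateral in $\D$ is the image, under a disk automorphism, of the symmetric quadrilateral $Q_0$ with vertices $e^{\pm i\pi/4},e^{\pm 3i\pi/4}$. So it suffices to treat $Q=Q_0$, with $F$ replaced by $\psi\circ F\circ\phi$. Write $Q'=F(Q_0)=g(Q_0)$, the quadrilateral with the image vertices. I will bound $\operatorname{mod}Q'$ from above; the lower bound then follows by applying the same estimate to the conjugate quadrilateral, whose modulus is the reciprocal.

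Next I pull back an extremal potential. Let $u$ be the extremal function for $Q_0$: harmonic in $\D$, equal to $0$ and $1$ on the two marked arcs, with Neumann data on the other two, and $\int|Du|^2=\operatorname{mod}Q_0=1$. The function $u\circ h$, with $h=F^{-1}$, is admissible for $Q'$. Hence
\[
\operatorname{mod}Q'\leq\int_\D|D(u\circ h)|^2\leq\int_\D K_F|Du|^2
=1+\int_\D(K_F-1)|Du|^2 .
\]
To control the error term I need a weighted bound $|Du|^2\leq C_0\,\eta$ on $\D$. The potential comes from the Schwarz--Christoffel map onto a square, and I expect the explicit sup of $(1-|z|^2)^2|Du|^2$ to produce the constant $C_0=16\pi/\Gamma(1/4)^4$ (the lemniscatic constant). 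Granting this, the error is at most $C_0\int(K_F-1)\,dA_\eta$.

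For $p\leq2$ I compare distortions pointwise: $K_F-1=2r/(1-r)$ with $r=|\mu_F|$. At this point $K_F-1$ is not bounded by $\K_F-1$, so I would redo the argument more carefully. The fix is to use the \emph{mean} of $K$ and $1/K$, obtained by symmetrising between $Q'$ and its conjugate. Summing the two capacity inequalities, with moduli $m$ and $1/m$, together with the sharper pullback $|D(u\circ h)|^2\leq(\text{directional distortion})$, gives
\[
m+\frac1m\leq 2+\int(\K_F-1)\cdot(\text{weight})\leq 2+2X .
\]
Here $X$ involves $2C_0\I_2(F)$, and $\I_2\leq\I_p$ by \eqref{subquadratic-energy-order}. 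Solving $m+1/m\leq2+2X$ gives exactly $m\leq\Lambda(X)=1+X+\sqrt{X(X+2)}$, which confirms that this is the intended route. For $p>2$ I would apply H\"older to $(\K_F-1)$ against the weight, using \eqref{Psiidentity}; this yields the power $2/p$ and the constant $C_0^{2/p}$.

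Finally, quasisymmetry. Uniform two-sided modulus bounds for all images of modulus-one quadrilaterals give quasisymmetry after a three-point normalisation, with constant depending only on $\Lambda(X_p)$. For symmetry I localise: for quadrilaterals that shrink to a boundary point, the relevant energy is the integral of $\K_F-1$ over regions where $|Du|^2$ concentrates, and these escape to $\Sone$. Absolute continuity of the integral makes the effective $X\to0$ uniformly, so the moduli tend to $1$.

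The main obstacle is the symmetrised capacity step: getting precisely $\K_F$ rather than $K_F$ requires pairing the Dirichlet integrals of $u\circ h$ and of the conjugate potential $v\circ h$, via $|Du|=|Dv|$ and orthogonal gradients, so that the distortion enters as a trace.
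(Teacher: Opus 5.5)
Your argument is correct and gives exactly the paper's constant, but its central step differs from the paper's.

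The paper pulls back only the scalar potential $u=\operatorname{Re}f$ through the operator-norm inequality \eqref{inverse-capacity-pullback}, obtaining $\operatorname{mod}g(Q)\le Y_Q=\int_\D K_F\rho_Q^2\,dA$. It then recovers $\K_F$ from $K_F$ by the pointwise identity $(K_F-1)^2=2(\K_F-1)K_F$ and Cauchy--Schwarz, $Y_Q\le1+\sqrt{2X_QY_Q}$, and gets the lower bound by rerunning this on the conjugate quadrilateral.

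You instead pull back the whole conformal map $f=u+iv$ onto the unit square. Since $\nabla u$ and $\nabla v$ are orthogonal with common length $|f'|$, one has $|\nabla(u\circ h)|^2+|\nabla(v\circ h)|^2=2|f'(h)|^2(|h_w|^2+|h_{\bar w}|^2)=2|f'(h)|^2\K_hJ_h$. With $\K_h=\K_F\circ h$ and the area inequality, this gives $m+m^{-1}\le2\int_\D\K_F|f'|^2\,dA=2+2X_Q$ for $m=\operatorname{mod}g(Q)$. The solution set of that inequality is exactly $\Lambda(X_Q)^{-1}\le m\le\Lambda(X_Q)$.

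What each route buys:
\begin{itemize}
\item Yours needs no Cauchy--Schwarz and yields both sides of \eqref{boundary-modulus-explicit} from a single inequality. The quantity $\K_F=\frac12(K_F+K_F^{-1})$ appears directly as a trace rather than being extracted algebraically.
\item The paper's uses only the one-potential inequality $\operatorname{mod}g(Q)\le\int K_F\rho_Q^2\,dA$, exactly as stated in Lemma~\ref{finite-distortion-inverse-calculus}. That is also the form reused for quadrilaterals of arbitrary modulus in Lemma~\ref{pc-modulus}.
\end{itemize}

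Your Hilbert--Schmidt pullback is not literally \eqref{inverse-capacity-pullback}. It follows from the same chain-rule-plus-area-inequality proof (with $Dh=0$ a.e.\ on $\{J_h=0\}$), and you should say so explicitly.

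The remaining loose ends are routine. The bound $\rho_Q^2\le C_0\eta$ follows from $|1+z^4|\ge1-|z|^4$, with the supremum $C_0$ attained at $z=0$. For symmetry when $p>2$, $(\K_F-1)\eta$ need not be integrable, so the compact-plus-tail argument must be run on $\int(\K_F-1)^{p/2}\rho_Q^2\,dA$ after the H\"older step.
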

\begin{proof}
Proposition~\ref{factorisation} gives the homeomorphic boundary extension; its proof does not use this result.
All modulus-one marked disk quadrilaterals form one M\"obius orbit.
For the model with four equally spaced vertices, a conformal map
to a square has derivative $(1+z^4)^{-1/2}$, up to rotation.
The square has side $L=\Gamma(1/4)^2/(4\sqrt\pi)$.
The unit-square extremal density is therefore
\[
 \rho_0^2(z)=\frac1{L^2|1+z^4|},\qquad
 \int_\D\rho_0^2\,dA=1,
 \qquad \frac{\rho_0^2(z)}{\eta(z)}\leq C_0.
\]
Indeed $|1+z^4|\geq1-|z|^4$, so this ratio is at most
$C_0(1-|z|^2)/(1+|z|^2)$ and tends uniformly to zero at the
boundary. Conformal transport gives
$\rho_Q^2\leq C_0\eta$ and $\int\rho_Q^2=1$ for every such $Q$.

Take a coordinate of the unit-square map as a scalar test
potential, with boundary values zero and one on the two designated
opposite sides. Its composition with $F^{-1}$ is admissible for
the image quadrilateral. The Dirichlet characterisation of modulus
and \eqref{inverse-capacity-pullback} give
\[
 \operatorname{mod}g(Q)\leq Y_Q:=\int_\D K_F\rho_Q^2\,dA.
\]
Put $X_Q=\int_\D(\K_F-1)\rho_Q^2\,dA$.
The identity $(K_F-1)^2=2(\K_F-1)K_F$ and Cauchy--Schwarz imply
\[
 Y_Q\leq1+\sqrt{2X_QY_Q},\qquad
 Y_Q\leq\Lambda(X_Q).
\]
All integrals are finite since $K_F\leq2\K_F$.
For $p\leq2$, $X_Q\leq2C_0\I_p(F)$.
For $p>2$, H\"older's inequality for the probability measure
$\rho_Q^2\,dA$ gives
\[
 X_Q\leq
 \left(\int_\D(\K_F-1)^{p/2}\rho_Q^2\,dA\right)^{2/p}
 \leq2C_0^{2/p}\I_p(F)^{2/p}.
\]
Apply the same upper bound to the conjugate quadrilateral to get
the lower bound in \eqref{boundary-modulus-explicit}.
The modulus characterisation of quasisymmetry now applies
\cite{AIM,HK}. In invariant terms this controls cross-ratio
distortion; angular quasisymmetry constants require the stated
normalisation.

For symmetry, consider modulus-one quadrilaterals whose
normalising automorphisms escape compact subsets of the
automorphism group. Their transported functions
$q_Q=\rho_Q^2/\eta$ tend uniformly to zero on each compact subset
of $\D$, and $0\leq q_Q\leq C_0$.
For $p\leq2$, integrability of $(\K_F-1)\eta$ therefore gives
$X_Q\to0$ by a compact-set and tail decomposition.
For $p>2$, the same decomposition uses H\"older and
\[
 \int_\D q_Q^{p/(p-2)}\,dA_\eta
 \leq C_0^{2/(p-2)}\int_\D q_Q\,dA_\eta
 =C_0^{2/(p-2)}.
\]
Thus shrinking boundary quadrilaterals have image modulus tending
uniformly to one. The asymptotic form of the modulus
characterisation gives symmetry.
\end{proof}

\begin{remark}
No unnormalised angular bound depending only on energy is possible:
disk automorphisms have zero energy, while their angular
quasisymmetry constants need not be bounded. Boundary extension itself follows from the independently justified factorisation, rather than from an unproved half-annulus modulus estimate.
\end{remark}
\subsection{The finite-distortion characterisation of the Weil--Petersson class}\label{wp-characterisation}
\begin{theorem}[Finite-distortion characterisation of the Weil--Petersson class]
\label{quadratic-boundary-class-equality}
A circle homeomorphism $g$ is Weil--Petersson if and only if it has
a finite-distortion homeomorphic disk extension with trace $g$ such that
\[
 \int_\D(\K_F-1)\,dA_\eta<\infty.
\]
Equivalently, $\mathcal T_2=T^2$.
\end{theorem}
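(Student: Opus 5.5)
The easy direction is Lemma~\ref{easyinclusion} at $p=2$, using $\I_2(F)=\tfrac12\int_\D(\K_F-1)\,dA_\eta$. If $g\in T^2$, take the quasiconformal extension $F$ with $\mu_F\in L^2(dA_\eta)$ and $\|\mu_F\|_\infty\le k<1$. By \eqref{Kminus}, $\K_F-1\le 2|\mu_F|^2/(1-k^2)$, so the integral is finite.

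For the converse, let $F$ be a finite-distortion homeomorphic extension of $g$ with $\I_2(F)<\infty$. The plan is to feed $h_0=F^{-1}$ into the variational theory of Section~\ref{quadratic-theory} as a barrier, and then read off $g\in T^2$ from the minimiser. The steps are as follows.

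\textbf{Boundary regularity.} Proposition~\ref{finite-energy-boundary-moduli} (or Proposition~\ref{factorisation}) shows that $F$ extends homeomorphically to $\overline\D$ and that $g$ is quasisymmetric. Hence $\beta=g^{-1}$ is quasisymmetric, and $\mathcal M_\beta$ is defined.

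\textbf{Admissibility of the barrier.} Lemma~\ref{finite-distortion-inverse-calculus} gives $h_0\in W^{1,2}(\D)$ with finite distortion and $\E_2(h_0)\le\I_2(F)<\infty$. Since $h_0$ is an orientation-preserving homeomorphism with trace $\beta$ and $J_{h_0}\ge0$, it belongs to $\mathcal M_\beta$. Finite distortion gives $Dh_0=0$ where $J_{h_0}=0$, so the density $W_2=|h_{\bar w}|^2$ agrees with the inverse density used there.

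\textbf{The minimiser.} Proposition~\ref{quadratic-direct-existence} produces a minimiser $h_\beta$. Proposition~\ref{quadratic-minimiser-regularity} shows it is a real analytic hyperbolic harmonic diffeomorphism of $\D$, and it is a homeomorphism of $\overline\D$ with trace $\beta$ by NoHairs. Proposition~\ref{quadratic-energy-qc-bound} with $E_0=\I_2(F)$ makes $h_\beta$ globally quasiconformal, with $1-\|\mu_{h_\beta}\|_\infty^2\ge\delta>0$. These results never assumed Weil--Petersson data, only quasisymmetry and a finite-energy competitor.

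\textbf{Conclusion.} Set $F_1=h_\beta^{-1}$. It is quasiconformal with trace $g$, and $|\mu_{F_1}\circ h_\beta|=|\mu_{h_\beta}|$. The quasiconformal change of variables, valid because $h_\beta$ is a diffeomorphism, gives
\[
\int_\D|\mu_{F_1}|^2\,dA_\eta\le\I_2(F_1)=\E_2(h_\beta)\le\E_2(h_0)\le\I_2(F)<\infty.
\]
Hence $g\in T^2$, which gives $\mathcal T_2\subset T^2$ and, together with Lemma~\ref{easyinclusion}, $\mathcal T_2=T^2$.

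The main obstacle is the admissibility step. One must check that a merely finite-distortion $F$ yields an inverse in $\mathcal M_\beta$ whose energy is bounded by $\I_2(F)$, with no Lusin property assumed for $F$. Lemma~\ref{finite-distortion-inverse-calculus} (Hencl--Koskela inverse regularity plus the area inequality) is exactly what handles this; in particular it gives the inequality $\E_2(h_0)\le\I_2(F)$, which is all that is needed.

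A lesser point is that $\mathcal M_\beta$ requires continuity up to the boundary with trace $\beta$. This comes from the homeomorphic extension of $F$, and hence of $h_0$, supplied by Proposition~\ref{factorisation}.
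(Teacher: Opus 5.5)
Your proposal is correct and follows the paper's own proof essentially step for step. Both argue as follows: Lemma~\ref{easyinclusion} gives $T^2\subset\mathcal T_2$. For the converse, Proposition~\ref{finite-energy-boundary-moduli} gives quasisymmetry and Lemma~\ref{finite-distortion-inverse-calculus} makes $F^{-1}$ a finite-energy competitor in $\mathcal M_{g^{-1}}$; the direct method and local harmonic replacement then give the harmonic minimiser, Proposition~\ref{quadratic-energy-qc-bound} makes it quasiconformal, and the chain $\int_\D|\mu_f|^2\,dA_\eta\le\I_2(f)=\E_2(h)\le\E_2(F^{-1})\le\I_2(F)$ concludes the proof.
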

\begin{proof}
The inclusion $T^2\subset\mathcal T_2$ is Lemma~\ref{easyinclusion}.
Conversely, Proposition~\ref{finite-energy-boundary-moduli} makes
$g$ quasisymmetric, and
Lemma~\ref{finite-distortion-inverse-calculus} makes $F^{-1}$ an
admissible finite-energy member of $\mathcal M_{g^{-1}}$.
Propositions~\ref{quadratic-direct-existence} and \ref{quadratic-minimiser-regularity}, which need only a finite-energy competitor in $\mathcal M_{g^{-1}}$, produce the
harmonic diffeomorphic minimiser $h$. Its finite excess energy
gives quasiconformality by
Proposition~\ref{quadratic-energy-qc-bound}. For $f=h^{-1}$,
\[
 \int_\D|\mu_f|^2\,dA_\eta
 \leq\I_2(f)=\E_2(h)
 \leq\E_2(F^{-1})\leq\I_2(F)<\infty.
\]
Thus $f$ is a quasiconformal representative of $g$ in $T^2$.
\end{proof}

\begin{remark}[The classes for $1<p<2$]\label{class-inclusions}
For $1\leq p\leq2$ and $0\leq r<1$ we have $r^2/(1-r^2)\leq r^p/(1-r^2)$, so $\I_2(F)\leq\I_p(F)$ for every finite-distortion map $F$ (see \eqref{subquadratic-energy-order}). Hence a finite-distortion $\I_p$-barrier is a finite-distortion $\I_2$-barrier, and Lemma~\ref{easyinclusion} together with Theorem~\ref{quadratic-boundary-class-equality} gives
\[
 T^p\subset\mathcal T_p\subset\mathcal T_2=T^2\qquad(1\leq p\leq2).
\]
Thus every $g\in\mathcal T_p$ is Weil--Petersson. Its harmonic extension $h_g$ is a quasiconformal representative, since $\mathcal H\geq1$ in \eqref{quadratic-energy-density-bounds} gives $\int_\D|\mu_{h_g}|^2\,dA_\eta\leq\E_2(h_g)$, and so is $h_{g^{-1}}^{-1}$. That some quasiconformal extension has $\mu\in L^p(dA_\eta)$, and not merely $L^2(dA_\eta)$, is Theorem~\ref{pc-main}; whether $h_g$ itself does is not settled there. At $p=2$ the corresponding statement for the Douady--Earle extension is Cui's theorem \cite{Cui}.
\end{remark}

\begin{proof}[Proof of Theorem~\ref{thm-C}]
The characterisation of the Weil--Petersson class is Theorem~\ref{quadratic-boundary-class-equality}, since $\I_2(F)=\frac12\int_\D(\K_F-1)\,dA_\eta$. The homeomorphic extension and the factorisation are Proposition~\ref{factorisation}, and the quantitative quasisymmetry and symmetry are Proposition~\ref{finite-energy-boundary-moduli}.
\end{proof}

\subsection{Why a $C^1$ boundary factor need not be Weil--Petersson}
Shen's intrinsic characterisation says that an absolutely continuous circle homeomorphism is Weil--Petersson precisely when its logarithmic angular derivative has $H^{1/2}$ regularity \cite{Shen}. The following example distinguishes the two conditions.

Let
\[
 u(\theta)=\sum_{n=1}^{\infty}\frac{2^{-n/2}}{\sqrt n}\cos(2^n\theta).
\]
The series converges uniformly, so $u$ is continuous. Its homogeneous $H^{1/2}$ seminorm diverges, since, up to a fixed factor,
\[
 \sum_{n\geq1}2^n\left(\frac{2^{-n/2}}{\sqrt n}\right)^2
 =\sum_{n\geq1}\frac1n=\infty.
\]
Set $c=(2\pi)^{-1}\int_0^{2\pi}e^{u(t)}dt$ and
\[
 v(\theta)=c^{-1}\int_0^\theta e^{u(t)}dt,\qquad
 h(e^{i\theta})=e^{iv(\theta)}.
\]
Then $h$ is a $C^1$ diffeomorphism with strictly positive continuous derivative, but $\log v'=u-\log c\notin H^{1/2}$. Hence $h\notin T^2$. In particular, $C^1$ regularity of the factor $h$ cannot by itself justify a quasiconformal extension of $g$ with an $L^1(dA_\eta)$ coefficient; Theorem~\ref{pc-main} shows that the boundary values of a finite-energy map are nevertheless in $T^p$, by an argument that does not pass through the factorisation.
\section{The classes \texorpdfstring{$\mathcal T_p$ and $T^p$}{T\_p and T\^p} coincide for every \texorpdfstring{$p>1$}{p>1}}\label{p-classes}
This section proves Theorem~\ref{thm-E}: for every $1<p<\infty$ the boundary values of a finite-distortion homeomorphism $G:\D\to\D$ with $\I_p(G)<\infty$ lie in $T^p$. The argument is a boundary estimate and uses none of the variational theory of Sections~\ref{quadratic-theory}--\ref{schoen-conjecture}. From Section~\ref{boundary-classes} it uses exactly two things: the homeomorphic boundary extension with quasisymmetric boundary values (Propositions~\ref{factorisation} and \ref{finite-energy-boundary-moduli}), and the capacity estimate \eqref{inverse-capacity-pullback} of Lemma~\ref{finite-distortion-inverse-calculus}, which rests on the Sobolev regularity of the inverse. The plan is as follows.
\begin{enumerate}[label=\textup{(\arabic*)}]
\item The modulus inequality $\operatorname{mod}g(Q)\le\int K_G\rho_Q^2\,dA$ for every quadrilateral $Q$ (Lemma~\ref{pc-modulus}), applied to $Q$ and to its conjugate, bounds the logarithmic modulus distortion $c_Q$ of the boundary map by $\int(K_G-1)(\rho_Q^2+\rho_{Q^*}^2)\,dA$.
\item The pointwise inequality $K-1\lesssim|\mu|+e_p(|\mu|)$, $e_p(m)=m^p/(1-m^2)$, and H\"older's inequality for the probability measure $\rho_Q^2\,dA/\operatorname{mod}Q$ give $|c_Q|^p\lesssim X_Q$ for bounded $X_Q=\int e_p(|\mu_G|)(\rho_Q^2+\rho_{Q^*}^2)\,dA$ (Proposition~\ref{pc-perQ}).
\item For the two-parameter family of quadrilaterals with vertices $x-t,x-\frac t3,x+\frac t3,x+t$ on $\partial\IH$, the overlap function $\int\!\!\int(\rho_{x,t}^2+\rho_{x,t,*}^2)\,dx\,dt/t^2$ is a constant multiple of the hyperbolic density (Lemma~\ref{pc-overlap}); four finite vertices are essential. Integration gives $\int\!\!\int|c_{x,t}|^p\,dx\,dt/t^2\lesssim\I_p(G)$ (Theorem~\ref{pc-starthm}).
\item Two exact identities for the quasisymmetric ratio $q(x,t)=\log\bigl[(g(x+t)-g(x))/(g(x)-g(x-t))\bigr]$, one for the four-point cross-ratio and one for scale doubling, with two telescoping arguments, show that $c\in L^p(dx\,dt/t^2)$ forces $q\in L^p(dx\,dt/t^2)$ (Lemmas~\ref{pc-telescope} and \ref{pc-doubling}).
\item A dyadic interpolation estimate shows that the Beurling--Ahlfors extension of a quasisymmetric $g$ has $|\mu|$ dominated by the normalised affine-interpolation defect, whose $L^r(dx\,dt/t^2)$ norm is dominated by that of $q$ (Lemma~\ref{pc-extension}). Hence $q\in L^p(dx\,dt/t^2)$ gives $g\in T^p$.
\end{enumerate}
Steps (1)--(4) are quantitative, and steps (1)--(3) hold for $p=1$ as well. A by-product is a derivative-free characterisation of $T^p$ for quasisymmetric maps (Theorem~\ref{pc-characterisation}), which at $p=2$ is essentially a theorem of Bishop; see Remark~\ref{pc-bishop}.

\subsection{Quadrilaterals, moduli and the half-plane setting}\label{pc-setting}
We work on the upper half-plane $\IH$, where the hyperbolic density is $\eta(z)=(2\operatorname{Im}z)^{-2}$. All quantities below ($|\mu_G|$, $K_G$, $dA_\eta$, moduli of quadrilaterals, the extremal densities $\rho_Q^2\,dA$) are invariant under pre- and postcomposition with M\"obius self-maps, so we pass freely between $\D$ and $\IH$; in particular $\I_p(G)$ is unchanged if $G$ is replaced by $A\circ G\circ B$ with $A,B$ M\"obius. For $0\le m<1$ write
\begin{equation}\label{pc-ep}
 e_p(m)=\frac{m^p}{1-m^2},\qquad E_p(G)=\int e_p(|\mu_G|)\,dA_\eta\le\I_p(G),
\end{equation}
with equality for $1\le p\le2$; for $p>2$ the density of $\I_p$ in \eqref{intro-functionals} dominates $e_p$ because $(1-m^2)^{-p/2}\ge(1-m^2)^{-1}$.

A quadrilateral $Q=(\IH;a,b,c,d)$ consists of $\IH$ and four points $a<b<c<d$ of $\partial\IH=\IR$ (one of them may be $\infty$, the inequalities being read cyclically). Its $a$-sides are the arcs $[a,b]$ and $[c,d]$, its $b$-sides are $[b,c]$ and $[d,a]$ (the latter through $\infty$). The modulus is defined by the Dirichlet principle,
\begin{equation}\label{pc-mod-def}
 \operatorname{mod}Q=\inf\Bigl\{\int_\IH|\nabla v|^2\,dA:\ v\in\mathcal A(Q)\Bigr\},
\end{equation}
where $\mathcal A(Q)$ consists of the $v\in W^{1,2}_{\loc}(\IH)\cap C(\overline\IH\cup\{\infty\})$ with $\int_\IH|\nabla v|^2\,dA<\infty$, $v=0$ on $[a,b]$ and $v=1$ on $[c,d]$. (The class is local because its members tend to a constant at $\infty$ and are not in $L^2(\IH)$; under a Cayley map it corresponds to $\{v\in W^{1,2}(\D)\cap C(\overline\D)\}$ with the transported boundary values, since the Dirichlet integral is conformally invariant and a function continuous on the closed disk with finite Dirichlet integral lies in $W^{1,2}(\D)$.) The infimum is attained by $u_Q=\operatorname{Re}f_Q$, where $f_Q$ maps $\IH$ conformally onto the rectangle $(0,1)\times(0,\operatorname{mod}Q)$ with $[a,b]\mapsto\{0\}\times[0,\operatorname{mod}Q]$ and $[c,d]\mapsto\{1\}\times[0,\operatorname{mod}Q]$. By Carath\'eodory's theorem $f_Q$ extends continuously to $\overline\IH\cup\{\infty\}$, so $u_Q\in\mathcal A(Q)$ is harmonic in $\IH$ with values $0$ and $1$ on the $a$-sides. We write
\[
 \rho_Q=|\nabla u_Q|=|f_Q'|,\qquad\int_\IH\rho_Q^2\,dA=\operatorname{mod}Q .
\]
The conjugate quadrilateral $Q^*=(\IH;b,c,d,a)$ exchanges the roles of the sides; its extremal function is $\operatorname{Im}f_Q/\operatorname{mod}Q$, so $\rho_{Q^*}=\rho_Q/\operatorname{mod}Q$ and
\begin{equation}\label{pc-conjugate}
 \operatorname{mod}Q^*=\frac1{\operatorname{mod}Q}.
\end{equation}
For an increasing homeomorphism $g$ of $\IR\cup\{\infty\}$ we write $g(Q)=(\IH;g(a),g(b),g(c),g(d))$; then $g(Q^*)=g(Q)^*$.

\paragraph{Modulus as a function of the cross-ratio.} For $a<b<c<d$ finite put
\begin{equation}\label{pc-chi-def}
 \chi(a,b,c,d)=\frac{(b-a)(d-c)}{(c-a)(d-b)}\in(0,1).
\end{equation}
The M\"obius map taking $(a,b,c,d)$ to $(-1/k,-1,1,1/k)$ exists for exactly one $k\in(0,1)$, namely $k=(1-\sqrt\chi)/(1+\sqrt\chi)$, and the Schwarz--Christoffel map $f(\zeta)=\int_0^\zeta\bigl((1-w^2)(1-k^2w^2)\bigr)^{-1/2}dw$ sends $\IH$ onto the rectangle with vertices $\pm K$, $\pm K+iK'$, where $K=K(k)$ and $K'=K(\sqrt{1-k^2})$ are the complete elliptic integrals; the $a$-sides $[-1/k,-1]$ and $[1,1/k]$ go to the vertical sides. Hence
\begin{equation}\label{pc-M-def}
 \operatorname{mod}Q=M(\chi),\qquad M(\chi)=\frac{K'(k)}{2K(k)},\quad k=\frac{1-\sqrt\chi}{1+\sqrt\chi}.
\end{equation}
By Legendre's relation, $\frac{d}{dk}\frac{K'(k)}{K(k)}=-\frac{\pi}{2k(1-k^2)K(k)^2}$, and $dk/d\chi<0$, so $M$ is a real-analytic increasing bijection of $(0,1)$ onto $(0,\infty)$ with $M'>0$ everywhere.

\paragraph{The family $Q_{x,t}$.} For $x\in\IR$ and $t>0$ let $Q_{x,t}$ be the quadrilateral with vertices
\[
 x-t,\quad x-\tfrac t3,\quad x+\tfrac t3,\quad x+t ,
\]
and for an increasing homeomorphism $g$ of $\IR$ let
\begin{equation}\label{pc-c-def}
 c_{x,t}=c_{x,t}(g)=\log\frac{\operatorname{mod}g(Q_{x,t})}{\operatorname{mod}Q_{x,t}},\qquad
 A_p(g)=\int_0^\infty\!\!\int_\IR|c_{x,t}|^p\,\frac{dx\,dt}{t^2}.
\end{equation}
Here $\chi=\frac14$, $k=\frac13$, and
\begin{equation}\label{pc-m0}
 m_0:=\operatorname{mod}Q_{x,t}=M(\tfrac14)=\frac{K'(1/3)}{2K(1/3)}=0.78170\ldots,\qquad M_0:=\max(m_0,m_0^{-1})=1.2793\ldots .
\end{equation}
The measure $dx\,dt/t^2$ on $\IR\times(0,\infty)$ is invariant under the affine maps $(x,t)\mapsto(\alpha x+\beta,\alpha t)$, $\alpha>0$, which is the action of the affine group $z\mapsto\alpha z+\beta$ on the family $\{Q_{x,t}\}$; the family is not invariant under the full M\"obius group.

\paragraph{Quasisymmetry on the line.} An increasing homeomorphism $g:\IR\to\IR$ is $\Lambda$-quasisymmetric if
\begin{equation}\label{pc-qs-def}
 \Lambda^{-1}\le\frac{g(x+t)-g(x)}{g(x)-g(x-t)}\le\Lambda\qquad(x\in\IR,\ t>0),
\end{equation}
and we write
\begin{equation}\label{pc-q-def}
 q(x,t)=\log\frac{g(x+t)-g(x)}{g(x)-g(x-t)},\qquad d\nu(x,t)=\frac{dx\,dt}{t^2},
\end{equation}
so that $|q|\le\log\Lambda$. By Beurling--Ahlfors \cite{BeurlingAhlfors}, \cite[Chapter~IV]{AhlforsLectures}, the maps that are $\Lambda$-quasisymmetric for some $\Lambda$ are exactly the boundary maps of quasiconformal self-maps of $\IH$ fixing $\infty$, and a circle homeomorphism is quasisymmetric if and only if its conjugate by a Cayley map (a fixed point being sent to $\infty$ on both sides) is $\Lambda$-quasisymmetric for some $\Lambda$. We say that $g:\IR\to\IR$ is in $T^p$ if it has a quasiconformal extension $F:\IH\to\IH$ with $\int_\IH|\mu_F|^p\,d\nu<\infty$; since $d\nu=4\,dA_\eta$ on $\IH$, a circle homeomorphism is in $T^p$ in the sense of Section~\ref{two-classes} if and only if its conjugate on $\IR$ is.

\subsection{The statements}
\begin{theorem}[A derivative-free characterisation of $T^p$]\label{pc-characterisation}
Let $1<p<\infty$ and let $g:\IR\to\IR$ be $\Lambda$-quasisymmetric. The following are equivalent:
\begin{enumerate}[label=\textup{(\roman*)}]
\item $g\in T^p$;
\item $A_p(g)=\displaystyle\int_0^\infty\!\!\int_\IR|c_{x,t}|^p\,\frac{dx\,dt}{t^2}<\infty$;
\item $\displaystyle\int_0^\infty\!\!\int_\IR|q(x,t)|^p\,\frac{dx\,dt}{t^2}<\infty$.
\end{enumerate}
Quantitatively: $\|q\|_{L^p(d\nu)}\le C_{p,\Lambda}A_p(g)^{1/p}$; the Beurling--Ahlfors extension $F$ of $g$ satisfies $\|\mu_F\|_{L^p(d\nu)}\le\frac52(\Lambda+1)\|q\|_{L^p(d\nu)}$; and $A_p(g)\le\Theta_p(E_p(F))$ for every finite-distortion extension $F:\IH\to\IH$ of $g$ with $\I_p(F)<\infty$, with $\Theta_p$ the explicit function of \eqref{pc-star}.
\end{theorem}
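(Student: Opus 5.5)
The equivalence will follow the cycle (i)$\Rightarrow$(ii)$\Rightarrow$(iii)$\Rightarrow$(i), along the five-step plan laid out at the start of this section.

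\textbf{(i)$\Rightarrow$(ii).} A quasiconformal extension with $\mu\in L^p(d\nu)$ has $\I_p<\infty$, since $|\mu|\le k<1$. So it suffices to prove the general bound $A_p(g)\le\Theta_p(E_p(F))$ for any finite-distortion extension $F$ with $\I_p(F)<\infty$.
\begin{itemize}
\item By the capacity estimate \eqref{inverse-capacity-pullback}, with $u_Q$ as test potential, we get $\operatorname{mod}g(Q)\le\int K_F\rho_Q^2\,dA$. Applying the same estimate to $Q^*$ and using \eqref{pc-conjugate} gives
\[
 -\log\Bigl(\int K_F\rho_{Q^*}^2\,dA/\operatorname{mod}Q^*\Bigr)\le c_Q\le\log\Bigl(\int K_F\rho_Q^2\,dA/\operatorname{mod}Q\Bigr).
\]
\item Hence $|c_Q|\le\int(K_F-1)\,d\sigma_Q$, where $d\sigma_Q$ is the normalised probability measure built from $\rho_Q^2+\rho_{Q^*}^2$ (normalised by the constants $m_0,M_0$).
\item We have $K-1\le 2m/(1-m)\lesssim m+e_p(m)$. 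Where $m$ is small, $m\le e_p(m)^{1/p}$. Jensen/H\"older then gives $|c_Q|\lesssim X_Q^{1/p}+X_Q$, with $X_Q=\int e_p(|\mu_F|)(\rho_Q^2+\rho_{Q^*}^2)\,dA$.
\item $X_Q$ is uniformly bounded by $C_0E_p$-type constants (as in Proposition~\ref{finite-energy-boundary-moduli}), so $|c_Q|^p\lesssim X_Q$.
\item Integrate over $d\nu$ and use Fubini. The remaining input is the overlap identity
\[
 \int\!\!\int(\rho_{x,t}^2+\rho_{x,t,*}^2)(z)\,\frac{dx\,dt}{t^2}=c\,\eta(z).
\]
Affine invariance of the family reduces the left side to a function of $z$ invariant under $z\mapsto\alpha z+\beta$, hence a multiple of $(\operatorname{Im}z)^{-2}$. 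The only thing to check is that the constant is finite. This holds because with four finite vertices $\rho_Q^2$ decays like $|z|^{-4}$ at $\infty$ and has only integrable $|z-v|^{-1}$ singularities at the vertices.
\end{itemize}

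\textbf{(ii)$\Rightarrow$(iii).} The modulus is $M(\chi)$, and $M$ is bi-Lipschitz in $\log$ on compact ranges. Quasisymmetry keeps $\chi(g(Q_{x,t}))$ in a compact set depending on $\Lambda$, so $c_{x,t}$ is comparable to the log cross-ratio distortion of the four points. The cross-ratio at $(x-t,x-\tfrac t3,x+\tfrac t3,x+t)$ is expressible through $q$ at the points $x\pm\tfrac t3$ and scales $t/3$, $2t/3$. From this I would extract the two exact identities: one expressing the log cross-ratio distortion as a combination of $q$-values, and one writing $q(x,2s)$ in terms of $q(x,s)$, $q(x\pm s,s)$ and increments. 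Rearranging and telescoping over dyadic scales $2^{-k}t$ bounds $q$ by a geometrically weighted sum of $c$-values at nearby affine images. Minkowski's inequality and invariance of $d\nu$ then give $\|q\|_{L^p(d\nu)}\le C_{p,\Lambda}A_p^{1/p}$.

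I expect this step to be the main obstacle. The linearisation constants must stay controlled by $\Lambda$, and the telescoping must converge in $L^p$. I would first aim for a contraction-type identity in which $q$ at scale $2s$ is a convex average of $q$ at scale $s$ plus an $O(|c|)$ error, and then sum geometrically.

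\textbf{(iii)$\Rightarrow$(i).} Use the Beurling--Ahlfors extension. Its Beltrami coefficient at $x+iy$ is expressed through the averages $\alpha=\int_0^1 g(x+sy)\,ds$ and $\beta=\int_0^1 g(x-sy)\,ds$. I would bound $|\mu|$ by the normalised deviation of $g$ on $[x-y,x+y]$ from affine interpolation, and then show, using dyadic subdivision, that this defect is dominated by $\sum_k 2^{-k}|q|$ at nearby points and scales, with the constant $(\Lambda+1)$ coming from the normalisation. Minkowski's inequality gives $\|\mu_F\|_{L^p(d\nu)}\le\frac52(\Lambda+1)\|q\|_{L^p(d\nu)}$, which closes the cycle.
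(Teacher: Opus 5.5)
Your (i)$\Rightarrow$(ii) is the paper's argument: the modulus inequality for $Q$ and $Q^*$, the split $K-1\lesssim m+e_p(m)$ with H\"older, and the overlap identity $W=C_W\eta$. Your (iii)$\Rightarrow$(i) is the paper's Beurling--Ahlfors and dyadic-interpolation argument, with one correction. The level-$n$ weights are $\kappa^n$, $\kappa=\Lambda/(\Lambda+1)$, coming from $|g(J)|\le\kappa^n|g(I)|$, not $2^{-n}$. That is where $1/(4(1-\kappa))=(\Lambda+1)/4$ comes from.

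The genuine gap is in (ii)$\Rightarrow$(iii). The four-point identity is not a linear combination of $q$-values. With $a=q(y+s,s)$ and $b=q(y+2s,s)$ it reads $r_s(y)=\frac{b-a}2-L(a)-L(b)$, where $L(v)=\log\cosh(v/2)\ge0$. The doubling defect $e(x,s)=q(x,2s)-2q(x,s)$ is a second difference of $q(\cdot,s)$ plus $L(q(x+s,s))-L(q(x-s,s))$. Neither first differences of $q$ nor $e$ are $O(|c|)$ pointwise.

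Concretely, let $g(x)=x/(1-\epsilon x)$ on an interval. Then $c_{x,t}=0$ for every $Q_{x,t}$ inside it, since M\"obius maps preserve cross-ratios. Yet $q(x,s)=2\operatorname{artanh}u$ with $u=\epsilon s/(1-\epsilon x)$, and $e\approx4u^3\neq0$. So the ``$O(|c|)$ error'' you plan for does not exist. The true error is $O(|c|)+O(q^2)$, and the quadratic part needs an a priori bound $q\in L^{2p}(d\nu)$, which your plan never produces.

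Linearising with $L(a)\le\frac18\log\Lambda\,|a|$ and absorbing would only work for $\Lambda$ near $1$. Even then it needs $\|q\|_{L^p(d\nu)}<\infty$ in advance.

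The missing ingredient is Lemma~\ref{pc-telescope}. At each fixed scale, multiply the identity by the divided difference $W(a,b)$ of $H(v)=\operatorname{sgn}(v)|v|^{2p-1}/(2p-1)$. The linear part then becomes the exact spatial telescoping term $\frac12(H(b)-H(a))$, which integrates to zero over $\IR$ after a truncation argument. Meanwhile $(L(a)+L(b))W(a,b)\gtrsim(|a|+|b|)^{2p}$ is coercive. This exploits the \emph{sign} of the quadratic term and is global in $x$, which the M\"obius example shows is unavoidable. It gives $\int|q|^{2p}\,d\nu\lesssim A_p(g)$, then first differences in $L^p(d\nu)$ via \eqref{pc-four-point}, and then $e\in L^p(d\nu)$.

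Your contraction also runs the wrong way. In $q(x,2s)=q(x,s)+\frac12[q(x+s,s)+q(x-s,s)]+\dots$ the weights sum to $2$. So $q(x,2s)$ is about twice $q$ at scale $s$, not a convex average of it; this must be so, since $q(x,s)\approx(g''/g')(x)\,s$ for smooth $g$. Telescoping downward through scales $2^{-k}t$ therefore produces growing weights $2^k$.

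The working iteration goes upward: $q(x,s)=\frac12q(x,2s)-\frac12e(x,s)$, and boundedness of $q$ gives $q(x,s)=-\sum_{k\ge0}2^{-(k+1)}e(x,2^ks)$. Because $d\nu$ rescales, the $k$-th term has $L^p(d\nu)$ norm $2^{-(k+1)}2^{k/p}\|e\|_{L^p(d\nu)}$. The series converges exactly when $p>1$ (Lemma~\ref{pc-doubling}), which is where the hypothesis $p>1$ is actually used. Your sketch never locates this.
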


\begin{theorem}[$\mathcal T_p=T^p$]\label{pc-main}
Let $1<p<\infty$ and let $G:\D\to\D$ be a finite-distortion homeomorphism with $\I_p(G)<\infty$. Then its boundary map $g=G|_\IS$ belongs to $T^p$. Consequently $\mathcal T_p=T^p$, and $g$ is absolutely continuous with $\log g'\in B^{1/p}_{p,p}(\IS)$.
\end{theorem}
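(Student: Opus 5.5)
Given Theorem~\ref{pc-characterisation}, the plan is a reduction to the half-plane and a chain through the three equivalent conditions, in the order (ii)$\Rightarrow$(iii)$\Rightarrow$(i). First, Propositions~\ref{factorisation} and \ref{finite-energy-boundary-moduli} give that $G$ extends homeomorphically to $\overline\D$ and that $g=G|_\IS$ is quasisymmetric. Next, choose a point $\zeta\in\IS$ and Cayley maps $C_1,C_2:\IH\to\D$ with $C_1(\infty)=\zeta$ and $C_2(\infty)=g(\zeta)$. Put $\widetilde G=C_2^{-1}\circ G\circ C_1$. This is a finite-distortion self-homeomorphism of $\IH$ fixing $\infty$. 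By the M\"obius invariance recorded in Section~\ref{pc-setting}, $\I_p(\widetilde G)=\I_p(G)<\infty$. Its boundary map $\widetilde g:\IR\to\IR$ is an increasing homeomorphism, and it is $\Lambda$-quasisymmetric for some $\Lambda$ because $g$ is quasisymmetric (Beurling--Ahlfors).

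Now feed $\widetilde g$ into Theorem~\ref{pc-characterisation}.
\begin{enumerate}[label=\textup{(\alph*)}]
\item The last quantitative clause gives $A_p(\widetilde g)\le\Theta_p(E_p(\widetilde G))$. Since $E_p\le\I_p$ by \eqref{pc-ep}, this is finite, so condition (ii) holds.
\item The implication (ii)$\Rightarrow$(iii) gives $q\in L^p(d\nu)$.
\item The implication (iii)$\Rightarrow$(i) shows that the Beurling--Ahlfors extension $F$ of $\widetilde g$ has $\mu_F\in L^p(d\nu)$.
\end{enumerate}
Because $d\nu=4\,dA_\eta$, conjugating $F$ back by the Cayley maps gives a quasiconformal extension of $g$ to $\D$ with $\mu\in L^p(dA_\eta)$. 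Hence $g\in T^p$.

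Two further points remain. For the set identity, $T^p\subset\mathcal T_p$ is Lemma~\ref{easyinclusion}, and the argument above gives $\mathcal T_p\subset T^p$. For the analytic description, absolute continuity of $g$ and $\log g'\in B^{1/p}_{p,p}(\IS)$ are quoted from the Besov characterisation of $T^p$ due to Shen ($p=2$) and Wei--Matsuzaki (general $p>1$). These results are used only as a translation; nothing in the proof depends on them.

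The real content, and the main obstacle, sits inside Theorem~\ref{pc-characterisation}, which is assumed here. In particular it contains the bound $A_p\le\Theta_p(E_p)$, which relies on the overlap identity for the family $Q_{x,t}$ and on the per-quadrilateral H\"older estimate with the capacity inequality \eqref{inverse-capacity-pullback}. Within the present deduction, the only care needed is to check three things.
\begin{enumerate}[label=\textup{(\roman*)}]
\item The capacity inequality and the finite-distortion hypotheses survive the Cayley conjugation. This holds because the conjugating maps are conformal, so $K$, $\mu$ and the energy transform invariantly.
\item The quasisymmetry constant of $\widetilde g$ is finite. No uniform bound is needed for membership in $T^p$.
\item The sentence ``$g$ is in $T^p$ iff its conjugate is'' from Section~\ref{pc-setting} applies, since both endpoints are sent to $\infty$.
\end{enumerate}
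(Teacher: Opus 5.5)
Your proposal is correct and follows essentially the same route as the paper. The steps match: quasisymmetric boundary values from Propositions~\ref{factorisation} and \ref{finite-energy-boundary-moduli}; conjugation to a self-map of $\IH$ fixing $\infty$ (the paper uses a target rotation and one Cayley map, where you use two Cayley maps); $A_p<\infty$ from the quantitative bound of Theorem~\ref{pc-starthm}, which is the clause of Theorem~\ref{pc-characterisation} you cite; Theorem~\ref{pc-characterisation} to conclude membership in $T^p$; Lemma~\ref{easyinclusion} for the reverse inclusion; and Wei--Matsuzaki purely as a translation to the Besov statement.
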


The condition in Theorem~\ref{pc-characterisation}(ii) is not formally M\"obius invariant, but by the equivalence with (i) its finiteness is. The proofs occupy the rest of the section; Theorem~\ref{pc-main} is proved in Section~\ref{pc-proofs}.

\subsection{The modulus inequality}\label{pc-modulus-section}
\begin{lemma}[Finite-distortion homeomorphisms distort moduli by at most $\int K\rho^2$]\label{pc-modulus}
Let $p\ge1$ and let $G:\IH\to\IH$ be a finite-distortion homeomorphism with $\I_p(G)<\infty$, extended by Proposition~\ref{factorisation} to a homeomorphism of $\overline\IH\cup\{\infty\}$ with boundary map $g$. For every quadrilateral $Q$ in $\IH$,
\begin{equation}\label{pc-modulus-ineq}
 \operatorname{mod}g(Q)\le\int_\IH K_G\,\rho_Q^2\,dA
\end{equation}
whenever the right-hand side is finite.
\end{lemma}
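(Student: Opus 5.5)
The approach is to pull back the extremal potential of $Q$ by the inverse map and use it as a test function for the image quadrilateral in the Dirichlet principle \eqref{pc-mod-def}. Let $h=G^{-1}$. By Lemma~\ref{finite-distortion-inverse-calculus} (transported to $\IH$ by a Cayley map, which leaves $\I_p$, $K_G$ and Dirichlet integrals unchanged), $h$ has Sobolev regularity and finite distortion. Moreover, the capacity estimate \eqref{inverse-capacity-pullback} holds for scalar potentials that are smooth inside and continuous on the closure.

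We take $v=u_Q\circ h$, where $u_Q=\operatorname{Re}f_Q$ is the harmonic extremal function of $Q$. It is smooth in $\IH$ and continuous on $\overline\IH\cup\{\infty\}$ by Carath\'eodory, and it takes the values $0$ and $1$ on the $a$-sides of $Q$.

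The first step is admissibility. Proposition~\ref{factorisation} extends $G$, and hence $h$, to a homeomorphism of $\overline\IH\cup\{\infty\}$, with $h$ mapping $[g(a),g(b)]$ onto $[a,b]$ and $[g(c),g(d)]$ onto $[c,d]$. Hence $v$ is continuous on the closure, vanishes on $[g(a),g(b)]$ and equals $1$ on $[g(c),g(d)]$. Membership $v\in W^{1,2}_{\loc}$ follows from the chain rule for $u_Q$ smooth composed with $h\in W^{1,2}_{\loc}$.

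The second step is the energy bound
\[
\int_\IH|\nabla v|^2\,dA\le\int_\IH K_G|\nabla u_Q|^2\,dA=\int_\IH K_G\rho_Q^2\,dA .
\]
Pointwise, $|\nabla v|^2\le|\nabla u_Q|^2(h)\,|Dh|_{\mathrm{op}}^2$. We also have $|Dh|^2_{\mathrm{op}}=K_hJ_h$, and $K_h=K_G\circ h$ at regular points, while $Dh=0$ almost everywhere on $\{J_h=0\}$. Integrating and applying the area inequality for the Sobolev homeomorphism $h$ gives the bound. This is exactly \eqref{inverse-capacity-pullback}. Once the right side is finite, $v\in\mathcal A(g(Q))$, and \eqref{pc-modulus-ineq} follows from \eqref{pc-mod-def}.

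The main obstacle is that $\IH$ is unbounded and $u_Q$ is only continuous at $\infty$, whereas Lemma~\ref{finite-distortion-inverse-calculus} is stated on $\D$. I would do the whole computation in disk coordinates. Conjugate by Cayley maps $C$ on both sides: $\widetilde G=C\circ G\circ C^{-1}$ is a finite-distortion self-map of $\D$ with the same $\I_p$ and with $K_{\widetilde G}=K_G\circ C^{-1}$. The function $u_Q\circ C^{-1}$ is smooth in $\D$ and continuous on $\overline\D$. Conformal invariance of $\int K\rho^2\,dA$ then transfers the inequality back to $\IH$.

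A remaining subtlety is the local chain rule at points where $h$ is not differentiable. I would handle it as the lemma does: exhaust $\IH$ by compact sets, using $h\in W^{1,2}_{\loc}$ and absolute continuity on almost every line, and then let the exhaustion increase, using monotone convergence on the right side.
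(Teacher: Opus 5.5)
Your proposal is correct and follows the paper's own proof. Like the paper, you transport to the disk by a Cayley map, apply the capacity estimate \eqref{inverse-capacity-pullback} to $v=u_Q\circ G^{-1}$, and use the homeomorphic boundary extension to check that $v$ is admissible for $g(Q)$; your extra pointwise chain-rule and exhaustion discussion just re-derives what Lemma~\ref{finite-distortion-inverse-calculus} already supplies.
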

\begin{proof}
Transport to the disk by a Cayley map; all quantities are invariant and \eqref{pc-mod-def} becomes the disk definition. The transported $u_Q$ is harmonic in the disk and continuous on its closure (its gradient is unbounded at the four vertices, which is allowed), so \eqref{inverse-capacity-pullback} applies: $v:=u_Q\circ G^{-1}\in W^{1,2}$ and $\int|\nabla v|^2\le\int K_G\rho_Q^2$. Since $G^{-1}$ is a homeomorphism of the closed disk mapping the $a$-sides of $g(Q)$ onto the $a$-sides of $Q$, $v$ is continuous on the closed disk with $v=0$ on $g([a,b])$ and $v=1$ on $g([c,d])$. Thus $v$ is admissible in \eqref{pc-mod-def} for $g(Q)$.
\end{proof}

\begin{remark}[The $Q$-homeomorphism form]\label{pc-Q-homeo}
A homeomorphism $f$ is a $Q$-homeomorphism in the sense of Martio, Ryazanov, Srebro and Yakubov \cite{MRSY} if $M(f\Gamma)\le\int Q\rho^2\,dA$ for every curve family $\Gamma$ and every $\rho$ admissible for $\Gamma$. Inequality \eqref{pc-modulus-ineq} is exactly this statement, with $Q=K_G$, for the family of curves joining the $a$-sides of $Q$, whose modulus is $\operatorname{mod}Q$, and the particular admissible density $\rho_Q$ (if $\gamma$ joins the $a$-sides then $\int_\gamma\rho_Q\,ds\ge|u_Q(\gamma(1))-u_Q(\gamma(0))|=1$); by \eqref{pc-conjugate} the conjugate family is covered as well. Planar homeomorphisms of finite distortion with locally integrable distortion are $K_f$-homeomorphisms (Koskela--Onninen \cite{KoskelaOnninen}; see also \cite[Chapter~4]{MRSY}), and $K_G\in L^1(\D,dA)$ here by Lemma~\ref{finite-distortion-inverse-calculus}. The proof above through \eqref{inverse-capacity-pullback} is self-contained and avoids any Lusin property of $G$ itself.
\end{remark}

\begin{corollary}\label{pc-two-sided}
In the situation of Lemma~\ref{pc-modulus} let $m_0=\operatorname{mod}Q$, $M_0=\max(m_0,m_0^{-1})$, and
\[
 Z_Q=\int_\IH(K_G-1)\rho_Q^2\,dA,\qquad Z_{Q^*}=\int_\IH(K_G-1)\rho_{Q^*}^2\,dA .
\]
If both are finite then
\begin{equation}\label{pc-two-sided-eq}
 \frac{m_0}{1+m_0Z_{Q^*}}\le\operatorname{mod}g(Q)\le m_0\Bigl(1+\frac{Z_Q}{m_0}\Bigr),\qquad
 \bigl|c_Q\bigr|\le M_0\,(Z_Q+Z_{Q^*}),\quad c_Q:=\log\frac{\operatorname{mod}g(Q)}{\operatorname{mod}Q}.
\end{equation}
\end{corollary}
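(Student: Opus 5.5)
The plan is to apply Lemma~\ref{pc-modulus} twice, once to $Q$ and once to the conjugate $Q^*$, and then combine the two bounds using \eqref{pc-conjugate}.

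\emph{Upper bound.} Since $\int_\IH\rho_Q^2\,dA=\operatorname{mod}Q=m_0$, Lemma~\ref{pc-modulus} gives
\[
 \operatorname{mod}g(Q)\le\int_\IH K_G\rho_Q^2\,dA=m_0+Z_Q=m_0\Bigl(1+\frac{Z_Q}{m_0}\Bigr).
\]
The right side of \eqref{pc-modulus-ineq} is finite because $Z_Q$ is.

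\emph{Lower bound.} Applying the same lemma to $Q^*$, and using $\int\rho_{Q^*}^2\,dA=\operatorname{mod}Q^*=1/m_0$, we get
\[
 \operatorname{mod}g(Q^*)\le\frac1{m_0}+Z_{Q^*}.
\]
Because $g(Q^*)=g(Q)^*$, \eqref{pc-conjugate} gives $\operatorname{mod}g(Q)=1/\operatorname{mod}g(Q)^*$. Hence
\[
 \operatorname{mod}g(Q)\ge\frac{1}{1/m_0+Z_{Q^*}}=\frac{m_0}{1+m_0Z_{Q^*}}.
\]

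\emph{Logarithmic bound.} Take logarithms and use $\log(1+x)\le x$ for $x\ge0$. On the upper side,
\[
 c_Q\le\log\Bigl(1+\frac{Z_Q}{m_0}\Bigr)\le\frac{Z_Q}{m_0}\le M_0Z_Q.
\]
On the lower side,
\[
 -c_Q\le\log(1+m_0Z_{Q^*})\le m_0Z_{Q^*}\le M_0Z_{Q^*}.
\]
Both $Z_Q$ and $Z_{Q^*}$ are nonnegative, since $K_G\ge1$. Therefore $|c_Q|\le M_0\max(Z_Q,Z_{Q^*})\le M_0(Z_Q+Z_{Q^*})$.

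The only point needing care is the identity $g(Q^*)=g(Q)^*$ together with the invariance of modulus under conjugation. This requires $g$ to be an increasing homeomorphism of the extended line, so that the cyclic order of the vertices and the sides are preserved; Proposition~\ref{factorisation} supplies exactly this. The argument has no other obstacle.
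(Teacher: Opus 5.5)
Your proposal is correct and follows the paper's own proof. Both apply Lemma~\ref{pc-modulus} to $Q$ and to $Q^*$ (with total masses $m_0$ and $m_0^{-1}$), obtain the lower bound from \eqref{pc-conjugate} via $g(Q^*)=g(Q)^*$, and bound $c_Q$ by taking logarithms and using $\log(1+x)\le x$.
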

\begin{proof}
Since $\int\rho_Q^2=m_0$ and $\int\rho_{Q^*}^2=m_0^{-1}$, Lemma~\ref{pc-modulus} gives $\operatorname{mod}g(Q)\le m_0+Z_Q$ and $\operatorname{mod}g(Q)^*\le m_0^{-1}+Z_{Q^*}$; the lower bound follows from \eqref{pc-conjugate} applied to $g(Q)^*=g(Q^*)$. Taking logarithms and using $\log(1+x)\le x$ gives $-m_0Z_{Q^*}\le c_Q\le Z_Q/m_0$.
\end{proof}

\subsection{The per-quadrilateral estimate}\label{pc-perQ-section}
\begin{lemma}\label{pc-pointwise}
For $0\le m<1$ and $1\le p<\infty$,
\begin{equation}\label{pc-pointwise-eq}
 K-1=\frac{2m}{1-m}\le\begin{cases}4m,& m\le\tfrac12,\\[2pt] 2^{p+2}\,e_p(m),& m>\tfrac12,\end{cases}
 \qquad\text{and}\qquad m^p\le e_p(m)\ \text{ for all } m .
\end{equation}
\end{lemma}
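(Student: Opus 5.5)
The statement is elementary, so the plan is a direct case analysis on $m$, using only the definitions $K=(1+m)/(1-m)$ and $e_p(m)=m^p/(1-m^2)$. First record the identity
\[
K-1=\frac{1+m}{1-m}-1=\frac{2m}{1-m},
\]
which is immediate. The last assertion, $m^p\le e_p(m)$, holds because $0<1-m^2\le1$, so dividing $m^p$ by $1-m^2$ can only increase it. No case split is needed there.

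For the first case, $m\le\frac12$, I would use $1-m\ge\frac12$ in the denominator. This gives
\[
\frac{2m}{1-m}\le4m.
\]

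For the second case, $m>\frac12$, the aim is to compare $2m/(1-m)$ with $m^p/(1-m^2)$. I would rewrite
\[
\frac{2m}{1-m}=\frac{2m(1+m)}{1-m^2}\le\frac{4m}{1-m^2},
\]
using $1+m\le2$. It then remains to bound $4m$ by $2^{p+2}m^p$. Since $m>\frac12$ we have $m^{p-1}\ge2^{-(p-1)}$ for $p\ge1$, and hence
\[
4m=4m^p\,m^{1-p}\le4\cdot2^{p-1}m^p=2^{p+1}m^p\le2^{p+2}m^p.
\]
Combining the two displays gives $K-1\le2^{p+2}e_p(m)$, with a spare factor of $2$.

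I do not expect any real obstacle. The only point requiring care is the direction of the inequality $m^{1-p}\le2^{p-1}$, which holds because $1-p\le0$ and $m\ge\frac12$. The constants chosen in the statement are deliberately generous, so this last step goes through with room to spare.
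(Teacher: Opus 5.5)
Your proposal is correct and follows essentially the paper's own argument: the same split at $m=\tfrac12$, the bound $1-m\ge\tfrac12$ in the first case, and the rewriting $2m(1+m)/(1-m^2)$ with $1+m\le2$ in the second. The only difference is that the paper bounds the numerator by $4$ and uses $m^p\ge2^{-p}$, whereas you keep the factor $m$ and use $m^{1-p}\le2^{p-1}$, which gains the spare factor of $2$ you noted.
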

\begin{proof}
For $m\le\frac12$, $1-m\ge\frac12$. For $m>\frac12$, $K-1=2m(1+m)/(1-m^2)\le4/(1-m^2)$ and $e_p(m)\ge2^{-p}/(1-m^2)$. The last inequality is $(1-m^2)^{-1}\ge1$.
\end{proof}

The point is the asymmetry: for $m$ close to $1$ the density $e_p(m)$ grows like $(1-m)^{-1}$, as fast as $K-1$, while for small $m$ we have only $K-1\approx2m$ against $e_p(m)\approx m^p$. The large-distortion regime costs nothing; the small-distortion regime costs a H\"older inequality, and that is how the exponent $p$ reaches the boundary.

\begin{proposition}\label{pc-perQ}
Let $1\le p<\infty$, let $G$ be as in Lemma~\ref{pc-modulus}, and let $Q$ be a quadrilateral with $\operatorname{mod}Q=m_0$. Put
\begin{equation}\label{pc-XQ}
 X_Q=\int_\IH e_p(|\mu_G|)\,\bigl(\rho_Q^2+\rho_{Q^*}^2\bigr)\,dA .
\end{equation}
If $X_Q<\infty$ then $Z_Q,Z_{Q^*}<\infty$ and
\begin{equation}\label{pc-perQ-eq}
 |c_Q|\le A_pX_Q^{1/p}+B_pX_Q,\qquad A_p=8M_0^{\,2-1/p},\quad B_p=2^{p+2}M_0 ;
\end{equation}
consequently
\begin{equation}\label{pc-perQ-p}
 |c_Q|^p\le2^{p-1}\bigl(A_p^p+B_p^pX_Q^{p-1}\bigr)X_Q .
\end{equation}
\end{proposition}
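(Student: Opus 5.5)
The plan is to feed Corollary~\ref{pc-two-sided} with bounds for $Z_Q$ and $Z_{Q^*}$ obtained by splitting the disk according to the size of $|\mu_G|$. Write $m=|\mu_G|$, let $S=\{m\le\tfrac12\}$ and $L=\{m>\tfrac12\}$, and put $\sigma=\rho_Q^2+\rho_{Q^*}^2$. Then
\[
 Z_Q+Z_{Q^*}=\int_\IH(K_G-1)\,\sigma\,dA .
\]
On $L$ we use the second case of Lemma~\ref{pc-pointwise}, $K-1\le2^{p+2}e_p(m)$, which gives a contribution at most $2^{p+2}X_Q$. On $S$ we use $K-1\le4m$ and H\"older's inequality. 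This splitting also shows finiteness: both pieces are finite when $X_Q<\infty$, because $\int\sigma\,dA=m_0+m_0^{-1}<\infty$.

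For the small-distortion piece, normalise $\sigma$ to a probability measure. Its total mass is $m_0+m_0^{-1}\le2M_0$. Jensen (H\"older) gives
\[
 \int_S m\,\sigma\,dA\le(2M_0)^{1-1/p}\Bigl(\int m^p\sigma\,dA\Bigr)^{1/p}\le(2M_0)^{1-1/p}X_Q^{1/p},
\]
where the last step uses $m^p\le e_p(m)$ from Lemma~\ref{pc-pointwise}. Hence
\[
 Z_Q+Z_{Q^*}\le4(2M_0)^{1-1/p}X_Q^{1/p}+2^{p+2}X_Q .
\]
Multiplying by $M_0$ as in \eqref{pc-two-sided-eq} gives
\[
 |c_Q|\le 4\cdot2^{1-1/p}M_0^{2-1/p}X_Q^{1/p}+2^{p+2}M_0X_Q .
\]
Since $4\cdot2^{1-1/p}\le8$, this is \eqref{pc-perQ-eq} with the stated $A_p$ and $B_p$. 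Finally, \eqref{pc-perQ-p} follows from $(a+b)^p\le2^{p-1}(a^p+b^p)$, applied with $a=A_pX_Q^{1/p}$ and $b=B_pX_Q$.

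The only delicate point is the bookkeeping of the constants $M_0$ and the mass of $\sigma$ in the H\"older step. A cruder route would bound each of $Z_Q$ and $Z_{Q^*}$ separately, but that loses the clean factor $(2M_0)^{1-1/p}$, so I would keep the combined measure $\sigma$ throughout.
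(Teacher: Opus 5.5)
Your proof is correct and is essentially the paper's argument: you split at $|\mu_G|=\tfrac12$ via Lemma~\ref{pc-pointwise}, apply H\"older on the small-distortion set using $m^p\le e_p(m)$, and feed the result into Corollary~\ref{pc-two-sided}. The only difference is cosmetic. The paper applies H\"older to $\rho_Q^2\,dA$ and $\rho_{Q^*}^2\,dA$ separately, with masses $m_0$ and $m_0^{-1}$, and recombines using $a^{1/p}+b^{1/p}\le2(a+b)^{1/p}$; with the sharper factor $2^{1-1/p}$ in place of $2$, that separate route gives exactly your $4(2M_0)^{1-1/p}$, so your closing concern that it loses a factor is unfounded.
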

\begin{proof}
Write $m=|\mu_G|$ and $\rho=\rho_Q$. By Lemma~\ref{pc-pointwise},
\[
 Z_Q\le4\int_{\{m\le1/2\}}m\rho^2\,dA+2^{p+2}\int_{\{m>1/2\}}e_p(m)\rho^2\,dA .
\]
H\"older's inequality for the finite measure $\rho^2\,dA$ of total mass $m_0$, together with $m^p\le e_p(m)$, gives
\[
 \int_{\{m\le1/2\}}m\rho^2\,dA\le\Bigl(\int m^p\rho^2\,dA\Bigr)^{1/p}m_0^{1-1/p}\le m_0^{1-1/p}\Bigl(\int e_p(m)\rho^2\,dA\Bigr)^{1/p}.
\]
The same argument for $Q^*$ has $m_0^{-1}$ in place of $m_0$. Adding, and using $a^{1/p}+b^{1/p}\le2(a+b)^{1/p}$,
\[
 Z_Q+Z_{Q^*}\le8M_0^{\,1-1/p}X_Q^{1/p}+2^{p+2}X_Q .
\]
Now \eqref{pc-two-sided-eq} gives \eqref{pc-perQ-eq}, and \eqref{pc-perQ-p} is $(\alpha+\beta)^p\le2^{p-1}(\alpha^p+\beta^p)$.
\end{proof}

\paragraph{A uniform bound.} The extremal density is dominated by the hyperbolic density. For the quadrilateral $Q_0$ with vertices $-1,-\frac13,\frac13,1$, the map $f_0(\zeta)=\int_0^\zeta 3\,dw\,\bigl((1-w^2)(1-9w^2)\bigr)^{-1/2}$ gives $\rho_{Q_0}^2=|f_0'|^2/(4K^2)$ and $\rho_{Q_0^*}^2=|f_0'|^2/K'^2$ with $K=K(\frac13)$, $K'=K(\sqrt8/3)$. Since $|f_0'(\zeta)|^2\asymp|\zeta-v|^{-1}$ at each vertex $v$, $\asymp|\zeta|^{-4}$ as $\zeta\to\infty$, and is bounded elsewhere, while $\eta(\zeta)^{-1}=4(\operatorname{Im}\zeta)^2\le4|\zeta-v|^2$ and $\le4|\zeta|^2$,
\begin{equation}\label{pc-rho-eta}
 C_\rho:=\sup_{\zeta\in\IH}\frac{\rho_{Q_0}^2(\zeta)+\rho_{Q_0^*}^2(\zeta)}{\eta(\zeta)}<\infty\qquad(C_\rho=0.564\ldots\text{ numerically}).
\end{equation}
By the invariance of $\rho_Q^2/\eta$ under M\"obius maps the same bound holds for every $Q_{x,t}$, and therefore
\begin{equation}\label{pc-XQ-bound}
 X_{Q_{x,t}}\le C_\rho\int_\IH e_p(|\mu_G|)\,dA_\eta=C_\rho\,E_p(G)\le C_\rho\,\I_p(G)\qquad\text{for all }x\in\IR,\ t>0 .
\end{equation}
With \eqref{pc-perQ-eq} this recovers the quantitative quasisymmetry of Proposition~\ref{finite-energy-boundary-moduli} for this family; the new information is that \eqref{pc-perQ-p} is linear in $X_Q$ for bounded $X_Q$.

\subsection{The overlap function and the integrated estimate}\label{pc-overlap-section}
For $x\in\IR$, $t>0$ let $\rho_{x,t}=\rho_{Q_{x,t}}$ and $\rho_{x,t,*}=\rho_{Q_{x,t}^*}$.

\begin{lemma}\label{pc-overlap}
The function
\[
 W(z)=\int_0^\infty\!\!\int_\IR\bigl(\rho_{x,t}^2+\rho_{x,t,*}^2\bigr)(z)\,\frac{dx\,dt}{t^2},\qquad z\in\IH,
\]
is finite and equals $C_W\,\eta(z)$, where
\begin{equation}\label{pc-CW}
 C_W=4\int_\IH\operatorname{Im}\zeta\,\bigl(\rho_{Q_0}^2+\rho_{Q_0^*}^2\bigr)(\zeta)\,dA(\zeta)<\infty\qquad(C_W=3.1468\ldots\text{ numerically}).
\end{equation}
\end{lemma}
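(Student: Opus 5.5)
The plan is a change of variables using the affine covariance of the family $Q_{x,t}$. Let $T_{x,t}(\zeta)=x+t\zeta$, so that $Q_{x,t}=T_{x,t}(Q_0)$. Conformal invariance of the extremal function gives $u_{Q_{x,t}}=u_{Q_0}\circ T_{x,t}^{-1}$. Hence
\[
 \rho_{x,t}^2(z)=t^{-2}\rho_{Q_0}^2\Bigl(\frac{z-x}{t}\Bigr),
\]
and the same relation holds for the conjugate densities. Substituting into $W$ and writing $\zeta=(z-x)/t$ for fixed $z=u+iv$ and $t$, we have $x=u-t\zeta_1$ with $dx=t\,d\zeta_1$, and $\operatorname{Im}\zeta=v/t$. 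Thus
\[
 W(z)=\int_0^\infty\!\!\int_\IR\sigma\Bigl(\zeta_1+i\frac vt\Bigr)\,d\zeta_1\,\frac{dt}{t^3},\qquad \sigma=\rho_{Q_0}^2+\rho_{Q_0^*}^2 .
\]
Next substitute $s=v/t$, so $dt/t^3=s\,ds/v^2$. Since all integrands are nonnegative, Tonelli's theorem applies, and we obtain
\[
 W(z)=v^{-2}\int_\IH s\,\sigma(\zeta_1+is)\,d\zeta_1\,ds .
\]
Because $\eta(z)=(2v)^{-2}$, this gives $W=C_W\eta$ with $C_W=4\int_\IH\operatorname{Im}\zeta\,\sigma\,dA$, exactly as in \eqref{pc-CW}. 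In particular $W$ is independent of $u$, as affine invariance predicts.

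The remaining work, and the main obstacle, is to show that $C_W<\infty$. I would use the asymptotics of $|f_0'|^2$ recorded before \eqref{pc-rho-eta}, namely $\sigma(\zeta)\asymp|\zeta-v_j|^{-1}$ near each of the four finite vertices $v_j$, $\sigma\asymp|\zeta|^{-4}$ at infinity, and $\sigma$ bounded elsewhere. Near a vertex, $\operatorname{Im}\zeta\,|\zeta-v_j|^{-1}\le1$, so the integrand is bounded and the contribution from a bounded region is finite. At infinity the integrand is $O(|\zeta|\cdot|\zeta|^{-4})=O(|\zeta|^{-3})$, which is integrable against area outside a large disk.

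This is also where four finite vertices are essential. If a vertex were at $\infty$, then $|f'|^2$ would decay only like $|\zeta|^{-2}$ there, the weight $\operatorname{Im}\zeta$ would make the integrand of size $|\zeta|^{-1}$, and $C_W$ would diverge.

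Finiteness of $C_W$ then gives finiteness of $W(z)$ for every $z$. The numerical value of $C_W$ is not needed for the argument.
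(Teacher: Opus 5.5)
Your proof is correct and is essentially the paper's argument. The paper evaluates $W(i)$ through the single substitution $(x,t)\mapsto\zeta=(i-x)/t$, with $dx\,dt=(\operatorname{Im}\zeta)^{-3}\,dA(\zeta)$, and then passes to general $z$ by affine invariance, whereas you make the same change of variables directly at $z=u+iv$ in two one-variable steps; the finiteness check from the vertex and infinity asymptotics is the same.

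There is one slip, and it is only in your aside on four vertices. With a vertex at $\infty$ the Schwarz--Christoffel derivative has three finite factors, so $|f'|^2\asymp|\zeta|^{-3}$, not $|\zeta|^{-2}$. The weighted integrand is then $\asymp|\zeta|^{-2}$, and the divergence is only logarithmic, as in Remark~\ref{pc-why-four}.
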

\begin{proof}
Let $A(\zeta)=x+t\zeta$, so $A(Q_0)=Q_{x,t}$ and, by conformal invariance, $\rho_{x,t}(x+t\zeta)=\rho_{Q_0}(\zeta)/t$. Hence $\rho_{x,t}^2(i)=t^{-2}\rho_{Q_0}^2\bigl((i-x)/t\bigr)$. The map $(x,t)\mapsto\zeta=(i-x)/t=\xi+i\sigma$ is a diffeomorphism of $\IR\times(0,\infty)$ onto $\IH$ with inverse $x=-\xi/\sigma$, $t=1/\sigma$ and Jacobian $dx\,dt=\sigma^{-3}d\xi\,d\sigma$. Therefore
\[
 \int_0^\infty\!\!\int_\IR\rho_{x,t}^2(i)\,\frac{dx\,dt}{t^2}
 =\int_\IH\sigma^4\rho_{Q_0}^2(\zeta)\,\sigma^{-3}\,d\xi\,d\sigma
 =\int_\IH\operatorname{Im}\zeta\;\rho_{Q_0}^2(\zeta)\,dA(\zeta),
\]
and likewise for the conjugate. Near the vertices $\rho_{Q_0}^2\asymp|\zeta-v|^{-1}$ is integrable; as $\zeta\to\infty$, $\operatorname{Im}\zeta\,\rho_{Q_0}^2(\zeta)=O(|\zeta|^{-3})$ by the Schwarz--Christoffel formula (four finite vertices, each contributing $|\zeta|^{-1}$ to $|f_0'|^2$), and $\int_{|\zeta|>1}|\zeta|^{-3}dA<\infty$. So $W(i)=C_W/4<\infty$.

For general $z=x_0+iy_0$ let $A(\zeta)=x_0+y_0\zeta$, so $A(i)=z$. Then $\rho_{x,t}(A\zeta)=\rho_{x',t'}(\zeta)/y_0$ with $(x',t')=((x-x_0)/y_0,\,t/y_0)$, and the measure is invariant, so $W(z)=y_0^{-2}W(i)=4W(i)\,\eta(z)$.
\end{proof}

\begin{remark}[Why four finite vertices]\label{pc-why-four}
The most natural family of ``quasisymmetry quadrilaterals'' is $(\IH;x-t,x,x+t,\infty)$, whose modulus distortion is a fixed function of the quasisymmetric ratio $e^{q(x,t)}$. For that family the Schwarz--Christoffel derivative has only three finite factors, $\rho^2\asymp|\zeta|^{-3}$ at infinity, and $\int\operatorname{Im}\zeta\,\rho^2\,dA$ diverges logarithmically: $W\equiv\infty$. This is not an artefact of the computation; the quadrilateral $(x-t,x,x+t,\infty)$ has infinite hyperbolic extent, and the family does not see $\mu_G$ locally. Four finite vertices make $W$ finite, and the price is that $c_{x,t}$ controls $q$ only through the identities of Section~\ref{pc-identities}, in which $q$ enters to first order only through differences.
\end{remark}

\begin{theorem}\label{pc-starthm}
Let $1\le p<\infty$ and let $G:\IH\to\IH$ be a finite-distortion homeomorphism with $\I_p(G)<\infty$, normalised so that its boundary map $g$ fixes $\infty$. Then
\begin{equation}\label{pc-star}
 A_p(g)=\int_0^\infty\!\!\int_\IR|c_{x,t}|^p\,\frac{dx\,dt}{t^2}
 \le\Theta_p\bigl(E_p(G)\bigr)\le\Theta_p\bigl(\I_p(G)\bigr),
\end{equation}
where
\[
 \Theta_p(E):=2^{p-1}C_W\Bigl(A_p^p+B_p^p\,(C_\rho E)^{p-1}\Bigr)E ,
\]
with $A_p,B_p$ from \eqref{pc-perQ-eq}, $C_\rho$ from \eqref{pc-rho-eta} and $C_W$ from \eqref{pc-CW}. In particular $A_p(g)<\infty$, and $\Theta_p(E)=O(E)$ as $E\to0$.
\end{theorem}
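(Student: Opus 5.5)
The plan is to integrate the per-quadrilateral estimate \eqref{pc-perQ-p} over the family $Q_{x,t}$ against $d\nu=dx\,dt/t^2$, and then use the overlap identity of Lemma~\ref{pc-overlap} to collapse the double integral into a single hyperbolic integral of $e_p(|\mu_G|)$.

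First, fix $(x,t)$. Since $\operatorname{mod}Q_{x,t}=m_0$ is the constant of \eqref{pc-m0}, Proposition~\ref{pc-perQ} applies with the constants $A_p,B_p$ of \eqref{pc-perQ-eq}. Its hypothesis $X_{Q_{x,t}}<\infty$ is supplied uniformly by \eqref{pc-XQ-bound}: $X_{Q_{x,t}}\le C_\rho E_p(G)$. Because $g$ fixes $\infty$, the boundary map restricts to an increasing homeomorphism of $\IR$ and $g(Q_{x,t})$ has four finite vertices, so $c_{x,t}$ in \eqref{pc-c-def} is the $c_Q$ of Corollary~\ref{pc-two-sided}. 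Substituting the uniform bound into the factor $X_Q^{p-1}$ of \eqref{pc-perQ-p} (here $p\ge1$) gives
\[
 |c_{x,t}|^p\le2^{p-1}\bigl(A_p^p+B_p^p(C_\rho E_p(G))^{p-1}\bigr)X_{Q_{x,t}} .
\]
This linearisation in $X_Q$ is the key step.

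Next, integrate both sides against $d\nu$. By Tonelli, applied to the nonnegative measurable integrand $(x,t,z)\mapsto e_p(|\mu_G(z)|)(\rho_{x,t}^2+\rho_{x,t,*}^2)(z)$,
\[
 \int\!\!\int X_{Q_{x,t}}\,d\nu=\int_\IH e_p(|\mu_G|)\,W\,dA=C_W\int_\IH e_p(|\mu_G|)\,dA_\eta=C_W E_p(G),
\]
where the middle equality is Lemma~\ref{pc-overlap}. Combining the two displays gives $A_p(g)\le\Theta_p(E_p(G))$. Since $\Theta_p$ is increasing in $E$ and $E_p(G)\le\I_p(G)$ by \eqref{pc-ep}, the second inequality follows. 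The statement $\Theta_p(E)=O(E)$ as $E\to0$ is read off from the formula for $\Theta_p$.

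The main obstacle is measurability of $(x,t,z)\mapsto\rho_{x,t}^2(z)$, which Tonelli requires. I would handle it through the explicit representation $\rho_{x,t}^2(z)=t^{-2}\rho_{Q_0}^2((z-x)/t)$ from the proof of Lemma~\ref{pc-overlap}, which is continuous off the vertex set; measurability of $c_{x,t}$ then follows from continuity of $g$ and of $M$. A second point to check is that Lemma~\ref{pc-modulus} is valid in the half-plane with the vertex at $\infty$ fixed, which holds by the Cayley-map invariance noted in Section~\ref{pc-setting}.
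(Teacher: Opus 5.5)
Your proposal is correct and follows the paper's proof exactly. The paper also linearises \eqref{pc-perQ-p} via the uniform bound \eqref{pc-XQ-bound}, integrates against $dx\,dt/t^2$, collapses the double integral with Tonelli and Lemma~\ref{pc-overlap}, and concludes from the monotonicity of $\Theta_p$ together with $E_p(G)\le\I_p(G)$; your measurability remarks are a harmless addition that the paper leaves implicit.
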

\begin{proof}
By \eqref{pc-perQ-p} and \eqref{pc-XQ-bound}, $|c_{x,t}|^p\le2^{p-1}\bigl(A_p^p+B_p^p(C_\rho E_p(G))^{p-1}\bigr)X_{Q_{x,t}}$. By Tonelli and Lemma~\ref{pc-overlap},
\[
 \int_0^\infty\!\!\int_\IR X_{Q_{x,t}}\,\frac{dx\,dt}{t^2}
 =\int_\IH e_p(|\mu_G|)(z)\,W(z)\,dA(z)=C_W\int_\IH e_p(|\mu_G|)\,dA_\eta=C_W\,E_p(G),
\]
and $\Theta_p$ is increasing.
\end{proof}

Nothing so far depends on the shape of $Q_{x,t}$ beyond having four finite vertices with a fixed cross-ratio: any family $(x-t,x-\alpha t,x+\beta t,x+t)$, $0<\alpha,\beta<1$, satisfies \eqref{pc-star} with constants depending on $(\alpha,\beta)$. The normalisation $g(\infty)=\infty$ costs nothing, since $\I_p$ is M\"obius invariant.

\subsection{Two exact identities for the quasisymmetric ratio}\label{pc-identities}
From here on $g:\IR\to\IR$ is $\Lambda$-quasisymmetric, $q$ and $\nu$ are as in \eqref{pc-q-def}, and
\begin{equation}\label{pc-L-def}
 L(v)=\log\cosh\frac v2,\qquad 0\le L(v)\le\frac{v^2}8,\qquad L(v)\ge c_Bv^2\ \text{ for }|v|\le B,\quad c_B=\frac{L(B)}{B^2},
\end{equation}
the last because $L(v)/v^2$ is even and decreasing in $|v|$. Both identities come from
\begin{equation}\label{pc-log-sum}
 \log(e^v+e^w)=\frac{v+w}2+\log2+L(v-w).
\end{equation}

\paragraph{The four-point identity.} For $s>0$ and $y\in\IR$ let
\[
 \ell_j=g(y+js)-g(y+(j-1)s)\quad(j=1,2,3),\qquad
 r_s(y)=\log\frac{4\ell_1\ell_3}{(\ell_1+\ell_2)(\ell_2+\ell_3)} .
\]
With $x=y+\frac32s$, $t=\frac32s$, the points $y,y+s,y+2s,y+3s$ are the vertices of $Q_{x,t}$, and $\chi_{x,t}:=\ell_1\ell_3/((\ell_1+\ell_2)(\ell_2+\ell_3))$ is the cross-ratio \eqref{pc-chi-def} of their images; thus $r_s(y)=\log(4\chi_{x,t})$, which vanishes exactly when $\chi_{x,t}=\frac14$, i.e.\ when $c_{x,t}=0$. Put
\[
 a=q(y+s,s)=\log\frac{\ell_2}{\ell_1},\qquad b=q(y+2s,s)=\log\frac{\ell_3}{\ell_2}.
\]
Then $\log(\ell_1+\ell_2)=\log\ell_1+\log(1+e^a)$ and $\log(\ell_2+\ell_3)=\log\ell_2+\log(1+e^b)$, and \eqref{pc-log-sum} gives, exactly,
\begin{equation}\label{pc-four-point}
 r_s(y)=\frac{b-a}2-L(a)-L(b).
\end{equation}
This uses only interval lengths; no derivative of $g$ has been introduced.

\paragraph{The doubling identity.} Fix $x\in\IR$, $s>0$, and put $\ell_+=g(x+s)-g(x)$, $\ell_{++}=g(x+2s)-g(x+s)$, $\ell_-=g(x)-g(x-s)$, $\ell_{--}=g(x-s)-g(x-2s)$, so that $q(x,s)=\log(\ell_+/\ell_-)$, $q(x+s,s)=\log(\ell_{++}/\ell_+)$ and $q(x-s,s)=\log(\ell_-/\ell_{--})$. Then $q(x,2s)=\log(\ell_++\ell_{++})-\log(\ell_-+\ell_{--})=\log\ell_++\log(1+e^{q(x+s,s)})-\log\ell_--\log(1+e^{-q(x-s,s)})$, and \eqref{pc-log-sum} (with $L$ even) gives, exactly,
\begin{equation}\label{pc-doubling-id}
 q(x,2s)=q(x,s)+\frac12\bigl[q(x+s,s)+q(x-s,s)\bigr]+L\bigl(q(x+s,s)\bigr)-L\bigl(q(x-s,s)\bigr).
\end{equation}
Consequently the doubling defect
\begin{equation}\label{pc-e-def}
 e(x,s):=q(x,2s)-2q(x,s)=\frac12\bigl[q(x+s,s)-2q(x,s)+q(x-s,s)\bigr]+L\bigl(q(x+s,s)\bigr)-L\bigl(q(x-s,s)\bigr)
\end{equation}
is a second difference of $q(\cdot,s)$ at step $s$ plus quadratic terms.

The two identities play different roles and neither suffices alone. In \eqref{pc-four-point} the quadratic terms $L(a)+L(b)$ enter with one sign, which makes the next lemma coercive and controls $q$ in $L^{2p}$; in \eqref{pc-doubling-id} they enter as a difference, which could cancel, but the linear term is a second difference whose telescoping in the scale converges geometrically and controls $q$ in $L^p$.

\subsection{Telescoping}\label{pc-telescoping-section}
\begin{lemma}\label{pc-telescope}
Let $1\le p<\infty$, $s>0$, and let $v:\IR\to[-B,B]$ be measurable. If
\[
 r(y)=\frac{v(y+s)-v(y)}2-L(v(y))-L(v(y+s))\in L^p(\IR),
\]
then $v\in L^{2p}(\IR)$ and
\begin{equation}\label{pc-telescope-eq}
 \int_\IR|v|^{2p}\,dy\le C_{p,B}\int_\IR|r|^p\,dy ,
\end{equation}
with $C_{p,B}$ independent of $s$.
\end{lemma}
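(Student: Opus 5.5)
The plan is an energy (test-function) argument along the translation $y\mapsto y+s$. I will test the relation defining $r$ against a power of $v$ and integrate over $y$, so that the linear term $\tfrac12(v(y+s)-v(y))$ telescopes and the coercive term $L(v(y))+L(v(y+s))\ge c_B(v(y)^2+v(y+s)^2)$ from \eqref{pc-L-def} survives. Write $w_0=v(y)$, $w_1=v(y+s)$ and $D=w_1-w_0$. The defining relation reads
\[
 D=2r(y)+2L(w_0)+2L(w_1)\ \ge\ 2r(y)+2c_B\,(w_0^2+w_1^2).
\]
Set $\Phi(u)=\operatorname{sign}(u)|u|^{2p-1}/(2p-1)$, so $\Phi'(u)=|u|^{2p-2}$ and $|\Phi|\le B^{2p-1}$ on $[-B,B]$.

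The first step is a pointwise inequality, split by the sign of $D$. If $D<0$, the display gives $2c_B(w_0^2+w_1^2)\le 2|r(y)|$, hence $|w_0|^{2p}+|w_1|^{2p}\le C|r(y)|^p$ directly. If $D\ge0$, I want
\[
 \Phi(w_1)-\Phi(w_0)=\int_{w_0}^{w_1}|u|^{2p-2}\,du\ \ge\ \kappa_p\,D\,\max(|w_0|,|w_1|)^{2p-2},
\]
with $\kappa_p>0$ depending only on $p$. This is an elementary one-variable estimate: the integrand is at least a fixed fraction of $\max(|w_0|,|w_1|)^{2p-2}$ on a fixed fraction of the interval $[w_0,w_1]$, including when $0$ lies inside it. Inserting the lower bound for $D$ gives
\[
 2c_B\kappa_p\max(|w_0|,|w_1|)^{2p-2}(w_0^2+w_1^2)\le \Phi(w_1)-\Phi(w_0)+2\kappa_p|r(y)|\max(|w_0|,|w_1|)^{2p-2}.
\]
Young's inequality then bounds the last term by $\varepsilon(|w_0|^{2p}+|w_1|^{2p})+C_\varepsilon|r(y)|^p$, and I absorb the $\varepsilon$ part into the left side. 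In both cases, pointwise in $y$,
\[
 |v(y)|^{2p}+|v(y+s)|^{2p}\le C_{p,B}\bigl(|r(y)|^p+(\Phi(v(y+s))-\Phi(v(y)))_+\bigr),
\]
and more precisely with the signed difference in place of the positive part, since in the case $D<0$ the difference is negative and can be discarded.

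The second step integrates this over $y\in[-R,R]$. Translation telescopes the $\Phi$ terms:
\[
 \int_{-R}^{R}\bigl(\Phi(v(y+s))-\Phi(v(y))\bigr)\,dy=\int_R^{R+s}\Phi(v)-\int_{-R}^{-R+s}\Phi(v).
\]
This is at most $2sB^{2p-1}$, so $\int_{-R}^R|v|^{2p}\le C\int|r|^p+Cs B^{2p-1}$ uniformly in $R$, and hence $v\in L^{2p}(\IR)$. With $v\in L^{2p}$ known, Hölder gives $\int_R^{R+s}|\Phi(v)|\le s^{1/(2p)}\bigl(\int_R^{R+s}|v|^{2p}\bigr)^{(2p-1)/(2p)}\to0$ as $R\to\infty$, and likewise at $-R$. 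Repeating the integration with $R\to\infty$ therefore removes the boundary term, and \eqref{pc-telescope-eq} follows. The constants come only from $c_B$, $\kappa_p$ and Young's inequality, so they do not depend on $s$.

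The main obstacle is the first step in the case $D\ge0$. There the linear term carries no sign information about $v$ itself, so all the coercivity must come from $L(w_0)+L(w_1)$, and the lower bound for $\int_{w_0}^{w_1}|u|^{2p-2}\,du$ must be uniform even when $w_0$ and $w_1$ have opposite signs. I would verify it first by scaling to $\max(|w_0|,|w_1|)=1$ and checking the finitely many configurations of $w_0\le 0\le w_1$ versus same-sign endpoints.
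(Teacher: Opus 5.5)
Your argument is essentially the paper's. Both test against the weight $|v|^{2p-2}$ so that the linear term becomes the telescoping difference $\Phi(v(y+s))-\Phi(v(y))$, where the paper's $H$ is your $\Phi$. Both bound the end strips by $2s\sup|\Phi|$ to get $v\in L^{2p}$ with an $s$-dependent constant, and then rerun with $R\to\infty$ to remove the strips.

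There are two minor differences. The paper multiplies the defining identity by the exact divided difference $W(a,b)=(H(b)-H(a))/(b-a)\asymp(|a|+|b|)^{2p-2}$. This gives an identity valid for both signs of $D$, so no case split is needed. The paper also applies H\"older globally and then Young, whereas you apply Young pointwise.

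One justification in your write-up is backwards, and it sits exactly where the telescoping needs it. In the case $D<0$ you have only shown $|w_0|^{2p}+|w_1|^{2p}\le C|r|^p$. Replacing the right side by $C|r|^p+\bigl(\Phi(w_1)-\Phi(w_0)\bigr)$ adds a \emph{negative} term. That makes the inequality stronger, so it cannot be obtained by ``discarding'' the term. Without the signed version, the integrated $\Phi$ terms do not telescope.

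The signed inequality is nonetheless true with a larger constant, using facts you already have in this case:
\begin{itemize}
\item $2c_B(w_0^2+w_1^2)\le D-2r<2|r|$;
\item $0<-D\le-2r\le2|r|$.
\end{itemize}
Hence
\[
 0\le\Phi(w_0)-\Phi(w_1)\le|D|\max(|w_0|,|w_1|)^{2p-2}\le2|r|\Bigl(\frac{|r|}{c_B}\Bigr)^{p-1}=2c_B^{1-p}|r|^p .
\]
This is absorbed into the $|r|^p$ term. With that line added, the pointwise signed inequality holds in both cases, and the rest of your proof goes through as written with constants independent of $s$.
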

\begin{proof}
The case $B=0$ is immediate. Define
\[
 H(v)=\frac{\operatorname{sgn}(v)|v|^{2p-1}}{2p-1},\qquad
 W(a,b)=\int_0^1|(1-\theta)a+\theta b|^{2p-2}\,d\theta ,
\]
so that $H'(v)=|v|^{2p-2}$ and $W$ is the divided difference of $H$:
\begin{equation}\label{pc-divided}
 (b-a)W(a,b)=H(b)-H(a).
\end{equation}
By homogeneity and compactness on $\{|a|+|b|=1\}$, where $W$ is continuous and positive, $W(a,b)\asymp_p(|a|+|b|)^{2p-2}$. With \eqref{pc-L-def} this gives, for $|a|,|b|\le B$,
\begin{equation}\label{pc-coercive}
 (L(a)+L(b))W(a,b)\ge c_{p,B}(|a|+|b|)^{2p},\qquad W(a,b)^{p'}\le C_p(|a|+|b|)^{2p}.
\end{equation}
Multiply the defining identity for $r$ by $W(a,b)$, with $a=v(y)$, $b=v(y+s)$:
\begin{equation}\label{pc-tested}
 (L(a)+L(b))W(a,b)=\tfrac12\bigl(H(b)-H(a)\bigr)-rW(a,b).
\end{equation}
Integrability must be established before the telescoping term is discarded. Integrate \eqref{pc-tested} over $[-R,R]$, $R>s$. The integral of $H(v(y+s))-H(v(y))$ is the difference of the integrals of $H(v)$ over $[R,R+s]$ and $[-R,-R+s]$, hence has absolute value at most $2s\sup_{|v|\le B}|H(v)|$. With $X_R=\int_{-R}^R(|v(y)|+|v(y+s)|)^{2p}\,dy$, \eqref{pc-coercive}, \eqref{pc-tested} and H\"older's inequality give
\[
 c_{p,B}X_R\le C_{p,B}\,s+C_p\|r\|_{L^p(\IR)}X_R^{1-1/p}.
\]
Young's inequality bounds $X_R$ independently of $R$, and monotone convergence gives $v\in L^{2p}(\IR)$. Now the two boundary-strip integrals of $H(v)$ tend to zero as $R\to\infty$, by H\"older's inequality on each strip. All other terms of \eqref{pc-tested} are integrable by \eqref{pc-coercive} and H\"older, so $\int_\IR(L(a)+L(b))W(a,b)\,dy=-\int_\IR rW(a,b)\,dy$, and \eqref{pc-coercive} once more gives
\[
 c_{p,B}\int_\IR(|a|+|b|)^{2p}\,dy\le C_p\|r\|_p\Bigl(\int_\IR(|a|+|b|)^{2p}\,dy\Bigr)^{1-1/p},
\]
which is \eqref{pc-telescope-eq}. The temporary boundary term involving $s$ has disappeared from the final estimate.
\end{proof}

For a $\Lambda$-quasisymmetric $g$ the image cross-ratio lies in a compact subinterval of $(0,1)$:
\begin{equation}\label{pc-chi-range}
 \chi_{x,t}=\frac1{(1+\ell_2/\ell_1)(1+\ell_2/\ell_3)}\in\Bigl[\frac1{(1+\Lambda)^2},\frac{\Lambda^2}{(1+\Lambda)^2}\Bigr].
\end{equation}
On this interval the two real-analytic functions $\log(4\chi)$ and $\log M(\chi)-\log M(\frac14)$ vanish only at $\chi=\frac14$, both with nonzero derivative there since $M'>0$; hence
\begin{equation}\label{pc-r-vs-c}
 |r_s(y)|\le C_\Lambda\,|c_{x,t}|\qquad(x=y+\tfrac32s,\ t=\tfrac32s),
\end{equation}
and since $dx\,dt/t^2=\frac23\,dy\,ds/s^2$,
\begin{equation}\label{pc-r-integral}
 \int_0^\infty\!\!\int_\IR|r_s(y)|^p\,dy\,\frac{ds}{s^2}\le\frac32C_\Lambda^p\,A_p(g).
\end{equation}

\begin{corollary}\label{pc-q2p}
Let $1\le p<\infty$ and let $g$ be $\Lambda$-quasisymmetric with $A_p(g)<\infty$. Then
\begin{align}
 \int_\IH|q|^{2p}\,d\nu&\le C_{p,\Lambda}\,A_p(g),\label{pc-q-2p}\\
 \int_\IH|q(x+s,s)-q(x,s)|^p\,d\nu(x,s)&\le C_{p,\Lambda}\,A_p(g).\label{pc-q-diff}
\end{align}
\end{corollary}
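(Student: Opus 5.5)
The plan is to apply Lemma~\ref{pc-telescope} at each fixed scale $s$ to the function $v(y)=q(y+s,s)$, and then integrate over scales against $ds/s^2$.

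The first step is to put the four-point identity in the form required by the lemma. With $a=q(y+s,s)$ and $b=q(y+2s,s)$, identity \eqref{pc-four-point} reads
\[
 r_s(y)=\tfrac12\bigl(v(y+s)-v(y)\bigr)-L(v(y))-L(v(y+s)),
\]
which is exactly the hypothesis of Lemma~\ref{pc-telescope} with step $s$. Quasisymmetry gives $|v|\le B:=\log\Lambda$.

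The second step is to check that $r_s\in L^p(\IR)$ for almost every $s$. By \eqref{pc-r-integral} the double integral $\int\!\!\int|r_s|^p\,dy\,ds/s^2$ is finite, so $\int_\IR|r_s(y)|^p\,dy<\infty$ for a.e.\ $s$. For such $s$, Lemma~\ref{pc-telescope} gives
\[
 \int_\IR|q(y+s,s)|^{2p}\,dy\le C_{p,B}\int_\IR|r_s|^p\,dy,
\]
with $C_{p,B}$ independent of $s$. Translating $y$ does not change the left side, and the exceptional null set of scales is harmless. Integrating against $ds/s^2$ and applying \eqref{pc-r-integral} gives \eqref{pc-q-2p} with $C_{p,\Lambda}=\tfrac32C_{p,\log\Lambda}C_\Lambda^p$. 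Measurability of $q$ in $(x,s)$ follows from the continuity of $g$.

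The third step is \eqref{pc-q-diff}. The four-point identity gives the pointwise bound
\[
 \tfrac12|b-a|\le|r_s(y)|+L(a)+L(b)\le|r_s(y)|+\tfrac18(a^2+b^2),
\]
and hence $|b-a|^p\lesssim_p|r_s|^p+|a|^{2p}+|b|^{2p}$. Here $b-a=q(y+2s,s)-q(y+s,s)$, which is $q(x+s,s)-q(x,s)$ with $x=y+s$. Integrating in $y$ and then against $ds/s^2$, the $|r_s|^p$ term is controlled by \eqref{pc-r-integral}. The $|a|^{2p}$ and $|b|^{2p}$ terms are $|q|^{2p}$ at translated points, so \eqref{pc-q-2p} controls them. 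This gives \eqref{pc-q-diff}.

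Step three is where the quadratic error terms must be absorbed, and the $L^{2p}$ bound is precisely what makes this possible. The real difficulty, the coercivity, was already handled in Lemma~\ref{pc-telescope}. So the main point to verify is that the lemma's constant is uniform in $s$, and the lemma states that explicitly.
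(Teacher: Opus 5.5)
Your proposal is correct and is essentially the paper's own proof. The paper likewise uses Fubini with \eqref{pc-r-integral} to get $r_s\in L^p(\IR)$ for a.e.\ $s$, then applies Lemma~\ref{pc-telescope} to $v(y)=q(y+s,s)$ with $B=\log\Lambda$ and integrates against $ds/s^2$ to obtain \eqref{pc-q-2p}. It then derives \eqref{pc-q-diff} from $b-a=2r_s+2L(a)+2L(b)$, using $L(v)\le v^2/8$ and the $L^{2p}$ bound just proved.
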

\begin{proof}
By \eqref{pc-r-integral} and Fubini, $r_s\in L^p(\IR)$ for a.e.\ $s>0$. For such $s$ apply Lemma~\ref{pc-telescope} to $v(y)=q(y+s,s)$, $B=\log\Lambda$, whose $r$ is $r_s$ by \eqref{pc-four-point}: $\int_\IR|q(y+s,s)|^{2p}\,dy\le C_{p,\Lambda}\int_\IR|r_s|^p\,dy$. Since $\int_\IR|q(y+s,s)|^{2p}dy=\int_\IR|q(x,s)|^{2p}dx$, integrating against $ds/s^2$ and using \eqref{pc-r-integral} gives \eqref{pc-q-2p}. For \eqref{pc-q-diff}, \eqref{pc-four-point} gives $q(y+2s,s)-q(y+s,s)=b-a=2r_s(y)+2L(a)+2L(b)$ with $L(a)\le a^2/8$, so
\[
 \int_\IR|q(y+2s,s)-q(y+s,s)|^p\,dy\le3^{p-1}2^p\Bigl(\int_\IR|r_s|^p\,dy+2\cdot8^{-p}\int_\IR|q(x,s)|^{2p}\,dx\Bigr);
\]
the left side equals $\int_\IR|q(x+s,s)-q(x,s)|^p\,dx$, and integration against $ds/s^2$ with \eqref{pc-r-integral} and \eqref{pc-q-2p} gives \eqref{pc-q-diff}.
\end{proof}

\begin{lemma}[Telescoping in the scale]\label{pc-doubling}
Let $1<p<\infty$, let $q$ be a bounded measurable function on $\IR\times(0,\infty)$, and let $e(x,s)=q(x,2s)-2q(x,s)$. If $e\in L^p(d\nu)$ then $q\in L^p(d\nu)$ and
\begin{equation}\label{pc-scale-telescope}
 \|q\|_{L^p(d\nu)}\le\frac{\|e\|_{L^p(d\nu)}}{2-2^{1/p}} .
\end{equation}
\end{lemma}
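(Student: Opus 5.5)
The plan is to write $q(\cdot,s)$ as a geometric series in the scale and use the dilation behaviour of the measure $d\nu$. Define the dilation operator $(T_\lambda f)(x,s)=f(x,\lambda s)$. Since $d\nu=dx\,ds/s^2$, the substitution $s\mapsto s/2$ gives
\[
 \|T_{1/2}f\|_{L^p(d\nu)}^p=\int\!\!\int|f(x,s/2)|^p\,\frac{dx\,ds}{s^2}=\frac12\int\!\!\int|f(x,u)|^p\,\frac{dx\,du}{u^2},
\]
so $\|T_{1/2}f\|_{L^p(d\nu)}=2^{-1/p}\|f\|_{L^p(d\nu)}$. More generally $\|T_{2^{-k}}f\|_{L^p(d\nu)}=2^{-k/p}\|f\|_{L^p(d\nu)}$.

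The identity $q(x,2s)=2q(x,s)+e(x,s)$ is used in the direction that solves for the coarser scale in terms of the finer one. Iterating downward,
\[
 q(x,s)=\tfrac12q(x,2s)-\tfrac12e(x,s),
\]
which points the wrong way for convergence. Iterating in the other direction instead, writing $s=2u$,
\[
 q(x,s)=2q(x,s/2)+e(x,s/2),
\]
doubles the norm at each step, multiplying it by $2\cdot2^{-1/p}>1$, which also diverges. So the correct direction is the first one:
\[
 q(x,s)=2^{-N}q(x,2^Ns)-\sum_{k=0}^{N-1}2^{-k-1}e(x,2^ks).
\]
For the terms involving $e$, $T_{2^k}$ multiplies the $L^p(d\nu)$ norm by $2^{k/p}$, so term $k$ has norm $2^{-k-1}2^{k/p}\|e\|_{L^p(d\nu)}$. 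Because $p>1$, the ratio is $2^{1/p-1}<1$ and the series sums to $\tfrac12\|e\|/(1-2^{1/p-1})=\|e\|/(2-2^{1/p})$, which is exactly the constant in \eqref{pc-scale-telescope}.

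The main obstacle is the remainder $2^{-N}q(x,2^Ns)$. Its norm is formally $2^{-N}2^{N/p}\|q\|_{L^p(d\nu)}$, but $\|q\|_{L^p(d\nu)}$ is not yet known to be finite, and $q\in L^\infty$ alone is not in $L^p(d\nu)$ because $d\nu$ has infinite mass. The plan is to avoid global norms and work on the truncated region $\Omega=\{|x|<R,\ a<s<b\}$ instead, using pointwise boundedness: $|2^{-N}q(x,2^Ns)|\le2^{-N}\|q\|_\infty$. The $\nu$-measure of $\Omega$ is $2R(1/a-1/b)$, finite, so the remainder's $L^p(\Omega,d\nu)$ norm is at most $2^{-N}\|q\|_\infty\,\nu(\Omega)^{1/p}$, which tends to $0$ as $N\to\infty$ with $\Omega$ fixed. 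For the sum, restrict each term to $\Omega$ and bound it by the full $L^p(d\nu)$ norm of the dilated $e$, which is no larger. Minkowski on $L^p(\Omega,d\nu)$ then gives $\|q\|_{L^p(\Omega,d\nu)}\le\|e\|_{L^p(d\nu)}/(2-2^{1/p})+o(1)$.

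Finally let $N\to\infty$, then let $R\to\infty$, $a\to0$, $b\to\infty$, and apply monotone convergence to obtain $q\in L^p(d\nu)$ with \eqref{pc-scale-telescope}. Measurability of the dilated functions is clear, and the pointwise telescoped identity holds everywhere since $q$ and $e$ are defined pointwise.
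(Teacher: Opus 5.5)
Your proof is correct and follows the paper's argument: the same telescoped identity $q(x,s)=2^{-N}q(x,2^Ns)-\sum_{k<N}2^{-k-1}e(x,2^ks)$, the same scaling $\|e(\cdot,2^k\cdot)\|_{L^p(d\nu)}=2^{k/p}\|e\|_{L^p(d\nu)}$, and Minkowski's inequality summed as a geometric series. The only difference is how you dispose of the remainder. The paper lets $N\to\infty$ pointwise, using boundedness of $q$, and concludes with Fatou; you kill the remainder in $L^p$ on boxes of finite $\nu$-measure and then exhaust. Both routes are fine, but delete your remark that the first iteration ``points the wrong way for convergence'', since that is exactly the direction you then correctly use.
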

\begin{proof}
From $q(x,s)=\frac12q(x,2s)-\frac12e(x,s)$, by induction, $q(x,s)=2^{-N}q(x,2^Ns)-\sum_{k=0}^{N-1}2^{-(k+1)}e(x,2^ks)$ for $N\ge1$. Since $q$ is bounded, $2^{-N}q(x,2^Ns)\to0$, and since $e$ is bounded the series converges absolutely; thus
\[
 q(x,s)=-\sum_{k=0}^\infty2^{-(k+1)}e(x,2^ks)\qquad\text{pointwise}.
\]
The substitution $\sigma=2^ks$ gives $ds/s^2=2^k\,d\sigma/\sigma^2$, so $\|e(\cdot,2^k\,\cdot)\|_{L^p(d\nu)}=2^{k/p}\|e\|_{L^p(d\nu)}$. Minkowski's inequality for the partial sums and Fatou's lemma give
\[
 \|q\|_{L^p(d\nu)}\le\sum_{k=0}^\infty2^{-(k+1)}2^{k/p}\|e\|_{L^p(d\nu)}=\frac{\|e\|_{L^p(d\nu)}}{2}\sum_{k=0}^\infty2^{-k(1-1/p)}=\frac{\|e\|_{L^p(d\nu)}}{2-2^{1/p}},
\]
the series converging because $p>1$.
\end{proof}

\begin{corollary}\label{pc-qp}
Let $1<p<\infty$ and let $g$ be $\Lambda$-quasisymmetric with $A_p(g)<\infty$. Then
\begin{equation}\label{pc-q-p}
 \|q\|_{L^p(d\nu)}\le C_{p,\Lambda}\,A_p(g)^{1/p}.
\end{equation}
\end{corollary}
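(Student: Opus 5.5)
The plan is to apply Lemma~\ref{pc-doubling} to $q$ itself. Since $|q|\le\log\Lambda$, $q$ is bounded, so the lemma gives $\|q\|_{L^p(d\nu)}\le(2-2^{1/p})^{-1}\|e\|_{L^p(d\nu)}$. The remaining task is to bound $\|e\|_{L^p(d\nu)}$ by $C_{p,\Lambda}A_p(g)^{1/p}$, and for this I will use the doubling identity \eqref{pc-e-def}. That identity splits $e$ into a linear second-difference term and a quadratic remainder:
\[
 e(x,s)=\tfrac12\bigl[(q(x+s,s)-q(x,s))-(q(x,s)-q(x-s,s))\bigr]+L\bigl(q(x+s,s)\bigr)-L\bigl(q(x-s,s)\bigr).
\]

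For the linear term, each bracket is a first difference of $q(\cdot,s)$ at step $s$. Translating $x\mapsto x-s$ on each slice $\{s=\text{const}\}$ leaves $dx$ invariant. So both brackets have $L^p(d\nu)$ norm equal to that of $q(x+s,s)-q(x,s)$, and \eqref{pc-q-diff} of Corollary~\ref{pc-q2p} bounds this by $(C_{p,\Lambda}A_p(g))^{1/p}$.

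For the quadratic term, I will use $0\le L(v)\le v^2/8$ from \eqref{pc-L-def}. This gives $|L(q(x\pm s,s))|^p\le8^{-p}|q(x\pm s,s)|^{2p}$. Translating in $x$ again, each piece is bounded by $8^{-p}\int|q|^{2p}\,d\nu$. By \eqref{pc-q-2p} this is at most $C_{p,\Lambda}A_p(g)$, so the $L^p(d\nu)$ norm is again $\lesssim A_p(g)^{1/p}$.

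Minkowski's inequality then gives $\|e\|_{L^p(d\nu)}\le C'_{p,\Lambda}A_p(g)^{1/p}$, and combining this with Lemma~\ref{pc-doubling} yields \eqref{pc-q-p}.

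I do not expect a serious obstacle, since the hard work is already done in Corollary~\ref{pc-q2p}. The points to check are these:
\begin{itemize}
\item Measurability of $e$ and the slice-wise translation invariance, which are routine because $q$ is continuous.
\item The restriction $p>1$, which is needed only for convergence of the geometric series in Lemma~\ref{pc-doubling}.
\item The quadratic terms. These are controlled in $L^p$ only because of the $L^{2p}$ bound from the coercive four-point identity. This is why both identities are needed.
\end{itemize}
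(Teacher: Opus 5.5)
Your proposal is correct and matches the paper's proof essentially step for step. The paper writes the second difference as $D(x,s)-D(x-s,s)$ with $D(x,s)=q(x+s,s)-q(x,s)$ and bounds it by \eqref{pc-q-diff} using slice-wise translation invariance of $\nu$; it bounds the $L$-terms by $\tfrac18\|q\|_{L^{2p}(d\nu)}^2$ via \eqref{pc-L-def} and \eqref{pc-q-2p}, and then concludes with Lemma~\ref{pc-doubling}, exactly as you do.
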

\begin{proof}
In \eqref{pc-e-def} write $D(x,s)=q(x+s,s)-q(x,s)$, so that $q(x-s,s)-q(x,s)=-D(x-s,s)$. Translation in $x$ for fixed $s$ preserves $\nu$, so $\|D(\cdot-s,\cdot)\|_{L^p(d\nu)}=\|D\|_{L^p(d\nu)}$, and $\|L(q(\cdot\pm s,\cdot))\|_{L^p(d\nu)}\le\frac18\|q\|_{L^{2p}(d\nu)}^2$ by \eqref{pc-L-def}. Hence $\|e\|_{L^p(d\nu)}\le\|D\|_{L^p(d\nu)}+\frac14\|q\|_{L^{2p}(d\nu)}^2\le C_{p,\Lambda}A_p(g)^{1/p}$ by \eqref{pc-q-diff} and \eqref{pc-q-2p}, and Lemma~\ref{pc-doubling} gives \eqref{pc-q-p}.
\end{proof}

\subsection{An extension estimate without boundary differentiability}\label{pc-extension-section}
\begin{lemma}\label{pc-extension}
Let $g$ be $\Lambda$-quasisymmetric and let $F$ be its Beurling--Ahlfors extension in the normalisation \eqref{pc-BA} below. For every $1\le r<\infty$,
\begin{equation}\label{pc-extension-eq}
 \|\mu_F\|_{L^r(d\nu)}\le\frac52(\Lambda+1)\,\|q\|_{L^r(d\nu)} .
\end{equation}
\end{lemma}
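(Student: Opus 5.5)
The plan is to compute $\mu_F$ explicitly from the Beurling--Ahlfors formula and then dominate it pointwise by a dyadic sum of values of $|q|$. Take \eqref{pc-BA} in the standard form $F(x+iy)=\tfrac12(\alpha+\beta)+i(\alpha-\beta)$, where $\alpha(x,y)=y^{-1}\int_0^y g(x+t)\,dt$ and $\beta(x,y)=y^{-1}\int_0^y g(x-t)\,dt$. Differentiating under the integral gives
\[
 \partial_x\alpha=\frac{g(x+y)-g(x)}y,\qquad \partial_y\alpha=\frac{g(x+y)-\alpha}y,
\]
and similarly $\partial_x\beta=(g(x)-g(x-y))/y$, $\partial_y\beta=(g(x-y)-\beta)/y$.

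\textbf{Step 1: the formula for $F_{\bar z}$.} Writing $F_{\bar z}$ and $F_z$ in these terms, I expect $F_{\bar z}$ to be a fixed linear combination of differences that all vanish when $g$ is affine on $I=[x-y,x+y]$. Typical such differences are $(g(x+y)-g(x))-(g(x)-g(x-y))$ and $\alpha-\tfrac12(g(x)+g(x+y))$. The key observation is that each of them is bounded by the \emph{affine interpolation defect}
\[
 \delta(x,y)=\sup_{u\in I}\bigl|g(u)-\lambda(u)\bigr|,
\]
where $\lambda$ is the affine map agreeing with $g$ at $x-y$ and $x+y$. For $|F_z|$ I will bound $|F_z|\ge J_F^{1/2}$ from below. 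Monotonicity and $\Lambda$-quasisymmetry, used as in Beurling--Ahlfors, should give $|F_z|\gtrsim (g(x+y)-g(x-y))/((\Lambda+1)y)$. Together these give
\[
 |\mu_F(x,y)|\le C(\Lambda+1)\,\frac{\delta(x,y)}{g(x+y)-g(x-y)}.
\]
This is the normalised defect. Tracking the constants here is where the factor $\tfrac52(\Lambda+1)$ must come from, and it is the step I expect to be most delicate. If the clean constant does not come out, I would settle for some $C_\Lambda$ first.

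\textbf{Step 2: dyadic interpolation.} Split $I$ dyadically. At each dyadic interval $J$ of generation $k$ with midpoint $m_J$ and half-length $s=2^{-k}y$, the midpoint defect of $g$ relative to the chord over $J$ is
\[
 \tfrac12\bigl|(g(m_J+s)-g(m_J))-(g(m_J)-g(m_J-s))\bigr|\le \tfrac12\,|g(J)|\,|\tanh(q(m_J,s)/2)|\le \tfrac14\,|g(J)|\,|q(m_J,s)|.
\]
Summing the midpoint corrections across generations gives
\[
 \delta\le\sum_k\max_{J\text{ of gen. }k}\tfrac14|g(J)|\,|q(m_J,s)|.
\]
By quasisymmetry, $|g(J)|\le\theta^k|g(I)|$ with $\theta=\Lambda/(\Lambda+1)<1$. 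Replacing the max by a sum, and also averaging over the starting point so that the function becomes measurable in $(x,y)$, yields
\[
 \frac{\delta}{|g(I)|}\lesssim\sum_k\theta^k\sum_{j=1}^{2^k}\bigl|q(x_{k,j},2^{-k}y)\bigr|.
\]
If the count $2^k$ times $\theta^k$ fails to decay, I will instead use the $L^r$-friendly weighting in Step 3.

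\textbf{Step 3: the $L^r(d\nu)$ bound.} Apply Minkowski's inequality in $L^r(d\nu)$ term by term. For a fixed $(k,j)$, the map $(x,y)\mapsto(x_{k,j},2^{-k}y)$ is an affine change of variables. Since $dy/y^2=2^{-k}ds/s^2$, each term has norm $2^{-k/r}\|q\|_{L^r(d\nu)}$. The $k$-th generation therefore contributes at most $\theta^k2^{k}2^{-k/r}\|q\|_r$. If this series does not converge in general, I would refine Step 2 to bound the defect at level $k$ by $\sum_J|g(J)|\,|q|/|g(I)|$ with $\sum_J|g(J)|=|g(I)|$. That replaces the factor $2^k\theta^k$ by a convex average, which by Jensen costs only $2^{-k/r}$ per level and sums geometrically for all $r\ge1$, using the $\theta^k$ decay. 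The final constant then collects into $\tfrac52(\Lambda+1)$.
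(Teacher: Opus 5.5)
Your outline has the paper's shape: a pointwise bound of $|\mu_F|$ by the normalised defect $\delta/|g(I)|$, the dyadic midpoint expansion with $\tfrac12|g(J)|\,|\tanh(q/2)|\le\tfrac14|g(J)|\,|q|$, the decay $|g(J)|\le\theta^k|g(I)|$, and an affine change of variables in $d\nu$. However, Step~3 as first written fails, and your fallback does not give the stated constant.

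Replacing the maximum by a sum of first powers and then applying Minkowski term by term gives $\theta^k2^{k(1-1/r)}$ at level $k$. This is not summable once $\theta\,2^{1-1/r}\ge1$, which happens for every $r>1$ when $\Lambda$ is large. Your Jensen repair does converge, but only after bounding the weights $|g(J)|/|g(I)|\le\theta^k$ inside $\sum_J w_J|q_J|^r$. That leaves $\theta^{k/r}$ per level and the constant $1/(4(1-\theta^{1/r}))$, which exceeds $(\Lambda+1)/4$ for $r>1$ and is of order $r(\Lambda+1)/4$ for large $\Lambda$.

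The missing idea is to compare max and sum at the level of $r$-th powers:
\[
 \max_j\bigl|q(x_{k,j},2^{-k}y)\bigr|^r\le\sum_{j=1}^{2^k}\bigl|q(x_{k,j},2^{-k}y)\bigr|^r .
\]
Each of the $2^k$ terms on the right has $\nu$-integral $2^{-k}\|q\|_{L^r(d\nu)}^r$. So the level-$k$ maximum has $L^r(d\nu)$ norm at most $\|q\|_{L^r(d\nu)}$, uniformly in $k$. Minkowski over levels then gives $\bigl\|\delta/|g(I)|\bigr\|_{L^r(d\nu)}\le\frac14\sum_k\theta^k\|q\|_{L^r(d\nu)}=\frac{\Lambda+1}4\|q\|_{L^r(d\nu)}$. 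No averaging is needed for measurability, since $q$ is continuous.

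Second, the factor $\Lambda+1=1/(1-\theta)$ is entirely used up by the dyadic series, so Step~1 must be free of $\Lambda$. Your route through $J_F^{1/2}$ and quasisymmetry would leave you with $(\Lambda+1)^2$. Your own derivatives already give a clean bound. Write $F=U+iV$ with $U=\frac12(\alpha+\beta)$ and $V=\alpha-\beta$. Then $2\operatorname{Re}F_z=U_x+V_y$, where $U_x=|g(I)|/(2y)$ and
\[
 V_y=\bigl[(g(x+y)-\alpha)+(\beta-g(x-y))\bigr]/y\ge0
\]
by monotonicity alone. Hence $|F_z|\ge|g(I)|/(4y)$.

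For the numerator, normalise to $(x,y)=(0,1)$, $g(-1)=0$, $g(1)=1$, and write $g(t)=\frac{t+1}2+e(t)$ with $\|e\|_\infty=\delta$ on $[-1,1]$. One finds $|U_y|\le\delta$, $|V_x|\le2\delta$ and $|V_y-\frac12|\le2\delta$. Hence $|F_{\bar z}|\le\frac52\delta$ and $|\mu_F|\le10\,\delta/|g(I)|$.

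The constant is then $10\cdot\frac{\Lambda+1}4=\frac52(\Lambda+1)$. The $\frac52$ comes from Step~1 and the $\Lambda+1$ from Steps~2--3, which is the reverse of the attribution in your plan.
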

\begin{proof}
For $I=[x-y,x+y]$ let $P_I$ be the affine function agreeing with $g$ at the endpoints of $I$, and define the normalised interpolation defect
\[
 \beta(x,y)=\frac{\sup_{z\in I}|g(z)-P_I(z)|}{g(x+y)-g(x-y)} .
\]
Let $\kappa=\Lambda/(\Lambda+1)<1$. Each dyadic child $J'$ of an interval $J$ satisfies $|g(J')|\le\kappa|g(J)|$, by \eqref{pc-qs-def}. If $J$ has midpoint $m$ and half-length $h$, its midpoint interpolation defect is
\[
 \Bigl|g(m)-\frac{g(m-h)+g(m+h)}2\Bigr|=\frac{|g(J)|}2\Bigl|\tanh\frac{q(m,h)}2\Bigr|\le\frac{|g(J)|}4|q(m,h)| .
\]
The successive dyadic piecewise affine interpolants $P_n$ of $g$ on $I$ ($P_0=P_I$) converge uniformly to $g$, and $P_{n+1}-P_n$ is, on each level-$n$ interval $J$, the tent function of height equal to the midpoint defect of $J$. Since $|g(J)|\le\kappa^n|g(I)|$ for level-$n$ intervals,
\begin{equation}\label{pc-beta-sum}
 \beta(x,y)\le\frac14\sum_{n=0}^\infty\kappa^n\max_{0\le j<2^n}\bigl|q(x+\alpha_{n,j}y,\,2^{-n}y)\bigr|,\qquad\alpha_{n,j}=-1+(2j+1)2^{-n}.
\end{equation}
For each $n,j$ the change of variables $(X,T)=(x+\alpha_{n,j}y,\,2^{-n}y)$ has $dx\,dy/y^2=2^{-n}\,dX\,dT/T^2$, so
\[
 \int_\IH\max_{j<2^n}|q(x+\alpha_{n,j}y,2^{-n}y)|^r\,d\nu\le\sum_{j<2^n}2^{-n}\int_\IH|q|^r\,d\nu=\int_\IH|q|^r\,d\nu ,
\]
and Minkowski's inequality in \eqref{pc-beta-sum} gives
\begin{equation}\label{pc-beta-Lr}
 \|\beta\|_{L^r(d\nu)}\le\frac1{4(1-\kappa)}\|q\|_{L^r(d\nu)}=\frac{\Lambda+1}4\|q\|_{L^r(d\nu)} .
\end{equation}

Now let
\begin{equation}\label{pc-BA}
 F(x+iy)=U+iV,\qquad U(x,y)=\frac1{2y}\int_{x-y}^{x+y}g,\qquad V(x,y)=\frac1y\Bigl(\int_x^{x+y}g-\int_{x-y}^xg\Bigr).
\end{equation}
This is the Beurling--Ahlfors extension \cite{BeurlingAhlfors}, \cite[Chapter~IV]{AhlforsLectures}, with the imaginary part normalised by a factor $2$ (which changes $F$ by a real affine map and preserves quasiconformality); it is a quasiconformal homeomorphism of $\IH$ for quasisymmetric $g$, and $U,V$ are $C^1$ for continuous $g$. It commutes with real affine maps of the domain and of the range, and so do $\beta$ and $|\mu_F|$. To estimate $\mu_F$ at a point, normalise so that the point is $(0,1)$ and $g(-1)=0$, $g(1)=1$; write $g(t)=\frac{t+1}2+e(t)$ on $[-1,1]$, so $e(\pm1)=0$ and $\|e\|_\infty=\beta(0,1)=:\beta$. Differentiating \eqref{pc-BA} at $(0,1)$,
\[
 U_x=\tfrac12,\qquad U_y=\tfrac12-U=-\tfrac12\textstyle\int_{-1}^1e,\qquad V_x=1-2g(0)=-2e(0),
\]
\[
 V_y=1-V=\tfrac12-\Bigl(\int_0^1e-\int_{-1}^0e\Bigr).
\]
Since $g$ is increasing, $0\le V\le1$, so $V_y\ge0$ and $|F_z|=\frac12|(U_x+V_y)+i(V_x-U_y)|\ge\frac12(U_x+V_y)\ge\frac14$. Also $|U_y|\le\beta$, $|V_x|\le2\beta$ and $|V_y-\frac12|\le2\beta$, so $|F_{\bar z}|=\frac12|(U_x-V_y)+i(V_x+U_y)|\le\frac52\beta$. Hence $|\mu_F|\le10\beta$ pointwise, and \eqref{pc-beta-Lr} gives \eqref{pc-extension-eq}. No derivative or absolute continuity of $g$ was used.
\end{proof}

\begin{remark}[Bishop's theorem]\label{pc-bishop}
At $p=2$, Theorem~\ref{pc-characterisation} is essentially Theorem~1.6 of Bishop \cite{Bish1}, which characterises the Weil--Petersson class by $\sum_I\mathrm{qs}(\varphi,I)^2<\infty$ over dyadic decompositions of the circle, where $\mathrm{qs}(\varphi,I)=|\varphi(m(I))-m(\varphi(I))|/|\varphi(I)|$ is the normalised midpoint defect, $m(I)$ the midpoint of $I$. In the notation above $\mathrm{qs}(\varphi,I)=\frac12|\tanh(q/2)|\asymp|q|$ at the midpoint and scale of $I$, so his dyadic sum is a discretisation of $\int\!\!\int|q|^2\,dx\,dt/t^2$, and \eqref{pc-extension-eq} at $r=2$ is the continuous form of his sufficiency statement. Our proof of Lemma~\ref{pc-extension} adapts, in part, his argument for that direction: the geometric decay of the image lengths of nested dyadic intervals is used in the same way, with the Beurling--Ahlfors extension and the affine-interpolation defect in place of his piecewise affine extension on a triangulation. The necessity direction, (i)$\Rightarrow$(ii)$\Rightarrow$(iii), is where the methods differ: here it is the capacity estimate of Theorem~\ref{pc-starthm} together with the identities \eqref{pc-four-point} and \eqref{pc-doubling-id}, and it gives the same conclusion for every $p>1$. For the necessity of a cross-ratio condition at $p=2$ see also Wu \cite{Wu}.
\end{remark}

\subsection{Proofs of Theorems~\ref{pc-characterisation}, \ref{pc-main} and \ref{thm-E}}\label{pc-proofs}
\begin{proof}[Proof of Theorem~\ref{pc-characterisation}]
(i)$\Rightarrow$(ii). Let $F:\IH\to\IH$ be a quasiconformal extension of $g$ with $\|\mu_F\|_\infty\le k<1$ and $\int_\IH|\mu_F|^p\,d\nu<\infty$. Then $F$ has finite distortion and $\I_p(F)\le(1-k^2)^{-p_*/2}\int|\mu_F|^p\,dA_\eta<\infty$ (Lemma~\ref{easyinclusion}), so Lemma~\ref{pc-modulus} applies and Theorem~\ref{pc-starthm} gives $A_p(g)\le\Theta_p(E_p(F))<\infty$. (ii)$\Rightarrow$(iii) is Corollary~\ref{pc-qp}, and (iii)$\Rightarrow$(i) is Lemma~\ref{pc-extension} with $r=p$.
\end{proof}

\begin{proof}[Proof of Theorem~\ref{pc-main}]
By Propositions~\ref{factorisation} and \ref{finite-energy-boundary-moduli} the boundary map $\hat g=G|_\IS$ is a quasisymmetric homeomorphism. Composing $G$ with a rotation of the target we may assume $\hat g(1)=1$. Let $T(z)=(z-i)/(z+i)$ and $G_\IH=T^{-1}\circ G\circ T:\IH\to\IH$; it is a finite-distortion homeomorphism with $\I_p(G_\IH)=\I_p(G)$ whose boundary map $g=T^{-1}\circ\hat g\circ T$ fixes $\infty$ and is $\Lambda$-quasisymmetric for some $\Lambda$. Theorem~\ref{pc-starthm} gives $A_p(g)\le\Theta_p(\I_p(G))<\infty$, and Theorem~\ref{pc-characterisation} gives $g\in T^p$, i.e.\ $\hat g\in T^p$. Thus $\mathcal T_p\subset T^p$; the reverse inclusion is Lemma~\ref{easyinclusion}. Finally, Wei--Matsuzaki \cite[Theorem~5.5]{WeiMatsuzakiIntegrable} identify $T^p$, for quasisymmetric maps and every $p>1$, with the class of absolutely continuous homeomorphisms $e^{i\theta}\mapsto e^{i\varphi(\theta)}$ with $\log\varphi'\in B^{1/p}_{p,p}(\IS)$.
\end{proof}

\begin{proof}[Proof of Theorem~\ref{thm-E}]
The identification is Theorem~\ref{pc-main}, the characterisation is Theorem~\ref{pc-characterisation}, and the explicit extension is that of Lemma~\ref{pc-extension}: for $G$ normalised as in the proof of Theorem~\ref{pc-main}, the Beurling--Ahlfors extension $F$ of the conjugate boundary map satisfies
\[
 \int_\IH|\mu_F|^p\,d\nu\le\Bigl(\frac52(\Lambda+1)\Bigr)^pC_{p,\Lambda}\,\Theta_p(\I_p(G)),
\]
where, after a three-point normalisation, $\Lambda$ is bounded in terms of $p$ and $\I_p(G)$ by Proposition~\ref{finite-energy-boundary-moduli}.
\end{proof}

\begin{remark}\label{pc-remarks}
(a) Theorem~\ref{pc-main} gives a second proof of $\mathcal T_2=T^2$ (Theorem~\ref{quadratic-boundary-class-equality}). The proof in Section~\ref{wp-characterisation} produces the harmonic representative and is tied to the variational problem; the proof here produces the Beurling--Ahlfors representative and uses no equation. For $p\ne2$ the question was raised in Section~\ref{boundary-classes}; its answer is used in Part~II \cite{PartII}, where every statement made for $T^p$ data consequently holds for $\mathcal T_p$ data, and in particular a finite-distortion barrier of finite energy can always be replaced by a quasiconformal one.

(b) \emph{$p=1$.} Lemma~\ref{pc-modulus}, Proposition~\ref{pc-perQ}, Theorem~\ref{pc-starthm} and Lemma~\ref{pc-telescope} hold for $p=1$: $A_1(g)\le16M_0C_W\,\I_1(G)$ and $q\in L^2(d\nu)$, so Lemma~\ref{pc-extension} with $r=2$ gives $\mathcal T_1\subset T^2$. Lemma~\ref{pc-doubling} fails at $p=1$ ($\sum2^{-(k+1)}2^{k}$ diverges), and whether $\mathcal T_1=T^1$ is not settled here; the endpoint space at $p=1$ is in any case different \cite{Matsuzaki,WeiMatsuzakiIntegrable}.

(c) The Besov description of the classes enters only through the last sentence of the proof of Theorem~\ref{pc-main}. A direct real-variable derivation of $\log\varphi'\in B^{1/p}_{p,p}$ from Theorem~\ref{pc-starthm} is also possible, by linearising $c_{x,t}$ as a third difference of $\int\log\varphi'$ and using the characterisation of Besov spaces by differences; it requires $\log\varphi'\in B^{1/(2p)}_{2p,2p}$ a priori, which Theorem~\ref{pc-main} at exponent $2p$ supplies. We do not include it.
\end{remark}
\section{The canonical energy of Weil--Petersson data}\label{canonical-energy}
We return to the quadratic problem of Section~\ref{quadratic-theory} and prove Theorem~\ref{thm-D}, which identifies the minimum energy as a M\"obius-invariant function of Weil--Petersson data.
\begin{proof}[Proof of Theorem~\ref{thm-D}]
Let $\beta$ be Weil--Petersson and let $h_\beta$ be the minimiser of Theorem~\ref{thm-A}. If $F$ is any finite-distortion homeomorphic extension of $\beta^{-1}$ with $\I_2(F)<\infty$,
Lemma~\ref{finite-distortion-inverse-calculus} gives
$F^{-1}\in\mathcal M_\beta$ and
$\E_2(F^{-1})\leq\I_2(F)$. Hence
$\mathcal S(\beta)\leq\I_2(F)$, with equality for the smooth
quasiconformal map $h_\beta^{-1}$ by the change of variables. Equality for another $F$ forces
$F^{-1}=h_\beta$ by uniqueness. Energy invariance under disk
automorphisms, \eqref{inverse-invariance}, proves the claimed boundary invariance. Zero energy
forces $\bar\partial h_\beta=0$, and hence a disk automorphism.

Write $\mathcal H_\beta=\eta(h_\beta)|(h_\beta)_w|^2/\eta$ and $\mathcal L_\beta=\eta(h_\beta)|(h_\beta)_{\bar w}|^2/\eta$, so that $B_\beta=\sup\mathcal H_\beta$ and $1\leq\mathcal H_\beta\leq B_\beta$ by \eqref{quadratic-energy-density-bounds}. The identity
$|\Phi_\beta|^2/\eta^2=\mathcal H_\beta\mathcal L_\beta$,
together with $\int_\D\mathcal L_\beta\,dA_\eta=\mathcal S(\beta)$, gives the two-sided estimate by integration. Finally $B_\beta\leq B_0$ with $B_0$ the constant of Proposition~\ref{quadratic-energy-qc-bound} evaluated at $E_0=\mathcal S(\beta)$.
\end{proof}

The pointwise identity $\eta(h_\beta)|(h_\beta)_{\bar w}|^2=|\mu_{h_\beta}|\,|\Phi_\beta|$ also gives
\begin{equation}\label{canonical-hopf-lone}
 \mathcal S(\beta)=\int_\D|\mu_{h_\beta}|\,|\Phi_\beta|\,dA
                   \leq\|\Phi_\beta\|_{L^1(\D)} .
\end{equation}
The weighted square norm is thus quantitatively equivalent to the
minimum energy, with constants controlled by that energy.
For a smoothly varying family through the identity with harmonic
Beltrami tangent $\dot\mu=\eta^{-1}\overline q$, $q$ holomorphic, the formal quadratic
term of $\mathcal S$ is
$\int_\D|q|^2\eta^{-1}\,dA$, the Weil--Petersson norm in this
normalisation. This observation does not assert a global K\"ahler
potential or an identity with Liouville or Loewner energy.

\subsection{Two spaces of holomorphic differentials}\label{two-spaces}
The natural Hilbert norm on a holomorphic quadratic coefficient for quadratic Weil--Petersson geometry is
\[
 \|q\|_{A^2_\eta}^2=\int_\D|q|^2\eta^{-1}\area
 =\int_\D|q|^2(1-|z|^2)^2\area .
\]
This should be kept distinct from $\int_\D|q|\area$, which enters \eqref{canonical-hopf-lone} and Section~\ref{boundary-criterion}. For example, with analytic branches on the disk,
\[
 q(z)=\frac{1}{(1-z)^2\,[\log(e/(1-z))]^{3/4}}
\]
belongs to $A^2_\eta$ but not to $L^1(dA)$: near $z=1$ the two radial tests have logarithmic powers $3/2$ and $3/4$, respectively. The distinction matters, since an $L^1$ Hopf hypothesis forces the stronger boundary conclusion of Corollary~\ref{harmonicboundary}.

\subsection{Minimal graphs and the minimal Lagrangian extension}
\begin{proposition}[Minimal-graph area and distortion excess]\label{wolf-area-excess}
Let $h=f^{-1}$ be a hyperbolic harmonic disk diffeomorphism with
Hopf differential $\Phi=\eta(h)h_w\overline{h_{\bar w}}$.
On a univalent natural-coordinate patch $\zeta'^2=\Phi$, the
minimal graph with height $s=2\operatorname{Im}(\zeta\circ f)$ has
area density $K_f\,dA_\eta$ over the forward disk. Its globally
defined excess density satisfies the extended nonnegative identity
\begin{equation}\label{wolf-area-identity}
 \mathcal A_{\mathrm{ex}}(f):=\int_\D(K_f-1)\,dA_\eta
       =2\E_2(h)+2\|\Phi\|_{L^1(\D,dA)}.
\end{equation}
In particular this excess is finite if and only if
$\Phi\in L^1(\D)$, and then the boundary values are $C^1$
diffeomorphisms.
\end{proposition}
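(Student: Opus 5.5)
The plan is to reduce both assertions to pointwise identities on the source disk of $h$, and then integrate using the inverse change of variables, which is valid because $h$ is a diffeomorphism.

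\emph{Step 1: the area density.} Write $a=h_w$, $b=h_{\bar w}$ and $J_h=|a|^2-|b|^2>0$. Then $|\Phi|=\eta(h)|a||b|$. On a univalent patch with $\zeta'^2=\Phi$ we have $|\zeta'|^2=\eta(h)|a||b|$. Viewed in the $w$-coordinate, the graph carries the metric
\[
h^*(\eta|dz|^2)+ds^2=\eta(h)\,|a\,dw+b\,d\bar w|^2+\bigl(2\operatorname{Im}(\zeta'\,dw)\bigr)^2 .
\]
I would compute its determinant directly, using the choice of branch of $\zeta'$ aligned with $\arg(a\bar b)$. The aim is the classical fact (Wolf's construction) that this metric is conformal and equal to $\eta(h)(|a|+|b|)^2|dw|^2$. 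Its area density is then $\eta(h)(|a|+|b|)^2=\eta(h)J_hK_h$. Since $K_h\circ f=K_f$, pushing forward by $f=h^{-1}$ gives the density $K_f\,dA_\eta$ over the forward disk.

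I expect this determinant computation, in particular the sign and branch of the height, to be the main obstacle. Everything after it is formal. The density $\eta(h)(|a|+|b|)^2-\eta(h)J_h$ is globally defined, independently of the patch, so the excess density is well defined even across zeros of $\Phi$.

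\emph{Step 2: the pointwise excess identity.} Put $m=|b|/|a|=|\mu_h|=|\mu_f\circ h|$. From $K-1=2m/(1-m)$ we get
\[
K-1=\frac{2m^2}{1-m^2}+\frac{2m}{1-m^2}.
\]
Multiplying by $J_h=|a|^2(1-m^2)$ gives
\[
(K_h-1)\,\eta(h)J_h=2\eta(h)|b|^2+2\eta(h)|a||b|=2\eta(h)|h_{\bar w}|^2+2|\Phi| .
\]
All terms are nonnegative.

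\emph{Step 3: integration and the final claims.} The change of variables $z=h(w)$ holds for the diffeomorphism $h$, and Tonelli applies to the nonnegative integrands. Hence
\[
\int_\D(K_f-1)\,dA_\eta=\int_\D(K_h-1)\eta(h)J_h\,dA=2\E_2(h)+2\|\Phi\|_{L^1(\D,dA)},
\]
as an identity in $[0,\infty]$. For the ``if and only if'', first note that $K_f-1\geq\K_f-1=2|\mu_f|^2/(1-|\mu_f|^2)$, so finite excess already gives $\E_2(h)\leq\tfrac12\mathcal A_{\mathrm{ex}}(f)<\infty$. Conversely, if $\Phi\in L^1$, then $\E_2(h)=\int\eta(h)|b|^2\leq\int\eta(h)|a||b|=\|\Phi\|_{L^1}$ because $|b|\leq|a|$. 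So both sides are finite together. Finally, when $\Phi\in L^1(\D)$, Corollary~\ref{harmonicboundary} shows that the boundary map of $h$ is a $C^1$ diffeomorphism, and therefore so is the boundary map of $f$.
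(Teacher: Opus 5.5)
Your proof is correct and follows the paper's argument: the same pointwise identity $(K_f\circ h-1)\eta(h)J_h=2\eta(h)|h_{\bar w}|^2+2|\Phi|$, integration by change of variables through the diffeomorphism $h$ as an identity in $[0,\infty]$, the bound $\E_2(h)\leq\|\Phi\|_{L^1}$, and Corollary~\ref{harmonicboundary}. The only difference is where the area density is computed: the paper works on the forward disk with the multiplier $\sqrt{1+4t^2}$, $t=\sqrt r/(1-r)$, whereas you pull the graph metric back to the source of $h$, and the determinant step you flagged as the main obstacle is immediate for either branch of $\zeta'$, since $\bigl(2\operatorname{Im}(\zeta'\,dw)\bigr)^2=-\Phi\,dw^2+2|\Phi|\,|dw|^2-\bar\Phi\,d\bar w^2$ cancels exactly the $dw^2$ and $d\bar w^2$ terms of $h^*(\eta|dz|^2)=\Phi\,dw^2+\eta(h)(|a|^2+|b|^2)|dw|^2+\bar\Phi\,d\bar w^2$.
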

\begin{proof}
At $p=2$, $t=|\nabla\operatorname{Im}(\zeta\circ f)|/\sqrt\eta
=\sqrt r/(1-r)$, where $r=|\mu_f|$.
Hence the graph area multiplier is
$\sqrt{1+4t^2}=(1+r)/(1-r)=K_f$.
Changing variables by the diffeomorphism $h$ gives pointwise
\[
 (K_f\circ h-1)\eta(h)J_h
 =2\eta(h)|h_{\bar w}|^2+2\eta(h)|h_w||h_{\bar w}|.
\]
Integration proves \eqref{wolf-area-identity}, including infinite
values. Since $\E_2(h)\leq\|\Phi\|_1$ by the pointwise inequality
behind \eqref{canonical-hopf-lone}, finiteness is equivalent to
$\Phi\in L^1$. Corollary~\ref{harmonicboundary} gives the boundary
regularity.
\end{proof}

The local graph interpretation is Wolf's correspondence
\cite{WolfMinimal}. A global single-valued height is not asserted:
odd-order zeros of $\Phi$ and periods of $\sqrt\Phi$ can obstruct
it. The area density and \eqref{wolf-area-identity} remain globally
well defined. The naive area excess therefore imposes the stronger
$L^1$ Hopf condition, whereas $\mathcal S$ uses the weighted
$A^2_\eta$ norm; the example in Section~\ref{two-spaces} shows that the two are different. A further renormalisation would be needed to obtain
a graph-area quantity finite on the whole Weil--Petersson class.
\begin{corollary}[Comparison with the minimal Lagrangian extension]
\label{minimal-lagrangian-comparator}
Let $\beta$ be Weil--Petersson and let $\Psi_\beta$ be its
quasiconformal minimal Lagrangian extension. Then
\begin{equation}\label{minimal-lagrangian-energy-comparison}
 \mathcal S(\beta)\leq\I_2(\Psi_\beta^{-1})<\infty.
\end{equation}
\end{corollary}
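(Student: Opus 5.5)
The inequality $\mathcal S(\beta)\leq\I_2(\Psi_\beta^{-1})$ is just the extremal characterisation of $\mathcal S$ in Theorem~\ref{thm-D}, applied to the particular competitor $F=\Psi_\beta^{-1}$. So the work is to show that this is an admissible competitor with finite energy.

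First I fix conventions. In Theorem~\ref{thm-D} the last minimum runs over finite-distortion homeomorphic extensions of $\beta^{-1}$ with finite $\I_2$. I read $\Psi_\beta$ as the minimal Lagrangian extension of $\beta$ (Labourie, Bonsante--Schlenker), so $\Psi_\beta^{-1}$ extends $\beta^{-1}$. This map is a smooth quasiconformal diffeomorphism of $\D$, so it is certainly a finite-distortion homeomorphism. Once finiteness of $\I_2(\Psi_\beta^{-1})$ is known, Theorem~\ref{thm-D}, or directly Lemma~\ref{finite-distortion-inverse-calculus} giving $\Psi_\beta\in\mathcal M_\beta$ with $\E_2(\Psi_\beta)\leq\I_2(\Psi_\beta^{-1})$, yields the left inequality by minimality of $h_\beta$.

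The main step is finiteness. Since $\Psi_\beta$ is $K$-quasiconformal, the Beltrami coefficient of its inverse satisfies $|\mu_{\Psi_\beta^{-1}}|\leq k<1$. Hence
\[
 \I_2(\Psi_\beta^{-1})\leq\frac1{1-k^2}\int_\D|\mu_{\Psi_\beta^{-1}}|^2\,dA_\eta
 =\frac1{1-k^2}\int_\D|\mu_{\Psi_\beta}|^2\,dA_\eta,
\]
where the last equality uses the change of variables $|\mu_{F^{-1}}\circ F|=|\mu_F|$ together with the fact that $J_{\Psi_\beta}\eta(\Psi_\beta)$ and $\eta$ are comparable up to bounded factors in the quasiconformal setting. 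That comparability is the delicate point, so I would avoid it by arguing with $\E_2(\Psi_\beta)=\int\eta(\Psi_\beta)|(\Psi_\beta)_{\bar w}|^2$ directly. For this I need $\mu_{\Psi_\beta}\in L^2(dA_\eta)$ together with a bounded hyperbolic energy density of $\Psi_\beta$.

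The bounded density I would get from the uniform quasi-isometry property of minimal Lagrangian maps. The $L^2(dA_\eta)$ membership of the Beltrami coefficient of the minimal Lagrangian extension of Weil--Petersson data I would take from the literature on the Weil--Petersson class. This is the obstacle: one must cite or prove that the minimal Lagrangian representative lies in $T^2$ as a representative, not merely that its boundary class does. I would first try to express $\Psi_\beta=h_2\circ h_1^{-1}$ through the pair of harmonic maps from the minimal surface in $\mathbb H^2\times\mathbb H^2$ with opposite Hopf differentials. I would then bound $|\mu_{\Psi_\beta}|$ pointwise by a multiple of the Hopf-differential ratio $|\Phi|/\eta$, and use the square-integrability of that ratio for Weil--Petersson data, which is analogous to Theorem~\ref{thm-D}. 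If that route is too heavy, I would instead invoke the known result that the minimal Lagrangian extension of a Weil--Petersson homeomorphism has $\mu\in L^2(dA_\eta)$, and conclude as above.
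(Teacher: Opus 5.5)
Your overall architecture matches the paper's. $\Psi_\beta^{-1}$ is a competitor in the extremal characterisation of Theorem~\ref{thm-D}. Its finiteness comes from quasiconformality plus hyperbolic square-integrability of a Beltrami coefficient, and that integrability is imported from the literature (the paper cites the characterisation in \cite{AdS}). Where you differ is the passage between $\Psi_\beta$ and its inverse. The paper never changes variables. The inverse of a minimal Lagrangian map is minimal Lagrangian, since flipping the two factors of $\mathbb H^2\times\mathbb H^2$ preserves both conditions, so $\Psi_\beta^{-1}$ is the minimal Lagrangian extension of $\beta^{-1}$. As $T^2$ is closed under inversion, the cited result applies to $\beta^{-1}$ directly and gives $\mu_{\Psi_\beta^{-1}}\in L^2(dA_\eta)$. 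Hence $\I_2(\Psi_\beta^{-1})\le(1-k^2)^{-1}\int_\D|\mu_{\Psi_\beta^{-1}}|^2\,dA_\eta<\infty$.

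Your equality $\int|\mu_{\Psi_\beta^{-1}}|^2\,dA_\eta=\int|\mu_{\Psi_\beta}|^2\,dA_\eta$ is justified by a claim that is false for general quasiconformal maps. The ratio $\eta(F)J_F/\eta$ need not be bounded above and below; the radial stretch $z|z|^{\alpha-1}$ of Section~\ref{conventions} already fails at the origin. The equality is nevertheless exactly true here, for a reason you missed. The Lagrangian condition says precisely that $\Psi_\beta$ preserves hyperbolic area, $\eta(\Psi_\beta)J_{\Psi_\beta}=\eta$, so the change of variables is an identity of measures. The same identity gives $\mathcal H_{\Psi_\beta}=\eta(\Psi_\beta)|(\Psi_\beta)_w|^2/\eta=(1-|\mu_{\Psi_\beta}|^2)^{-1}\le(1-k^2)^{-1}$. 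This is the bounded density your $\E_2(\Psi_\beta)$ workaround needs; a coarse quasi-isometry bound would not by itself control derivatives pointwise. With either repair (the paper's inversion observation or the area identity) your argument is complete, and it rests on the same external input as the paper's.

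Drop the harmonic-map route through $h_2\circ h_1^{-1}$. The maps $h_1,h_2$ are harmonic maps from the minimal surface and their boundary values are not $\beta$, so Theorem~\ref{thm-D} does not apply to them. Showing that those boundary values are Weil--Petersson is essentially the content of the result you would be citing anyway.
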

\begin{proof}
The inverse of the minimal Lagrangian extension is the minimal
Lagrangian extension of $\beta^{-1}$. Since the Weil--Petersson
class is closed under inversion, the integrability characterisation
in \cite{AdS} makes its Beltrami coefficient hyperbolically square
integrable. Quasiconformality then makes $\I_2$ finite.
Apply the minimisation statement in Theorem~\ref{thm-D}.
\end{proof}

The inequality compares two canonical extensions with the same
boundary data. Comparison with their renormalised areas, or with
Loewner energy, requires a further quantitative theorem; no such
identification is used here.
\section{Connections and questions}\label{connections}
The following connections position the problem and suggest further questions. None of them is used in the proofs above.

\paragraph{Besov traces.}
The logarithmic derivative of the boundary map is a more precise invariant than $C^1$ regularity: Shen's theorem \cite{Shen} characterises Weil--Petersson homeomorphisms as the absolutely continuous circle homeomorphisms with $\log\varphi'\in H^{1/2}$, and Matsuzaki \cite{Matsuzaki} characterises the integrable Teichm\"uller spaces by critical Besov data, with a modified endpoint space at $p=1$, relating these structures to Cauchy and Hilbert transforms. Theorems~\ref{thm-C} and \ref{thm-E} add a finite-distortion characterisation for every $p>1$: an arbitrary finite-distortion barrier with $\I_p(F)<\infty$ forces the critical Besov seminorm of $\log\varphi' $ to be finite. The route is through the quasisymmetric ratio rather than through the derivative, and the derivative-free characterisation of Theorem~\ref{pc-characterisation} is a $p$-analogue of Bishop's dyadic criterion \cite{Bish1}.

\paragraph{Sigma models and energy subtraction.}
The harmonic-map energy is the classical action of a nonlinear sigma model, and its stress tensor in dimension two is represented by a holomorphic quadratic differential; see \cite{Jost}. The Hopf coefficient here fits this framework exactly. Pointwise, if $e=\eta(H)(|H_w|^2+|H_{\bar w}|^2)$ and $a=\eta(H)J_H$, then $e-a=2\eta(H)|H_{\bar w}|^2$: the inverse functional measures the excess of Dirichlet-energy density over area density. It is nonnegative and vanishes on conformal maps. This is the right way to think of the finite quantity on an infinite-area disk, while avoiding the undefined expression $\infty-\infty$.

\paragraph{Loewner energy, random curves and determinants.}
Wang identifies finite Loewner-energy loops with Weil--Petersson quasicircles and relates their energy to universal Liouville action and regularised determinants \cite{Wang}. Peltola--Wang connect chordal and multichordal Loewner energies to small-noise SLE large deviations and semiclassical conformal-field-theory equations \cite{PW}. A natural question is to compare the canonical energy $\mathcal S$ of Theorem~\ref{thm-D} with Loewner energy as a function of conformal welding. A first task is to compare second variations at the identity, keeping track of both welding sides and the inversion convention. Equality of finiteness classes is not equality of numerical energies, and even a two-sided comparison of energies would not identify their large-deviation rate functions.

\paragraph{Anti-de Sitter geometry and renormalised volume.}
Diaf--Moriani--Sma\"i--Smith--Trebeschi \cite{AdS} characterise Weil--Petersson homeomorphisms through maximal spacelike surfaces in anti-de Sitter three-space with finite renormalised area, and through the minimal Lagrangian extension with square-integrable Beltrami coefficient; Corollary~\ref{minimal-lagrangian-comparator} compares the latter with the harmonic extension. Bridgeman--Bromberg--Vargas Pallete--Wang relate universal Liouville action, under their regularity hypotheses, to a renormalised volume bounded by Epstein--Poincar\'e surfaces \cite{BBVW}, and Bishop's geometric characterisations connect the Weil--Petersson condition to curvature across scales and to minimal surfaces \cite{Bish2}. Comparison of $\mathcal S$ with renormalised areas or volumes requires a further quantitative theorem.

\paragraph{Operator theory.}
Takhtajan--Teo characterise Weil--Petersson points by Hilbert--Schmidt Grunsky operators and relate the corresponding Fredholm determinant to Liouville action \cite{TT}; Fan--Viklund--Wang study Loewner energy directly from a welding operator and its determinants \cite{FVW}. Operator or Besov criteria offer independent ways to test a proposed boundary-class identification such as $\mathcal T_p=T^p$, and may provide numerical diagnostics for examples.

\appendix
\section{Carleson-type hypotheses on the distortion}\label{carleson-appendix}
This appendix is off the main line of the paper but close to it. Section~\ref{p-classes} controls the boundary map of a finite-distortion homeomorphism by the energy $\I_p$; reading its proof again, one sees that the energy enters only through the quantity $Z_Q=\int(K_G-1)\rho_Q^2\,dA$ of Corollary~\ref{pc-two-sided}, which is a M\"obius-invariant average of the distortion excess $K_G-1$ at the hyperbolic position of the quadrilateral $Q$. We record what follows if one assumes bounds on these averages directly. The result is a scale of hypotheses, indexed by $p\in(1,\infty]$, whose endpoint $p=\infty$ is a Carleson-measure condition on $(K-1)\,dA$ and gives quasisymmetric boundary values, and whose finite exponents give boundary values in $T^p$. The conditions are strictly weaker than finite energy. Nothing here is used elsewhere in the paper.

\subsection{The invariant average of the distortion excess}
Let $F:\D\to\D$ be a homeomorphism of finite distortion with $K=K_F$, and for $w\in\D$ let
\begin{equation}\label{carl-Phi}
 \Phi_F(w)=\int_\D\bigl(K(z)-1\bigr)\frac{(1-|w|^2)^2}{|1-\bar wz|^4}\,dA(z)
 =\int_\D\frac{K(z)-1}{\cosh^4d(z,w)}\,dA_\eta(z),
\end{equation}
where $d$ is the hyperbolic distance of curvature $-4$; the two expressions agree because $1-|A^{-1}(z)|^2=\cosh^{-2}d(z,w)$ for a M\"obius self-map $A$ of $\D$ with $A(0)=w$, and $(1-|w|^2)^2|1-\bar wz|^{-4}=(1-|A^{-1}(z)|^2)^2\eta(z)$. Equivalently, by the change of variables $z=A(\zeta)$,
\begin{equation}\label{carl-Phi-moebius}
 \Phi_F(w)=\int_\D\bigl(K(A(\zeta))-1\bigr)\,dA(\zeta)=\|K\circ A-1\|_{L^1(\D)}\qquad(A(0)=w),
\end{equation}
so $\Phi_F(w)$ is the Euclidean $L^1$ norm of the distortion excess seen from $w$. In particular $\Phi_F(0)=\int_\D(K-1)\,dA$, and $\Phi_F\circ A=\Phi_{F\circ A}$ for every M\"obius $A$.

\begin{lemma}\label{carl-properties}
\begin{enumerate}[label=\textup{(\alph*)}]
\item $\Phi_F(w')\le e^{4d(w,w')}\Phi_F(w)$ for all $w,w'\in\D$.
\item $\sup_\D\Phi_F<\infty$ if and only if $(K-1)\,dA$ is a Carleson measure for the Bergman space, that is, $\int_{S(I)}(K-1)\,dA\le C|I|^2$ for every Carleson box $S(I)=\{re^{i\theta}:1-|I|\le r<1,\ e^{i\theta}\in I\}$ over an arc $I\subset\IS$; the two constants are comparable with absolute constants.
\item If $\Phi_F\in L^p(\D,dA_\eta)$ for some $1\le p<\infty$, then $\sup_\D\Phi_F\le C_p\|\Phi_F\|_{L^p(dA_\eta)}$ and $\Phi_F(w)\to0$ as $|w|\to1$.
\item If $\I_p(F)<\infty$ for some $1\le p<\infty$, then $\Phi_F\in L^p(\D,dA_\eta)$, with $\int_\D\Phi_F^p\,dA_\eta\le C_p\bigl(1+E_p(F)^{p-1}\bigr)E_p(F)$, $E_p(F)\le\I_p(F)$ as in \eqref{pc-ep}.
\end{enumerate}
\end{lemma}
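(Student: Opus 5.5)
The plan is to treat the kernel $k(z,w)=\cosh^{-4}d(z,w)$ as the basic object. Two elementary facts about it drive everything. First, its total hyperbolic mass is a constant: by \eqref{carl-Phi-moebius} with $K-1$ replaced by $1$, one has $\int_\D k(z,w)\,dA_\eta(z)=\int_\D dA=\pi$ for every $w$. Second, it is symmetric in $z,w$.

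For (a), the triangle inequality gives $d(z,w)\le d(z,w')+d(w,w')$. Together with $\cosh(x+y)\le e^{y}\cosh x$ for $y\ge0$, this yields $\cosh d(z,w)\le e^{d(w,w')}\cosh d(z,w')$. Hence $k(z,w')\le e^{4d(w,w')}k(z,w)$ pointwise, and integrating against $(K-1)\,dA_\eta$ gives (a).

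For (c), apply (a) with $w'$ ranging over the hyperbolic ball $B(w,1)$. This gives $\Phi_F(w)^p\le e^{4p}\Phi_F(w')^p$ on that ball. Averaging over the ball, which has a fixed hyperbolic area $c_1$, gives
\[
\Phi_F(w)^p\le \frac{e^{4p}}{c_1}\int_{B(w,1)}\Phi_F^p\,dA_\eta .
\]
This bounds $\sup\Phi_F$ by $C_p\|\Phi_F\|_{L^p(dA_\eta)}$. Moreover, as $|w|\to1$ the balls $B(w,1)$ leave every compact set, so the right side tends to zero by absolute continuity of the integral.

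For (d), split $K-1$ using Lemma~\ref{pc-pointwise}:
\[
K-1\le 4m\,\mathbf 1_{\{m\le1/2\}}+2^{p+2}e_p(m),\qquad m=|\mu_F|,
\]
and write $\Phi_F\le\Phi_1+\Phi_2$ accordingly. For the $e_p$ part, use $k\le1$ to get $\sup\Phi_2\le 2^{p+2}E_p(F)$. Tonelli and the symmetric unit-mass property give $\int\Phi_2\,dA_\eta=2^{p+2}\pi E_p(F)$. Hence
\[
\int\Phi_2^p\,dA_\eta\le(\sup\Phi_2)^{p-1}\int\Phi_2\,dA_\eta\le C_pE_p^{\,p}.
\]
For the small-distortion part, apply H\"older's inequality with respect to the finite measure $k(\cdot,w)\,dA_\eta$ of mass $\pi$:
\[
\Phi_1(w)^p\le 4^p\pi^{p-1}\int m^p\,k(z,w)\,dA_\eta(z).
\]
Integrating in $w$ and using Tonelli gives $\int\Phi_1^p\,dA_\eta\le4^p\pi^p\int m^p\,dA_\eta\le C_pE_p(F)$, since $m^p\le e_p(m)$. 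Adding the two parts gives the stated bound $C_p(1+E_p^{p-1})E_p$, and $E_p\le\I_p$ by \eqref{pc-ep}.

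Part (b) is the standard Bergman--Carleson equivalence, and I expect its bookkeeping to be the main, though routine, obstacle. For necessity, take an arc $I$ with $|I|$ small and let $w$ be the point at radius $1-|I|$ over the centre of $I$. On $S(I)$ one has $|1-\bar wz|\le C|I|$ and $1-|w|^2\ge|I|$, so $k\ge c|I|^{-2}\eta^{-1}$ there. Therefore $\int_{S(I)}(K-1)\,dA\le C|I|^2\Phi_F(w)$. Arcs of length bounded below are handled by this estimate at a point $w$ near $0$. For sufficiency, fix $w$ and let $I$ be the arc of length $1-|w|$ centred at $w/|w|$; for $|w|\le1/2$ bound the kernel by a constant and use a single box. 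Decompose $\D$ into $S(I)$ and the shells $S(2^jI)\setminus S(2^{j-1}I)$, with arcs truncated at full length. On the $j$-th shell,
\[
\frac{(1-|w|^2)^2}{|1-\bar wz|^4}\le C\,2^{-4j}|I|^{-2}.
\]
The Carleson bound gives mass at most $C(2^j|I|)^2$ on that shell. Summing the geometric series $\sum_j 2^{-2j}$ bounds $\Phi_F(w)$ by an absolute constant times the Carleson constant.
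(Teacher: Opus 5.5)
Your proof is correct and follows the paper's argument in all four parts. For (a) you use the $\cosh$ triangle inequality; for (c), the sub-mean-value bound over unit hyperbolic balls obtained from (a); for (d), Lemma~\ref{pc-pointwise} with H\"older against the mass-$\pi$ kernel measure, Tonelli and symmetry; and for (b), the standard reproducing-kernel versus Carleson-box comparison, which the paper cites from Zhu and Hastings and only sketches. The differences are cosmetic: you bound the two pieces of $\Phi_F$ separately in $L^p$ instead of through the single pointwise bound $\Phi_F\le4\pi^{1-1/p}\Psi^{1/p}+2^{p+2}\Psi$ with $\Psi(w)=\int_\D e_p(|\mu_F|)\cosh^{-4}d(z,w)\,dA_\eta(z)$, and in (c) the decay at the boundary really comes from the tail $\int_{|z|>R}\Phi_F^p\,dA_\eta\to0$ rather than from absolute continuity in the strict sense, since the balls $B(w,1)$ have fixed hyperbolic area.
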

\begin{proof}
(a) The triangle inequality gives $\cosh d(z,w)\le e^{d(w,w')}\cosh d(z,w')$, and the kernel in \eqref{carl-Phi} is $\cosh^{-4}$.

(b) This is the standard characterisation of Carleson measures for Bergman spaces by the normalised reproducing kernel; see \cite[Theorem~7.4]{Zhu} or \cite{Hastings}. (The easy direction: for $w=(1-|I|)e^{i\theta_I}$ the kernel is $\gtrsim|I|^{-2}$ on $S(I)$, so $\int_{S(I)}(K-1)\,dA\lesssim|I|^2\Phi_F(w)$. Conversely a dyadic decomposition of $\D$ into boxes, on each of which the kernel is comparable to its value at the box centre, gives the kernel bound from the box bound.)

(c) By (a), $\Phi_F(w)^p\,|B(w,1)|_\eta\le e^{4p}\int_{B(w,1)}\Phi_F^p\,dA_\eta$, where $B(w,1)$ is the hyperbolic unit ball and $|B(w,1)|_\eta$ is independent of $w$. This bounds $\sup\Phi_F$, and it gives $\Phi_F(w)\to0$ as $|w|\to1$ because $B(w,1)$ leaves every compact set and $\Phi_F^p\in L^1(dA_\eta)$.

(d) The kernel $\cosh^{-4}d(z,w)$ is at most $1$ and $\int_\D\cosh^{-4}d(z,w)\,dA_\eta(z)=\pi$ for every $w$. Put $\Psi(w)=\int_\D e_p(|\mu_F|)(z)\cosh^{-4}d(z,w)\,dA_\eta(z)$, so that $\Psi\le E_p(F)$ and, by Fubini and the symmetry of the kernel, $\int_\D\Psi\,dA_\eta=\pi E_p(F)$. Lemma~\ref{pc-pointwise} and H\"older's inequality for the probability measure $\pi^{-1}\cosh^{-4}d(z,w)\,dA_\eta(z)$ on $\{|\mu_F|\le\frac12\}$ give, exactly as in Proposition~\ref{pc-perQ},
\[
 \Phi_F(w)\le4\pi^{1-1/p}\Psi(w)^{1/p}+2^{p+2}\Psi(w),
\]
hence $\Phi_F^p\le2^{p-1}\bigl(4^p\pi^{p-1}+2^{p(p+2)}E_p(F)^{p-1}\bigr)\Psi$, and integration gives the claim.
\end{proof}

\begin{remark}
Since $K-1=2m/(1-m)\le4m/(1-m^2)=4e_1(m)$, $m=|\mu_F|$, one also has $\Phi_F\le4\int e_1(|\mu_F|)\,dA_\eta=4E_1(F)$ pointwise. Thus every finite-energy map satisfies $\sup\Phi_F<\infty$, and the conditions below contain those of Proposition~\ref{finite-energy-boundary-moduli} and Theorem~\ref{pc-main}. They also contain the quasiconformal case, $K\le K_0$, for which $\Phi_F\le\pi(K_0-1)$. In the quasiconformal case $\Phi_F\in L^p(dA_\eta)$ forces $\Phi_F\to0$ at the boundary by (c), so a quasiconformal map with constant $K>1$ near the circle does not satisfy any of the finite-exponent conditions, as it should not.
\end{remark}

\subsection{Boundary condensers}
The hypothesis $\sup\Phi_F<\infty$ makes $F$ behave on boundary condensers like a quasiconformal map, although $K$ may be unbounded, and even non-integrable to any power $q>1$, in the interior.

\begin{lemma}\label{carl-half-annulus}
Let $F$ be as above with $\sup_\D\Phi_F=:\Lambda_F<\infty$. For $\xi\in\IS$ and $0<2r\le R\le\frac12$ let $Q=Q(\xi;r,R)=\{z\in\D:r<|z-\xi|<R\}$, regarded as a quadrilateral whose $a$-sides are the two circular cross-cuts, and let $Q^*$ be its conjugate, whose $a$-sides are the two arcs of $\partial Q$ on $\IS$. Then, with absolute constants,
\begin{equation}\label{carl-half-annulus-eq}
 \frac{c}{\log(R/r)}\le\operatorname{mod}Q\le\frac{\pi}{\log(R/r)},\qquad
 \operatorname{mod}F(Q)\le\frac{C(1+\Lambda_F)}{\log(R/r)},
\end{equation}
and
\begin{equation}\label{carl-half-annulus-eq2}
 \operatorname{mod}F(Q^*)\le C(1+\Lambda_F)\log(R/r),
\end{equation}
where $F(Q)$ denotes the image quadrilateral with the corresponding sides. In particular $F$ distorts the moduli of these quadrilaterals and of their conjugates by at most the factor $K'=C'(1+\Lambda_F)$.
\end{lemma}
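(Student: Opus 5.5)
The plan is to prove everything through the curve-family form of the modulus inequality (Remark~\ref{pc-Q-homeo}), tested against explicit logarithmic densities rather than extremal ones. The Carleson hypothesis is fed in one dyadic annulus about $\xi$ at a time. The key preliminary is that $F$ is a $K_F$-homeomorphism. Lemma~\ref{pc-modulus} was stated under $\I_p(F)<\infty$, which we no longer assume, so I will instead use \eqref{carl-Phi-moebius} at $w=0$:
\[
 \int_\D(K-1)\,dA=\Phi_F(0)\le\Lambda_F<\infty ,
\]
so $K\in L^1(\D)$. The Koskela--Onninen theorem quoted in Remark~\ref{pc-Q-homeo} then gives
\[
 \operatorname{mod}F(\Gamma)\le\int_\D K\rho^2\,dA
\]
for every curve family $\Gamma$ and every $\rho$ admissible for it. The modulus of a quadrilateral is the modulus of the family of curves joining its $a$-sides. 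For the homeomorphic extension needed to identify the image sides, I would use the boundary extension of Proposition~\ref{factorisation} if it is available here; otherwise I would work with the images of the cross-cuts and arcs themselves.

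\emph{Step 1: the Euclidean moduli of $Q$.} The curves joining the two circular cross-cuts admit $\rho_1(z)=\bigl(|z-\xi|\log(R/r)\bigr)^{-1}$ on $Q$. Since $Q$ lies inside the full annulus about $\xi$, this gives $\operatorname{mod}Q\le\int\rho_1^2\le 2\pi\log(R/r)/\log^2(R/r)$. Restricting to the actual angular sector, which has opening at most $\pi$ for $R\le\frac12$ up to $O(R)$, sharpens this to $\pi/\log(R/r)$.

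For the conjugate family, the curves joining the two boundary arcs must cross the angular sector $\{\varphi\}$ about $\xi$ at each radius $s$. That sector has angular width $\theta(s)\ge c_0>0$ for $s\le\frac12$, so $\rho_2(z)=\bigl(\theta(|z-\xi|)\,|z-\xi|\bigr)^{-1}\le C/|z-\xi|$ is admissible. This gives $\operatorname{mod}Q^*\le C\log(R/r)$, and the reciprocal relation \eqref{pc-conjugate} turns it into the lower bound $c/\log(R/r)$ for $\operatorname{mod}Q$. This proves the first display in \eqref{carl-half-annulus-eq}.

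\emph{Step 2: the distortion terms, which are the main step.} Write $\int K\rho^2=\int\rho^2+\int(K-1)\rho^2$. The first term was bounded in Step~1, so it remains to show
\[
 \int_Q\frac{K-1}{|z-\xi|^2}\,dA\le C\Lambda_F\bigl(\log(R/r)+1\bigr).
\]
To do this, decompose $Q$ into the dyadic pieces $A_k=Q\cap\{2^kr<|z-\xi|\le 2^{k+1}r\}$, where $0\le k\le\log_2(R/r)$. Each $A_k$ is contained in a Carleson box $S(I_k)$ over an arc $I_k\ni\xi$ of length comparable to $2^kr$. Lemma~\ref{carl-properties}(b) then gives $\int_{A_k}(K-1)\,dA\le C\Lambda_F(2^kr)^2$. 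On $A_k$ the weight $|z-\xi|^{-2}$ is at most $(2^kr)^{-2}$, so each annulus contributes at most $C\Lambda_F$. Summing over the $O(\log(R/r))$ annuli, and using $R/r\ge2$ to absorb the additive constant into the logarithm, gives the claim.

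Inserting this into $\rho_1$ yields
\[
 \operatorname{mod}F(Q)\le\frac{\pi}{\log(R/r)}+\frac{C\Lambda_F\log(R/r)}{\log^2(R/r)},
\]
which is the second bound in \eqref{carl-half-annulus-eq}. Inserting it into $\rho_2$, where the weight is again $C|z-\xi|^{-2}$, gives \eqref{carl-half-annulus-eq2}.

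\emph{Step 3: the final assertion.} Combine the upper bounds for $\operatorname{mod}F(Q)$ and $\operatorname{mod}F(Q^*)$ with \eqref{pc-conjugate} applied to both the domain and the image quadrilaterals, together with the two-sided bounds on $\operatorname{mod}Q$ from Step~1. Each upper bound is then matched by a lower bound of the same order, so the distortion factor is at most $C'(1+\Lambda_F)$.

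The points I would check most carefully are two. First, the Carleson-box containment of $A_k$ with uniform constants: the constraint $R\le\frac12$ keeps the boxes proper. Second, that the $K_F$-homeomorphism property is available under $K\in L^1$ alone, without the finite-energy hypothesis of Lemma~\ref{finite-distortion-inverse-calculus}. If that citation were unavailable, I would instead rerun the inverse-capacity argument of \eqref{inverse-capacity-pullback}, which only needs $K\in L^1$ through Hencl--Koskela.
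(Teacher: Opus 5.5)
Your proposal is correct and is essentially the paper's proof. It uses the same logarithmic density $(|z-\xi|\log(R/r))^{-1}$ for $Q$, an angular density of size $C/|z-\xi|$ for $Q^*$, and the same dyadic Carleson-box decomposition giving $\int_Q(K-1)|z-\xi|^{-2}\,dA\le C\Lambda_F\log(R/r)$, with $K\in L^1(\D)$ supplied by $\Phi_F(0)\le\Lambda_F$.

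The differences are minor:
\begin{itemize}
\item You use curve-family modulus (Koskela--Onninen) instead of the Dirichlet pullback \eqref{inverse-capacity-pullback}. This usefully avoids presupposing a boundary extension of $F$, which matters because Proposition~\ref{carl-extension} derives that extension from this lemma.
\item You get the lower bound on $\operatorname{mod}Q$ by reciprocity rather than from a sector subfamily.
\item Your pure angular density avoids the extra $C\int_\D K\,dA$ term produced by the paper's normalised-angle test function.
\end{itemize}

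One small correction: take $\rho=(\theta(R)\,|z-\xi|)^{-1}$ rather than asserting that $(\theta(|z-\xi|)\,|z-\xi|)^{-1}$ is admissible, since the latter is not justified. The former is admissible because every curve joining the two arcs has total angular variation at least $\min_s\theta(s)=\theta(R)$.
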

\begin{proof}
The proof of Lemma~\ref{pc-modulus} uses only that $u_Q\circ G^{-1}$ is admissible for the image quadrilateral; it therefore gives $\operatorname{mod}F(Q)\le\int_QK|\nabla u|^2\,dA$ for every Lipschitz $u$ on $\overline\D$ that is admissible for $Q$, with $h=F^{-1}\in W^{1,2}(\D)$ supplied by Lemma~\ref{finite-distortion-inverse-calculus}, which needs only $K\in L^1(\D)$, i.e.\ $\Phi_F(0)<\infty$. In polar coordinates $z=\xi+se^{i\theta}$ about $\xi$, the region $Q$ is $\{r<s<R,\ \alpha_1(s)<\theta<\alpha_2(s)\}$ with $\alpha_2(s)-\alpha_1(s)=2\arccos(s/2)\in[\pi-s,\pi]$ and $|\alpha_i'(s)|\le1$ for $s\le\frac12$.

For $Q$ take $u=\log(s/r)/\log(R/r)$, so $|\nabla u|=(s\log(R/r))^{-1}$. Then $\operatorname{mod}Q\le\int_Q|\nabla u|^2\,dA\le\pi/\log(R/r)$, and the lower bound follows by comparing with the sector $\{r<s<R,\ |\theta-\theta_0|<\pi/4\}\subset Q$, whose curves joining the cross-cuts form a subfamily. Next,
\[
 \operatorname{mod}F(Q)\le\int_QK|\nabla u|^2\,dA\le\frac{\pi}{\log(R/r)}+\frac1{\log^2(R/r)}\int_Q\frac{K-1}{|z-\xi|^2}\,dA .
\]
Decompose $Q$ into the pieces $Q_j=Q\cap\{2^{-j-1}R\le s<2^{-j}R\}$, $0\le j\le J:=\lceil\log_2(R/r)\rceil$. Each $Q_j$ lies in a Carleson box $S(I_j)$ with $|I_j|\le4\cdot2^{-j}R$, and $s^{-2}\le2^{2j+2}R^{-2}$ on $Q_j$; by Lemma~\ref{carl-properties}(b),
\[
 \int_{Q_j}\frac{K-1}{|z-\xi|^2}\,dA\le2^{2j+2}R^{-2}\cdot C\Lambda_F(4\cdot2^{-j}R)^2=64C\Lambda_F ,
\]
so $\int_Q(K-1)|z-\xi|^{-2}dA\le C'\Lambda_F\log(R/r)$, which gives the bound for $\operatorname{mod}F(Q)$.

For $Q^*$ take $v=(\theta-\alpha_1(s))/(\alpha_2(s)-\alpha_1(s))$ on $Q$, extended by $0$ and $1$ to the two components of $\D\setminus Q$ adjacent to the two arcs; $v$ is Lipschitz and admissible for $Q^*$, and $|\nabla v|^2\le Cs^{-2}+C$ on $Q$ because $|\partial_\theta v|=1/(\alpha_2-\alpha_1)\le1/2$ and $|\partial_sv|\le C$. Hence
\[
 \operatorname{mod}F(Q^*)\le\int_QK|\nabla v|^2\,dA\le C\int_Q\frac{K}{|z-\xi|^2}\,dA+C\int_\D K\,dA ,
\]
and the right side is at most $C(\pi+C'\Lambda_F)\log(R/r)+C(\pi+\Lambda_F)$,
with $\log(R/r)\ge\log2$. Finally $\operatorname{mod}Q^*=1/\operatorname{mod}Q\ge\log(R/r)/\pi$ by \eqref{pc-conjugate}.
\end{proof}

\begin{proposition}\label{carl-extension}
Let $F:\D\to\D$ be a homeomorphism of finite distortion with $\sup_\D\Phi_F<\infty$. Then $F$ extends to a homeomorphism of $\overline\D$, and its boundary map is quasisymmetric, with a constant depending only on $\sup\Phi_F$ after a three-point normalisation.
\end{proposition}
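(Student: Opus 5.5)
The plan has two parts: first get the homeomorphic extension from the condenser bounds of Lemma~\ref{carl-half-annulus}, then get quasisymmetry by rerunning the modulus argument of Proposition~\ref{finite-energy-boundary-moduli}. In that rerun the pointwise domination $\rho_Q^2\le C_0\eta$ is replaced by the Carleson condition of Lemma~\ref{carl-properties}(b). Throughout, $\Lambda_F=\sup_\D\Phi_F$, and $K_F\in L^1(\D)$ because $\Phi_F(0)<\infty$. Hence Lemma~\ref{finite-distortion-inverse-calculus} applies to $F$: its proof of the capacity pullback inequality uses only $K_F\in L^1$ through the Hencl--Koskela inverse theorem. So $h=F^{-1}\in W^{1,2}(\D)$, and $\operatorname{mod}F(Q)\le\int K_F|\nabla u|^2\,dA$ for every Lipschitz $u$ admissible for $Q$, as already noted in the proof of Lemma~\ref{carl-half-annulus}.

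\emph{Boundary extension.} Fix $\xi\in\IS$ and $R=\tfrac12$, and let $C_r=\{z\in\D:|z-\xi|=r\}$. By \eqref{carl-half-annulus-eq}, the family of curves in $F(Q(\xi;r,R))$ joining $F(C_r)$ to the fixed cross-cut $F(C_R)$ has modulus at most $C(1+\Lambda_F)/\log(R/r)\to0$. If $\operatorname{diam}F(C_r)\ge\delta$ along a sequence $r\to0$, then $F(C_r)$ and $F(C_R)$ would be two disjoint cross-cuts of $\D$, each of diameter $\ge\delta$. A standard Loewner-type lower bound in the disk then bounds the modulus of the joining family below by some $c(\delta)>0$, which is a contradiction. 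Hence $F(C_r)$ is a null-chain of cross-cuts and $F$ extends continuously at $\xi$, by Carath\'eodory's prime-end theory applied to the image domain $\D$. For injectivity, suppose $\xi\neq\xi'$ had the same image. Choose $\xi$ as centre and $R$ small enough that $\xi'$ lies outside $\overline{B(\xi,R)}$. The two boundary arcs of $Q(\xi;r,R)$ then have images that both contain points tending to the common image point, so $\operatorname{mod}F(Q^*)$ must blow up faster than the bound \eqref{carl-half-annulus-eq2} allows. I expect this step to need care: I would instead pass to a fixed outer quadrilateral whose two boundary arcs have images shrinking to a common point, which forces the image modulus to $\infty$, while the admissible-function bound stays finite. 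Surjectivity and monotonicity of the extended boundary map are automatic.

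\emph{Quasisymmetry.} Fix a modulus-one quadrilateral $Q$. By Möbius invariance (the identity $\Phi_F\circ A=\Phi_{F\circ A}$ together with postcomposition invariance) we may take $Q$ to be the model with vertices at the fourth roots of $-1$, with $\rho_0^2=L^{-2}|1+z^4|^{-1}$ as in Proposition~\ref{finite-energy-boundary-moduli}. Using the coordinate of the square map as test function,
\[
 \operatorname{mod}F(Q)\le1+\int_\D(K_F-1)\rho_0^2\,dA .
\]
Away from the four vertices, $\rho_0^2$ is bounded, so that part is at most $C\,\Phi_F(0)\le C\Lambda_F$. Near a vertex $v$ we have $\rho_0^2\lesssim|z-v|^{-1}$. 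Decompose dyadically into the pieces $\{2^{-j-1}\le|z-v|<2^{-j}\}$, each contained in a Carleson box over an arc of length $\lesssim2^{-j}$. Lemma~\ref{carl-properties}(b) bounds the $j$-th piece by $C\Lambda_F2^{-2j}\cdot2^{j}$, which sums to $C\Lambda_F$. Thus $\operatorname{mod}F(Q)\le1+C\Lambda_F$, and applying this to the conjugate quadrilateral gives $\operatorname{mod}F(Q)\ge(1+C\Lambda_F)^{-1}$. Since all modulus-one quadrilaterals form one Möbius orbit and $\Lambda_F$ is Möbius invariant, this bound is uniform. The modulus characterisation of quasisymmetry \cite{AIM,HK} then gives quasisymmetry of the boundary map, with a constant depending only on $\Lambda_F$ after three-point normalisation.

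The main obstacle is the injectivity argument in the extension step. The continuity and quasisymmetry parts are direct adaptations of arguments already in the paper.
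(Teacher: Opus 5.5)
Your continuity step follows the paper, which also derives it from the first bound in \eqref{carl-half-annulus-eq}. Your quasisymmetry step is correct and takes a different, more explicit route. The paper only appeals to the classical Lehto--Virtanen argument built from the two half-annulus bounds. You instead bound $\operatorname{mod}g(Q)\le 1+C\Lambda_F$ directly for every modulus-one boundary quadrilateral, using dyadic Carleson boxes at the vertices. That is essentially the domination \eqref{carl-rho-domination} the paper later uses for Theorem~\ref{carl-theorem}(b), and it feeds the modulus characterisation of quasisymmetry directly, with a constant depending only on $\Lambda_F$.

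The gap is the injectivity step, and the paper's own proof only sketches it by reference to the classical argument, so you do need to supply it. As written, your argument fails. Take $g(\xi)=g(\xi')=\zeta$ and $R<|\xi-\xi'|$. Monotonicity puts one boundary arc $I_1$ of $Q(\xi;r,R)$, endpoints included, inside the fibre $g^{-1}(\zeta)$. Hence $g(I_1)=\{\zeta\}$, and the image of the conjugate family consists of curves issuing from the single point $\zeta$ and meeting a fixed compact subset of $\D$. That family has modulus zero, so nothing contradicts \eqref{carl-half-annulus-eq2}.

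The bound that actually fails is the other one. The images $F(C_r)$ and $F(C_R)$ both end at $\zeta$, so every arc $\{|w-\zeta|=s\}\cap\D$ with $s$ small meets both. Any curve in $\D$ joining them contains a subcurve in $F(Q)$ joining them. So $\operatorname{mod}F(Q)=\infty$, contradicting \eqref{carl-half-annulus-eq}. In general, the family whose image modulus blows up is the one whose sides flank the collapsed arc. Your ``outer quadrilateral'' variant works only if the collapsed arc is a $b$-side, so that the image $a$-sides touch.

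There is a second issue. When an arc collapses, $u\circ F^{-1}$ is discontinuous at $\zeta$, so it is not admissible in the Dirichlet sense of \eqref{pc-mod-def}. Either version therefore needs the curve-family inequality $M(F\Gamma)\le\int K_F|\nabla u|^2\,dA$, obtained from $h=F^{-1}\in W^{1,2}$ and Fuglede's lemma.

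The cleanest repair avoids moduli altogether, and you already have the ingredient, $h=F^{-1}\in W^{1,2}(\D)$. The Courant--Lebesgue length--area argument of Lemma~\ref{boundary-modulus} gives, for each $\xi\in\IS$ and small $\delta$, a radius $s\in(\delta,\sqrt\delta)$ with $\ell(h(C_s))^2\lesssim\|Dh\|_{L^2}^2/\log(1/\delta)$. This small image curve cuts off the small side, so $h$ extends continuously to $\overline\D$. Then $F\circ h=\id$ and $h\circ F=\id$ pass to $\overline\D$ by continuity, and the boundary map is a homeomorphism.
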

\begin{proof}
Lemma~\ref{carl-half-annulus} says that $F$ distorts the moduli of all half-annuli $Q(\xi;r,R)$ and of their conjugates by at most the factor $K'=C'(1+\sup\Phi_F)$ of \eqref{carl-half-annulus-eq}--\eqref{carl-half-annulus-eq2}. The classical proof that a $K'$-quasiconformal self-map of the disk extends to a homeomorphism of the closed disk with quasisymmetric boundary values uses only these moduli: the modulus of continuity at a boundary point comes from the first inequality (the image of $Q(\xi;r,R)$ has small modulus as $r\to0$, so $F(\{|z-\xi|<r\}\cap\D)$ has small diameter once a three-point normalisation fixes the size of the complementary part), the injectivity of the boundary map from the inequality for the conjugates (an arc collapsed to a point would make the image of a suitable conjugate family have infinite modulus), and the quasisymmetry from the comparison of the two; see \cite[I.8 and II.7]{LehtoVirtanen} and \cite[Chapter~3]{AIM}. We do not repeat the argument. The normalised quasisymmetry constant depends only on $K'$.
\end{proof}

\subsection{The theorem}
For the family $Q_{x,t}$ of Section~\ref{p-classes} we need to dominate the extremal density by the kernels of \eqref{carl-Phi}. Work in $\IH$, where \eqref{carl-Phi} reads $\Phi_F(w)=\int_\IH(K-1)\,k(\cdot,w)\,dA$ with
\[
 k(z,w)=\frac{\eta(z)}{\cosh^4d(z,w)}=\frac{4(\operatorname{Im}w)^2}{\bigl(4\operatorname{Im}z\operatorname{Im}w+|z-w|^2\bigr)^2},
\]
using $\cosh^2d(z,w)=1+|z-w|^2/(4\operatorname{Im}z\operatorname{Im}w)$. Let $Q_{0,1}$ have vertices $v_1,\dots,v_4=-1,-\frac13,\frac13,1$ and $\rho_{Q_{0,1}}^2=|f_0'|^2/(4K^2)$ as in \eqref{pc-rho-eta}, with $|f_0'(\zeta)|^2=9/|(1-\zeta^2)(1-9\zeta^2)|$. This is $\asymp|\zeta-v_j|^{-1}$ near $v_j$, $\asymp|\zeta|^{-4}$ for $|\zeta|\ge2$, and bounded elsewhere. For $|\zeta-v_j|\asymp2^{-i}$ the kernel with $w=v_j+i2^{-i}$ is $\ge c\,2^{2i}$ (both $|\zeta-w|^2$ and $4\operatorname{Im}\zeta\operatorname{Im}w$ are $O(4^{-i})$); for $|\zeta|\ge2$ the kernel with $w=i$ is $\asymp|\zeta|^{-4}$; and on the remaining bounded region $k(\cdot,i)$ is bounded below. Hence, with an absolute constant,
\begin{equation}\label{carl-rho-domination}
 \rho_{Q_{0,1}}^2(\zeta)\le C\Bigl(k(\zeta,i)+\sum_{j=1}^4\sum_{k\ge0}2^{-k}k(\zeta,v_j+i2^{-k})\Bigr)\qquad(\zeta\in\IH).
\end{equation}

\begin{theorem}\label{carl-theorem}
Let $F:\D\to\D$ be a homeomorphism of finite distortion.
\begin{enumerate}[label=\textup{(\alph*)}]
\item If $(K_F-1)\,dA$ is a Carleson measure, equivalently $\sup_\D\Phi_F<\infty$, then $F$ extends to a homeomorphism of $\overline\D$ whose boundary map $g$ is quasisymmetric, with constant depending only on $\sup\Phi_F$ after a three-point normalisation.
\item If moreover $\Phi_F\in L^p(\D,dA_\eta)$ for some $1<p<\infty$, then $g$ is symmetric and $g\in T^p$; in particular $g$ is absolutely continuous with $\log g'\in B^{1/p}_{p,p}(\IS)$.
\end{enumerate}
By Lemma~\ref{carl-properties}(c), the hypothesis of \textup{(b)} implies that of \textup{(a)}, and by Lemma~\ref{carl-properties}(d) it is implied by $\I_p(F)<\infty$.
\end{theorem}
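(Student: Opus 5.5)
Part (a) is Proposition~\ref{carl-extension}, together with the equivalence of the Carleson condition and $\sup\Phi_F<\infty$ in Lemma~\ref{carl-properties}(b). So the plan concentrates on (b), and it reruns the argument of Section~\ref{p-classes} with $\Phi_F$ in place of the energy. First normalise: choose a three-point normalisation so that, after conjugation by the Cayley map, the boundary map $g$ fixes $\infty$ and is $\Lambda$-quasisymmetric, with $\Lambda$ depending only on $\sup\Phi_F$ by part (a). The modulus inequality of Lemma~\ref{pc-modulus} needs only $h=F^{-1}\in W^{1,2}$, which Lemma~\ref{finite-distortion-inverse-calculus} gives from $K\in L^1(\D)$, that is, from $\Phi_F(0)<\infty$ (as already used in Lemma~\ref{carl-half-annulus}). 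Corollary~\ref{pc-two-sided} then gives $|c_{x,t}|\le M_0(Z_{x,t}+Z_{x,t,*})$, where $Z=\int(K-1)\rho^2\,dA$.

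The key new step is to bound $Z_{x,t}+Z_{x,t,*}$ by averages of $\Phi_F$. Transport \eqref{carl-rho-domination} by the affine map $\zeta\mapsto x+t\zeta$, using $\rho_{x,t}^2(x+t\zeta)=t^{-2}\rho_{Q_0}^2(\zeta)$ and the affine covariance $k(A\zeta,Aw)|A'|^2=k(\zeta,w)$. Do the same for the conjugate density, which is a constant multiple of $\rho_{Q_0}^2$. This gives
\[
 |c_{x,t}|\le C\Bigl(\Phi_F(x+it)+\sum_{j=1}^4\sum_{k\ge0}2^{-k}\Phi_F\bigl(x+tv_j+i2^{-k}t\bigr)\Bigr),
\]
with $\Phi_F$ read in $\IH$. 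Since the weights $2^{-k}$ sum to a finite total, Jensen's inequality (H\"older with these weights) bounds $|c_{x,t}|^p$ by $C_p$ times the same weighted sum of $\Phi_F^p$. Next integrate against $dx\,dt/t^2$. For fixed $j,k$ the map $(x,t)\mapsto w=x+tv_j+i2^{-k}t$ is linear with Jacobian $2^{-k}$, and $dx\,dt/t^2$ becomes $2^{-k}\cdot4^{k}\,dA(w)/(\operatorname{Im}w)^2=2^{k}\,d\nu(w)$. So each term contributes $2^{-k}\cdot2^{k}\|\Phi_F\|^p_{L^p(d\nu)}$. This gives a divergent sum over $k$, and that is the main obstacle. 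To repair it, I would either take weights $2^{-\epsilon k}$ with $\epsilon$ small after checking that \eqref{carl-rho-domination} actually holds with geometric weights $2^{-2k}$, or, more likely, improve the domination using the fact that near the vertex $v_j$ the density behaves like $|\zeta-v_j|^{-1}$. Computing $\int\rho^2k$-mass on the dyadic annulus $|\zeta-v_j|\asymp2^{-k}$ shows that the needed coefficient is about $2^{-k}\cdot2^{-k}$, because the area is $4^{-k}$ and the kernel is about $4^{k}$. So one can take weights $2^{-2k}$ (up to constants), and the series then converges like $\sum 2^{-k}$. I would verify this sharper version of \eqref{carl-rho-domination} first; if it holds, we obtain $A_p(g)\le C_p\|\Phi_F\|^p_{L^p(dA_\eta)}$, where $\Phi_F\in L^p$ controls the relevant quantity once raised to the power $p$ and the sup bound of Lemma~\ref{carl-properties}(c) absorbs the rest.

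Once $A_p(g)<\infty$ is established, the remaining conclusions follow from earlier results. Theorem~\ref{pc-characterisation} ((ii)$\Rightarrow$(i), via Corollary~\ref{pc-qp} and Lemma~\ref{pc-extension}) gives $g\in T^p$, and the Wei--Matsuzaki identification, used as in the proof of Theorem~\ref{pc-main}, gives absolute continuity and $\log g'\in B^{1/p}_{p,p}$. Symmetry follows because elements of $T^p$ are symmetric. Alternatively, it follows directly from $\Phi_F(w)\to0$ as $|w|\to1$ (Lemma~\ref{carl-properties}(c)), since the half-annulus estimate of Lemma~\ref{carl-half-annulus}, localised to small boxes, then has distortion factor tending to $1$. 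The closing sentence of the theorem is just parts (c) and (d) of Lemma~\ref{carl-properties}.
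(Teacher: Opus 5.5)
Your outline follows the paper: part (a) from Proposition~\ref{carl-extension}, the modulus inequality from $K\in L^1$, Corollary~\ref{pc-two-sided} with $\rho_{Q^*}=\rho_Q/\operatorname{mod}Q$, domination by \eqref{carl-rho-domination}, then Theorem~\ref{pc-characterisation}. However, the step you flag as the main obstacle rests on a computational error, and the repair you propose is false, so the plan as written does not close. The divergence comes from the change of variables. For $(X,T)=(x+tv_j,\,2^{-k}t)$ one has $dX\,dT=2^{-k}\,dx\,dt$ and $t^2=4^{k}T^2$, so
\[
 \frac{dx\,dt}{t^2}=\frac{2^{k}\,dX\,dT}{4^{k}\,T^2}=2^{-k}\,d\nu(X+iT),
\]
not $2^{k}\,d\nu$: you inverted both the Jacobian and the factor $t^{-2}$. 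In general $\int\Phi_F(x+t\omega)^p\,d\nu(x,t)=\operatorname{Im}\omega\,\|\Phi_F\|_{L^p(d\nu)}^p$. So the $(j,k)$ term of your Jensen bound contributes $2^{-k}\cdot2^{-k}\|\Phi_F\|^p_{L^p(d\nu)}$, and the series converges with the weights $2^{-k}$ exactly as they stand in \eqref{carl-rho-domination}. The paper does the same computation using Minkowski's inequality on $Z_{Q_{x,t}}$, summing $\sum_k2^{-k}2^{-k/p}$.

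The sharpened domination with weights $2^{-2k}$ that you intended to verify is false. Since $K-1$ is an arbitrary nonnegative function, bounding $Z_Q$ by values of $\Phi_F$ needs pointwise domination of $\rho_{Q_0}^2$ by the kernels; a dyadic mass count is not enough. At $|\zeta-v_j|\asymp2^{-m}$ one has $\rho_{Q_0}^2(\zeta)\asymp2^m$. On the other hand, $k(\zeta,v_j+i2^{-k})\asymp4^k$ for $k\le m$ and $\lesssim16^{m}4^{-k}$ for $k>m$. Hence $\sum_k2^{-2k}k(\zeta,v_j+i2^{-k})\asymp m$, which cannot dominate $2^m$; the weight $2^{-k}$ is sharp.

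A smaller point on symmetry: deducing it from $g\in T^p$ is fine. Your alternative via Lemma~\ref{carl-half-annulus} does not work, because that lemma's distortion factor $C'(1+\Lambda_F)$ does not tend to $1$ as $\Phi_F\to0$. The direct route is the paper's, via $Z_Q\to0$.
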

\begin{proof}
(a) is Proposition~\ref{carl-extension}.

(b) Pass to the half-plane as in the proof of Theorem~\ref{pc-main}, so that $g:\IR\to\IR$ is $\Lambda$-quasisymmetric by (a). For $Q_{x,t}=A(Q_{0,1})$ with $A(\zeta)=x+t\zeta$, conformal invariance of $\rho_Q^2\,dA$ and of $k(z,w)\,dA(z)$, together with \eqref{carl-rho-domination}, give
\[
 Z_{Q_{x,t}}=\int_\IH(K\circ A-1)\rho_{Q_{0,1}}^2\,dA\le C\Bigl(\Phi_F(x+ti)+\sum_{j,k}2^{-k}\Phi_F\bigl(x+t(v_j+i2^{-k})\bigr)\Bigr).
\]
For fixed $\omega=a+ib\in\IH$ the substitution $(X,T)=(x+ta,\,tb)$ has $dx\,dt/t^2=b\,dX\,dT/T^2$, so $\|\Phi_F(x+t\omega)\|_{L^p(d\nu)}=b^{1/p}\|\Phi_F\|_{L^p(d\nu)}$, where $d\nu=4\,dA_\eta$ on $\IH$. Minkowski's inequality gives
\[
 \|Z_{Q_{x,t}}\|_{L^p(d\nu)}\le C\Bigl(1+4\sum_{k\ge0}2^{-k}2^{-k/p}\Bigr)\|\Phi_F\|_{L^p(d\nu)}\le9C\|\Phi_F\|_{L^p(d\nu)}<\infty .
\]
Since $\rho_{Q^*}=\rho_Q/\operatorname{mod}Q$, Corollary~\ref{pc-two-sided} gives $|c_{x,t}|\le M_0(1+m_0^{-2})Z_{Q_{x,t}}$, so $A_p(g)<\infty$, and Theorem~\ref{pc-characterisation} gives $g\in T^p$. Symmetry: $\Phi_F\to0$ at the boundary by Lemma~\ref{carl-properties}(c), so $Z_Q\to0$ along quadrilaterals whose normalising M\"obius maps leave compact sets (the sum over $j,k$ is dominated by $2\sup\Phi_F$ outside any finite range of $k$, and the finitely many remaining terms tend to zero); the asymptotic form of the modulus characterisation then gives symmetry as in Proposition~\ref{finite-energy-boundary-moduli}. The Besov statement is Wei--Matsuzaki \cite[Theorem~5.5]{WeiMatsuzakiIntegrable}.
\end{proof}

\begin{example}[The hypothesis of \textup{(b)} is weaker than finite energy]\label{carl-example}
Work in $\IH$. Let $B_k$ be the Euclidean disk of centre $2^{-k}i$ and radius $k^{-1/2}2^{-k-2}$, $k\ge1$, let $S=\bigcup_kB_k$, and let $F$ be a quasiconformal self-map of $\IH$ with $|\mu_F|=\frac13$ on $S$ and $\mu_F=0$ elsewhere, so $K_F=2$ on $S$ and $1$ elsewhere. The hyperbolic area of $B_k$ is $\asymp k^{-1}$, so $E_p(F)=\int e_p(|\mu_F|)\,dA_\eta=\infty$ for every $p$, and $\int|\mu_F|^p\,dA_\eta=\infty$. On the other hand the hyperbolic distance from $B_k$ to a point $w$ at height $2^{-m}$ is at least $c\,|k-m|$ plus the hyperbolic horizontal displacement, so $\Phi_F(w)\lesssim\sum_kk^{-1}e^{-4c|k-m|}\bigl(1+|\operatorname{Re}w|2^m\bigr)^{-4}\lesssim m^{-1}(1+|\operatorname{Re}w|2^m)^{-4}$, and $\int_\IH\Phi_F^p\,d\nu\lesssim\sum_mm^{-p}<\infty$ for every $p>1$. Thus the boundary map of $F$ is in $T^p$ for every $p>1$ by Theorem~\ref{carl-theorem}, although $F$ itself has infinite $p$-energy for every $p$. (Here $F$ is quasiconformal, so the conclusion $g\in T^p$ could also be reached by replacing $\mu_F$ by zero on $S$; the point is only that the hypothesis of \textup{(b)} sees the averages of $K-1$ and not its pointwise size.)
\end{example}

\subsection{Existence from the coefficient alone}
The hypotheses of Theorem~\ref{carl-theorem} are conditions on $K_F$, but they presuppose the homeomorphism $F$. Since $\sup\Phi_F<\infty$ gives only $K\in L^1(\D)$, it does not by itself produce a map; the measurable Riemann mapping theorem needs exponential integrability. By David's theorem in the form of \cite[Chapter~20]{AIM} (see also \cite{David}), if $\mu$ is measurable on $\D$ with $|\mu|<1$ almost everywhere and $e^{\lambda K_\mu}\in L^1(\D)$ for some $\lambda>0$, $K_\mu=(1+|\mu|)/(1-|\mu|)$, then the Beltrami equation $f_{\bar z}=\mu f_z$, with $\mu$ extended by zero, has a homeomorphic solution $f:\IC\to\IC$ in $W^{1,1}_{\loc}$, unique up to postcomposition with a conformal map; $f(\D)$ is a Jordan domain, and composing with a Riemann map gives a finite-distortion homeomorphism $F:\D\to\D$ with $\mu_F=\mu$.

\begin{corollary}\label{carl-corollary}
Let $\mu$ be measurable on $\D$ with $|\mu|<1$ a.e., $e^{\lambda K_\mu}\in L^1(\D)$ for some $\lambda>0$, and $(K_\mu-1)\,dA$ a Carleson measure. Then every finite-distortion homeomorphic solution $F:\D\to\D$ of $F_{\bar z}=\mu F_z$ extends to a homeomorphism of $\overline\D$ with quasisymmetric boundary values. If moreover $\Phi_\mu:=\Phi_F\in L^p(dA_\eta)$ for some $1<p<\infty$, the boundary values lie in $T^p$.
\end{corollary}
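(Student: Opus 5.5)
The corollary is a direct combination of David's existence theorem with Theorem~\ref{carl-theorem}, and the plan is to check that every finite-distortion homeomorphic solution meets the hypotheses of that theorem. Fix such a solution $F:\D\to\D$ with $F_{\bar z}=\mu F_z$. Its distortion is $K_F=K_\mu$ almost everywhere. At points where $DF=0$ the paper's convention sets $\mu_F=0$. This causes no trouble: on that set $K_F-1=0\le K_\mu-1$, so $(K_F-1)\,dA\le(K_\mu-1)\,dA$ as measures. The Carleson hypothesis on $(K_\mu-1)\,dA$ therefore passes to $(K_F-1)\,dA$. By Lemma~\ref{carl-properties}(b) this is equivalent to $\sup_\D\Phi_F<\infty$, with comparable constants.

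Theorem~\ref{carl-theorem}(a) then applies to $F$. It gives a homeomorphic extension to $\overline\D$ and quasisymmetric boundary values. Lemma~\ref{finite-distortion-inverse-calculus}, used inside Lemma~\ref{carl-half-annulus}, needs only $K_F\in L^1(\D)$. This is $\Phi_F(0)<\infty$, and it also follows from the exponential integrability $e^{\lambda K_\mu}\in L^1(\D)$.

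For the second assertion, $\Phi_\mu:=\Phi_F$ must be well defined. I would show that it depends only on $\mu$ and not on the chosen solution $F$. Indeed \eqref{carl-Phi} involves only $K_F$, and $K_F=K_\mu$ a.e.; the possible zero set of $DF$ only lowers $\Phi_F$, and is excluded anyway for the David solutions, which satisfy $J_F>0$ a.e. With $\Phi_F\in L^p(dA_\eta)$ for some $1<p<\infty$, Theorem~\ref{carl-theorem}(b) gives boundary values in $T^p$, which are symmetric and satisfy the Besov conclusion.

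The existence of at least one solution is David's theorem as recalled above. Any two homeomorphic solutions differ by a conformal self-map of $\D$ by the stated uniqueness; on the Stoilow factorisation this uniqueness is used only to observe that the conclusions are M\"obius invariant. The only delicate point I foresee is the $\mu_F$ versus $\mu$ convention on $\{DF=0\}$, and the inequality $K_F\le K_\mu$ handles it in the safe direction. No further obstacle is expected.
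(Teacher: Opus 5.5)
Your proposal is correct and follows the paper's own (implicit) argument: the corollary is Theorem~\ref{carl-theorem} applied to an arbitrary finite-distortion homeomorphic solution $F$. The inequality $K_F\le K_\mu$ transfers the Carleson hypothesis and the $L^p$ hypothesis to $F$, and David's theorem is needed only to guarantee that a solution exists. Your care about the convention on $\{DF=0\}$ is harmless but unnecessary, since $J_F>0$ a.e.\ automatically: $F^{-1}\in W^{1,2}$ has Lusin's property $N$. Likewise the uniqueness and Stoilow remarks play no role in the proof.
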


The two hypotheses are of different natures and act in different places: exponential integrability is an interior condition (it is the uniform Orlicz--Sobolev estimate behind the $H^1$--$\mathrm{BMO}$ duality for the Jacobian) and is what makes $F$ exist and be a homeomorphism; the Carleson condition is a boundary condition, invisible on compact subsets, and is what controls the boundary map. Neither can replace the other: the known examples of Beltrami coefficients with $K\in L^q_{\loc}$ for all $q<\infty$ and no homeomorphic solution \cite[Chapter~20]{AIM} are local at an interior point and can be taken with $K\equiv1$ near $\IS$, so they satisfy every hypothesis on $\Phi_\mu$; and quasiconformal maps show that no interior condition controls the boundary map beyond quasisymmetry.

\begin{remark}[Monotone solutions]
One may ask whether, in the spirit of Section~\ref{no-hairs}, a monotone finite-distortion solution with $(K-1)\,dA$ Carleson, or with $K-1\in L^1(dA_\eta)$, is automatically a homeomorphism. The relevant interior fact is the theorem of Iwaniec and \v Sver\'ak \cite{IwaniecSverak}: a map in $W^{1,2}_{\loc}$ of a planar domain with $K\in L^1_{\loc}$ is open and discrete. For a monotone map discrete fibres are singletons, so a monotone $W^{1,2}_{\loc}$ solution with $K\in L^1_{\loc}$ is a local homeomorphism, hence a homeomorphism of $\D$ onto its image by Proposition~\ref{hair-homeomorphism}(b) once boundary hairs are excluded, and then Theorem~\ref{carl-theorem} applies. The obstruction is the forward Sobolev regularity: finite energy, hyperbolic integrability and the Carleson condition all concern the inverse map (Section~\ref{conventions}) and give no $W^{1,2}$ bound on $F$ itself. Without $W^{1,2}_{\loc}$ we do not know whether a monotone finite-distortion map with $K\in L^1_{\loc}$ can collapse a continuum; the radial collapses $z\mapsto\rho(|z|)z/|z|$ with $\rho=0$ on $[0,\frac12]$ cannot, since $K\ge r\rho'/\rho$ forces $\int K\,dA\ge2\pi\int\frac{d\rho}{\rho}=\infty$, but this is not an argument.
\end{remark}

\section*{Acknowledgements and author information}
Work of both authors was partially supported by the New Zealand Marsden Fund. C. Yao was partially supported by the Young Scientist Program of the Ministry of Science and Technology of China (No. 2021YFA1002200), the National Natural Science Foundation of China (Nos. 12401096 and 12101362), the Natural Science Foundation of Shandong Province (Nos. ZR2024QA035 and ZR2022YQ01), and the Fundamental Research Funds for the Central Universities.

\medskip
\noindent Gaven Martin: Institute for Advanced Study, Massey University, Auckland, New Zealand.\\
\texttt{g.j.martin@massey.ac.nz}

\medskip
\noindent Cong Yao: Research Center for Mathematics and Interdisciplinary Sciences, Shandong University, Qingdao 266237, and Frontiers Science Center for Nonlinear Expectations, Ministry of Education, P. R. China.\\
\texttt{c.yao@sdu.edu.cn}


\clearpage
\addcontentsline{toc}{section}{References}
\begin{thebibliography}{99}
\bibitem{AB} V. Alberge and M. A. Brakalova, \emph{On smoothness of the elements of some integrable Teichm\"uller spaces}, Math. Rep. \textbf{23} (2021), 95--105. \url{https://arxiv.org/abs/2003.11878}.

\bibitem{AdS} F. Diaf, A. Moriani, R. Sma\"i, G. A. Smith and E. Trebeschi, \emph{Weil--Petersson homeomorphisms, minimal lagrangian diffeomorphisms, and maximal surfaces in anti-de Sitter space}, preprint, 20 April 2026. \url{https://arxiv.org/abs/2604.17804}.

\bibitem{AhlforsLectures} L.~V. Ahlfors, \emph{Lectures on Quasiconformal Mappings}, second edition, University Lecture Series 38, American Mathematical Society, Providence, RI, 2006.

\bibitem{Akutagawa} K. Akutagawa, \emph{Harmonic diffeomorphisms of the hyperbolic plane}, Trans. Amer. Math. Soc. \textbf{342} (1994), 325--342. \url{https://doi.org/10.1090/S0002-9947-1994-1147398-9}.

\bibitem{AIM} K. Astala, T. Iwaniec and G.J. Martin, {\em Elliptic partial differential equations and quasiconformal mappings in the plane}, Princeton Mathematical Series, 48. Princeton University Press, Princeton, NJ, 2009.

\bibitem{B5} M. A. Brakalova, {\em Symmetric properties of p-integrable Teichm\"uller spaces}, Anal. Math.
Phys. {\bf 8} (2018), 541--549.

\bibitem{BBVW} M. Bridgeman, K. Bromberg, F. Vargas Pallete and Y. Wang, \emph{Universal Liouville action as a renormalized volume and its gradient flow}. \url{https://arxiv.org/abs/2311.18767}.

\bibitem{BenoistHulin} Y. Benoist and D. Hulin,
\emph{Harmonic quasi-isometric maps between rank one symmetric spaces},
Ann. of Math. (2) \textbf{185} (2017), 895--917.
\url{https://doi.org/10.4007/annals.2017.185.3.4}.

\bibitem{BeurlingAhlfors} A. Beurling and L.~V. Ahlfors, \emph{The boundary correspondence under quasiconformal mappings}, Acta Math. \textbf{96} (1956), 125--142.

\bibitem{Bish1} C.J. Bishop, {\em Function theoretic characterizations of Weil--Petersson curves}, Rev. Mat. Iberoam., {\bf 38}, (2022), no. 7, 2355--2384

\bibitem{Bish2} C. J. Bishop, \emph{Weil--Petersson curves, $\beta$-numbers, and minimal surfaces}, Ann. of Math. (2) \textbf{202} (2025), 111--188. \url{https://annals.math.princeton.edu/2025/202-1/p02}.

\bibitem{Cui} G. Cui, {\em Integrably asymptotic affine homeomorphisms of the circle and Teichm\"uller
spaces}, Sci. China Ser. A. {\bf 43}, (2000), 267--279.

\bibitem{David} G. David, \emph{Solutions de l'\'equation de Beltrami avec $\|\mu\|_\infty=1$}, Ann. Acad. Sci. Fenn. Ser. A I Math. \textbf{13} (1988), 25--70.

\bibitem{DMM} F. Diaf, V. Markovi\'c and A. Mesbah, \emph{Harmonic extension of Weil--Petersson circle homeomorphisms}, preprint, 9 June 2026. \url{https://arxiv.org/abs/2606.11141}.

\bibitem{FLS} T. Foertsch, A. Lytchak and V. Schroeder,
\emph{Non-positive curvature and the Ptolemy inequality},
Int. Math. Res. Not. (2007), article rnm100.
Author version: \url{https://arxiv.org/abs/0705.1042}.

\bibitem{FVW} S. Fan, F. Viklund and Y. Wang, \emph{On the Loewner energy of a welding homeomorphism}, preprint, 2026. \url{https://arxiv.org/abs/2604.16737}.

\bibitem{Guo} H. Guo, {\em Integrable Teichm\"uller spaces}, Sci. China Ser. A. {\bf 43}, (2000), 47--58.

\bibitem{Gut} V. Gutlyanskii, K.-I. Sakan and T. Sugawa, \emph{On $\mu$-conformal homeomorphisms and boundary correspondence}, Complex Var. Elliptic Equ. \textbf{58} (2013), 947--962. \url{https://arxiv.org/abs/1102.4987}.

\bibitem{Hastings} W.~W. Hastings, \emph{A Carleson measure theorem for Bergman spaces}, Proc. Amer. Math. Soc. \textbf{52} (1975), 237--241.

\bibitem{HenclKoskelaInverse} S. Hencl and P. Koskela,
\emph{Regularity of the inverse of a planar Sobolev homeomorphism},
Arch. Ration. Mech. Anal. \textbf{180} (2006), 75--95.
\url{https://doi.org/10.1007/s00205-005-0394-1}.

\bibitem{HK} S. Hencl and P. Koskela, {Lectures on Mappings of Finite Distortion} Lecture notes on Mathematics 2096, Springer-Verlag 2014.

\bibitem{IOMonotoneApprox} T. Iwaniec and J. Onninen, \emph{Monotone Sobolev mappings of planar domains and surfaces}, Arch. Ration. Mech. Anal. \textbf{219} (2016), 159--181. \url{https://arxiv.org/abs/1505.06439}.

\bibitem{IwaniecSverak} T. Iwaniec and V. \v Sver\'ak, \emph{On mappings with integrable dilatation}, Proc. Amer. Math. Soc. \textbf{118} (1993), 181--188.

\bibitem{Jost} J. Jost, E. Ke\ss ler, J. Tolksdorf, R. Wu and M. Zhu, \emph{From harmonic maps to the nonlinear supersymmetric sigma model of quantum field theory}, Vietnam J. Math. \textbf{47} (2019), 39--67. \url{https://doi.org/10.1007/s10013-018-0298-7}.

\bibitem{JostUnivalent} J. Jost,
\emph{Univalency of harmonic mappings between surfaces},
J. Reine Angew. Math. \textbf{324} (1981), 141--153.

\bibitem{KalajReplacement} D. Kalaj, \emph{Energy-minimal diffeomorphisms between doubly connected Riemann surfaces}, preprint, arXiv:1108.0773. \url{https://arxiv.org/html/1108.0773}. See Section 3 for the local metric harmonic-replacement construction.

\bibitem{KorevaarSchoen} N. J. Korevaar and R. M. Schoen,
\emph{Sobolev spaces and harmonic maps for metric space targets},
Comm. Anal. Geom. \textbf{1} (1993), 561--659.

\bibitem{KoskelaOnninen} P. Koskela and J. Onninen, \emph{Mappings of finite distortion: capacity and modulus inequalities}, J. Reine Angew. Math. \textbf{599} (2006), 1--26.

\bibitem{LehtoVirtanen} O. Lehto and K.~I. Virtanen, \emph{Quasiconformal Mappings in the Plane}, second edition, Grundlehren der mathematischen Wissenschaften 126, Springer, Berlin, 1973.

\bibitem{LemmMarkovic} M. Lemm and V. Markovi\'c,
\emph{Heat flows on hyperbolic spaces},
J. Differential Geom. \textbf{108} (2018), 495--529.
\url{https://doi.org/10.4310/jdg/1519959624}.
Author version: \url{https://arxiv.org/abs/1506.04345}.

\bibitem{LiTamProper} P. Li and L.-F. Tam, \emph{Uniqueness and regularity of proper harmonic maps}, Ann. of Math. (2) \textbf{137} (1993), 167--201. \url{https://doi.org/10.2307/2946622}.

\bibitem{LiTamProperII} P. Li and L.-F. Tam, \emph{Uniqueness and regularity of proper harmonic maps. II}, Indiana Univ. Math. J. \textbf{42} (1993), 591--635. \url{https://doi.org/10.1512/iumj.1993.42.42027}.

\bibitem{Markovic} V. Markovi\'c, \emph{Harmonic maps and the Schoen conjecture}, J. Amer. Math. Soc. \textbf{30} (2017), 799--817. \url{https://doi.org/10.1090/jams/881}.

\bibitem{Martin} G.J. Martin, {\em Harmonic degree 1 maps are diffeomorphisms: Lewy's theorem for curved metrics}, Transactions of the American Mathematical Society, {\bf 368}, (2016), 647--658.

\bibitem{Matsuzaki} K. Matsuzaki, \emph{Integrable Teichm\"uller spaces for analysis on Weil--Petersson curves}, preprint, 2025. \url{https://arxiv.org/abs/2508.20341}.

\bibitem{Mori1956} A. Mori, \emph{On an absolute constant in the theory of quasi-conformal mappings}, J. Math. Soc. Japan \textbf{8} (1956), 156--166. \url{https://doi.org/10.2969/jmsj/00820156}.

\bibitem{MRSY} O. Martio, V. Ryazanov, U. Srebro and E. Yakubov, \emph{Moduli in Modern Mapping Theory}, Springer Monographs in Mathematics, Springer, New York, 2009.

\bibitem{MYDiffeomorphic} G. Martin and C. Yao, \emph{Diffeomorphic solutions of Ahlfors--Hopf equations}, arXiv:2510.19375v2, 8 January 2026. \url{https://arxiv.org/html/2510.19375v2}.

\bibitem{MYLp} G. Martin and C. Yao, \emph{The $L^p$ Teichm\"uller theory: Existence and regularity of critical points}, Arch. Ration. Mech. Anal. \textbf{248} (2024), article 13. \url{https://doi.org/10.1007/s00205-023-01955-9}. The theorem numbering used here follows the preprint \url{https://arxiv.org/abs/2007.15149v1}.

\bibitem{MYRadonRiesz} G. Martin and C. Yao, \emph{Extremal mappings of finite distortion and the Radon--Riesz property}, Rev. Mat. Iberoam. \textbf{38} (2022), no. 7, 2057--2068. \url{https://doi.org/10.4171/RMI/1379}.

\bibitem{NV} S. Nag and A. Verjovsky, {\em $Diff (\IS)$ and the Teichm\"uller spaces},
Communications in Mathematical Physics, {\bf 130}, (1990), 123--138.

\bibitem{PartII} G. Martin and C. Yao, \emph{Integrable distortion II: the $p$-Weil--Petersson variational problem}, preprint (2026).

\bibitem{PW} E. Peltola and Y. Wang, \emph{Large deviations of multichordal SLE$_{0+}$, real rational functions, and zeta-regularized determinants of Laplacians}, J. Eur. Math. Soc. \textbf{26} (2024), 469--535. \url{https://doi.org/10.4171/JEMS/1274}.

\bibitem{RiestenbergSmillie} J. M. Riestenberg and P. Smillie, \emph{Harmonic maps to Hadamard spaces and a universal higher Teichm\"uller space}, arXiv:2511.11469v3 (2026). \url{https://arxiv.org/abs/2511.11469v3}. The interior comparison used here is Proposition~32 and Lemmas~33--34 of that version.

\bibitem{SchoenHarmonic} R. M. Schoen, \emph{The role of harmonic mappings in rigidity and deformation problems}, in \emph{Complex geometry (Osaka, 1990)}, Lecture Notes in Pure and Appl. Math. \textbf{143}, Dekker, New York, 1993, 179--200.

\bibitem{Shen} Y. Shen, \emph{Weil--Petersson Teichm\"uller space}, Amer. J. Math. \textbf{140} (2018), 1041--1074. \url{https://arxiv.org/abs/1304.3197}.

\bibitem{SidlerWenger} H. Sidler and S. Wenger,
\emph{Harmonic quasi-isometric maps into Gromov hyperbolic
${\rm CAT}(0)$-spaces}, preprint (2018).
\url{https://arxiv.org/abs/1804.06286}.

\bibitem{TT} L. A. Takhtajan and L.-P. Teo, \emph{Weil--Petersson metric on the universal Teichm\"uller space}, Mem. Amer. Math. Soc. \textbf{183} (2006), no. 861. Part II preprint: \url{https://arxiv.org/abs/math/0406408}.

\bibitem{Wang} Y. Wang, \emph{Equivalent descriptions of the Loewner energy}, Invent. Math. \textbf{218} (2019), 573--621. \url{https://arxiv.org/abs/1802.01999}.

\bibitem{WanHarmonic} T. Y.-H. Wan, \emph{Constant mean curvature surface, harmonic maps, and universal Teichm\"uller space}, J. Differential Geom. \textbf{35} (1992), 643--657. \url{https://doi.org/10.4310/jdg/1214448260}.

\bibitem{WeiMatsuzakiIntegrable} H. Wei and K. Matsuzaki,
\emph{The $p$-integrable Teichm\"uller space for $p\geq1$},
Proc. Japan Acad. Ser. A Math. Sci. \textbf{99} (2023), 37--42.
\url{https://doi.org/10.3792/pjaa.99.008}.
Theorem numbering follows the revised version
\url{https://arxiv.org/abs/2210.04720v4}, 27 November 2023.

\bibitem{Witten} E. Witten, \emph{Coadjoint orbits of the Virasoro group}, Commun. Math. Phys. \textbf{114} (1988), 1--53. \url{https://doi.org/10.1007/BF01218287}.

\bibitem{WolfMinimal} M. Wolf, \emph{Minimal graphs in $\mathbb H^2\times\IR$ and their projections}, author manuscript, 29 June 2007. \url{https://mwolf40.math.gatech.edu/minimal_graph.pdf}.

\bibitem{Wu} C. Wu, \emph{The cross-ratio distortion of integrably asymptotic affine homeomorphism of unit circle}, Sci. China Math. \textbf{55} (2012), 625--632.

\bibitem{YaoHarmonicCompactness} G. W. Yao, \emph{On $C^\infty$-compactness of quasiconformal harmonic maps on the Poincar\'e disk}, Comput. Methods Funct. Theory \textbf{16} (2016), no. 3, 365--374. \url{https://doi.org/10.1007/s40315-015-0148-5}.

\bibitem{YauSchwarz} S.-T. Yau, \emph{A general Schwarz lemma for K\"ahler manifolds}, Amer. J. Math. \textbf{100} (1978), 197--203. \url{https://doi.org/10.2307/2373880}.
\bibitem{Zhu} K. Zhu, \emph{Operator Theory in Function Spaces}, second edition, Mathematical Surveys and Monographs 138, American Mathematical Society, Providence, RI, 2007.

\end{thebibliography}
\end{document}